\numberwithin{equation}{section}
\newtheorem{Theorem}{Theorem}[section]
\newtheorem{Corollary}[Theorem]{Corollary}
\newtheorem{Lemma}[Theorem]{Lemma}
\newtheorem{Proposition}[Theorem]{Proposition}
\newtheorem{cla}[Theorem]{Claim}
 { \theoremstyle{definition}
\newtheorem{Definition}[Theorem]{Definition}
\newtheorem{Observation}[Theorem]{Observation}
\newtheorem*{Main}{Main result}
\newtheorem{Remark}[Theorem]{Remark} }
\begin{document}

\global\long\def\cvarphi{\check{\varphi}}
\global\long\def\crho{\check{\rho}}
\global\long\def\rG{\mathrm{G}}
\global\long\def\sE{\mathcal{E}}
\global\long\def\sK{\mathcal{K}}
\global\long\def\sO{\mathcal{O}}
\global\long\def\mC{\mathbb{C}}
\global\long\def\mP{\mathbb{P}}
\global\long\def\mR{\mathbb{R}}
\global\long\def\mQ{\mathbb{Q}}
\global\long\def\mZ{\mathbb{Z}}
\global\long\def\Hom{\mathrm{Hom}}
\global\long\def\sZ{\mathcal{Z}}

\allowdisplaybreaks

\newcommand{\arXivNumber}{1707.08728}

\renewcommand{\thefootnote}{}

\renewcommand{\PaperNumber}{039}

\FirstPageHeading

\ShortArticleName{Movable vs Monodromy Nilpotent Cones of Calabi--Yau Manifolds}

\ArticleName{Movable vs Monodromy Nilpotent Cones\\ of Calabi--Yau Manifolds\footnote{This paper is a~contribution to the Special Issue on Modular Forms and String Theory in honor of Noriko Yui. The full collection is available at \href{http://www.emis.de/journals/SIGMA/modular-forms.html}{http://www.emis.de/journals/SIGMA/modular-forms.html}}}

\Author{Shinobu HOSONO and Hiromichi TAKAGI}

\AuthorNameForHeading{S.~Hosono and H.~Takagi}

\Address{Department of Mathematics, Gakushuin University, Mejiro, Toshima-ku, Tokyo 171-8588, Japan}
\Email{\href{mailto:hosono@math.gakushuin.ac.jp}{hosono@math.gakushuin.ac.jp}, \href{mailto:hiromici@math.gakushuin.ac.jp}{hiromici@math.gakushuin.ac.jp}}

\ArticleDates{Received September 11, 2017, in final form April 23, 2018; Published online May 02, 2018}

\Abstract{We study mirror symmetry of complete intersection Calabi--Yau manifolds which have birational automorphisms of infinite order. We observe
that movable cones in birational geometry are transformed, under mirror symmetry, to the monodromy nilpotent cones which are naturally glued together.}

\Keywords{Calabi--Yau manifolds; mirror symmetry; birational geometry; Hodge theory}

\Classification{14E05; 14E07; 14J33; 14N33}

\renewcommand{\thefootnote}{\arabic{footnote}}
\setcounter{footnote}{0}

\section{Introduction}

A smooth projective variety $X$ of dimension $n$ is called a Calabi--Yau $n$-fold if the canonical bundle $K_{X}$ is trivial and $H^{i}(X,\mathcal{O}_{X})=0$, $1\leq i\leq n-1$. In the 90's, the idea of mirror symmetry was discovered in theoretical physics and has long been a source of many mathematical ideas related to Calabi--Yau manifolds. After more than 20 years since its discovery, we have now several approaches \cite{GS1,GS2, Ko,SYZ} toward mathematical understanding of the symmetry.

In this paper, we will focus on ``classical'' mirror symmetry of Calabi--Yau threefolds, i.e., we compare two different moduli spaces associated to Calabi--Yau threefolds, the K\"ahler moduli and the complex structure moduli spaces, considering Calabi--Yau threefolds which have several birational models. According to birational geometry of higher dimensional manifolds, if a Calabi--Yau threefold $X$ has birational models, then the K\"ahler cone of $X$ can be extended to the movable cone $\operatorname{Mov}(X)$ \cite{KawCone, MoKahler}. On the mirror side, corresponding to each birational model, there appears a special boundary point called large complex structure limit, which is characterized by unipotent monodromy \cite{MoLCSL}. Using this unipotent property, the so-called monodromy nilpotent cone is defined for each boundary point. We will find that, as a result of monodromy relations, the monodromy nilpotent cones glue together to define a~larger cone which can be identified with the movable cone $\operatorname{Mov}(X)$ under mirror symmetry.

Studying birational geometry in mirror symmetry (or string theory) goes back to papers by Morrison and his collaborators in the 90's~\cite{AGM}. The birational geometry discussed in the 90's was mostly for Calabi--Yau hypersurfaces in toric varieties, and it comes from the different resolutions of ambient toric varieties. In this paper, we will study two specific examples of complete intersection Calabi--Yau threefolds for which we have birational models in slightly different form, and also have birational automorphisms of infinite order.

The construction of this paper is as follows: In Section~\ref{sec:Classical-MS}, we will first recall some background material on mirror symmetry as formulated in the 90's. Restricting our attentions to three dimensional Calabi--Yau manifolds, we will summarize the basic properties of Calabi--Yau manifolds called A- and B-structures. In Section~\ref{sec: CICY-I}, we will introduce a specific Calabi--Yau threefold given by a complete intersection in $\mP^{4}\times\mP^{4}$, whose birational geometry and mirror symmetry were studied in detail in previous works~\cite{HTcmp, HTjag}. We will describe its movable cone by studying the geometry of birational models. In Section~\ref{sec:Gluing-monod-I}, we will report some results of monodromy calculations, and describe the details of how the monodromy nilpotent cones glue together by monodromy relations. In Section~\ref{section5}, we will present another complete intersection given in $\mP^{3}\times\mP^{3}$. Although there do not appear other birational models to this Calabi--Yau threefold than itself, we will observe interesting gluing property of monodromy nilpotent cones which corresponds to the structure of the movable cone observed in~\cite{Oguiso1}. Summary and discussions will be presented in Section~\ref{section6}. There we will also describe the corresponding calculations for a~K3 surface in $\mP^{3}\times\mP^{3}$ which has a parallel description to the complete intersection in $\mP^{4}\times\mP^{4}$.

\section{Classical mirror symmetry} \label{sec:Classical-MS}

\subsection{Mirror symmetry of Calabi--Yau threefolds}

Let us consider Calabi--Yau threefolds $X$ and $X^{*}$ which will be taken to be mirror to each other. For each of these, we have two different structures, called A-structure and B-structure.

\subsubsection[A-structure of $X$]{A-structure of $\boldsymbol{X}$} Let $\mathcal{K}_{X}$ be the K\"ahler cone of $X$ and $\kappa_{1},\dots,\kappa_{r}\in H^{1,1}(X,\mR)=H^{1,1}(X)\cap H^{2}(X,\mR)$ be ge\-nerators of the K\"ahler cone, where for simplicity, we assume that the K\"ahler cone is a simplicial cone in $H^{2}(X,\mathbb{R})$. Let $\kappa$ be the K\"ahler class which corresponds to the polarization of $X$ and write $\kappa$ by
\begin{gather*}
\kappa=t_{1}\kappa_{1}+\cdots+t_{r}\kappa_{r},
\end{gather*}
with $t_{i}>0$. The Lefschetz operator $L_{\kappa}(-):=\kappa\wedge(-)$ defines a nilpotent linear action on the even cohomology $H^{\rm even}(X):=\oplus_{p}H^{p,p}(X)$. In fact, this is a part of the Lefschetz $\mathfrak{sl}(2,\mathbb{C})$ action, and defines the following decomposition:
\begin{gather*}
\begin{matrix}H^{0,0}\\
\\
H^{1,1}\\
\\
H^{2,2}\\
\\
H^{3,3}
\end{matrix}\;\;\;\;=\;\;\;\;\begin{matrix}\bullet\\
\downarrow\\
\bullet & \bullet & & \bullet~~~~~\\
\downarrow & \downarrow & \cdots & \downarrow L_{\kappa}.\\
\bullet & \bullet & & \bullet~~~~~ \\
\downarrow\\
\bullet
\end{matrix}
\end{gather*}

From the viewpoint of homological mirror symmetry, it is natural to replace $H^{\rm even}(X)$ with the Grothendieck group $K(X)$ (modulo torsion) which is an abelian group equipped the symplectic form
\begin{gather*}
\chi(-,-)\colon \ K(X)\times K(X)\to\mathbb{Z}
\end{gather*}
with $\chi$ defined by $\chi(\mathcal{E},\mathcal{F}):=\sum(-1)^{i}\dim H^{i}(X,\mathcal{E}^{*}\otimes\mathcal{F})$ for vector bundles. Based on this integral and symplectic structure on $K(X)$, we can introduce the corresponding structure on~$H^{\rm even}(X,\mathbb{Q})$. \textit{A-structure of $X$} is the nilpotent action $L_{\kappa}$ on $H^{\rm even}(X,\mathbb{Q})$ with this integral and symplectic structure.

\subsubsection[B-Structure of $X^*$]{B-Structure of $\boldsymbol{X^*}$} Let $X^{*}=X_{b_{0}}^{*}$ and consider
a smooth deformation family $\pi\colon \mathfrak{X}^{*}\to B$ of $X_{b_{0}}^{*}(b_{0}\in B)$ over some open parameter space $B$. We denote by $X_{b}^{*}=\pi^{-1}(b)$ the fiber over $b\in B$. Then we have Kodaira--Spencer map $\rho_{b}\colon T_{b}B\to H^{1}(X_{b},\mathcal{T}X_{b}^{*})$
which we assume to be an isomorphism. Associated to this family, we naturally have the local system $R^{3}\pi_{*}\mathbb{C}_{\mathfrak{X}^{*}}$
on $B$. In the 90's, mirror symmetry was recognized by finding some local family $\mathfrak{X}|_{\Delta_{r}^{*}}\to\Delta_{r}^{*}$ with special
properties over the product of punctured disc $\Delta_{r}^{*}=(\Delta^{*})^{r}$ where $\Delta^{*}=\left\{ z\in\mathbb{C}\,|\,0<|z|<1\right\} $ and
$\dim B=r$. The required properties for the local family are described by the monodromy representation of the fundamental group $\pi_{1}(\Delta_{r}^{*})\simeq\mathbb{Z}^{r}$ for the local system $R^{3}\pi_{*}\mathbb{C}_{\mathfrak{X}^{*}}$ restricted to over $\Delta_{r}^{*}$. Let $T_{i}$ represent the monodromy matrix corresponding to the $i$-th generator of $\pi_{1}(\Delta_{r}^{*})$ with fixing a base point $b_{0}\in\Delta_{r}^{*}$. Assuming that all $T_{i}$ are unipotent, we have nilpotent matrices $N_{i}=\log T_{i}=\sum\limits_{k\geq1}\frac{(-1)^{k-1}}{k}(T_{i}-{\rm id})^{k}$. The set
\begin{gather}
\Sigma=\left\{ \sum\lambda_{i}N_{i}\,|\,\lambda_{i}\in\mathbb{R}_{>0}\right\} \label{eq:Mon-cone-Sig}
\end{gather}
is called \textit{monodromy nilpotent cone} consisting of nilpotent matrices on $H^{3}(X_{b_{0}},\mathbb{\mathbb{Q}})$. It is known that each element of $\Sigma$ defines the same monodromy weight filtration on $H^{3}(X_{b_{0}},\mathbb{Q})$ (see \cite[Theorem~1.9]{Griffiths}). The following definition is due to Morrison~\cite{MoLCSL}.

\begin{Definition} \label{def:LCSL}The degeneration of the local family $\mathfrak{X}|_{\Delta_{r}^{*}}\to\Delta_{r}^{*}$ at the origin is called a large complex structure limit (LCSL) if the following hold:
\begin{enumerate}\itemsep=0pt
\item[(1)] All $T_{i}$, $i=1,\dots,r$, are unipotent.

\item[(2)] Let $N_{\lambda}=\sum_{i}\lambda_{i}N_{i}$ $\lambda_{i}>0$. This induces the monodromy weight filtration,
\begin{gather}
\begin{matrix}W_{0}=W_{1} & \subset & W_{2}=W_{3} & \subset & W_{4}=W_{5} & \subset & W_{6} & =H^{3}(X_{b_{0}},\mathbb{Q})\\
\bullet & \leftarrow & \bullet & \leftarrow & \bullet & \leftarrow & \bullet\\
 & & \bullet & \leftarrow & \bullet\\
 & & \vdots & & \vdots\\
 & & \bullet & \leftarrow & \bullet
\end{matrix}\label{eq:mw-filtration}
\end{gather}
with $\dim W_{0}=1$ and $\dim W_{2}=1+r$.

\item[(3)] Let $W_{0}=\mathbb{Q}w_{0}$ and introduce a bi-linear form on $W_{0}$ by $\langle w_{0},w_{0}\rangle=1$. This defines $m_{jk}:=\langle w_{0},N_{j}w_{k}\rangle$ for a $\mathbb{Q}$-basis $[w_{1}],\dots,[w_{r}]$ of $W_{2}/W_{0}$. Then the $r\times r$ matrix $(m_{jk})_{1\leq j,k\leq r}$ is an invertible $\mathbb{Q}$-matrix.
\end{enumerate}
\end{Definition}

We note that there is a natural integral symplectic structure on $H^{3}(X_{b_{0}}^{*},\mathbb{Z})$, and the mono\-dromy matrices $T_{i}$ are given by integral and symplectic matrices if we fix a symplectic basis of $H^{3}(X_{b_{0}}^{*},\mathbb{Z})$. \textit{B-structure of} $X^{*}$ at LCSL is defined to be such an integral and symplectic basis of $H^{3}(X_{b_{0}}^{*},\mathbb{Z})$ with the monodromy matrices $T_{i}$ which are compatible with the filtration~(\ref{eq:mw-filtration}).

\subsubsection{Mirror symmetry} In classical mirror symmetry, $X$ is called a mirror to $X^{*}$ if the A-structure of $X$ is isomorphic to the
B-structure of $X^{*}$, i.e., the two nilpotent actions $L_{\kappa}$ and $N_{\lambda}$ are identified together with their integral and symplectic structures. To be more explicit, suppose we have a B-structure at a LCSL. Since $N_{\lambda}^{4}=0$ and $N_{\lambda}^{3}W_{6}\subset W_{0}$, we have
\begin{gather*}
N_{i}N_{j}N_{k}=C_{ijk}\mathtt{N}_{0}
\end{gather*}
with a fixed rank one nilpotent matrix $\mathtt{N}_{0}$ satisfying $N_{i}\mathtt{N}_{0}=0$. Corresponding to this products of nilpotent matrices, we have, in the A-structure, the cup-product
\begin{gather*}
\kappa_{i}\cup\kappa_{j}\cup\kappa_{k}=K_{ijk}\mathtt{V}_{0}, \qquad K_{ijk}\in\mathbb{Z},
\end{gather*}
where $\mathtt{V}_{0}\in H^{3,3}(X)$ normalized by $\int_{X}\mathtt{V}_{0}=1$. After fixing a normalization of the matrix $\mathtt{N}_{0}$, we have $C_{ijk}=K_{ijk}$ if $X$ and $X^{*}$ are mirror to each other, in particular we have $C_{ijk}\in\mathbb{Z}_{\geq0}$. In fact, $C_{ijk}$ is the leading coefficient of the so-called Griffiths--Yukawa coupling, and $K_{ijk}$ is the leading term of the quantum product. Mirror symmetry implies the equality between the two in full orders under the so-called \textit{mirror map}.

\subsection{Birational geometry and mirror symmetry}

Calabi--Yau threefolds often come with birational models. Mirror symmetry in such cases has been studied in~\cite{MoKahler} and is known as topology change in physics~\cite{AGM}. The purpose of this paper is to elaborate such cases in more details comparing the A-structure of~$X$ and the B-structure of~$X^{*}$. In the 90's, Morrison considered the movable cone of~$X$ in the context of mirror symmetry and also the topology change. We will push this perspective further by finding the corresponding cone structure in terms of the monodromy nilpotent cones in the B-structure of~$X^{*}$.

\subsubsection[Movable cones of $X$]{Movable cones of $\boldsymbol{X}$} As above, let us assume that Calabi--Yau threefold $X=:X_{1}$ comes with several other Calabi--Yau threefolds~$X_{i}$, $i=2,\dots,s$, which are birational to each other. Let $\mathcal{K}_{i}\subset H^{2}(X_{i},\mathbb{R})$ be the K\"ahler cone of~$X_{i}$. Using the birational maps $\varphi_{i}\colon X\dashrightarrow X_{i}$, these K\"ahler cones of~$X_{i}$ can be transformed to the corresponding cones in~$H^{2}(X,\mathbb{R})$. The convex hull of the union of these cones is the movable cone~$\operatorname{Mov}(X)$ of~$X$. It is shown in~\cite{KawMovC} that the union of the transformed K\"ahler cones defines a chamber structure to the movable cone $\operatorname{Mov}(X)$ (see also \cite[Section~5]{MoKahler}). To work with the classical mirror symmetry, in fact, we have to consider the movable cone in~$H^{2}(X,\mathbb{R})\otimes\mathbb{C}$ using the complexified K\"ahler cones $\mathcal{K}_{i}+\sqrt{-1}H^{2}(X_{i},\mathbb{R})$. However, in this paper, we will mostly focus on the structures in the real part of the complexified K\"ahler moduli.

\subsubsection[Compactification of the moduli space ${\mathcal M}_{X^*}^{\rm cpx}$]{Compactification of the moduli space $\boldsymbol{{\mathcal M}_{X^*}^{\rm cpx}}$}\label{para:obs-path}

Suppose that $X^{*}$ is mirror to $X$, i.e., we have a mirror family $\mathfrak{X}^{*}\to B:=\mbox{\ensuremath{\mathcal{M}}}_{X^{*}}^{\rm cpx}$ over a parameter space $\mathcal{M}_{X^{*}}^{\rm cpx}$ on which we find a local (smooth) family $\mathfrak{X}|_{\Delta_{r}^{*}}\to\Delta_{r}^{*}\subset\mathcal{M}_{X^{*}}^{\rm cpx}$ to describe the B-structure which is mirror to the A-structure of~$X$. In the classical mirror symmetry of Calabi--Yau complete intersections in toric varieties, there is a natural (toric) compactification $\overline{\mathcal{M}}_{X^{*}}^{\rm cpx}$ \cite{hosCICY, HLY} of the moduli space $\mathcal{M}_{X^{*}}^{\rm cpx}$, and the geometry $\Delta_{r}^{*}\subset\mathcal{M}_{X^{*}}^{\rm cpx}$ is characterized by the corresponding normal crossing boundary divisors at the origin $o\in\Delta_{r}=\mathbb{C}^{r}$.

The following properties can be observed for an abundance of examples of complete intersection Calabi--Yau manifolds:

\begin{Observation}\label{Observation1} 
Assume $X$ and $X^{*}$ are Calabi--Yau threefolds which are mirror to each other. If Calabi--Yau threefold $X=:X_{1}$ has birational models $X_{i}$, $i=2,\dots ,s$, then there appear the corresponding boundary points $o=:o_{1}$ and $o_{i}$, $i=2,\dots ,s$, given by normal crossing divisors in~$\overline{\mathcal{M}}_{X^{*}}^{\rm cpx}$ such that
\begin{enumerate}\itemsep=0pt
\item[(1)] $o_{i}$ are LCSLs, and
\item[(2)] the A-structures of $X_{i}$ are isomorphic to the B-structures
arising from $o_{i}$.
\end{enumerate}
\end{Observation}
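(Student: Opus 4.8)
The plan is to verify the Observation not in full generality but, as its phrasing (\emph{``can be observed for an abundance of examples''}) suggests, through the explicit mirror construction available for complete intersection Calabi--Yau threefolds, establishing the two assertions by period and monodromy computations. First I would realize the mirror family $\mathfrak{X}^{*}\to\mathcal{M}_{X^{*}}^{\rm cpx}$ via the standard toric (Batyrev--Borisov) mirror construction, whose period integrals are governed by a GKZ-type hypergeometric system. The toric compactification $\overline{\mathcal{M}}_{X^{*}}^{\rm cpx}$ of \cite{hosCICY, HLY} is then the toric variety of the secondary fan, and its torus-fixed (boundary) points are in bijection with the maximal cones of that fan. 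The key geometric input is that these maximal cones correspond precisely to the regular subdivisions, i.e.\ the ``phases'', that produce the various birational models $X_{i}$; this is how the candidate points $o_{1},\dots,o_{s}$ are singled out, each cut out by $r=\dim B$ normal crossing boundary divisors meeting at $o_{i}$.

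For assertion (1), at each $o_{i}$ I would solve the restricted Picard--Fuchs / GKZ system by the Frobenius method in local coordinates centred at $o_{i}$. The unique holomorphic period together with the $r$ single-log and the higher-log solutions exhibits the monodromy $T_{j}$ around the $j$-th boundary divisor as unipotent, so $N_{j}=\log T_{j}$ is nilpotent, which checks condition~(1) of Definition~\ref{def:LCSL}. Reading off the weight filtration of $N_{\lambda}=\sum_{j}\lambda_{j}N_{j}$ from the logarithmic degrees of the solutions gives $\dim W_{0}=1$ and $\dim W_{2}=1+r$, verifying condition~(2), and the pairing $m_{jk}=\langle w_{0},N_{j}w_{k}\rangle$ is computed directly from the leading logarithmic coefficients; its invertibility then confirms that $o_{i}$ is a genuine maximally unipotent point, which must be checked case by case.

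For assertion (2), I would match the B-structure at $o_{i}$ with the A-structure of $X_{i}$. From $N_{a}N_{b}N_{c}=C_{abc}\mathtt{N}_{0}$ one extracts the triple couplings $C_{abc}$, and these are compared with the intersection numbers $K_{abc}$ of $X_{i}$ computed independently from the birational geometry of the model (its K\"ahler cone generators and the chamber decomposition of $\operatorname{Mov}(X)$ from \cite{KawMovC, MoKahler, HTcmp, HTjag}). After the normalization of $\mathtt{N}_{0}$ fixed in the text, verifying $C_{abc}=K_{abc}$ together with the identification of the integral symplectic lattice $H^{3}(X_{b_{0}}^{*},\mathbb{Z})$ with $K(X_{i})$ establishes the isomorphism of structures. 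The crucial subtlety here is that the underlying space $H^{3}(X_{b_{0}}^{*},\mathbb{Q})$ is fixed while different $o_{i}$ produce different weight filtrations matching different (flopped) models $X_{i}$, whose rings $H^{\rm even}(X_{i})$ genuinely differ.

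The main obstacle is twofold. Locally, verifying the full set of LCSL conditions---especially the non-degeneracy of $(m_{jk})$---requires explicit control of the monodromy representation, which is only feasible once the hypergeometric system is solved in each phase. Globally, and this is where the examples with birational automorphisms of infinite order are delicate, the finitely many boundary points $o_{i}$ need not exhaust the chamber structure of $\operatorname{Mov}(X)$: one must understand the monodromy group acting on $H^{3}(X_{b_{0}}^{*},\mathbb{Q})$ and show that the monodromy relations glue the local nilpotent cones $\Sigma_{i}$ into a single cone, with an infinite-order element realizing the infinitely many chambers predicted by birational geometry. Because a uniform proof across all complete intersections is not available, the honest route is to carry out these computations in the $\mP^{4}\times\mP^{4}$ and $\mP^{3}\times\mP^{3}$ models, confirming the Observation as an explicit correspondence rather than as a structural theorem.
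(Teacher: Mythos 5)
Your overall strategy---realizing the mirror family explicitly, solving the GKZ/Picard--Fuchs system by the Frobenius method at each toric boundary point, checking the three LCSL conditions of Definition~\ref{def:LCSL}, and then matching $C_{abc}$ with the intersection numbers $K_{abc}$ together with the integral symplectic structures---is exactly the verification the paper carries out (via \cite{HTcmp} for the $\mP^4\times\mP^4$ example and directly in Section~\ref{section5} for the $\mP^3\times\mP^3$ example), and you are right that the Observation is confirmed case by case rather than proved structurally. However, your ``key geometric input'' contains a genuine gap: you single out the candidate points $o_1,\dots,o_s$ by asserting that the maximal cones of the secondary fan correspond to the regular subdivisions/phases that \emph{produce} the birational models $X_i$. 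That is the Aspinwall--Greene--Morrison picture for hypersurfaces \cite{AGM}, and the paper explicitly warns in Remark~\ref{rem:Bat-Nil} that it fails for the examples at hand: the ambient toric varieties $U_\alpha$ are all isomorphic (each is $\sO(-1,-1)^{\oplus 5}\to\mP^4\times\mP^4$), so the toric phase structure does \emph{not} explain why the $X_\alpha$ are birational; that fact comes instead from the projective geometry of the determinantal quintics (Proposition~\ref{proposition3.3}, the Atiyah flops of $50$ lines), i.e., from the Batyrev--Nill phenomenon of coinciding nef-partitions \cite{BatNil}. The second example makes the failure of your bijection concrete: its compactified moduli space $\mP^2$ has three torus-fixed boundary points, yet only one of them is an LCSL and $X$ has no birational model other than itself---so ``maximal cones $\leftrightarrow$ phases $\leftrightarrow$ birational models'' is false there in both directions.

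The repair is to demote the secondary-fan enumeration to a list of \emph{candidates} and let the correspondence $o_i\leftrightarrow X_i$ be established a posteriori, exactly by the computations you describe under assertions (1) and (2): one computes the B-structure at every toric boundary point and discovers which ones are LCSLs and which A-structure each matches. This is what the paper does (note also that for the first example it bypasses the full Batyrev--Borisov combinatorics altogether, constructing the mirror by the orbifold construction from a special family and identifying the three coordinate points of $\mP^2$ as the LCSLs via the central charge formula of \cite{HTcmp}). Your closing paragraph, on the finitely many $o_i$ not exhausting the chamber structure of $\operatorname{Mov}(X)$ and the need for monodromy relations gluing the cones $\Sigma_{o_i}$, is accurate and is precisely the content of the paper's Main result and Sections~\ref{sec:Gluing-monod-I} and~\ref{section5}; but it concerns Observation~\ref{Observation2} and the gluing statement, not the existence claim of Observation~\ref{Observation1} itself.
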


\begin{Observation}\label{Observation2}
Let $X$ and $X^{*}$ be as above. Corresponding to the birational map $\varphi_{ji}\colon X_{i}\dashrightarrow X_{j}$, there is a path connecting $o_{i}$ to $o_{j}$ and the connection matrix $M_{ji}$ of the B-structures such that

\begin{enumerate}\itemsep=0pt
\item[(1)] it preserves the monodromy weight filtrations, and
\item[(2)] it is integral and also compatible with the symplectic structures at each~$o_{i}$, i.e., $^{t}M_{ji}\Sigma_{j}M_{ji}$ $=\Sigma_{i}$ for the symplectic matrices $\Sigma_{i}$ representing the symplectic forms on $H^{3}(X_{b_{o_{i}}}^{*},\mathbb{Z})$.
\end{enumerate}
\end{Observation}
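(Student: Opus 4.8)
The plan is to separate the two assertions. Compatibility with the integral and symplectic structures, item~(2), is essentially formal and should follow from the structure of a polarized integral variation of Hodge structure; preservation of the weight filtrations, item~(1), together with the identification of $M_{ji}$ with the cohomological action of $\varphi_{ji}$, is the genuinely geometric content and has to be pinned down by an explicit monodromy computation along the chosen path. To begin, I would record the B-structure as a polarized integral variation of Hodge structure: the local system $R^{3}\pi_{*}\mZ$ over $\mathcal{M}_{X^{*}}^{\rm cpx}$ is a local system of lattices $H^{3}(X^{*}_{b},\mZ)$ carrying the flat (topological) cup-product pairing, which is skew-symmetric on $H^{3}$ of a threefold and hence furnishes the symplectic form~$\Sigma$. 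The connection matrix $M_{ji}$ is then, by definition, the matrix of parallel transport of a flat multivalued frame along a path $\gamma$ from $o_{i}$ to $o_{j}$, written with respect to the two integral symplectic bases that define the B-structures at $o_{i}$ and $o_{j}$.

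With this setup, item~(2) is immediate. Parallel transport in an integral local system sends an integral frame to an integral frame, so $M_{ji}$ is an integral matrix; and because the polarization form is locally constant, hence flat for the Gauss--Manin connection, it is preserved by transport, which is precisely the relation $^{t}M_{ji}\Sigma_{j}M_{ji}=\Sigma_{i}$. The only point needing care is that $M_{ji}$ is path-dependent: two choices of $\gamma$ differ by composition with monodromy around the boundary divisors, so one must fix the path that tracks the birational correspondence in order to obtain the stated matrix rather than one of its monodromy translates.

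For item~(1) I would invoke Observation~\ref{Observation1}, by which both $o_{i}$ and $o_{j}$ are LCSLs and the A-structure of $X_{i}$ is identified with the B-structure at~$o_{i}$. On the A-side a birational map $\varphi_{ji}\colon X_{i}\dashrightarrow X_{j}$ of Calabi--Yau threefolds is an isomorphism in codimension one, so it induces an integral isomorphism of the even cohomology (equivalently of the Grothendieck groups $K(X_{i})\cong K(X_{j})$) respecting the symplectic pairing $\chi$ and, under the matching of K\"ahler classes across the flop, intertwining the Lefschetz filtrations defined by $L_{\kappa}$. Transporting this identification through the two mirror isomorphisms predicts that $M_{ji}$ carries $W^{(o_{i})}_{\bullet}$ to $W^{(o_{j})}_{\bullet}$. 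Since the monodromy weight filtration is determined by the nilpotent cone, this is equivalent to verifying that conjugation by $M_{ji}$ sends $\Sigma_{i}$ to $\Sigma_{j}$ compatibly with the monodromy relations around the boundary.

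The main obstacle, and the reason the statement is recorded as an observation over a family of examples rather than proved as a general theorem, is establishing that the analytically continued $M_{ji}$ really coincides with the cohomological action of $\varphi_{ji}$ under the mirror map; this step is not formal. I would carry it out concretely for the complete intersections of Sections~\ref{sec: CICY-I} and~\ref{section5}, where the monodromy data are accessible: solve the Picard--Fuchs (GKZ) system near each LCSL, build the integral symplectic frames, continue the period solutions along $\gamma$ to read off $M_{ji}$, and then check items~(1) and~(2) directly on the resulting matrices. The recurrence of this pattern across such examples is exactly what the observation asserts.
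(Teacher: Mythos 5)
Your proposal is correct, and its substantive part follows the same route as the paper: the paper treats Observation~\ref{Observation2} as an empirical statement and verifies it in the two examples by exactly the computation you describe in your last paragraph --- constructing local Frobenius-type solutions of the Picard--Fuchs system near each LCSL, fixing the path through the blow-up of the tangency point $q_{12}=E_{1}\cap D_{y}$, numerically continuing the period vector to read off the connection matrix, and then checking properties (1) and (2) directly on the explicit matrices (this is Proposition~\ref{prop:connect-phi-21} and the discussion following it, where the authors note the verification of Observation~\ref{Observation2}). Where you genuinely differ is in the decomposition: you dispose of item~(2) by a conceptual argument --- the cup-product pairing and the lattice $H^{3}(X^{*}_{b},\mZ)$ are flat for the Gauss--Manin connection, so the change of basis between any two integral symplectic frames transported along any path is automatically in $\operatorname{Sp}(6,\mZ)$ --- whereas the paper only observes integrality and symplecticity a posteriori on the computed matrix. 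Your argument is stronger in that it shows (2) holds for \emph{every} path, which isolates item~(1) (and the choice of a distinguished path realizing it) as the only genuinely geometric content; this also explains cleanly why, in the Fourier--Mukai situation mentioned in the Remark after Observation~\ref{Observation2}, property~(2) can survive while property~(1) fails. Conversely, the paper's hands-on computation is not wasted effort: the explicit matrices $\cvarphi_{21}$, $\cvarphi_{32}$, $\cvarphi_{13}$ are needed later anyway to write the monodromy relations~(\ref{eq:T-relation}) and to glue the nilpotent cones, so the direct check of (1) and (2) comes for free there. One small caution about your item~(1) sketch: the identification of the analytically continued $M_{ji}$ with the cohomological action of $\varphi_{ji}$ is, as you say, not formal, and in the paper it is never proved as such --- what is actually verified is the weaker, basis-level statement that the computed $M_{ji}$ is block-triangular with respect to bases adapted to the filtrations~(\ref{eq:W2i-filt}) at both ends; the match with Lemmas~\ref{lem:KahlerX2} and~\ref{lem:KahlerX3} then appears only up to the correction terms $\Delta$ supported in $\mathcal{I}_{2}$ (Proposition~\ref{prop:Sigma0+Sigma2}), i.e., the correspondence with the A-side flop holds exactly only after passing to the quotient by $\mathcal{I}_{2}$, because of the extra factor $T_{E_{1}}$ encoding the flopping curves.
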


\begin{Remark}Recently Calabi--Yau manifolds which are derived equivalent but not birational each other have been attracting attention (see, e.g., \cite{BoCa,HTjag,HoTaCIV,Kuz2} and references therein). These are called Fourier--Mukai partners after the original work by Mukai for K3 surfaces~\cite{Mukai}. As shown in examples \cite{BoCa,HTjag,vSt}, if a Calabi--Yau threefold~$X$ has such Fourier--Mukai partners, then corresponding boundary points exist in $\overline{\mathcal{M}}_{X^{*}}^{\rm cpx}$ with the property~(2) in Observation~\ref{Observation2}, but losing the property~(1). If we have both the property~(1) and~(2), then we can see that the so-called prepotential for quantum cohomology is invariant up to quadratic terms under analytic continuations (see Proposition~\ref{prop:prepot-F} below), and hence the quantum cohomologies of birational Calabi--Yau threefolds are essentially the same (see~\cite{LLWang} for example). However, as we see in \cite{BCKvS,HoKo,HTjag,Ro}, quantum cohomologies of Fourier--Mukai partners are quite different to each other.
\end{Remark}

In this paper we will focus on Calabi--Yau threefolds given by complete intersections in toric varieties. Showing two examples which exhibit interesting birational geometry, we will make Observation~\ref{Observation2} more explicit, e.g., we will give precise descriptions about the path connecting the boundary points. Also finding some monodromy relations, we will come to the following observation:

\begin{Main} Assume a Calabi--Yau threefold~$X$ and its mirror manifold $X^{*}$ have the properties described in Observation~\ref{Observation2}. Then there are natural choices of path connecting $o_{i}$ and $o_{j}$ such that the monodromy nilpotent cones defined for each $o_{i}$ in $\overline{\mathcal{M}}_{X^{*}}^{\rm cpx}$ are glued together. We identify the resulting structure as the mirror counter part of the movable cone obtained by gluing K\"ahler cones by birational maps.
\end{Main}

The gluing will be achieved by finding monodromy relations coming from boundary divisors which have multiple tangency with some component of the discriminant (see Section~\ref{sec:Gluing-monod-I}). When writing the monodromy relations, we find a certain monodromy action of a distinguished form, which we call ``\textit{Picard--Lefschetz formula of flopping curves}'' based on the mirror correspondence (cf.\ the same forms are known in physics literatures, \cite{A-Scheideg, CandelasTwoI} for example, as strong coupling limits associated to certain contractions of curves).

\section[Complete intersection Calabi--Yau spaces from Gorenstein cones]{Complete intersection Calabi--Yau spaces\\ from Gorenstein cones}\label{sec: CICY-I}

In this section, we describe mirror symmetry of a Calabi--Yau complete intersection of the form
\begin{gather}
 X:=\left(\begin{matrix}\mathbb{P}^{4}|11111\\
\mathbb{P}^{4}|11111
\end{matrix}\right)^{2,52},\label{eq:XP4P4}
\end{gather}
i.e., a complete intersection of five general $(1,1)$ divisors in $\mathbb{P}^{4}\times\mathbb{P}^{4}$ which has Hodge numbers $\big(h^{1,1},h^{2,1}\big)=(2,52)$. In this section, we will study the A-structure of $X$.

\subsection{Cones for complete intersections and Calabi--Yau manifolds}

To describe the complete intersection $X$, let us note that we can write $X=s^{-1}(0)$ with a generic choice of a section of the bundle $\mathcal{O}(-1,-1)^{\oplus5}\to\mathbb{P}^{4}\times\mathbb{P}^{4}$. We describe this starting with the affine cone over the generalized Segre embedding $s_{1,1,1}\big(\mathbb{P}^{4}\times\mathbb{P}^{4}\times\mathbb{P}^{4}\big)$, which we write by
\begin{gather*}
U_{0}:=\operatorname{Spec}\mathbb{C}[\lambda_{i}z_{j}w_{k}\,|\,1\leq i,j,k\leq5]
\end{gather*}
with the homogeneous coordinates $\lambda_{i}$, $z_{j}$, $w_{k}$ of $\mathbb{P}^{4}$'s. Let $U\to U_{0}$ be the blow-up of the cone at the origin. It is easy to see that the exceptional divisor $E$ is isomorphic to $\mathbb{P}^{4}\times\mathbb{P}^{4}\times\mathbb{P}^{4}$. In fact, $U$ is isomorphic to the total space of the line bundle $\mathcal{O}(-1,-1,-1)\to\mathbb{P}^{4}\times\mathbb{P}^{4}\times\mathbb{P}^{4}$. Contracting one of the $\mathbb{P}^{4}$'s ($m$-th factor of $\mP_{\lambda}^{4}\times\mP_{z}^{4}\times\mP_{w}^{4}$), we have three possible contractions of $U$ which fit in the following diagram:
\begin{gather*}
\xymatrix{
& \ar[dl]_{\pi_1} U_1
\ar@{<-->}[d] \\
\;\;\;\;U_0 \;\;\;\;
& \ar[l]_{\;\;\;\;\pi_2} \;\; U_2 \;\;
\ar@{<-->}[d] &
\ar[ul] \ar[l] \ar[dl] \;\;\;\;U. \;\;\;\; \\
 & \ar[ul]^{\pi_3} U_3 }
\end{gather*}
Again, it is easy to see that $U_{\alpha}\to U_{0}$, $\alpha=1,2,3$, are small resolutions, and the geometries of~$U_{\alpha}$ are of the form $\mathcal{O}(-1,-1)^{\oplus5}\to\mathbb{P}^{4}\times\mathbb{P}^{4}$ that are birational to each other. It is worthwhile noting that if we start with the cone over $s_{1,1}\big(\mathbb{P}^{1}\times\mathbb{P}^{1}\big)$ in the above construction, the resulting geometry is the standard Atiyah flop for the small resolutions of the form $\mathcal{O}(-1)\oplus\mathcal{O}(-1)\to\mathbb{P}^{1}$.
\begin{Definition}
Consider the potential function on $U_{0}$,
\begin{gather*}
W=\sum_{i,j,k}a_{ijk}\lambda_{i}z_{j}w_{k}
\end{gather*}
with $a_{ijk}\in\mathbb{C}$ being chosen generically. Let $W_{\alpha}:=\pi_{\alpha}^{\#}W$ be the potential functions on $U_{\alpha}$. We denote the critical locus of~$W_{\alpha}$ in each~$U_{\alpha}$ by
\begin{gather*}
X_{\alpha}:=\operatorname{Crit}(W_{\alpha},U_{\alpha}), \qquad \alpha=1,2,3.
\end{gather*}
\end{Definition}

\begin{Proposition}The critical locus $X_{\alpha}$ is a Calabi--Yau complete intersection of the form \eqref{eq:XP4P4}.
\end{Proposition}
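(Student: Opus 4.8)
The plan is to compute $\operatorname{Crit}(W_\alpha,U_\alpha)$ directly and identify it with the zero section sitting over the complete intersection cut out by the fiber derivatives of $W_\alpha$. I treat $\alpha=1$, corresponding to contracting the $\mP^4_\lambda$ factor; the cases $\alpha=2,3$ are identical after permuting the three $\mP^4$'s. Using the identification $U_1\cong\operatorname{Tot}\big(\mathcal{O}(-1,-1)^{\oplus5}\to S\big)$ with $S:=\mP^4_z\times\mP^4_w$ recorded above, the five homogeneous coordinates $\lambda_i$ of the contracted factor become the fiber coordinates $v_1,\dots,v_5$, and a short chart computation confirms that the pulled-back potential reads $W_1=\pi_1^{\#}W=\sum_{i,j,k}a_{ijk}\,v_i z_j w_k$, a function that is \emph{linear along the fibers}.

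The key observation is that such a fiberwise-linear function is the same datum as a section $\sigma=(f_1,\dots,f_5)$ of the dual bundle $\mathcal{O}(1,1)^{\oplus5}$, where $f_i=\sum_{j,k}a_{ijk}z_jw_k\in H^0\big(S,\mathcal{O}(1,1)\big)$. Splitting the tangent space of the total space at a point $(s_0,v_0)$ into fiber and (connection) base directions, the differential $dW_1$ decomposes into two blocks. The fiber block is the linear functional $\sigma(s_0)$, whose vanishing imposes $f_i(s_0)=0$ for all $i$, i.e.\ $s_0$ lies on the complete intersection $X:=\{f_1=\dots=f_5=0\}\subset S$ of five $(1,1)$-divisors, precisely a variety of the form \eqref{eq:XP4P4}. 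On $X$ the base block is independent of the connection and equals $\sum_i v_i\,df_i(s_0)\in T^{*}_{s_0}S$; I check this against the explicit chart partials $\partial_{\hat z_j}W_1$ and $\partial_{\hat w_k}W_1$ in an affine trivialization.

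It then remains to show the base block forces $v_0=0$. For generic $a_{ijk}$ the complete intersection $X$ is smooth of the expected dimension $\dim S-5=3$ by Bertini, which is exactly the statement that $df_1(s_0),\dots,df_5(s_0)$ are linearly independent in $T^{*}_{s_0}S$ at every $s_0\in X$. Hence $\sum_i v_i\,df_i(s_0)=0$ cuts out $v_0=0$ transversally, so $\operatorname{Crit}(W_1,U_1)$ is reduced, equals the zero section over $X$, and is isomorphic to $X$, a complete intersection of five $(1,1)$-divisors in $\mP^4\times\mP^4$. That this is Calabi--Yau with $\big(h^{1,1},h^{2,1}\big)=(2,52)$, i.e.\ genuinely of the form \eqref{eq:XP4P4}, follows from adjunction, $K_X=\big(K_S+\mathcal{O}(5,5)\big)|_X=\mathcal{O}_X$, together with the standard Koszul/Lefschetz vanishing $H^i(X,\mathcal{O}_X)=0$ for $0<i<3$ and the usual complete-intersection Hodge-number computation.

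The main obstacle I anticipate is the careful handling of the base-direction derivatives: on the total space of a bundle over a projective (non-affine) base the naive partials are not coordinate-invariant, so the content is to show that \emph{along} the fiber-critical locus $X$ they assemble into the well-defined covector $\sum_i v_i\,df_i$, and then to feed in the genericity of $a_{ijk}$ (smoothness of $X$) to conclude $v_0=0$ and reducedness. Everything else is a routine chart computation together with standard facts about smooth complete intersections.
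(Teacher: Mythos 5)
Your proposal is correct and follows essentially the same route as the paper's proof: splitting the criticality conditions of the fiberwise-linear potential $W_{1}$ on $\mathcal{O}(-1,-1)^{\oplus5}\to\mathbb{P}^{4}_{z}\times\mathbb{P}^{4}_{w}$ into fiber derivatives (which cut out $\{f_{1}=\cdots=f_{5}=0\}$) and base derivatives (which, by linear independence of the $df_{i}$ along the smooth complete intersection for generic $a_{ijk}$, force the fiber coordinates to vanish), so that the critical locus is the zero section over the complete intersection. Your added care about the coordinate-invariance of the base-direction differential and the explicit adjunction/vanishing check of the Calabi--Yau property are refinements of, not departures from, the paper's argument.
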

\begin{proof} By symmetry, we only consider the case $X_{1}$. To write the conditions for the criticality, it is helpful to use the homogeneous coordinate for the small resolution $U_{1}$, which is the total space $\sO(-1,-1)^{\oplus5}\to\mP_{z}^{4}\times\mP_{w}^{4}$. Let $z_{i},w_{j}$ denote the homogeneous coordinates of $\mP_{z}^{4}\times\mP_{w}^{4}$ and $\lambda_{i}$ be the fiber coordinate. Then the potential function is simply given by $W_{1}=\sum_{i,j,k}a_{ijk}\lambda_{i}z_{j}w_{k}$, which gives the conditions for the criticality $\frac{\partial W_{1}}{\partial\lambda_{i}}=\frac{\partial W_{1}}{\partial z_{j}}=\frac{\partial W_{1}}{\partial w_{k}}=0$. If we denote $\frac{\partial W_{1}}{\partial\lambda_{i}}=\sum_{j,k}a_{ijk}z_{j}w_{k}=:f_{i}(z,w)$, the conditions $\frac{\partial W_{1}}{\partial z_{j}}=\frac{\partial W_{1}}{\partial w_{k}}=0$ may be arranged into a matrix form
\begin{gather*}
\left(\begin{matrix}\nabla_{z}f_{1} & \cdots & \nabla_{z}f_{5}\\
\nabla_{w}f_{1} & \cdots & \nabla_{w}f_{5}
\end{matrix}\right)\left(\begin{smallmatrix}\lambda_{1}\\
\vdots\\
\lambda_{5}
\end{smallmatrix}\right)=0.
\end{gather*}
The last equation gives the zero section $\left\{ \lambda_{1}=\cdots=\lambda_{5}=0\right\} \simeq\mP_{z}^{4}\times\mP_{w}^{4}$
and the conditions $f_{1}(z,w)=\dots =f_{5}(z,w)=0$ give a smooth complete intersection in the zero section if we choose $a_{ijk}$ sufficiently
general. \end{proof}

\begin{Proposition}[\cite{HTcmp, HTjag}] \label{proposition3.3} $X_{\alpha}$ and $X_{\beta}$, $\alpha\not=\beta$, are birational. The birational maps $\varphi_{\beta\alpha}\colon X_{\alpha}\dashrightarrow X_{\beta}$ are given by the Atiyah flops associated to the contractions of $50$ $\mathbb{P}^{1}$s, which we summarize in the following diagram:
\begin{gather}\begin{split}&
\begin{xy}
(-26,0)*++{X_1}="Xi",
(0,0)*++{X_2}="Xii",
(26,0)*++{X_3}="Xiii",
(52,0)*++{X_1,}="XiR",
(-13,-13)*+{Z_2}="Zii",
(13,-13)*+{Z_3}="Ziii",
(39,-13)*+{Z_1}="ZiR",
\ar_{\pi_{21}} "Xi";"Zii"
\ar^{\pi_{22}} "Xii";"Zii"
\ar_{\pi_{32}} "Xii";"Ziii"
\ar^{\pi_{33}} "Xiii";"Ziii"
\ar_{\pi_{13}} "Xiii";"ZiR"
\ar^{\pi_{11}} "XiR";"ZiR"
\ar@{<-->} "Xi";"Xii"
\ar@{<-->} "Xii";"Xiii"
\ar@{<-->} "Xiii";"XiR"
\end{xy}\end{split}
\label{eq:Birat-diag-1}
\end{gather}
where $Z_{1}\subset\mathbb{P}_{z}^{4}$, $Z_{2}\subset\mathbb{P}_{w}^{4}$ and $Z_{3}\subset\mathbb{P}_{\lambda}^{4}$ are determinantal quintics defined by the $5\times5$ matrices $\big(\sum z_{j}a_{ijk}\big)$, $\big(\sum w_{k}a_{ijk}\big)$ and $\big(\sum\lambda_{i}a_{ijk}\big)$, respectively.
\end{Proposition}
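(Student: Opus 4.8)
The plan is to make the two projections of each $X_{\alpha}$ onto its two $\mP^4$-factors explicit, to recognize their images as the determinantal quintics $Z_1,Z_2,Z_3$, and then to read off the birational maps between the $X_{\alpha}$ as flops over a common target. By the symmetry under cyclic permutation of $(\lambda,z,w)$ it suffices to treat $X_1$ in detail. Recall from the previous proof that $X_1=\{f_1=\cdots=f_5=0\}\subset\mP^4_z\times\mP^4_w$ with $f_i(z,w)=\sum_{j,k}a_{ijk}z_jw_k$. Setting $M(z)_{ik}:=\sum_j a_{ijk}z_j$ and $N(w)_{ij}:=\sum_k a_{ijk}w_k$, the defining equations read simultaneously as $M(z)\,w=0$ and $N(w)\,z=0$. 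Hence the projection $\pi_{11}\colon X_1\to\mP^4_z$ has image $\{\det M(z)=0\}$, which is exactly the determinantal quintic $Z_1$, with fibre $\mP(\ker M(z))$ over $z\in Z_1$; symmetrically the other projection $\pi_{21}\colon X_1\to\mP^4_w$ has image $Z_2=\{\det N(w)=0\}$ with fibre $\mP(\ker N(w))$. Running the identical computation for $X_2\subset\mP^4_\lambda\times\mP^4_w$ and $X_3\subset\mP^4_\lambda\times\mP^4_z$ produces the remaining incidences with $Z_2$, $Z_3$ exactly as drawn in~\eqref{eq:Birat-diag-1}.

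Next I would show that each such projection is a small contraction and count the contracted curves. For generic $a_{ijk}$ the quintic $Z_1$ is singular precisely along the corank-two locus $\{\operatorname{rank}M(z)\le 3\}$: over a corank-one point $\ker M(z)$ is a line so $\pi_{11}$ is an isomorphism there, whereas over a corank-two point the fibre $\mP(\ker M(z))$ is a~$\mP^1$. Regarding $M(z)$ as a morphism $\sO^{\oplus 5}\to\sO(1)^{\oplus 5}$ on $\mP^4_z$, this corank-two locus is the Thom--Porteous degeneracy locus of expected codimension $(5-3)^2=4$, hence a finite reduced set for generic coefficients, with class
\begin{gather*}
c_2^2-c_1c_3,\qquad c\big(\sO(1)^{\oplus 5}\big)=(1+H)^5=1+5H+10H^2+10H^3+\cdots,
\end{gather*}
so that its degree equals $10^2-5\cdot 10=50$, i.e., exactly $50$ points. (The corank-three locus has expected codimension $9$ and is therefore empty for generic $a_{ijk}$, which I would record as well.) Thus $\pi_{11}$ contracts exactly $50$ disjoint $\mP^1$'s, and by symmetry the same holds for every projection in~\eqref{eq:Birat-diag-1}.

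Finally I would identify the induced birational maps as Atiyah flops. The two maps $X_1\to Z_2\leftarrow X_2$ resolve the same quintic $Z_2$ by the kernel bundles of $N(w)$ and of its transpose, respectively, and are isomorphisms away from the $50$ singular points of $Z_2$; they therefore induce $\varphi_{21}\colon X_1\dashrightarrow X_2$, an isomorphism in codimension one between Calabi--Yau threefolds, hence a composite of flops. To pin this down as the standard Atiyah flop I would carry out the local analysis at a corank-two point $w_0$: after a local change of frame $N(w)$ acquires an invertible $3\times 3$ block, so up to a unit $\det N(w)$ equals the determinant of the residual $2\times 2$ block of linear functions, giving the local equation $ad-bc=0$ of a rank-four quadric, i.e., an ordinary double point of the threefold $Z_2$; the two one-dimensional spaces $\ker N$ and $\ker N^{\mathrm t}$ then provide the two rulings, so $X_1$ and $X_2$ are precisely the two small resolutions and $\varphi_{21}$ is the simultaneous Atiyah flop of the $50$ curves. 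The cyclic symmetry yields $\varphi_{32}$ over $Z_3$ and $\varphi_{13}$ over $Z_1$, completing~\eqref{eq:Birat-diag-1}.

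The main obstacle is this last, local step: verifying that the $50$ corank-two points are genuine nodes of each $Z_\alpha$ and that $X_\alpha,X_\beta$ restrict over a node to its two distinct small resolutions. This requires a genericity analysis of the $a_{ijk}$ ensuring simultaneously that the $X_\alpha$ are smooth, that the degeneracy locus is reduced of the expected dimension zero (so the Thom--Porteous count gives honest points), that no corank-three points occur, and that the four linear functions entering the residual $2\times 2$ block are independent, so that the singularity is a rank-four quadric rather than something worse.
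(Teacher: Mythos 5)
The paper offers no internal proof of this proposition---the sentence immediately following it defers to the references \cite{HTcmp, HTjag}---and the closest in-paper material is Appendix~\ref{appendixA}, where the map $X_{2}\dashrightarrow X_{3}$ is identified as a flop structurally, via linear duality and the Springer-type resolutions $\mP\big(V_{1}^{*}\otimes\Omega_{\mP(V_{2})}(1)\big)\to\sZ\subset\mP(V_{1}^{*}\otimes V_{2}^{*})$, with relative ampleness of tautological divisors replacing any local computation. Your proposal is therefore a genuinely different, essentially self-contained route, and it checks out. The determinantal bookkeeping is correct: writing $X_{1}=\{M(z)w=0\}=\{N(w)z=0\}$ and running the same computation for $X_{2}$, $X_{3}$ produces the transposed matrices ${}^{t}N(w)$, ${}^{t}M(z)$ together with $P(\lambda)_{jk}=\sum_{i}a_{ijk}\lambda_{i}$, which is exactly why $X_{1}\to Z_{2}\leftarrow X_{2}$ (and the other two pairs in \eqref{eq:Birat-diag-1}) are two \emph{distinct} small resolutions of the same quintic; the Thom--Porteous class $c_{2}^{2}-c_{1}c_{3}=(100-50)H^{4}$ correctly counts the $50$ corank-two points, and the corank-three locus is empty for dimension reasons; and the Schur-complement analysis correctly exhibits the transverse singularity as the rank-four quadric cone $ad-bc=0$, whose two rulings distinguish the two resolutions and make $\varphi_{\beta\alpha}$ an Atiyah flop of the $50$ curves. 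The genericity step you flag as the main obstacle is a real gap in the write-up but a routine one to close: the bundle $\sO(1)^{\oplus 25}=\mathcal{H}om\big(\sO^{\oplus 5},\sO(1)^{\oplus 5}\big)$ on $\mP^{4}$ is globally generated, so Kleiman--Bertini transversality applied to the corank stratification of the space of $5\times 5$ matrices shows that, for general $a_{ijk}$, the corank-two locus is reduced and zero-dimensional, no corank-three points occur, and the four Schur-complement differentials are independent at each corank-two point; equivalently, all of these assertions hold for the universal determinantal hypersurface in $\mP^{24}$---whose singular locus is its corank-two stratum, of codimension~$4$ and degree~$50$, with transverse singularity an ordinary threefold node---and are inherited by a general linear section $\mP^{4}\subset\mP^{24}$. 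In comparison, your computation buys explicitness (the number $50$ and the Atiyah structure fall out of the argument rather than being quoted from the references), while the paper's linear-duality formalism buys uniformity: the same diagram also yields the divisor relations of Lemmas~\ref{lem:KahlerX2} and~\ref{lem:KahlerX3}, which is what the paper actually needs downstream.
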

We refer the references \cite{HTcmp, HTjag} for the proof of the above proposition.
\begin{Remark} \label{rem:Bat-Nil} In the above proposition, we naturally come to birational Calabi--Yau complete intersections. Some remarks related to this are in order:
\begin{enumerate}\itemsep=0pt
\item $U_{\alpha}$'s are birational to each other since they are all toric varieties with the same algebraic torus contained as a dense subset. In fact, they all have the form $\mathcal{O}(-1,-1)^{\oplus5}\to\mathbb{P}^{4}\times\mathbb{P}^{4}$. However, when defining $X_{\alpha}$ as the critical locus of the potential function, the zero section of $\mathcal{O}(-1,-1)^{\oplus5}\to\mathbb{P}^{4}\times\mathbb{P}^{4}$ is specified by the criticality condition. Hence, that $U_{\alpha}$'s are birational does not imply that $X_{\alpha}$'s are birational. The fact that $X_{\alpha}$'s are birational comes from different reasons as described in the above proposition.

\item The affine cone construction here is an example of more general method in toric geometry due to Batyrev and Borisov~\cite{BatBo}. There, the affine cone is replaced by the so-called Gorenstein cones, and actually a pair of reflexive Gorenstein cone $(C_{\nabla},C_{\Delta})$ to describe mirror symmetry. The birational geometry we observed in the above proposition has been described by the property of so-called nef-partitions of $\nabla$ by Batyrev and Nil~\cite{BatNil}. They have found that the two different (but isomorphic) nef-partitions
\begin{gather*}
\nabla=\nabla_{1}+\nabla_{2}+\cdots+\nabla_{s}=\nabla_{1}'+\nabla_{2}'+\cdots+\nabla_{s}'
\end{gather*}
sometimes results in dual nef-partitions
\begin{gather*}
\Delta=\Delta_{1}+\Delta_{2}+\cdots+\Delta_{s},\qquad \Delta'=\Delta_{1}'+\Delta_{2}'+\cdots+\Delta_{k}'
\end{gather*}
with $\Delta$ and $\Delta'$ having completely different shapes to each other. We can describe our birational Calabi--Yau threefolds in this general setting. See references \cite{BoLi, FavKel} for recent works which shed light on this general phenomenon from the derived categories of Calabi--Yau threefolds.
\end{enumerate}
\end{Remark}

\subsection[Movable cone of $X:=X_{1}$]{Movable cone of $\boldsymbol{X:=X_{1}}$} \label{sub:Movable-cone}

Let us note that the K\"ahler cone of $X(=X_{1})$ is given by $\mathcal{K}_{X}=\mathbb{R}_{>0}H_{1}+\mathbb{R}_{>0}H_{2}$ with the pull-backs $H_{1}=\pi_{11}^{*}H_{Z_{1}}$ and $H_{2}=\pi_{21}^{*}H_{Z_{2}}$ of the hyperplane classes $H_{Z_{i}}$ of $Z_{i}$, where $\pi_{ji}\colon X_{i}\to Z_{j}$ is the projection in the diagram~(\ref{eq:Birat-diag-1}).
\begin{Lemma}\label{lem:KahlerX2}\quad
\begin{enumerate}\itemsep=0pt

\item[$(1)$] Let $\mathcal{K}_{X_{2}}=\mathbb{R}_{>0}L_{Z_{2}}+\mathbb{R}_{>0}L_{Z_{3}}$
be the $K$\"ahler cone with the generators $L_{Z_{2}}=\pi_{22}^{*}H_{Z_{2}}$
and $L_{Z_{3}}=\pi_{32}^{*}H_{Z_{3}}$. By the birational map $\varphi_{21}\colon X_{1}\dashrightarrow X_{2}$,
the K\"ahler cone is transformed to
\begin{gather*}
\varphi_{21}^{*}(\mathcal{K}_{X_{2}})=\mR_{>0}H_{2}+\mathbb{R}_{>0}(4H_{2}-H_{1}).
\end{gather*}
\item[$(2)$] Similarly, let $\mathcal{K}_{X_{3}}=\mathbb{R}_{>0}M_{Z_{3}}+\mathbb{R}_{>0}M_{Z_{1}}$
be the K\"ahler cone of $X_{3}$ generated by $M_{Z_{3}}=\pi_{33}^{*}H_{Z_{3}}$
and $M_{Z_{1}}=\pi_{13}^{*}H_{Z_{1}}$, then we have
\begin{gather*}
\varphi_{31}^{*}(\mathcal{K}_{X_{3}})=\mathbb{R}_{>0}(4H_{1}-H_{2})+\mR_{>0}H_{1}
\end{gather*}
for the birational map $\varphi_{31}\colon X_{1}\dashrightarrow X_{3}$.
\end{enumerate}
\end{Lemma}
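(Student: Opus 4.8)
The plan is to work with the explicit models of the three threefolds. By the proof of the preceding propositions and the cyclic symmetry interchanging the factors $\mP_{\lambda}^{4}$, $\mP_{z}^{4}$, $\mP_{w}^{4}$, each $X_{\alpha}$ is a complete intersection of five $(1,1)$-divisors in a product of two of the projective spaces: $X_{1}\subset\mP_{z}^{4}\times\mP_{w}^{4}$ cut out by $f_{i}(z,w)=\sum_{j,k}a_{ijk}z_{j}w_{k}=0$, and likewise $X_{2}\subset\mP_{w}^{4}\times\mP_{\lambda}^{4}$, $X_{3}\subset\mP_{\lambda}^{4}\times\mP_{z}^{4}$. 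In these models the $\pi_{ji}$ are the two coordinate projections; e.g.\ $\pi_{11}\colon X_{1}\to Z_{1}\subset\mP_{z}^{4}$ and $\pi_{21}\colon X_{1}\to Z_{2}\subset\mP_{w}^{4}$ exhibit $X_{1}$ as a small resolution of each determinantal quintic, the fibre over a node being the $\mP^{1}$ of the two-dimensional kernel of the defining matrix. The flop $\varphi_{21}$ is performed over the common base $Z_{2}$, so I must express the two generators $\varphi_{21}^{*}L_{Z_{2}}$ and $\varphi_{21}^{*}L_{Z_{3}}$ of $\varphi_{21}^{*}(\mathcal{K}_{X_{2}})$ in the basis $H_{1},H_{2}$ of $\operatorname{Pic}(X_{1})\otimes\mR$.

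The first generator is immediate: since $L_{Z_{2}}=\pi_{22}^{*}H_{Z_{2}}$ and $H_{2}=\pi_{21}^{*}H_{Z_{2}}$ are both pulled back from $H_{Z_{2}}$ under the common small contraction to $Z_{2}$, and $\varphi_{21}$ is an isomorphism over the corank-one locus of $Z_{2}$, one has $\varphi_{21}^{*}L_{Z_{2}}=H_{2}$. For the second I would write $\varphi_{21}^{*}L_{Z_{3}}=aH_{1}+bH_{2}$ and pin down $a,b$ by intersecting with two test curves. Let $C\subset X_{1}$ be a flopping curve (a fibre of $\pi_{21}$ over a node of $Z_{2}$); in the model it is a line in $\mP_{z}^{4}$ with $w$ fixed, so $H_{1}\cdot C=1$ and $H_{2}\cdot C=0$. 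Let $C'\subset X_{1}$ be a fibre of $\pi_{11}$ over a node of $Z_{1}$; it is a line in $\mP_{w}^{4}$ with $z$ fixed, so $H_{1}\cdot C'=0$ and $H_{2}\cdot C'=1$, and for generic $a_{ijk}$ it is disjoint from the flopping locus and hence preserved by $\varphi_{21}$.

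To get $a$ I use the sign-reversal property of the flop, $(\varphi_{21}^{*}L_{Z_{3}})\cdot C=-\,L_{Z_{3}}\cdot C_{2}$, where $C_{2}\subset X_{2}$ is the flopped curve. In the model $C_{2}$ is a line in $\mP_{\lambda}^{4}$ over the same node and $L_{Z_{3}}=\pi_{32}^{*}H_{Z_{3}}$ is the $\lambda$-hyperplane class, so $L_{Z_{3}}\cdot C_{2}=1$ and hence $a=-1$ (the negative sign is also forced by the requirement that $\varphi_{21}^{*}(\mathcal{K}_{X_{2}})$ lie across the wall $\mR_{>0}H_{2}$ from $\mathcal{K}_{X_{1}}$). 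To get $b$ I use the projection formula on the preserved curve, $b=(\varphi_{21}^{*}L_{Z_{3}})\cdot C'=L_{Z_{3}}\cdot\varphi_{21}(C')$, which reduces to a degree computation: the line $\ell_{w}=\pi_{21}(C')\subset\mP_{w}^{4}$ lies in $Z_{2}$ because the right kernel $z_{0}$ of $M(w)=(\sum_{k}a_{ijk}w_{k})_{ij}$ is constant along it, and $\varphi_{21}(C')$ is parametrised by the left-kernel (cokernel) map $w\mapsto\lambda(w)$.

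The crux is therefore to show this cokernel map has degree $4$ onto $\mP_{\lambda}^{4}$. Writing $w(t)=w_{1}+tw_{2}$ and $M(w(t))=A+tB$, I would use the adjugate: $M(w(t))\operatorname{adj}M(w(t))=\det M(w(t))\,\mathrm{id}=0$, and since the right kernel is the constant vector $z_{0}$ this forces $\operatorname{adj}M(w(t))=z_{0}\,v(t)^{\mathsf T}$ with $v(t)\propto\lambda(w(t))$. As the entries of $\operatorname{adj}M(w(t))$ are $4\times4$ minors of a pencil linear in $t$, each $v(t)_{j}$ is a polynomial of degree $4$, so $\deg\,(w\mapsto\lambda(w))=4$ and $b=4$; this gives $\varphi_{21}^{*}L_{Z_{3}}=4H_{2}-H_{1}$ and the stated cone. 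I expect the genuine obstacle to be verifying that the degree is exactly $4$ rather than smaller, i.e.\ that the cofactor polynomials $v(t)_{j}$ have no common factor; this requires the genericity of $a_{ijk}$ together with the fact that $\ell_{w}$ avoids the finitely many nodes of $Z_{2}$, equivalently that $C'$ is disjoint from the flopping curves. Part~$(2)$ then follows from the identical computation applied to the flop $\varphi_{31}$ over $Z_{1}$ with the two rulings exchanged, or directly by the symmetry of the construction, giving $\varphi_{31}^{*}M_{Z_{1}}=H_{1}$ and $\varphi_{31}^{*}M_{Z_{3}}=4H_{1}-H_{2}$.
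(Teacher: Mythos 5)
Your argument is correct, but it reaches the key relation $\varphi_{21}^{*}L_{Z_{3}}=4H_{2}-H_{1}$ by a genuinely different route from the paper's. The paper (Appendix~\ref{appendixA}, resting on Appendix~\ref{section:LD}) never touches individual curves: it realizes $X_{1}\subset\mP(\sE)$ and $X_{2}\subset\mP\big(\sE^{\perp}\big)$ for $\sE=V_{1}\otimes\sO_{\mP(V_{2})}(-1)$, and derives, from canonical-bundle formulas on the universal family of orthogonal linear sections over the Grassmannian, the identity of divisor classes $\sO_{\mP(\sE)}(1)|_{X_{1}}+\sO_{\mP(\sE^{\perp})}(1)|_{X_{1}}=\det\sE^{*}=5H_{2}$; since $\sO_{\mP(\sE)}(1)|_{X_{1}}=H_{1}+H_{2}$ and the strict transform of $\sO_{\mP(\sE^{\perp})}(1)|_{X_{2}}$ is $L_{Z_{3}}$, the relation $L_{Z_{3}}=4H_{2}-H_{1}$ falls out with no genericity analysis, and the same machinery is reused for Lemma~\ref{lem:KahlerX3}. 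Your route---pinning down the coefficients by intersecting with a flopping curve $C$ (using sign reversal under the flop, which is legitimate here because Proposition~\ref{proposition3.3} identifies $\varphi_{21}$ as an Atiyah flop) and with a preserved curve $C'$, then computing the degree of the left-kernel map along $\ell_{w}\subset Z_{2}$ via the adjugate---is more elementary and makes the coefficient $4$ geometrically visible as the degree of the $4\times4$-minor parametrization. The price is exactly the step you flag: degree equal to $4$ requires that the cofactor forms $v_{j}$ have no common zero on $\ell_{w}$, i.e., that $\ell_{w}$ misses the corank-$\geq2$ points of $Z_{2}$ (a common zero at $w_{0}\in\ell_{w}$ is precisely $\operatorname{adj}M(w_{0})=0$), equivalently that $C'$ avoids the $50$ flopping curves; this is a routine but genuinely needed genericity verification on the $a_{ijk}$, whereas the paper's class-level identity requires no such input. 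Both proofs dispose of part~(2) identically, by the symmetry of the construction.
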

\begin{proof}
See Appendix~\ref{appendixA}. \end{proof}

\begin{Lemma}\label{lem:KahlerX3}With the divisors $L_{Z_{2}}$, $L_{Z_{3}}$ and $M_{Z_{3}}$, $M_{Z_{1}}$ defined as above, we have
\begin{gather*}
\varphi_{32}^{*}(M_{Z_{1}})=4L_{Z_{3}}-L_{Z_{2}},\qquad \varphi_{32}^{*}(M_{Z_{3}})=L_{Z_{3}}
\end{gather*}
for the birational map $\varphi_{32}\colon X_{2}\dashrightarrow X_{3}$.\end{Lemma}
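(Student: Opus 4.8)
The plan is to read off both identities from Lemma~\ref{lem:KahlerX2}(1) using the manifest cyclic symmetry of the construction of the $X_{\alpha}$, rather than by composing $\varphi_{21}$ and $\varphi_{31}$. Let $\sigma$ denote the cyclic permutation of the three factors $\mP_{\lambda}^{4},\mP_{z}^{4},\mP_{w}^{4}$ of the Segre cone $U_{0}$. Since $U_{0}$ and the three small resolutions $U_{1},U_{2},U_{3}$ in~\eqref{eq:Birat-diag-1} are interchanged by $\sigma$, it induces isomorphisms $X_{1}\to X_{2}\to X_{3}$, carries the determinantal quintics by $Z_{1}\mapsto Z_{2}\mapsto Z_{3}\mapsto Z_{1}$, the projections by $\pi_{ji}\mapsto\pi_{s(j)s(i)}$ with $s$ the shift $1\mapsto 2\mapsto 3\mapsto 1$, and the elementary flops of Proposition~\ref{proposition3.3} by $\varphi_{21}\mapsto\varphi_{32}\mapsto\varphi_{13}\mapsto\varphi_{21}$; in particular $\varphi_{32}$ is the $\sigma$-conjugate of $\varphi_{21}$.

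Tracing the two generating classes through $\sigma$, the rule $\pi_{ji}^{*}H_{Z_{j}}\mapsto\pi_{s(j)s(i)}^{*}H_{Z_{s(j)}}$ produces the two $\sigma$-orbits
\[
H_{1}=\pi_{11}^{*}H_{Z_{1}}\ \mapsto\ L_{Z_{2}}=\pi_{22}^{*}H_{Z_{2}}\ \mapsto\ M_{Z_{3}}=\pi_{33}^{*}H_{Z_{3}},
\]
\[
H_{2}=\pi_{21}^{*}H_{Z_{2}}\ \mapsto\ L_{Z_{3}}=\pi_{32}^{*}H_{Z_{3}}\ \mapsto\ M_{Z_{1}}=\pi_{13}^{*}H_{Z_{1}}.
\]
Conjugating the identities $\varphi_{21}^{*}(L_{Z_{2}})=H_{2}$ and $\varphi_{21}^{*}(L_{Z_{3}})=4H_{2}-H_{1}$ of Lemma~\ref{lem:KahlerX2}(1) by $\sigma$ --- that is, replacing every class by its $\sigma$-image --- turns them into $\varphi_{32}^{*}(M_{Z_{3}})=L_{Z_{3}}$ and $\varphi_{32}^{*}(M_{Z_{1}})=4L_{Z_{3}}-L_{Z_{2}}$, which is the assertion. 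The first of these can also be seen directly, since $M_{Z_{3}}$ and $L_{Z_{3}}$ are both pulled back from the common base $Z_{3}$ of the flop $\varphi_{32}$, which is an isomorphism in codimension one compatible with $\pi_{33}$ and $\pi_{32}$; the content is the second identity, and its coefficient $4$ is transported from Lemma~\ref{lem:KahlerX2}(1).

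Two points need attention. First, the potential $W=\sum a_{ijk}\lambda_{i}z_{j}w_{k}$ is not $\sigma$-invariant: $\sigma$ sends the construction for $a$ to the one for the cyclically permuted tensor $a'_{ijk}=a_{jki}$. This is harmless because $a'$ is generic exactly when $a$ is, while the conclusion of Lemma~\ref{lem:KahlerX2} --- an equality of divisor classes, hence of discrete intersection data --- is constant over the connected generic locus and equivariant for the index permutation; so applying the lemma to $a'$ and transporting back along $\sigma$ is legitimate. Second, and this is the real obstacle to be avoided, one must \emph{not} derive the result from a relation of the form $\varphi_{31}=\varphi_{32}\circ\varphi_{21}$: that relation is false, since the composite $\varphi_{13}\circ\varphi_{32}\circ\varphi_{21}$ around the triangle~\eqref{eq:Birat-diag-1} is a birational automorphism of $X_{1}$ of infinite order --- on $H^{2}(X_{1})=\langle H_{1},H_{2}\rangle$ its pullback is $\left(\begin{smallmatrix}-4&-15\\15&56\end{smallmatrix}\right)$, with determinant $1$, trace $52$ and irrational eigenvalues $26\pm15\sqrt{3}$. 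Thus naive composition lands in the wrong chamber of $\operatorname{Mov}(X)$, and it is precisely the cyclic symmetry that fixes the correct coefficients.
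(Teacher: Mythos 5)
Your proof is correct, and it takes a genuinely different route from the paper's. The paper establishes the nontrivial identity $\varphi_{32}^{*}(M_{Z_{1}})+L_{Z_{2}}=4L_{Z_{3}}$ by a direct computation in Appendix~\ref{appendixA.2}: $X_{2}$ and $X_{3}$ are realized inside the two Springer-type resolutions $\mP\big(V_{1}^{*}\otimes\Omega_{\mP(V_{2})}(1)\big)$ and $\mP\big(\Omega_{\mP(V_{1})}(1)\otimes V_{2}^{*}\big)$ of the determinantal locus $\sZ$, their fiber product is identified with the projective bundle $\mP_{\mP(V_{1})\times\mP(V_{2})}\big(\Omega_{\mP(V_{1})}(1)\boxtimes\Omega_{\mP(V_{2})}(1)\big)$, and a comparison of canonical-bundle formulas yields $L_{\mP(V_{1})}+L_{\mP(V_{2})}=4L_{\mP(V_{1}^{*}\otimes V_{2}^{*})}$, which restricts over $P_{4}$ to the claim~(\ref{eq:desiredrel}). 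You instead transport Lemma~\ref{lem:KahlerX2}(1) along the cyclic symmetry of the Segre-cone construction, and your bookkeeping is right: the relabeling carries the whole diagram~(\ref{eq:Birat-diag-1}) for the tensor $a$ to the one for $a'_{ijk}=a_{jki}$, matches the contraction pairs $(\pi_{21},\pi_{22})\mapsto(\pi_{32},\pi_{33})$ over $Z_{2}\mapsto Z_{3}$ --- hence conjugates $\varphi_{21}$ into $\varphi_{32}$, these being by definition the flops of those contractions --- and produces exactly the orbits $H_{1}\mapsto L_{Z_{2}}\mapsto M_{Z_{3}}$ and $H_{2}\mapsto L_{Z_{3}}\mapsto M_{Z_{1}}$; since $a'$ is generic precisely when $a$ is, applying the lemma to $a'$ and pulling back is legitimate. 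This is the same kind of reduction the paper itself uses (only $X_{1}$ is treated ``by symmetry'' in the proof that the $X_{\alpha}$ are complete intersections, and Lemma~\ref{lem:KahlerX2}(2) is the analogous translate of~(1)), pushed one step further around the triangle; it is shorter and exhibits Lemma~\ref{lem:KahlerX3} as nothing but the cyclic translate of Lemma~\ref{lem:KahlerX2}(1). What the paper's computation buys in exchange is its by-product in Appendix~\ref{appendixA.2} --- the Corollary that $X_{2}\dashrightarrow X_{3}$ is a flop, read off from relative ampleness in the same divisor relation --- together with the general linear-duality formulas of Appendix~\ref{section:LD} that are reused there. Two minor caveats on your write-up: the generator-level identities $\varphi_{21}^{*}(L_{Z_{2}})=H_{2}$ and $\varphi_{21}^{*}(L_{Z_{3}})=4H_{2}-H_{1}$ that you conjugate are proved in Appendix~\ref{appendixA} rather than stated in the cone-level formulation of Lemma~\ref{lem:KahlerX2}(1) (the paper itself cites them in this generator form in the proof of the lemma on $\rho$, so this is harmless, but cite them accordingly); and your justification that $\varphi_{31}\neq\varphi_{32}\circ\varphi_{21}$ via the infinite order of $\rho$ is a posteriori, because that fact is proved in the paper using the very lemma at hand --- this costs nothing, since your argument nowhere relies on that cautionary remark.
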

\begin{proof} The second relation holds by definition. For the first relation, see Appendix~\ref{appendixA}.
\end{proof}

Now, we define the following composite of the birational maps:
\begin{gather*}
\rho:=\varphi_{13}\circ\varphi_{32}\circ\varphi_{21}
\end{gather*}
with the convention $\varphi_{ij}=\varphi_{ji}^{-1}\colon X_{j}\dashrightarrow X_{i}$ (see the diagram (\ref{eq:Birat-diag-1})).
\begin{Lemma} The birational map $\rho$ is not an automorphism of~$X$. It is of infinite order.\end{Lemma}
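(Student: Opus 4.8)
The plan is to reduce everything to linear algebra on $H^{2}(X_{1},\mR)\cong\mR^{2}$. Since, by Proposition~\ref{proposition3.3}, each $\varphi_{\beta\alpha}$ is a composition of Atiyah flops, it is an isomorphism in codimension one, so taking strict transforms of divisors yields a linear isomorphism $\varphi_{\beta\alpha}^{*}\colon H^{2}(X_{\beta},\mR)\to H^{2}(X_{\alpha},\mR)$; these are functorial for composition because the indeterminacy loci have codimension at least two. Hence $\rho^{*}=\varphi_{21}^{*}\circ\varphi_{32}^{*}\circ\varphi_{13}^{*}$ is a well-defined linear automorphism of $H^{2}(X_{1},\mR)$, which I would compute in the basis $H_{1}=\pi_{11}^{*}H_{Z_{1}}$, $H_{2}=\pi_{21}^{*}H_{Z_{2}}$ generating $\mathcal{K}_{X}$.

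First I would read off the three pullbacks from the lemmas above. From Lemma~\ref{lem:KahlerX2}(1), together with the observation that $L_{Z_{2}}=\pi_{22}^{*}H_{Z_{2}}$ and $H_{2}=\pi_{21}^{*}H_{Z_{2}}$ are both pulled back from the common contraction to $Z_{2}$ in~(\ref{eq:Birat-diag-1}) (so the flop $\varphi_{21}$ fixes this class), one gets $\varphi_{21}^{*}(L_{Z_{2}})=H_{2}$ and then, matching the remaining ray of the cone, $\varphi_{21}^{*}(L_{Z_{3}})=4H_{2}-H_{1}$. Lemma~\ref{lem:KahlerX3} gives $\varphi_{32}^{*}(M_{Z_{1}})=4L_{Z_{3}}-L_{Z_{2}}$ and $\varphi_{32}^{*}(M_{Z_{3}})=L_{Z_{3}}$ directly. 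For the last map I use $\varphi_{13}^{*}=(\varphi_{31}^{*})^{-1}$: Lemma~\ref{lem:KahlerX2}(2) and the same common-contraction argument for $Z_{1}$ give $\varphi_{31}^{*}(M_{Z_{1}})=H_{1}$ and $\varphi_{31}^{*}(M_{Z_{3}})=4H_{1}-H_{2}$, which invert to $\varphi_{13}^{*}(H_{1})=M_{Z_{1}}$ and $\varphi_{13}^{*}(H_{2})=4M_{Z_{1}}-M_{Z_{3}}$.

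Composing the three maps in order then yields
\begin{gather*}
\rho^{*}(H_{1})=-4H_{1}+15H_{2},\qquad \rho^{*}(H_{2})=-15H_{1}+56H_{2},
\end{gather*}
so $\rho^{*}$ is represented by $\left(\begin{smallmatrix}-4 & -15\\ 15 & 56\end{smallmatrix}\right)$, with determinant $1$ and trace $52$. Both assertions now fall out of this matrix. Since $\rho^{*}(H_{1})=-4H_{1}+15H_{2}$ lies outside the closed K\"ahler (nef) cone $\mR_{\ge0}H_{1}+\mR_{\ge0}H_{2}$, the map $\rho^{*}$ does not preserve $\mathcal{K}_{X}$; as any biregular automorphism must send the ample cone to itself, $\rho$ is not an automorphism. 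For the order, the characteristic polynomial $t^{2}-52t+1$ has roots $26\pm15\sqrt{3}$, which are real, irrational and of absolute value $\ne1$, hence not roots of unity; therefore $\rho^{*}$ has infinite multiplicative order, and since $\rho^{n}=\operatorname{id}$ would force $(\rho^{*})^{n}=\operatorname{id}$, the map $\rho$ is of infinite order.

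The routine part is the $2\times2$ matrix multiplication; the one place demanding care—which I expect to be the only real obstacle—is pinning down the generator-by-generator correspondence under each $\varphi^{*}$ rather than merely the image cones, and in particular correctly inverting $\varphi_{31}^{*}$ to obtain $\varphi_{13}^{*}$. The identifications $\varphi_{21}^{*}(L_{Z_{2}})=H_{2}$ and $\varphi_{31}^{*}(M_{Z_{1}})=H_{1}$ are precisely what rigidifies the otherwise ambiguous matching of rays supplied by the K\"ahler-cone lemmas, and they follow from the compatibility of the flops with the contractions displayed in~(\ref{eq:Birat-diag-1}).
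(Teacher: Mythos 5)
Your proposal is correct and follows essentially the same route as the paper: both compute $\rho^{*}=\varphi_{21}^{*}\circ\varphi_{32}^{*}\circ\varphi_{13}^{*}$ on $H^{2}(X,\mR)$ via Lemmas~\ref{lem:KahlerX2} and~\ref{lem:KahlerX3} (inverting $\varphi_{31}^{*}$ exactly as you do), arrive at the matrix $\left(\begin{smallmatrix}-4 & -15\\ 15 & 56\end{smallmatrix}\right)$, and conclude non-automorphy from $\rho^{*}(\mathcal{K}_{X})\neq\mathcal{K}_{X}$ and infinite order from this matrix. Your added details---the flop-over-$Z_{i}$ argument pinning down the generator-by-generator correspondence, and the eigenvalue computation $26\pm15\sqrt{3}$ showing the matrix is not of finite order---are correct elaborations of steps the paper leaves implicit.
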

\begin{proof}
We show that
\begin{gather}
\rho^{*}H_{1}=-4H_{1}+15H_{2},\qquad \rho^{*}H_{2}=-15H_{1}+56H_{2} \label{eq:rho-actionH}
\end{gather}
for $\rho^{*}=\varphi_{21}^{*}\circ\varphi_{32}^{*}\circ\varphi_{13}^{*}$. Since $\varphi_{13}^{*}=\big(\varphi_{31}^{-1}\big)^{*}=(\varphi_{31})_{*}$ and using the relations $\varphi_{31}^{*}(M_{Z_{3}})=4H_{1}-H_{2},\varphi_{31}^{*}(M_{Z_{1}})=H_{1}$ in Lemma~\ref{lem:KahlerX2}(2), we have
\begin{gather*}
M_{Z_{3}}=4M_{Z_{1}}-\varphi_{13}^{*}(H_{2}),\qquad M_{Z_{1}}=\varphi_{13}^{*}(H_{1}).
\end{gather*}
Then, using Lemmas~\ref{lem:KahlerX2} and~\ref{lem:KahlerX3}, it is straightforward to evaluate $\rho^{*}(H_{i})$, e.g., $\rho^{*}(H_{1})=\varphi_{21}^{*}\circ\varphi_{32}^{*}(M_{Z_{1}})=\varphi_{21}^{*}(4L_{Z_{3}}-L_{Z_{2}})=4(4H_{2}-H_{1})-H_{2}$. From these actions of $\rho^{*}$, we see that $\rho^{*}(\sK_{X})\not=\sK_{X}$ and hence $\rho\notin\operatorname{Aut}(X).$ Also, expressing the linear
action (\ref{eq:rho-actionH}) by a matrix $\left(\begin{smallmatrix}-4 & -15\\
15 & 56 \end{smallmatrix}\right)$, we see that $\rho$ has an infinite order. \end{proof}

\begin{Proposition}\label{prop:Birat(Xi)}Suppose $X_{i}\not\simeq X_{j}$, $i\not=j$, then the groups of birational maps of $X_{i}$ are given by
\begin{gather*}
\operatorname{Bir}(X_{i})=\operatorname{Aut}(X_{i})\cdot\langle\varphi_{i1}\circ\rho\circ\varphi_{1i}\rangle.
\end{gather*}
\end{Proposition}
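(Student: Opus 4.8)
The plan is to run an orbit--stabilizer argument for the action of $\operatorname{Bir}(X_i)$ on the chamber decomposition of the movable cone $\operatorname{Mov}(X_i)$. By the cited decomposition of Kawamata~\cite{KawMovC}, $\operatorname{Mov}(X_i)$ is tiled by the pull-backs of the K\"ahler cones of the minimal models birational to $X_i$, and since $K_{X_i}$ is trivial every element of $\operatorname{Bir}(X_i)$ is an isomorphism in codimension one; hence $\operatorname{Bir}(X_i)$ acts linearly on $H^2(X_i,\mR)$ preserving $\operatorname{Mov}(X_i)$ and permuting its chambers. First I would record the one standard fact that drives the argument: a pseudo-automorphism preserving the K\"ahler cone $\mathcal{K}_{X_i}$ crosses no wall, maps ample classes to ample classes, and so extends to a biregular automorphism; thus the stabilizer of the chamber $\mathcal{K}_{X_i}$ is exactly $\operatorname{Aut}(X_i)$.

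Next I would identify the chambers explicitly. Working on $X_1$, the K\"ahler cone $\mathcal{K}_{X_1}=\mR_{>0}H_1+\mR_{>0}H_2$ has walls $\mR_{>0}H_1$ and $\mR_{>0}H_2$, across which the Atiyah flops to $X_3$ and to $X_2$ produce the transformed K\"ahler cones of Lemma~\ref{lem:KahlerX2}. Iterating the wall-crossings with Lemmas~\ref{lem:KahlerX2} and~\ref{lem:KahlerX3}, the adjacent models recur with period three, reading $\dots,X_1,X_2,X_3,X_1,X_2,X_3,\dots$, and the relations $\rho^*H_1=-4H_1+15H_2$, $\rho^*H_2=-15H_1+56H_2$ of~\eqref{eq:rho-actionH} show that $\rho^*$ is the shift by one full period of three chambers. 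Its matrix has determinant $1$ and trace $52$, hence is hyperbolic, so the chamber rays accumulate at the two eigendirections bounding $\operatorname{Mov}(X_1)$ and the $\rho$-translates of the three basic chambers exhaust it. Because $X_1,X_2,X_3$ are pairwise non-isomorphic by hypothesis, each period contains exactly one $X_1$-chamber, so the set of $X_1$-chambers is precisely $\{(\rho^n)^*\mathcal{K}_{X_1}\mid n\in\mZ\}$; in particular $\langle\rho\rangle$ acts transitively on it.

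With these ingredients the case $X=X_1$ is immediate. Given $g\in\operatorname{Bir}(X_1)$, the chamber $g^*\mathcal{K}_{X_1}$ has model $X_1$, realized by the marking $g$ itself, hence is an $X_1$-chamber and equals $(\rho^n)^*\mathcal{K}_{X_1}$ for some $n$. Then $(g\circ\rho^{-n})^*$ fixes $\mathcal{K}_{X_1}$, so $g\circ\rho^{-n}\in\operatorname{Aut}(X_1)$ by the stabilizer fact, giving $g\in\operatorname{Aut}(X_1)\cdot\langle\rho\rangle$. For general $i$ I would transport the whole picture to $X_i$ through the birational maps $\varphi_{1i}$: the movable cone of $X_i$ carries the same period-three chamber structure, its period shift is the conjugate $\rho_i:=\varphi_{i1}\circ\rho\circ\varphi_{1i}$, and the identical orbit--stabilizer argument yields $\operatorname{Bir}(X_i)=\operatorname{Aut}(X_i)\cdot\langle\varphi_{i1}\circ\rho\circ\varphi_{1i}\rangle$.

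The main obstacle is the middle step: pinning down the \emph{global} chamber structure as the $\rho$-periodic tiling and, crucially, verifying that each period contains a single $X_1$-chamber. This is exactly where the hypothesis $X_i\not\simeq X_j$ enters---without it a period could contain several $X_1$-chambers and $\langle\rho\rangle$ would fail to be transitive---and where one must combine the explicit flop computations of the Lemmas with the fact that all minimal models are connected by flops, so that the chambers we generate genuinely exhaust $\operatorname{Mov}(X_i)$. The concluding group theory is then routine.
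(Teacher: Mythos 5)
Your proposal is correct in substance, but it takes a genuinely different route from the paper. The paper's proof (following \cite[Lemma~6.4]{Oguiso1}) is dynamical and never mentions the movable cone: given $\tau\in\operatorname{Bir}(X)$ it tracks the strict transform $D'=(\tau^{-1})_*D$ of an ample divisor; if $D'$ is nef, \cite[Lemma~4.4]{Kollar} forces $\mathtt{E}(\tau^{-1})=\varnothing$, i.e., $\tau\in\operatorname{Aut}(X)$; if not, the MMP produces an extremal contraction, which up to automorphism must be $X\to Z_1$ or $X\to Z_2$, one flops and repeats, and termination is guaranteed by \cite[Theorem~3.5]{Kollar}, yielding $\tau=\varphi_L\rho^n$ or $\varphi_R(\rho^{-1})^m$. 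You instead take Kawamata's chamber decomposition of $\operatorname{Mov}(X)$ \cite{KawMovC} as the ambient structure, identify the stabilizer of the chamber $\mathcal{K}_{X_1}$ with $\operatorname{Aut}(X_1)$, enumerate the chambers via the period-three wall-crossing pattern of Lemmas~\ref{lem:KahlerX2} and~\ref{lem:KahlerX3}, and finish by orbit--stabilizer. Both proofs rest on the same two pillars (a stabilizer statement and an exhaustion/termination statement), but they package them with different cited theorems, and your proof reverses the paper's logical order: the paper computes $\operatorname{Bir}(X)$ first and only then determines $\operatorname{Mov}(X)$ in the following proposition, whereas you feed the (independently known) chamber structure of $\operatorname{Mov}(X)$ back into the computation of $\operatorname{Bir}(X)$. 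This is legitimate and arguably more conceptual; what it costs is that the hard content migrates into your exhaustion step.

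On that step, one caution: your sentence claiming the matrix is hyperbolic, ``so the chamber rays accumulate at the two eigendirections bounding $\operatorname{Mov}(X_1)$ and the $\rho$-translates of the three basic chambers exhaust it,'' has the logic backwards --- that the eigendirections bound $\operatorname{Mov}(X_1)$ is a \emph{consequence} of exhaustion (it is essentially the content of the paper's next proposition, equation~(\ref{eq:MovX})), not an input to it. To close this properly you need what you sketch at the end: every chamber of $\operatorname{Mov}(X_1)$ is reached from $\mathcal{K}_{X_1}$ by finitely many wall crossings (local finiteness of the decomposition in the interior, or connectedness of the flop graph together with the fact that in Picard rank two each chamber has exactly two walls, both of flopping type here by the Lemmas), so that the bi-infinite periodic sequence really is the whole decomposition. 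This repackaged finiteness is precisely what the paper outsources to \cite[Theorem~3.5]{Kollar}; since you identify the issue and the correct tools, the gap is one of detail rather than of idea, and your placement of the hypothesis $X_i\not\simeq X_j$ (one $X_1$-chamber per period, hence transitivity of $\langle\rho\rangle$) matches the role it plays at the end of the paper's proof.
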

\begin{proof} Since arguments are similar to \cite[Lemma~6.4]{Oguiso1}, here we only give a rough sketch. Also, we only describe the case $i=1$, $\varphi_{11}=\mathrm{id}_{X}$. Take a birational map $\tau\colon X\dashrightarrow X$. We denote by~$\mathtt{E}(\tau$) the locus where $\tau$ is not defined or non-isomorphic. Consider an ample divisor $D$ and its transform $D'=(\tau^{-1})_{*}D$. Under this setting, we consider the two cases: (i) If~$D'$ is nef, then using \cite[Lemma~4.4]{Kollar} we have $D|_{\mathtt{E}(\tau^{-1})}\equiv0$, i.e., numerically equivalent to zero. Since $D$ is ample, this implies $\mathtt{E}\big(\tau^{-1}\big)=\varnothing$, i.e., $\tau\in\operatorname{Aut}(X)$. (ii)~If $D'$ is not nef, the restriction $D'|_{\mathtt{E}(\tau)}$ is not nef, too. This is because if $D'|_{\mathtt{E}(\tau)}$ were nef, then $D'=(\tau^{-1})_{*}D$ must be nef because $D$ is ample. Therefore $D'|_{\mathtt{E}(\tau)}$ is not nef and there exists a~curve $C\subset\mathtt{E}(\tau)$ such that $D'\cdot C<0$. Now, since $K_{X}\vert_{\mathtt{E}(\tau)}\equiv0$, we know that $K_{X}+\varepsilon D'$, $0<\varepsilon\ll1$, is not nef and $(X,\varepsilon D')$ is klt. From the theory of minimal models, we know that there exists an extremal ray of~$\overline{\mathrm{NE}}(X)$ and its associated contraction, which must be either $X\to Z_{1}$ or $X\to Z_{2}$ up to automorphisms. Now, corresponding to these two possibilities, we make the following diagrams:
\begin{gather}\begin{split}&
\begin{xy}
(-50,0)*++{X}="Xl",
(-30,0)*++{X_2}="Xii",
(-10,0)*++{X}="Xll",
(-40,-13)*++{Z_1}="Zi",
(10,0)*++{X}="Xr",
(30, 0)*++{X_3}="Xiii",
(50,0)*++{X.}="Xll",
(20,-13)*++{Z_2}="Zii",
(0,-5)*{\text{or}}
\ar@/^3mm/@{-->}^\tau (-46,3);(-13,2)
\ar@{-->}^{\varphi_{21}} "Xl";"Xii"
\ar "Xl";"Zi"
\ar "Xii";"Zi"
\ar@/^3mm/@{-->}^\tau (14,3);(47,2)
\ar@{-->}^{\varphi_{31}} "Xr";"Xiii"
\ar "Xr";"Zii"
\ar "Xiii";"Zii"
\end{xy}\end{split}\label{diag:XZXZ}
\end{gather}
Depending on the two cases, we set $D''=\big(\varphi_{21}\tau^{-1}\big)_{*}D$ or $D''=\big(\varphi_{31}\tau^{-1}\big)_{*}D$ and consider inductively the above two cases~(i) and~(ii) again. Due to \cite[Theorem~3.5]{Kollar}, this process terminates arriving at the case~(i) in the end. We can deduce that there are only two possibilities under the assumption $X\not\simeq X_{i}$, $i=2,3$:
\begin{gather*}
\begin{xy}
(0,0)*+{X=X_{1}\overset{_{\varphi_{21}}}{\dashrightarrow}
X_{2}\overset{_{\varphi_{32}}}{\dashrightarrow}
X_{3}\overset{_{\varphi_{13}}}{\dashrightarrow}
X_{1}\overset{_{\varphi_{21}}}{\dashrightarrow}
X_{2}\cdots\overset{_{\varphi_{13}}}{\dashrightarrow}
X_{1}\xrightarrow[^{\sim}]{\varphi_{L}}
X_{1}=X,},
(0,-12)*+{X=X_{1}\overset{_{\varphi_{31}}}{\dashrightarrow}
X_{3}\overset{_{\varphi_{23}}}{\dashrightarrow}
X_{2}\overset{_{\varphi_{12}}}{\dashrightarrow}
X_{1}\overset{_{\varphi_{31}}}{\dashrightarrow}
X_{3}\cdots\overset{_{\varphi_{12}}}{\dashrightarrow}
X_{1}\xrightarrow[^{\sim}]{\varphi_{R}}
X_{1}=X.},
\ar@/_2.5mm/@{-->}^\rho (-35,-3);(-7,-3)
\ar@/_2.5mm/@{-->}^{\rho^{-1}} (-35,-15);(-7,-15)
\end{xy}
\end{gather*}
Corresponding to these two, we have the decomposition $\tau=\varphi_{L}\rho^{n}$ or $\tau=\varphi_{R}(\rho^{-1})^{m}$ with $\varphi_{L,R}\in\operatorname{Aut}(X)$.\end{proof}

\begin{Remark} We use the assumption $X_{i}\not\simeq X_{j}$ at the very end of the above proof. If $X_{1}\simeq X_{i}$, then it is easy to deduce that we only have to include $\varphi_{i1}$ in the generators of $\operatorname{Bir}(X_{1})$. Similar modification in $\operatorname{Bir}(X_{i})$ is required if $X_{i}\simeq X_{j}$, $i\not=j$. These do not affect the form of the movable cone determined below. The assumption in the above proposition has been made just for simplicity.
\end{Remark}

Let us denote by $\operatorname{Mov}(X_{i})$ be the movable cones generated
by movable divisors on $X_{i}$. Since the transforms of movable divisors
by flops are movable, we have
\begin{gather*}
\operatorname{Mov}(X)=\operatorname{Mov}(X_{1})=\varphi_{21}^{*}\operatorname{Mov}(X_{2})=\varphi_{31}\operatorname{Mov}(X_{3}).
\end{gather*}
 The following result is known by \cite[Lemma~1]{Fry}. For completeness of our arguments, we present it here with a general proof.
\begin{Proposition}
The closure of the movable cone $\operatorname{Mov}(X)$ is given by
\begin{gather}
\overline{\operatorname{Mov}}(X)=\mathbb{R}_{\geq0}\big({-}H_{1}+(2+\sqrt{3})H_{2}\big) +\mathbb{R}_{\geq0}\big(H_{1}+(-2+\sqrt{3})H_{2}\big).\label{eq:MovX}
\end{gather}
\end{Proposition}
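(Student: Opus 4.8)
The plan is to bypass an explicit chamber-by-chamber count and instead pin down $\overline{\operatorname{Mov}}(X)$ as the unique closed convex cone left invariant by the infinite-order automorphism $\rho$, using the spectral data of $\rho^{*}$. First I would record the structural properties of the cone. By definition $\operatorname{Mov}(X)$ is a convex cone; it is $2$-dimensional because it contains the full-dimensional K\"ahler cone $\mathcal{K}_{X}=\mathbb{R}_{>0}H_{1}+\mathbb{R}_{>0}H_{2}$; and it is pointed, since movable divisors are effective and hence $\operatorname{Mov}(X)\subset\overline{\operatorname{Eff}}(X)$, the pseudoeffective cone of the projective threefold $X$, which is salient. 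Because $\rho$ is a composite of flops and flops carry movable divisors to movable divisors, we have $\rho^{*}\operatorname{Mov}(X)=\operatorname{Mov}(X)$, so $\overline{\operatorname{Mov}}(X)$ is a $2$-dimensional, pointed, closed convex cone invariant under $\rho^{*}$.

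Next I would extract the eigendata of $\rho^{*}$ from its matrix $\left(\begin{smallmatrix}-4 & -15\\ 15 & 56\end{smallmatrix}\right)$ in \eqref{eq:rho-actionH}. Its trace is $52$ and its determinant is $1$, so the characteristic polynomial is $\lambda^{2}-52\lambda+1$ with discriminant $2700=(30\sqrt{3})^{2}$; the eigenvalues are therefore $\lambda_{\pm}=26\pm15\sqrt{3}$, which are real, irrational, and satisfy $\lambda_{+}>1>\lambda_{-}>0$, so $\rho^{*}$ is hyperbolic. Solving $(\rho^{*}-\lambda_{\pm})v=0$ gives eigenrays spanned by $-H_{1}+(2+\sqrt{3})H_{2}$ (for $\lambda_{+}$) and $H_{1}+(-2+\sqrt{3})H_{2}$ (for $\lambda_{-}$), which are exactly the two rays appearing in \eqref{eq:MovX}.

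The key step is then purely linear-algebraic. I claim that any $2$-dimensional pointed closed convex cone $C\subset H^{2}(X,\mathbb{R})$ with $\rho^{*}C=C$ is spanned by the two eigenrays of $\rho^{*}$. Such a $C$ is an angular sector with exactly two extreme rays, and $\rho^{*}$ must permute them; since $\rho^{*}$ has distinct positive real eigenvalues it fixes each eigenline setwise and cannot interchange two distinct rays, so each extreme ray is fixed, hence is an eigenray. As there are only two eigenrays and $C$ is $2$-dimensional, its extreme rays are precisely these two. Applying this to $C=\overline{\operatorname{Mov}}(X)$ and choosing the sector on the side containing $\mathcal{K}_{X}$ yields \eqref{eq:MovX}.

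Finally, to confirm the orientation of the sector and to match the geometry, I would observe that Lemma~\ref{lem:KahlerX2} shows the three chambers $\mathcal{K}_{X_{1}}$, $\varphi_{21}^{*}\mathcal{K}_{X_{2}}$, $\varphi_{31}^{*}\mathcal{K}_{X_{3}}$ glue along the walls $\mathbb{R}_{\geq0}H_{1}$ and $\mathbb{R}_{\geq0}H_{2}$ into the single convex cone $C_{0}=\mathbb{R}_{\geq0}(4H_{1}-H_{2})+\mathbb{R}_{\geq0}(4H_{2}-H_{1})$, and that $\rho^{*}$ sends $\mathbb{R}_{\geq0}(4H_{1}-H_{2})$ to $\mathbb{R}_{\geq0}(4H_{2}-H_{1})$; by Proposition~\ref{prop:Birat(Xi)} the $\langle\rho^{*}\rangle$-translates of $C_{0}$ then tile $\operatorname{Mov}(X)$ with walls converging monotonically to the eigenrays. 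I expect the main obstacle to be the cone-theoretic inputs rather than the computation: namely establishing rigorously that $\operatorname{Mov}(X)$ is $\rho^{*}$-invariant and pointed, and that it is genuinely exhausted by these chambers rather than being a priori larger. Once those inputs are secured, the hyperbolicity of $\rho^{*}$ forces the answer with essentially no further work.
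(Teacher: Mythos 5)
Your proposal is correct, but it takes a genuinely different route from the paper's proof. The paper argues by two inclusions: it forms $\overline{C}_{123}$, the union of $\mathcal{K}_{X_{1}}$ with the two transformed K\"ahler cones of Lemmas~\ref{lem:KahlerX2} and~\ref{lem:KahlerX3}, sets $M=\bigcup_{n}(\rho^{*})^{n}\overline{C}_{123}$, obtains $\overline{M}\subset\overline{\operatorname{Mov}}(X)$ via the description of $\operatorname{Bir}(X_{i})$ in Proposition~\ref{prop:Birat(Xi)}, and proves the reverse inclusion by the same MMP induction as in that proposition (any effective movable divisor becomes nef on some $X_{i}$ after finitely many flops, terminating by \cite[Theorem~3.5]{Kollar}), so that every rational movable class lies in a translated K\"ahler chamber. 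You replace all of this by a rigidity argument: $\overline{\operatorname{Mov}}(X)$ is a closed, convex, two-dimensional, pointed, $\rho^{*}$-invariant cone containing $\mathcal{K}_{X}$, and since $\rho^{*}$ is hyperbolic with distinct positive eigenvalues $26\pm15\sqrt{3}$ (your eigendata from \eqref{eq:rho-actionH} are correct), the only such invariant cones are the four eigensectors, exactly one of which contains $\mathcal{K}_{X}$; note that the exclusion of a swap of extreme rays is cleanest by observing that a linear map interchanging two distinct rays has a negative eigenvalue on their span. This gives both inclusions at once, with no termination-of-flops argument and no structure theory of $\operatorname{Bir}(X)$. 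Two caveats on your inputs: the $\rho^{*}$-invariance of $\operatorname{Mov}(X)$ is exactly the remark the paper records just before the proposition (strict transforms of movable divisors under flops are movable), so it is available; but salience of the pseudoeffective cone, while a standard fact for projective varieties, is the linchpin of your whole argument and should be stated with a precise reference rather than asserted. Also, your closing paragraph undersells your own proof: you do not need to verify that $\operatorname{Mov}(X)$ is exhausted by the chambers---rigidity already pins the cone down. What your route does not deliver, and the paper's does, is precisely that chamber decomposition of $\operatorname{Mov}(X)$ into translates of K\"ahler cones of the birational models; this tiling (not just the ambient cone) is what is later matched against the glued monodromy nilpotent cones in Section~\ref{sec:Gluing-monod-I}, so the paper's longer proof is doing extra work it needs afterwards, while yours is the shorter path to the proposition as stated.
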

\begin{proof}By Lemmas \ref{lem:KahlerX2} and \ref{lem:KahlerX3}, it is easy to see that the closure of the set $\varphi_{21}^{*}(\mathcal{K}_{X_{2}})\cup\mathcal{K}_{X_{1}}\cup\varphi_{31}^{*}(\mathcal{K}_{X_{3}})$ is given by
\begin{gather*}
\overline{C}_{123}:=\mathbb{R}_{\geq0}(4H_{2}-H_{1})+\mathbb{R}_{\geq0}(4H_{1}-H_{2}).
\end{gather*}
We define
\begin{gather*}
M:=\bigcup_{n\in\mathbb{Z}}(\rho^{*})^{n}\overline{C}_{123}:=\langle\rho^{*}\rangle\cdot\overline{C}_{123}.
\end{gather*}
Then, from a linear algebra, it is straightforward to see that the r.h.s.\ of~(\ref{eq:MovX}) coincides with the closure~$\overline{M}$. Since any automorphism of $X_{i}$ preserves the generators of $\sK_{X_{i}}$ or exchanges them, using Proposition~\ref{prop:Birat(Xi)}, we have $\cup_{i}\varphi_{i1}^{*}(\operatorname{Bir}(X_{i})^{*}\sK_{X_{i}})=M$. Hence we have $\overline{M}\subset\overline{\operatorname{Mov}}(X)$.

To show the other inclusion, take a rational point $d\in\operatorname{Mov}(X)$. There exist $m\gg1$ and an effective movable divisor~$D$ such that
$md=[D]$. If $D$ is nef, then $d\in\overline{\sK}_{X}$ and hence $d\in M$. If $D$ is not nef, we do the same inductive process as in the proof of Proposition~\ref{prop:Birat(Xi)} and find a~birational map $\tau\colon X\dashrightarrow X_{i}$, $\tau\in\langle\rho,\varphi_{21}, \varphi_{31}\rangle$, such that $D'=\tau_{*}D$ is a~nef divisor on $X_{i}$, i.e., $D'\in\overline{\sK}_{X_{i}}$. Namely, we have $D=\tau^{*}D'\in\overline{\mathcal{K}}_{X}$ and $\overline{\mathcal{K}}_{X}\subset M$, which imply $\operatorname{Mov}(X_{i})(\mQ)\subset M$. Hence we have $\overline{\operatorname{Mov}}(X)\subset\overline{M}$.
\end{proof}

\begin{figure}[t]\centering
\includegraphics[width=110mm]{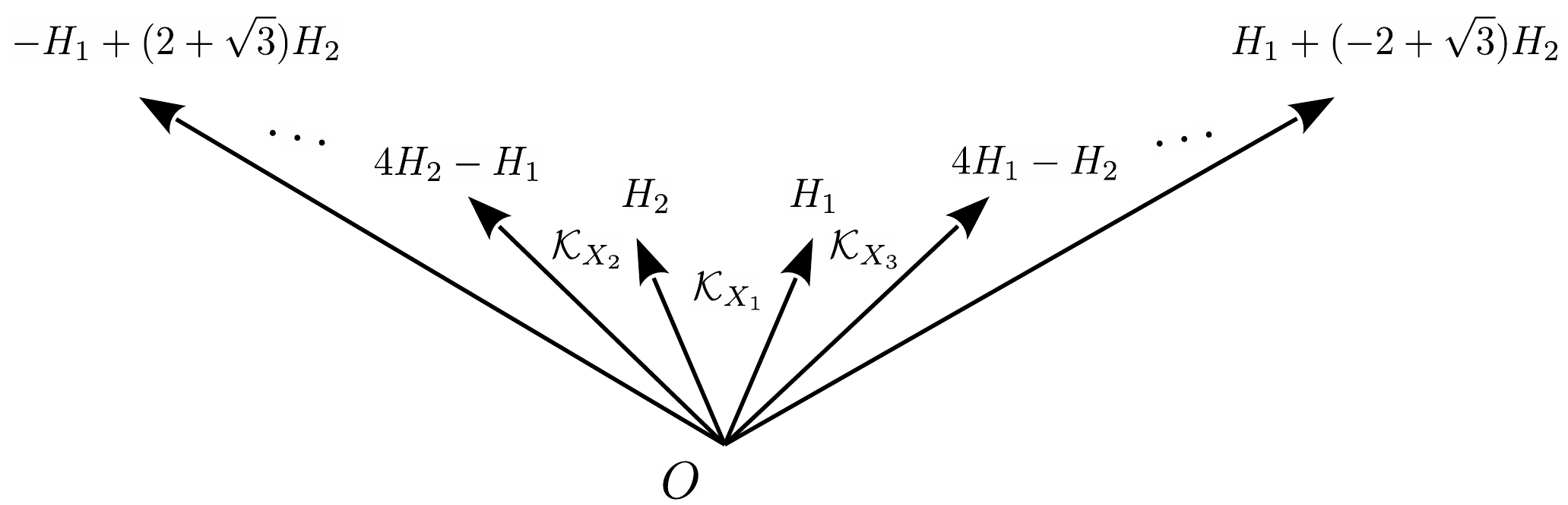}

\caption{Movable cone $\overline{\operatorname{Mov}}(X)$ in $H^2(X,\mathbb{R})$. The rays accumulate to the boundary rays of slopes $-2-\sqrt{3}$ and $-2+\sqrt{3}$.}\label{Fig1}
\end{figure}

\subsection[Mirror symmetry of $X$]{Mirror symmetry of $\boldsymbol{X}$} \label{sub:MirrorFamily}

For the complete intersection Calabi--Yau threefolds $X$ $(=X_{1})$, the mirror family can be obtained by a straightforward application of the Batyrev--Borisov toric mirror construction. However, the construction involves complications in combinatorics for toric geometry. In our case, we can avoid these complications and find the mirror family of~$X$ by the so-called orbifold mirror construction starting with a special family~\cite{HTcmp}.

Define the following special family of $X_{1}$:
\begin{gather*}
X_{\rm sp}:= \{ z_{i}w_{i}+a z_{i}w_{i+1}+b z_{i+1}w_{i}=0, \, i=1,\dots,5 \} \subset\mathbb{P}_{z}^{4}\times\mathbb{P}_{w}^{4},
\end{gather*}
where the indices of $z_{i}$, $w_{j}$ should be considered modulo $5$.

\begin{Proposition}
For general values of $a$, $b$, we have the following properties:
\begin{enumerate}\itemsep=0pt

\item[$(1)$] $X_{\rm sp}$ is singular along $20$ lines of singularity of~$A_{1}$ type.

\item[$(2)$] There exists a crepant resolution $X^{*}\to X_{\rm sp}$ with $X^{*}$ being a Calabi--Yau threefold with $h^{1,1}(X^{*})=52$, $h^{2,1}(X^{*})=2$.

\item[$(3)$] The resolution $X^{*}$ parametrized by $(a,b)\in\mathbb{C}^{2}$ defines a family $\mathfrak{X}^{*}\to\overline{\mathcal{M}}_{X^{*}}^{\rm cpx}\setminus {\rm Dis}$ with $\overline{\mathcal{M}}_{X^{*}}^{\rm cpx}=\mathbb{P}^{2}$ and ${\rm Dis}=D_{1}\cup D_{2}\cup D_{3}\cup {\rm Dis}_{0}$ where $D_{i}$ are the coordinate lines of~$\mathbb{P}^{2}$ and~${\rm Dis}_{0}$ is an irreducible $($singular$)$ curve of degree~$5$. The fiber over $\big[a^{5},b^{5},1\big]\not\in {\rm Dis}$ is given by the resolution $X^{*}$ with~$(a,b)$.

\end{enumerate}\end{Proposition}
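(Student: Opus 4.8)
The plan is to treat the three parts in turn: the first by a direct Jacobian computation exploiting the symmetry of the family, and the last two largely by appeal to the orbifold mirror construction of~\cite{HTcmp}.

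For part~(1) I would apply the Jacobian criterion to the complete intersection cut out by $F_{i}=z_{i}w_{i}+az_{i}w_{i+1}+bz_{i+1}w_{i}$, $i=1,\dots,5$. A point $([z],[w])\in X_{\rm sp}$ is singular precisely when the five differentials $dF_{i}$ become linearly dependent on $T(\mP_{z}^{4}\times\mP_{w}^{4})$, i.e.\ when there is $\lambda=(\lambda_{1},\dots,\lambda_{5})\neq 0$ with $\sum_{i}\lambda_{i}dF_{i}=0$. Separating the $z$- and $w$-derivatives (exactly as in the matrix form used in the proof that $X_{\alpha}=\operatorname{Crit}(W_{\alpha})$ is a complete intersection) gives the two cyclic systems
\begin{gather*}
\lambda_{j}(w_{j}+aw_{j+1})+b\,\lambda_{j-1}w_{j-1}=0,\qquad
\lambda_{k}(z_{k}+bz_{k+1})+a\,\lambda_{k-1}z_{k-1}=0
\end{gather*}
for all $j,k$ (indices mod $5$), to be solved together with $F_{i}=0$. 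I would exploit the two manifest symmetries of the family --- the cyclic shift $\sigma\colon (z_{i},w_{i})\mapsto(z_{i+1},w_{i+1})$ of order $5$, and the involution $(z\leftrightarrow w,\,a\leftrightarrow b)$ --- to reduce the casework to a single representative. Solving the coupled system, I expect each solution to sweep out a $\mP^{1}$; collecting these into $\sigma$-orbits should produce exactly $20$ lines. For each line I would then verify the singularity type by restricting to a slice transverse to the line at a general point and checking that the induced quadratic (the transverse Hessian) is nondegenerate of rank $2$, so that the singularity is a compound $A_{1}$ along a smooth curve, as claimed.

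For part~(2), a curve of transverse $A_{1}$ singularities on a threefold admits a crepant resolution: the transverse slice is the surface node $\mathbb{C}^{2}/\{\pm1\}$, whose minimal resolution is crepant, and performing it in family over each singular line blows the line up to a ruled exceptional divisor with discrepancy $0$; hence the resolution $\nu\colon X^{*}\to X_{\rm sp}$ has $K_{X^{*}}=\nu^{*}K_{X_{\rm sp}}$ trivial. Because $A_{1}$ singularities are rational, $H^{i}(X^{*},\sO_{X^{*}})\cong H^{i}(X_{\rm sp},\sO_{X_{\rm sp}})$ vanishes for $i=1,2$, so $X^{*}$ is Calabi--Yau. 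The Hodge numbers are then obtained by adjoining the exceptional divisor classes to the rank-$2$ lattice pulled back from the ambient product and counting the two parameters $(a,b)$ as complex moduli; this is precisely the orbifold mirror computation of~\cite{HTcmp}, which I would cite for the exact values $h^{1,1}(X^{*})=52$, $h^{2,1}(X^{*})=2$, i.e.\ the mirror interchange of the Hodge numbers $(2,52)$ of~$X$.

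For part~(3) I would note that $X_{\rm sp}$ depends only on $(a,b)$, but the residual Greene--Plesser-type symmetry of the orbifold construction acts by fifth roots of unity, so the invariant moduli coordinates are $a^{5}$ and $b^{5}$; projectivizing gives $\overline{\mathcal{M}}_{X^{*}}^{\rm cpx}=\mP^{2}$ with the stated map $(a,b)\mapsto[a^{5}:b^{5}:1]$. The discriminant splits into the three coordinate lines $D_{i}$, where the defining equations degenerate (e.g.\ at $a=0$ one has $F_{i}=w_{i}(z_{i}+bz_{i+1})$, which is totally reducible, and similarly for $b=0$ and for the line at infinity), and the remaining component ${\rm Dis}_{0}$, the locus where a generic member acquires an additional isolated node beyond the $20$ lines; I would cut this out as the vanishing of the resultant-type condition for the system above to acquire an extra solution, an irreducible polynomial in $\big(a^{5},b^{5}\big)$ of degree~$5$. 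The main obstacle is the sharp bookkeeping in part~(2): showing that the exceptional contributions of the $20$ resolved lines, together with the resolution of any points where they meet, sum to \emph{exactly} $h^{1,1}=52$ rather than merely bounding it, and correspondingly pinning down the precise equation and degree of ${\rm Dis}_{0}$ --- both points where I would lean on the detailed toric analysis of~\cite{HTcmp} rather than reprove them from scratch.
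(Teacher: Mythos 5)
The paper itself offers no argument for this proposition: its entire ``proof'' is the citation ``Proofs of these properties are given in \cite[Theorems~5.11 and~5.17]{HTcmp}.'' So your strategy of deferring the decisive facts (the enumeration of the singular curves, the exact values $h^{1,1}(X^{*})=52$, $h^{2,1}(X^{*})=2$, and the degree-$5$ equation of ${\rm Dis}_{0}$) to \cite{HTcmp} is, in effect, the same approach as the paper's, and the details you do commit to are mostly sound: the Jacobian system you write down is the correct one, the total reducibility $F_{i}=w_{i}(z_{i}+bz_{i+1})$ at $a=0$ is correct, and the $\mathbb{Z}_{5}\times\mathbb{Z}_{5}$ rescaling argument (e.g.\ $z_{i}\mapsto\zeta^{i}z_{i}$ sends $(a,b)$ to $(a,\zeta b)$) does show that $\big(a^{5},b^{5}\big)$ are the true moduli coordinates.

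Two points in your sketch would fail as written, however. First, your criterion for the singularity type is wrong: transverse $A_{1}$ along a curve means the generic slice transverse to the curve is the surface node $uv=w^{2}$, i.e.\ a \emph{nondegenerate rank-$3$} quadric cone in the three transverse directions (equivalently, the full Hessian has corank exactly one, its kernel being the tangent direction of the curve). A transverse quadratic of rank~$2$ gives two smooth sheets crossing along a surface, which is not $A_{1}$, so verifying ``rank $2$'' could never establish part~(1). Second, the Hodge-number strategy in part~(2) cannot reach $52$ in the form you state it: resolving $20$ pairwise disjoint transverse $A_{1}$ curves adjoins exactly one ruled exceptional divisor per curve, giving $h^{1,1}=2+20=22$. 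In fact the $20$ singular curves of $X_{\rm sp}$ meet one another --- this can be checked directly from your own Jacobian system: among the components attached to $\lambda=e_{1}$ one finds both the line $\{e_{3}\}\times\{w_{1}=w_{2}=w_{3}+aw_{4}=0\}$ and a second rational curve through the point $\big(e_{3},[0,0,a^{2},-a,1,]\big)$ lying on it --- and at the crossing points the singularity is strictly worse than $A_{1}$, so the existence of a (projective) crepant resolution is not automatic there and its exceptional locus contributes the further classes needed to reach $52$. Thus what you describe as ``sharp bookkeeping'' to be quoted from \cite{HTcmp} is not a residual check but the main content of parts~(1) and~(2); deferring it is acceptable (the paper does the same), but your sketch should not suggest that the generic-transverse-$A_{1}$ picture alone produces the stated Hodge numbers.
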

\begin{proof}Proofs of these properties are given in \cite[Theorems~5.11 and~5.17]{HTcmp}.
\end{proof}

We can verify that all the properties in Observation~\ref{Observation1} hold for the family $\mathfrak{X}^{*}\to\mathbb{P}^{2}\setminus {\rm Dis}$.
\begin{Proposition}We set
\begin{gather*}
o_{1}=D_{1}\cap D_{2}=[0,0,1],\qquad o_{2}=D_{2}\cap D_{3}=[1,0,0],\qquad o_{3}=D_{3}\cap D_{1}=[0,1,0].
\end{gather*}
All these boundary points $o_{1}$, $o_{2}$, $o_{3}$ are LCSLs whose B-structures are identified with the A-structures of the birational models $X_{1}$, $X_{2}$ and $X_{3}$, respectively. See Fig.~{\rm \ref{Fig2}} in the next section.
\end{Proposition}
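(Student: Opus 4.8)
The plan is to exploit the cyclic symmetry of order three that permutes the three projective factors $\mP^4_\lambda, \mP^4_z, \mP^4_w$ of the Gorenstein-cone construction: this symmetry interchanges the birational models $X_1, X_2, X_3$ and simultaneously permutes the coordinate lines $D_1, D_2, D_3$ of $\overline{\mathcal{M}}_{X^*}^{\rm cpx}=\mP^2$, hence the three boundary points $o_1, o_2, o_3$. It therefore suffices to treat a single point, say $o_1=D_1\cap D_2=[0,0,1]$, the remaining cases following verbatim. Since $D_1$ and $D_2$ meet transversally at $o_1$, the affine coordinates $(a^5, b^5)$ centered there identify a punctured neighborhood with $\Delta_2^*=(\Delta^*)^2$; as $h^{2,1}(X^*)=2$ this realizes the local family $\mathfrak{X}|_{\Delta_2^*}\to\Delta_2^*$ with $r=2$, and I would read off the two local monodromy generators $T_1, T_2$ of $\pi_1(\Delta_2^*)\simeq\mZ^2$ acting on $H^3(X^*_{b_{o_1}},\mZ)$, which has rank $2h^{2,1}+2=6$.

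First I would verify the three conditions of Definition~\ref{def:LCSL} using the Picard--Fuchs system of the orbifold mirror family, whose solution structure near $o_1$ is controlled by the data of~\cite{HTcmp}. For condition~(1), a Frobenius analysis at $o_1$ produces one holomorphic period, two single-logarithm periods, two double-logarithm periods and one triple-logarithm period, which forces $T_1, T_2$ to be unipotent with nilpotent $N_i=\log T_i$. The crux of condition~(2) is maximal unipotency, i.e., $N_\lambda^3\neq0$ for $N_\lambda=\lambda_1 N_1+\lambda_2 N_2$, which follows from the presence of the triple-logarithm solution; once this holds, the $\mathfrak{sl}(2)$-structure of the limiting mixed Hodge structure (hard Lefschetz $N_\lambda^k\colon \mathrm{gr}^W_{3+k}\xrightarrow{\sim}\mathrm{gr}^W_{3-k}$) forces the graded dimensions $1,2,2,1$, giving precisely $\dim W_0=1$ and $\dim W_2=1+r=3$ as in~(\ref{eq:mw-filtration}). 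For condition~(3), the matrix $m_{jk}=\langle w_0, N_j w_k\rangle$ is read off from the leading Yukawa couplings of the period data; its invertibility is equivalent, under the mirror dictionary, to nondegeneracy of the cup-product pairing $H^{1,1}(X_1)\times H^{2,2}(X_1)\to H^{3,3}(X_1)$, which holds by Poincar\'e duality.

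The second and principal task is the mirror identification of the B-structure at $o_1$ with the A-structure of $X_1$. Here the A-side data is explicit: with $H_1, H_2$ the two hyperplane classes one computes $\int_X H_1^a H_2^b=\int_{\mP^4\times\mP^4}H_1^a H_2^b (H_1+H_2)^5$, giving the triple intersection numbers $K_{111}=5$, $K_{112}=10$, $K_{122}=10$, $K_{222}=5$. I would match these against the couplings $C_{ijk}$ extracted from $N_i N_j N_k=C_{ijk}\mathtt{N}_0$ after fixing the normalization of the rank-one nilpotent $\mathtt{N}_0$, thereby identifying $N_\lambda$ with the Lefschetz operator $L_\kappa$ on $H^{\rm even}(X_1,\mQ)$ together with its degree grading (the monodromy weight filtration matching the Hodge degree filtration). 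Finally I would check that the integral symplectic basis of $H^3(X^*_{b_{o_1}},\mZ)$ furnished by the B-structure corresponds, under this matching, to the lattice $K(X_1)$ with its $\chi$-pairing, so that the two symplectic and integral structures agree.

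The hard part will be the integrality and normalization bookkeeping rather than the dimension count: exhibiting a genuinely integral symplectic basis at $o_1$ in which $T_1, T_2$ are integral symplectic matrices, and pinning down the normalization of $\mathtt{N}_0$ that makes $C_{ijk}=K_{ijk}$ hold exactly. By contrast, the unipotency and the graded dimensions are essentially automatic once the Frobenius solutions of~\cite{HTcmp} are available, and the order-three symmetry eliminates any need to repeat the analysis at $o_2$ and $o_3$.
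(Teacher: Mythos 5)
Your overall strategy is essentially the one the paper itself uses: its proof of this proposition is a deferral to the explicit computations of \cite[Section~6.3]{HTcmp}, namely introducing an integral symplectic basis at each $o_i$ via the central charge formula, solving the Picard--Fuchs system by Frobenius analysis, verifying the conditions of Definition~\ref{def:LCSL}, and matching the resulting couplings against the intersection numbers of $X$. Your A-side numbers $K_{111}=K_{222}=5$, $K_{112}=K_{122}=10$ are correct and agree with Proposition~\ref{prop:Nijk-N0}, and you correctly identify the integrality and normalization bookkeeping as the substantive work.

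The genuine gap is your opening reduction via a $\mZ_3$ symmetry. The symmetry you invoke is a symmetry of the A-side \emph{construction} (permuting the factors of $\mP_\lambda^4\times\mP_z^4\times\mP_w^4$), not of the objects: permuting the factors replaces the tensor $a_{ijk}$ by a permuted tensor, and for a fixed choice of equations the three models need not even be isomorphic (cf.\ the hypothesis $X_i\not\simeq X_j$ in Proposition~\ref{prop:Birat(Xi)}). More to the point, the mirror family in this proposition is \emph{not} the Batyrev--Borisov family attached to the Gorenstein cone --- the paper deliberately avoids that construction --- but the orbifold mirror built from the special family $X_{\rm sp}\subset\mP_z^4\times\mP_w^4$ with parameters $(a,b)$, compactified to $\mP^2\ni[a^5,b^5,1]$. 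In that presentation essentially the only manifest symmetry of the family over $\mP^2$ is $(z,w,a,b)\mapsto(w,z,b,a)$, which fixes $o_1$ and merely swaps $o_2\leftrightarrow o_3$; a cyclic automorphism (of the family, or of its variation of Hodge structure up to twist) carrying $o_1\to o_2\to o_3$ is not visible and cannot simply be asserted from the Gorenstein-cone picture. In the paper this reduction is a \emph{derived} fact, namely the period identities~\eqref{eq:PiXpXpp}, $\Pi'(x',y')=x'\Pi(x',y')$ and $\Pi''(x'',y'')=y''\Pi(x'',y'')$ with the same function $\Pi$, obtained in \cite{HTcmp} by analyzing the Picard--Fuchs system at all three boundary points. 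So ``it suffices to treat $o_1$'' is a conclusion of precisely the computation you are trying to avoid, not an available hypothesis; as written, your argument establishes the proposition only at $o_1$.

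Two smaller points: abstract nilpotency with $N_\lambda^3\neq0$ on a six-dimensional space does not by itself force the graded dimensions $(1,2,2,1)$ --- the pattern $(2,1,1,2)$ is also compatible with hard Lefschetz --- so you need either the Frobenius solution structure (one series, two single-log, two double-log, one triple-log solutions) or the Hodge-theoretic bound $\dim\mathrm{gr}_6^W\leq\dim F^3=1$; and invertibility of $(m_{jk})$ must be read off from the explicit $N_i$, since appealing to Poincar\'e duality ``under the mirror dictionary'' is circular when the mirror identification is what is being proved. Both are settled by the same explicit solutions you already invoke, so they are matters of phrasing rather than of substance.
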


The above proposition has been derived by introducing integral and symplectic structures at each $o_{i}$ and calculating the monodromies around the divisors~$D_{i}$, see \cite[Section~6.3]{HTcmp} for details. Our focus in what follows will be gluing the monodromy cones~(\ref{eq:Mon-cone-Sig}) which are defined for each boundary point~$o_{i}$.

\section{Gluing monodromy nilpotent cones I} \label{sec:Gluing-monod-I}

For the example in the preceding section, we will find a path $o_{i}\to o_{j}$ which we can identify with the birational map $\varphi_{ji}\colon X_{i}\dashrightarrow X_{j}$ as described in Observation~\ref{Observation2}. We will find that the monodromy nilpotent cones~(\ref{eq:Mon-cone-Sig}) at each boundary point are naturally glued together by the monodromy relations coming from the path. Also, in the next section (Section~\ref{section5}) we will study another interesting example, which has no other birational models other than itself but has a~birational automorphism of infinite order.

\subsection[B-structures of $X^{*}$]{B-structures of $\boldsymbol{X^{*}}$} \label{sub:B-str-Reye}

Associated to the family $\pi\colon \mathfrak{X}^{*}\to\overline{\mathcal{M}}_{X^{*}}^{\rm cpx}\setminus {\rm Dis}$, we have the local system $R^{3}\pi_{*}\mC_{\mathfrak{X}^{*}}$ which introduces the Gauss--Manin system on the moduli space, or equivalently the Picard--Fuchs differential equation for the period integrals of holomorphic three form. This Picard--Fuchs equation has been studied in our previous work \cite{HTcmp}, where we have described the B-structure for the boundary points $o_{i}$, i.e., the integral and symplectic basis for the local solutions as well as integral monodromy matrices using the central charge formula given in \cite{IIAmonod,CentralCh} which goes back to the study of GKZ system \cite{GKZ1} in the 90's (see~\cite{HKTY,HLY} for details). Here we briefly recall the integral and symplectic basis referring to \cite{HTcmp} for its explicit form, and define the monodromy nilpotent cones for each $o_{j}$ from the monodromy matrices calculated there.

\subsubsection[B-structure at $o_1$]{B-structure at $\boldsymbol{o_1}$} \label{para:B-str-1}
Let $[-x,-y,1]\in\mathbb{P}^{2}$ be the affine coordinate with the origin $o_{1}$ (where the minus signs are required to have the canonical integral and symplectic structure based on the central charge formula). The canonical, integral and symplectic structure appears from a unique power series solution $w_{0}(x,y)$ of the Picard--Fuchs differential equation around the origin~$o_{1}$. Including the logarithmic solutions, the result can be arranged as follows:
\begin{gather}
\Pi(x,y) ={\vphantom{\big(}}^{t}\big(w_{0}(x,y),w_{1}^{(1)}(x,y),
w_{2}^{(1)}(x,y),w_{2}^{(2)}(x,y),w_{1}^{(2)}(x,y),w^{(3)}(x,y)\big)\nonumber\\
\hphantom{\Pi(x,y)}{} = {\vphantom{\Big(}}^{t}
 \left(\int_{A_{0}}\Omega_{\bm{x}},\int_{A_{1}}\Omega_{\bm{x}},
 \int_{A_{2}}\Omega_{\bm{x}},\int_{B_{2}}\Omega_{\bm{x}},\int_{B_{1}}\Omega_{\bm{x}},
 \int_{B_{0}}\Omega_{\bm{x}}\right),\label{eq:PiX}
\end{gather}
where $\{ A_{0},A_{1},A_{2},B_{2},B_{1},B_{0}\} \subset H_{3}(X_{b_{o}}^{*},\mathbb{Z})$ is a symplectic basis satisfying $A_{i}\cap B_{j}=\delta_{ij}$, $A_{i}\cap A_{j}=B_{i}\cap B_{j}=0$ representing the integral and symplectic solutions of the Picard--Fuchs equation \cite[Section~6.3.1]{HTcmp}. The monodromy matrix $\mathtt{T}_{x}$ of $\Pi(x,y)$ for a small loop around $x=0$ and similarly $\mathtt{T}_{y}$ for $y=0$ have been determined as follows:
\begin{gather*}
\mathtt{T}_{x}=\left(\begin{smallmatrix}1 & 0 & 0 & 0 & 0 & 0\\
1 & 1 & 0 & 0 & 0 & 0\\
0 & 0 & 1 & 0 & 0 & 0\\
5 & 10 & 10 & 1 & 0 & 0\\
2 & 5 & 10 & 0 & 1 & 0\\
-5 & -3 & -5 & 0 & -1 & 1
\end{smallmatrix}\right),\qquad \mathtt{T}_{y}=\left(\begin{smallmatrix}1 & 0 & 0 & 0 & 0 & 0\\
1 & 1 & 0 & 0 & 0 & 0\\
1 & 0 & 1 & 0 & 0 & 0\\
2 & 10 & 5 & 1 & 0 & 0\\
5 & 10 & 10 & 0 & 1 & 0\\
-5 & -5 & -3 & -1 & 0 & 1
\end{smallmatrix}\right).
\end{gather*}
We define
\begin{gather}
\mathcal{B}_{1}:= \{ \alpha_{0},\alpha_{1},\alpha_{2},\beta_{2},\beta_{1},\beta_{0} \} \subset H^{3}(X_{b_{o}}^{*},\mathbb{Z})\label{eq:base-B1}
\end{gather}
to be the dual basis satisfying $\int_{A_{i}}\alpha^{j}=\delta_{i}^{\;j}=\int_{B_{i}}\beta^{j}$ and $\int_{A_{i}}\beta^{j}=\int_{B_{i}}\alpha^{j}=0$. Since the monodromy actions on the period integrals, i.e., on $H_{3}(X_{b_{o}}^{*},\mathbb{Z})$, are translated into the dual space via the transpose and inverse, we define the linear action $N_{\lambda}=\sum\lambda_{i}N_{i}$ on $H^{3}(X_{b_{o}}^{*},\mathbb{Z})$ by
\begin{gather*}
N_{1}:=-\,{}^{t}(\log\mathtt{T}_{x}),\qquad N_{2}:=-\,{}^{t}(\log\mathtt{T}_{y}).
\end{gather*}
Then we define the monodromy nilpotent cone at $o_{1}$ by
\begin{gather}
\Sigma_{o_{1}}:=\left\{ \sum\lambda_{i}N_{i}\,|\,\lambda_{i}>0\right\} \subset\operatorname{End}\big(H^{3}(X_{b_{0}},\mathbb{Q})\big).\label{eq:Cone1}
\end{gather}
For general values of $\lambda_{i}>0$, it is easy to see that the nilpotent matrix~$N_{\lambda}$ induces the monodromy weight filtration $W_{0}\subset W_{2}\subset W_{4}\subset W_{6}=H^{3}(X_{b_{o}}^{*},\mathbb{Q})$
given by
\begin{alignat}{3}
& W_{0}=\langle\alpha_{0}\rangle,\qquad && W_{2}=\langle\alpha_{0},\alpha_{1},\alpha_{2}\rangle,&\nonumber\\
& W_{4}=\langle\alpha_{0},\alpha_{1},\alpha_{2},\beta_{2},\beta_{1}\rangle,\qquad && W_{6}=\langle\alpha_{0},\alpha_{1},\alpha_{2},\beta_{2},\beta_{1},\beta_{0}\rangle.& \label{eq:W2i-filt}
\end{alignat}

Using the matrices $N_{1}$, $N_{2}$, it is easy to see the following property:
\begin{Proposition}\label{prop:Nijk-N0}We have
\begin{gather*}
N_{i}N_{j}N_{k}=C_{ijk}\mathtt{N}_{0}
\end{gather*}
with totally symmetric $C_{ijk}$ given by $C_{111}=C_{222}=5$, $C_{112}=C_{122}=10$ and $C_{ijk}=0$ for other cases, and $\mathtt{N}_{0}=\left(\begin{smallmatrix}0 & 1\\
O_{5} & 0
\end{smallmatrix}\right)$ where $O_{5}$ is the zero matrix of size $5\times5$.\end{Proposition}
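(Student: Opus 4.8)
The statement is essentially a finite linear-algebra verification with the explicitly given matrices $\mathtt{T}_{x}$ and $\mathtt{T}_{y}$, but it is cleanest to organize it around the structural facts already recorded above. The plan is to first establish the total symmetry of $C_{ijk}$ conceptually, so that only the four coefficients $C_{111}$, $C_{112}$, $C_{122}$, $C_{222}$ remain to be pinned down. Since $\mathtt{T}_{x}$ and $\mathtt{T}_{y}$ are the monodromies around the two normal-crossing components meeting at $o_{1}$, they commute (the local fundamental group $\pi_{1}(\Delta_{2}^{*})\simeq\mZ^{2}$ is abelian; one may also check $\mathtt{T}_{x}\mathtt{T}_{y}=\mathtt{T}_{y}\mathtt{T}_{x}$ directly). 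Commuting unipotents have commuting logarithms, so $N_{1}$ and $N_{2}$ commute, and hence $N_{i}N_{j}N_{k}$ depends only on the multiset $\{i,j,k\}$; this is the asserted total symmetry.

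Next I would compute $N_{1}=-\,{}^{t}(\log\mathtt{T}_{x})$ and $N_{2}=-\,{}^{t}(\log\mathtt{T}_{y})$ explicitly. Writing $A=\mathtt{T}-\mathrm{id}$ for either matrix, $A$ is strictly lower triangular, and consistently with the four-step weight filtration~(\ref{eq:W2i-filt}) one finds $A^{4}=0$, so the defining series truncates to
\begin{gather*}
\log\mathtt{T}=A-\frac{1}{2}A^{2}+\frac{1}{3}A^{3}.
\end{gather*}
Carrying out these products and transposing yields $N_{1}$ and $N_{2}$. As a check, each $N_{i}$ should send $W_{2k}$ into $W_{2k-2}$ for the filtration~(\ref{eq:W2i-filt}): $N_{i}\alpha_{0}=0$, $N_{i}$ maps $\langle\alpha_{1},\alpha_{2}\rangle$ into $\langle\alpha_{0}\rangle$, $\langle\beta_{2},\beta_{1}\rangle$ into $W_{2}$, and $\beta_{0}$ into $W_{4}$.

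It then remains only to read off the coefficients. Because $N_{\lambda}^{4}=0$, $N_{\lambda}^{3}W_{6}\subset W_{0}$, and $\dim W_{0}=1$ at an LCSL, every triple product $N_{i}N_{j}N_{k}$ annihilates $W_{4}=\langle\alpha_{0},\alpha_{1},\alpha_{2},\beta_{2},\beta_{1}\rangle$ and carries the remaining generator $\beta_{0}$ into $W_{0}=\langle\alpha_{0}\rangle=\langle\mathtt{N}_{0}\beta_{0}\rangle$; this is precisely the assertion $N_{i}N_{j}N_{k}=C_{ijk}\mathtt{N}_{0}$, with $C_{ijk}$ the scalar determined by $N_{i}N_{j}N_{k}\,\beta_{0}=C_{ijk}\,\alpha_{0}$. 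I would therefore evaluate the four independent products on $\beta_{0}$ alone. For instance, one computes $N_{1}^{2}\beta_{0}\in\langle\alpha_{1},\alpha_{2}\rangle$ and then $N_{1}^{3}\beta_{0}=5\,\alpha_{0}$, giving $C_{111}=5$; the remaining evaluations give $C_{222}=5$ and $C_{112}=C_{122}=10$, with all out-of-pattern coefficients vanishing.

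There is no conceptual obstacle here, and the main work is the bookkeeping in the logarithms and the matrix products. The one point worth watching is that $N_{1}$ and $N_{2}$ individually have half-integer entries, so it is a useful consistency check that the $C_{ijk}$ nevertheless come out as integers — as they must, since under mirror symmetry they coincide with the triple intersection numbers $K_{ijk}\in\mZ_{\geq0}$ of $X$. Confirming integrality of all four values validates both the arithmetic and the expected mirror identification.
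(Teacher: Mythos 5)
Your proposal is correct and takes essentially the same route as the paper, whose entire proof is the phrase ``using the matrices $N_{1}$, $N_{2}$, it is easy to see'' --- i.e., direct computation with the explicit monodromy matrices, which is exactly what you carry out. Your organizational shortcuts are all sound (the local monodromies around the normal-crossing divisors do commute, so the $N_i$ commute and $C_{ijk}$ is totally symmetric; each $N_i$ shifts the weight filtration by $-2$, so every triple product kills $W_4$ and sends $\beta_0$ into $W_0$), and your stated intermediate values check out: one indeed finds $N_1^2\beta_0=-5\alpha_1-10\alpha_2$, $N_1^3\beta_0=5\alpha_0$, and likewise $N_1^2N_2=10\,\mathtt{N}_0$, confirming $(C_{111},C_{112},C_{122},C_{222})=(5,10,10,5)$.
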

\begin{Remark}\label{rem:T-and-dual-T} As we see above, the monodromy matrices of the period integrals act on $H_{3}(X_{b_{o}}^{*}{,}\mathbb{Z})$ while the monodromy weight filtration is defined in the dual space $H^{3}(X_{b_{o}}^{*},\mathbb{Z})$. Hence, we translate any monodromy matrix~$\mathtt{A}$ obtained from the analytic continuations of the period integ\-ral~$\Pi(x,y)$ to the corresponding matrix $A$ in the dual space by $A= {}^{t}\mathtt{A}^{-1}$.
\end{Remark}

\subsubsection[B-structures at $o_2$, $o_3$]{B-structures at $\boldsymbol{o_2}$, $\boldsymbol{o_3}$} \label{para: B-str-23} In a similar way to the last paragraph, we determine the B-structure from the boundary points~$o_{2}$ and~$o_{3}$, which are given by the origins of the affine charts $[1,-y',-x']\in\mathbb{P}^{2}$ and \smash{$[-x'',1,-y'']\in\mathbb{P}^{2}$}. As described in detail in \cite[Section~6.3.1]{HTcmp}, we have the canonical integral and symplectic basis
\begin{gather}
\Pi'(x',y')=x'\Pi(x',y')\qquad \text{and} \qquad \Pi''(x'',y'')=y''\Pi(x'',y'')\label{eq:PiXpXpp}
\end{gather}
in terms of the same $\Pi(x,y)$ as (\ref{eq:PiX}) for $o_{2}$ and $o_{3}$, respectively. Since both of (\ref{eq:PiXpXpp}) have essentially the same form as~$\Pi(x,y)$, we have
\begin{gather}
\mathtt{T}_{x'}^{\prime}=\mathtt{T}_{x''}^{\prime\prime}=\mathtt{T}_{x}\qquad \text{and} \qquad \mathtt{T}_{y'}^{\prime}=\mathtt{T}_{y''}^{\prime\prime}=\mathtt{T}_{y}\label{eq:TxTyForAll}
\end{gather}
for the monodromy matrices with the base points $b_{o}'$ and $b_{o}''$ near the origins. Hence for~$o_{2}$ and~$o_{3}$ we have isomorphic
B-structures with
\begin{gather*}
\tilde{N}_{1}'=\log T_{x'}',\qquad \tilde{N}_{2}'=\log T_{y'}'\qquad \text{and} \qquad \tilde{N}_{1}''=\log T_{x''}'',\qquad \tilde{N}_{2}''=\log T_{y''}'',
\end{gather*}
where $T_{x'}'=(\,^{t}\mathtt{T}_{x'}')^{-1}$, $T_{y'}'=(\,^{t}\mathtt{T}_{y'}')^{-1}$ and similarly for $T_{x''}''$, $T_{y''}''$. These nilpotent matrices determine the respective monodromy weight filtrations in $H^{3}(X_{b_{o}'}^{*},\mathbb{Q})$ and $H^{3}(X_{b_{o}''}^{*},\mathbb{Q})$ with the basis
\begin{gather}
\left\{ \alpha_{0}',\alpha_{1}',\alpha_{2}',\beta_{2}',\beta_{1}',\beta_{0}'\right\} \qquad \text{and} \qquad \left\{ \alpha_{0}'',\alpha_{1}'',\alpha_{2}'',\beta_{2}'',\beta_{1}'',\beta_{0}''\right\} ,\label{eq:base-B23}
\end{gather}
as described above. We denote the monodromy nilpotent cones at $o_{2}$ and $o_{3}$ by
\begin{gather}
\Sigma_{o_{2}}'=\left\{ \sum\lambda_{i}\tilde{N}_{i}'\,|\,\lambda_{i}>0\right\} \subset\operatorname{End}\big(H^{3}(X_{b_{o}'}^{*},\mathbb{Q})\big),\nonumber\\
\Sigma_{o_{3}}''=\left\{ \sum\lambda_{i}\tilde{N}_{i}''\,|\,\lambda_{i}>0\right\} \subset\operatorname{End}\big(H^{3}(X_{b_{o}''}^{*},\mathbb{Q})\big).
\label{eq:Cones23}
\end{gather}
These are the B-structures which we identify with the A-structures of the birational models~$X_{2}$ and~$X_{3}$, respectively, in~\cite{HTcmp}.

\subsection{Gluing the monodromy nilpotent cones} \label{sub:Gluing-MN-subsec}

The monodromy matrices are transformed by conjugation when the base point is changed along a path. We can transform the monodromy nilpotent cones~(\ref{eq:Cones23}) into $H^{3}(X_{b_{o}}^{*},\mathbb{Q})$ once we fix paths $p_{b_{0}'\leftarrow b_{o}}$ and $p_{b_{0}''\leftarrow b_{o}}$. Let us denote by $\varphi_{b_{o}'b_{o}}$ the resulting isomorphism $\varphi_{b_{o}'b_{o}}\colon H^{3}(X_{b_{o}}^{*},\mathbb{Q})\simeq H^{3}(X_{b_{o}'}^{*},\mathbb{Q})$ and similarly for $\varphi_{b_{o}''b_{o}}$. We define the transforms of the nilpotent cones~(\ref{eq:Cones23}) by these isomorphisms by
\begin{gather*}
\Sigma_{o_{2}}:=(\varphi_{b_{o}'b_{o}})^{-1} \Sigma_{o_{2}}' \varphi_{b_{o}'b_{0}} ,\qquad \Sigma_{o_{3}}:=(\varphi_{b_{o}''b_{o}})^{-1} \Sigma_{o_{3}}'' \varphi_{b_{o}''b_{o}}.
\end{gather*}
Then the cones $\Sigma_{o_{2}}$ and $\Sigma_{o_{3}}$ are generated by
\begin{gather*}
N_{i}':=(\varphi_{b_{o}'b_{o}})^{-1} \tilde{N}_{i}' \varphi_{b_{o}'b_{0}},\qquad N_{i}'':=(\varphi_{b_{o}''b_{o}})^{-1} \tilde{N}_{i}'' \varphi_{b_{o}''b_{0}}, \qquad i=1,2,
\end{gather*} respectively. Note that $\Sigma_{o_{1}}$, $\Sigma_{o_{2}}$, $\Sigma_{o_{3}}$ are cones in $\operatorname{End}\big(H^{3}(X_{b_{o}}^{*},\mathbb{Q})\big)$.

\subsubsection[Path $p_{{o_2}\leftarrow {o_1}}$]{Path $\boldsymbol{p_{{o_2}\leftarrow {o_1}}}$}

The transform $\Sigma_{o_{2}}$ of the nilpotent cone obviously depends on the choice of the path. Looking the moduli space $\overline{\mathcal{M}}_{X^{*}}^{\rm cpx}$ closely, we find that there is a natural choice of the path by which the cone $\Sigma_{o_{2}}$ is glued with $\Sigma_{o_{1}}$ along a common face (boundary ray) of them.

The moduli space $\overline{\mathcal{M}}_{X^{*}}^{\rm cpx}$ has been studied in detail in~\cite{HTcmp}. Here we recall the structure of the discriminant ${\rm Dis}={\rm Dis}_{0}\cup D_{x}\cup D_{y}\cup D_{z}$. As we schematically reproduce the results in Fig.~\ref{Fig1}, the irreducible component ${\rm Dis}_{0}$ of the discriminant touches the divisor $D_{y}=\{ y=0\} $ at $(x,y)=(1,0)$ with fifth-order tangency as we can see in the expression
\begin{gather*}
{\rm Dis}_{0}=\big\{ (1-x-y)^{5}-5^{4}xy(1-x-y)^{2}+5^{5}xy(xy-x-y)=0\big\} .
\end{gather*}
We introduce the affine chart $\mathbb{C}_{(1,0)}^{2}$ with the origin $(1,0)$. After blowing-up at the origin five times, we can remove the tangential intersection of the proper transform $\widetilde{{\rm Dis}}_{0}$ of ${\rm Dis}_{0}$ with the exceptional divisors (see Fig.~\ref{Fig2}). We denote the exceptional divisors by $E_{1},\dots,E_{5}$.
\begin{Definition}
Let $q_{12}$ be a point near the intersection $E_{1}\cap D_{y}$, and $b_{o}$, $b_{o}'$ be points near the origins $o_{1}$ and $o_{2}$, respectively. We define a path $p_{b_{o}'\leftarrow b_{0}}$ to be the composite path $p_{b_{o}'\leftarrow q_{12}}\circ p_{q_{12}\leftarrow b_{o}}$ of the following straight lines:
\begin{gather*}
\begin{aligned}p_{q_{12}\leftarrow b_{o}}=\{(1-t)b_{o}+tq_{12}\,|\,0\leq t\leq1\},\\
p_{b_{o}'\leftarrow q_{12}}=\{(1-t)q_{12}+tb_{o}'\,|\,0\leq t\leq1\}.
\end{aligned}
\end{gather*}
\end{Definition}

\subsubsection[The isomorphisms $\varphi_{b_o'b_0}$, $\varphi_{b_o''b_o'}$ and $\varphi_{b_ob_o''}$]{The isomorphisms $\boldsymbol{\varphi_{b_o'b_0}}$, $\boldsymbol{\varphi_{b_o''b_o'}}$ and $\boldsymbol{\varphi_{b_ob_o''}}$}

We first calculate the connection matrix of the local solution $\Pi(x,y)$ along the path $p_{b_{o}'\leftarrow b_{o}}$.

\begin{figure}[t]\centering
\includegraphics[width=100mm]{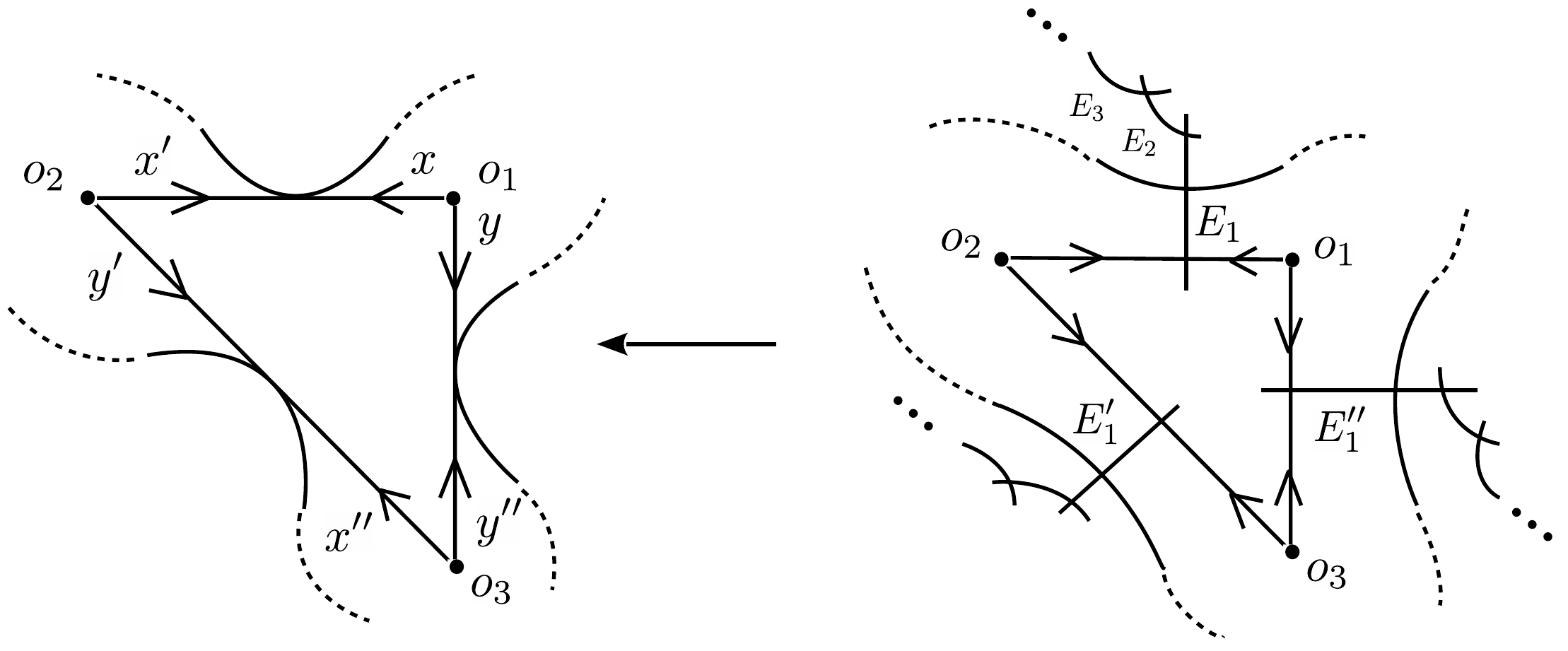}

\caption{Blowing-up the moduli space $\overline{\mathcal M}_{X^*}^{\rm cpx}={\mathbb P}^2$. To remove the tangential intersections at $[1,1,0]$, $[1,0,1]$, $[0,1,1]$, we blow-up five times at each of the three points. The exceptional divisors $E_1$, $E_1'$, $E_1''$ are normal crossing with the proper transform $\widetilde{{\rm Dis}_0}$ of the discriminant. The affine coordinates are introduced by the relations $[-x,-y,1]=[1,-y',-x']=[-x'',1,-y'']$. }\label{Fig2}
\end{figure}

\begin{Proposition}\label{prop:connect-phi-21}With respect to the basis \eqref{eq:base-B1} and \eqref{eq:base-B23}, the isomorphism $\varphi_{b_{o}'b_{o}}\colon H^{3}(X_{b_{o}}^{*},\mathbb{Q})\simeq H^{3}(X_{b_{o}'}^{*},\mathbb{Q})$ along the path $p_{b_{o}'\leftarrow b_{o}}$ is given by
\begin{gather*}
\varphi_{b_{o}'b_{o}}=\left(\begin{matrix}-1 & 0 & 0 & 0 & 0 & 0\\
0 & 1 & -4 & 2 & 25 & 0\\
0 & 0 & -1 & 0 & -2 & 0\\
0 & 0 & 0 & -1 & -4 & 0\\
0 & 0 & 0 & 0 & 1 & 0\\
0 & 0 & 0 & 0 & 0 & -1
\end{matrix}\right).
\end{gather*}
This isomorphism preserves the monodromy weight filtrations and also the symplectic structures described in Section~{\rm \ref{para: B-str-23}}. \end{Proposition}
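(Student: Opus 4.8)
The plan is to obtain $\varphi_{b_o'b_o}$ as the constant connection matrix relating the fundamental period vector $\Pi(x,y)$, analytically continued along $p_{b_o'\leftarrow b_o}$, to the canonical period $\Pi'(x',y')=x'\Pi(x',y')$ that governs the B-structure at $o_2$, and then to dualize to cohomology. First I would record the overlap of the two affine charts. From the identification $[-x,-y,1]=[1,-y',-x']$ one reads off the coordinate change $x'=1/x$, $y'=-y/x$; in particular the tangency point $D_y\cap{\rm Dis}_0$ sitting at $(x,y)=(1,0)$ is also $(x',y')=(1,0)$, and the chart around $o_2$ meets the chart around $o_1$ precisely in the region swept out by the path. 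Because $\Pi$ and $\Pi'$ solve the same Picard--Fuchs system on the overlap, the continued $\Pi$ must equal a constant matrix applied to $\Pi'$, and computing that matrix is the whole content.

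For the computation I would continue $\Pi$ in two stages along the two straight segments. On $p_{q_{12}\leftarrow b_o}$ I would use the explicit GKZ/hypergeometric series form of the Frobenius basis recalled from \cite{HTcmp}: the holomorphic solution $w_0$ and the logarithmic solutions are continued from near $o_1$ toward $q_{12}$, and the substitution $(x,y)\mapsto(x',y')$ together with the prefactor $x'$ re-expresses them in the local exponents and log-structure at $o_2$. The branch data picked up in this continuation is fixed by the location of $q_{12}$ near $E_1\cap D_y$ in the five-fold blow-up that separates $\widetilde{{\rm Dis}}_0$ from the exceptional divisors; these branch choices are exactly what produces the off-diagonal entries $-4,2,25,-2,-4$ of the stated matrix. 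The second segment $p_{b_o'\leftarrow q_{12}}$ stays inside the $o_2$-chart and crosses no further component of ${\rm Dis}$, so it contributes trivially. Finally, since this connection matrix acts on periods, i.e.\ on $H_3$, I would pass to the cohomology action on $H^3$ via $A={}^t\mathtt{A}^{-1}$ as in Remark~\ref{rem:T-and-dual-T}, arriving at the displayed $\varphi_{b_o'b_o}$ in the bases \eqref{eq:base-B1} and \eqref{eq:base-B23}.

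The two structural claims then follow by inspection of the matrix. For the weight filtrations, reading off the columns shows $\alpha_0\mapsto-\alpha_0'$, so $W_0$ maps to $W_0'$, while the block-triangular shape with respect to the flag $\langle\alpha_0\rangle\subset\langle\alpha_0,\alpha_1,\alpha_2\rangle\subset\langle\alpha_0,\alpha_1,\alpha_2,\beta_2,\beta_1\rangle$ of \eqref{eq:W2i-filt} shows that each $W_{2k}$ is sent to the corresponding $W_{2k}'$ built from the primed basis \eqref{eq:base-B23}. For the symplectic structure, with the ordering $(\alpha_0,\alpha_1,\alpha_2,\beta_2,\beta_1,\beta_0)$ the pairing is the anti-diagonal form $J$ determined by $A_i\cap B_j=\delta_{ij}$, and one checks directly that ${}^t\varphi_{b_o'b_o}\,J\,\varphi_{b_o'b_o}=J$; this is a finite matrix computation.

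The main obstacle is the analytic continuation past the fifth-order tangency of ${\rm Dis}_0$ with $D_y$ at $(1,0)$. A naive straight path from $o_1$ to $o_2$ would run through this tangential intersection, where the local monodromy is not merely the product of the separate monodromies around $D_y$ and ${\rm Dis}_0$ and the branch structure is genuinely delicate. The role of the five blow-ups, and of choosing $q_{12}$ near $E_1\cap D_y$, is precisely to disentangle these branches and make the continuation unambiguous; correctly bookkeeping the resulting branch and monodromy factors around the blown-up configuration is the crux, after which everything reduces to the linear algebra above.
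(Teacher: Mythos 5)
Your overall route coincides with the paper's: continue $\Pi(x,y)$ along the two straight segments through the point $q_{12}$ near $E_1\cap D_y$ on the blown-up moduli space, read the result against the canonical basis at $o_2$, dualize to $H^3$ via $A={}^t\mathtt{A}^{-1}$, and verify the filtration and symplectic claims by inspection of the resulting matrix (your last paragraph matches the paper's argument for those two claims). But there is a genuine gap in the middle step: your claim that the second segment $p_{b_o'\leftarrow q_{12}}$ ``contributes trivially'' because it crosses no further component of ${\rm Dis}$. Crossing no discriminant component only means there is no additional \emph{monodromy}; the connection matrix is not monodromy. It is the change of basis between the local Frobenius solutions at $q_{12}$ --- power series and logarithms in the blow-up coordinates $s_1=x-1$, $s_2=y/(1-x)^5$ --- and the canonical basis $\Pi'(x',y')$ at $o_2$, whose logarithmic structure is in $x'$, $y'$. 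These are two different local bases, and identifying them on the overlap of their convergence domains is a second, genuinely non-trivial matching computation; the paper explicitly computes a connection matrix ``in a similar way'' for this latter half of the path. In your version that computation has been silently folded into the phrase ``re-expresses them in the local exponents and log-structure at $o_2$'', which a coordinate substitution and the prefactor $x'$ alone cannot accomplish, so your recipe as written never actually produces the stated matrix.

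Relatedly, attributing the off-diagonal entries $-4$, $2$, $25$, $-2$ to ``branch data'' mischaracterizes where they come from: they are connection coefficients. The paper obtains them by first reducing to a one-variable problem --- since the powers of $\log y$ are unchanged under the continuation along a path hugging $D_y$, the whole matrix follows from continuing $\Pi(x,0)$ with $y=0$, $\log y=0$, and $\log s_2=-5\log(1-x)$ --- and then numerically matching power-series expansions of sufficiently high degree (an analytic formula for $\Pi(x,0)$ being deferred to later work). Some device of this kind, either the $y=0$ reduction or a substitute for it, is needed to turn your outline into an actual computation; without it the entries of $\varphi_{b_o'b_o}$, which are the entire content of the proposition, are not derived.
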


\begin{proof} To determine the matrix form of $\varphi_{b_{o}'b_{o}}$, we do first the analytic continuation of the period integral $\Pi(x,y)$ along
the path $p_{q_{12}\leftarrow b_{o}}$ by making local solutions around $q_{12}=E_{1}\cap D_{y}$ in terms of the blow-up coordinates $s_{1}=x-1$, $s_{2}=\frac{y}{(1-x)^{5}}$ which represent $q_{12}$ by \smash{$s_{1}=s_{2}=0$}. There are two local solutions which are given by regular powerseries, and others contain logarithmic singularities given by $\log s_{1}$ and $\log s_{2},\dots,(\log s_{2})^{3}$. For a fixed value of~$y$, $|y|\ll1$, we analytically continue these solutions to $\Pi(x,y)$ as functions of $s_{1}=x-1$. Note that, under the analytic continuation, the powers of $\log y$ are unchanged. Hence the connection matrix follows from the analytic continuation of the period integrals $\Pi(x,0)$ where we set $\log y=0$ and $y=0$. In our actural calculation, we set $s_{2}=0$ and $\log s_{2}=-5\log(1-x)$ for the local solutions around $(s1,s2)=(0,0)$, and relate these solutions numerically to $\Pi(x,0)$ using powerseries expansions with sufficiently high degrees. In a similar way, we can calculate the connection matrix for the latter half $p_{b_{o}'\leftarrow q_{12}}$ of the path $p_{b_{o}'\leftarrow b_{o}}$. Actually, we can avoid the above numerical calculations finding an analytic formula for $\Pi(x,0)$. However, since the details are technical, we will report them elsewhere. It is clear that the connection matrix $\varphi_{b_{o}'b_{o}}$ preserves the filtrations since it is block diagonal with respect to the basis compatible with the filtrations $W_{0}\subset W_{2}\subset W_{4}\subset W_{6}=H^{3}(X_{b_{o}}^{*},\mathbb{Q})$ and $W_{0}'\subset W_{2}'\subset W_{4}'\subset W_{6}'=H^{3}(X_{b_{o}'}^{*},\mathbb{Q})$. Moreover, we can verify directly that it preserves the symplectic structure given by~(\ref{eq:base-B23}).
\end{proof}

From the forms of period integrals given in (\ref{eq:PiXpXpp}), it is easy to deduce that we have the isomorphisms
\begin{gather*}
\varphi_{b_{o}''b_{o}'}\colon \ H^{3}(X_{b_{o}'}^{*},\mathbb{Z})\simeq H^{3}(X_{b_{o}''}^{*},\mathbb{Z})\qquad \text{and} \qquad \varphi_{b_{o}b_{o}''}\colon \ H^{3}(X_{b_{o}''}^{*},\mathbb{Z})\simeq H^{3}(X_{b_{o}}^{*},\mathbb{Z})
\end{gather*}
by simply exchanging the bases $\alpha_{1}\leftrightarrow\alpha_{2}$ and $\beta_{1}\leftrightarrow\beta_{2}$ suitably, i.e., $\varphi_{b_{o}''b_{o}'}=\varphi_{b_{o}'b_{o}}\mathrm{p}_{23}\mathrm{p}_{45}$ and $\varphi_{b_{o}b_{o}''}=\mathrm{p}_{23}\mathrm{p}_{45}\varphi_{b_{o}'b_{o}}\mathrm{p}_{23}\mathrm{p}_{45}$ with the permutation matrices $\mathrm{p}_{ij}$ for the transposition $(i,j)$. Explicitly, they are given by
\begin{gather*}
\varphi_{b_{o}''b_{o}'}=\left(\begin{smallmatrix}-1 & 0 & 0 & 0 & 0 & 0\\
0 & -4 & 1 & 25 & 2 & 0\\
0 & -1 & 0 & -2 & 0 & 0\\
0 & 0 & 0 & -4 & -1 & 0\\
0 & 0 & 0 & 1 & 0 & 0\\
0 & 0 & 0 & 0 & 0 & -1
\end{smallmatrix}\right),\qquad \varphi_{b_{o}b_{o}''}=\left(\begin{smallmatrix}-1 & 0 & 0 & 0 & 0 & 0\\
0 & -1 & 0 & -2 & 0 & 0\\
0 & -4 & 1 & -25 & 2 & 0\\
0 & 0 & 0 & 1 & 0 & 0\\
0 & 0 & 0 & -4 & -1 & 0\\
0 & 0 & 0 & 0 & 0 & -1
\end{smallmatrix}\right).
\end{gather*}
Here we note that these isomorphisms preserve the monodromy weight filtrations and also the symplectic structures described in Sections~\ref{para:B-str-1} and~\ref{para: B-str-23}. Also it should be noted that we have verified Observation~\ref{Observation2} in Section~\ref{para:obs-path} in the present case.

Let us introduce the following notation:
\begin{gather*}
\cvarphi_{21}:=\varphi_{b_{o}'b_{o}},\qquad \cvarphi_{32}:=\varphi_{b_{o}''b_{o}'},\qquad \cvarphi_{13}:=\varphi_{b_{o}b_{o}''}
\end{gather*}
and also set $\cvarphi_{ij}:=\cvarphi_{ji}^{-1}$. As this notation indicates, we expect certain correspondence of these~$\cvarphi_{ij}$ to the birational maps $\varphi_{ij}\colon X_{j}\dashrightarrow X_{i}$ under mirror symmetry. In order to make this more explicit, we note the groupoid structure associated to the isomorphisms $\cvarphi_{ij}$.
\begin{Definition}\label{def:crho}We denote by $G_{\{1,2,3\}}$ the groupoid generated by $\cvarphi_{21}$, $\cvarphi_{32}$, $\cvarphi_{13}$.
\end{Definition}
Let $G_{ij}$ be the subset of $G_{\{1,2,3\}}$ consisting of elements $\cvarphi_{ii_{1}}\cvarphi_{i_{1}i_{2}}\cdots\cvarphi_{i_{k}j}$, $k\geq0$. It is easy to see that
\begin{gather*}
G_{11}=\big\{ \crho^{n}\,|\, n\in\mathbb{Z}\big\} ,\qquad G_{21}=\big\{ \cvarphi_{21}\crho^{n}\,|\, n\in\mathbb{Z}\big\} ,\qquad G_{31}=\big\{ \cvarphi_{31}\crho^{n}\,|\, n\in\mathbb{Z}\big\} ,
\end{gather*}
where set $\crho:=\cvarphi_{13}\cvarphi_{32}\cvarphi_{21}$.

\subsubsection{Groupoid actions on the nilpotent cones} \label{para:Sigma(n)-and-Ni(n)}
We define the following conjugates of the nilpotent cones (\ref{eq:Cone1}) and (\ref{eq:Cones23}):
\begin{gather*}
\Sigma_{o_{1}}^{(n)}:=\big(\crho^{-1}\big)^{n}\Sigma_{o_{1}}\crho^{n},\\
\Sigma_{o_{2}}^{(n)}:=\big(\crho^{-1}\big)^{n}\cvarphi_{21}^{-1}\Sigma_{o_{2}}'\cvarphi_{21}\crho^{n} =\big(\crho^{-1}\big)^{n}\Sigma_{o_{2}}\crho^{n},\\
\Sigma_{o_{3}}^{(n)}:=\big(\crho^{-1}\big)^{n}\cvarphi_{31}^{-1}\Sigma_{o_{3}}''\cvarphi_{31}\crho^{n} =\big(\crho^{-1}\big)^{n}\Sigma_{o_{3}}\crho^{n}.
\end{gather*}
These are cones in $\operatorname{End}\big(H^{3}(X_{b_{o}}^{*},\mathbb{R})\big)$ and generalize the nilpotent cones $\Sigma_{o_{k}}=\Sigma_{\sigma_{k}}^{(0)}$, $k=1,2,3$, introduced in the beginning of this subsection. It is easy to see that these cones are generated by
\begin{gather*}
N_{i}(n):=\big(\crho^{-1}\big)^{n} N_{i} \crho^{n},\qquad N_{i}'(n):=\big(\crho^{-1}\big)^{n} N_{i}' \crho^{n},\qquad N_{i}''(n):=\big(\crho^{-1}\big)^{n}\,N_{i}'' \crho^{n},
\end{gather*}
respectively, where we set $N_{i}':=\cvarphi_{21}^{-1}\tilde{N}_{i}'\cvarphi_{21}$ and $N_{i}'':=\cvarphi_{31}^{-1}\tilde{N}_{i}''\cvarphi_{31}$, $i=1,2$.

\subsubsection{Monodromy relations}
To see how the (closure of the) cone $\Sigma_{o_{2}}=\Sigma_{o_{2}}^{(0)}$ is connected to (that of) $\Sigma_{o_{1}}=\Sigma_{o_{1}}^{(0)}$, we calculate the generators $N_{i}'$ in $\operatorname{End}\big(H^{3}(X_{b_{o}}^{*},\mathbb{Z})\big)$. By the definition of~$N_{i}'$, it suffices to calculate
\begin{gather*}
T_{x'}:=\cvarphi_{21}^{-1} T_{x'}^{\prime} \cvarphi_{21},\qquad T_{y'}:=\cvarphi_{21}^{-1} T_{y'}^{\prime} \cvarphi_{21},
\end{gather*}
since we can use $T_{x'}'=T_{x}$, $T_{y'}'=T_{y}$ for the local monodromy matrices as we remarked in~(\ref{eq:TxTyForAll}). Similarly, using the connection matrix along the path $p_{q_{12}\leftarrow b_{o}}$, we can express the local monodromy around the exceptional divisor~$E_{1}$ as a linear (integral and symplectic) action on $H^{3}(X_{b_{o}}^{*},\mathbb{Z})$ which we denote by a matrix $T_{E_{1}}$ using the basis $\mathcal{B}_{1}$ in~(\ref{eq:base-B1}).
\begin{Proposition}[`Picard--Lefschetz formula' for flopping curves] \label{prop:Picard-Lefschetz-Reye}Using the basis $\mathcal{B}_{1}$ in~\eqref{eq:base-B1}, we have
\begin{gather*}
T_{E_{1}}=\left(\begin{smallmatrix}1\\
 & 1 & & & 50\\
 & & 1\\
 & & & 1\\
 & & & & 1\\
 & & & & & 1
\end{smallmatrix}\right), \qquad \text{i.e.}, \qquad \begin{cases}
\alpha_{1}\to\alpha_{1}+50\beta_{1},\\
\beta_{1}\to\beta_{1},\\
\alpha_{i}=\alpha_{i},\quad \beta_{i}=\beta_{i},\quad i\not=1.
\end{cases}
\end{gather*}
\end{Proposition}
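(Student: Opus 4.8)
The plan is to compute the monodromy $\mathtt{T}_{E_1}$ directly on the period vector $\Pi(x,y)$ by analytic continuation around $E_1$, and then dualize to $H^3$ via Remark~\ref{rem:T-and-dual-T}. I would work in the blow-up coordinates $s_1=x-1$, $s_2=y/(1-x)^5$ already used in the proof of Proposition~\ref{prop:connect-phi-21}, in which $q_{12}$ is the origin $s_1=s_2=0$, the proper transform $\widetilde{\mathrm{Dis}}_0$ is separated off, and $E_1=\{s_1=0\}$. The essential input, again taken from that proof, is the shape of the local solution space at $q_{12}$: two solutions are regular power series, three carry the powers $\log s_2,(\log s_2)^2,(\log s_2)^3$ (the maximally unipotent direction heading toward $o_2$), and exactly one independent solution involves $\log s_1$, and only to first order. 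The monodromy around $E_1$ is the continuation $s_1\mapsto e^{2\pi i}s_1$, i.e.\ $\log s_1\mapsto\log s_1+2\pi i$, fixing $s_2$, $\log s_2$ and all regular series.

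First I would express this continuation as a matrix in the local frame at $q_{12}$. Since only one solution contains $\log s_1$ and it enters linearly, the result is a single unipotent Jordan block of size two, pairing that $\log s_1$-solution with the regular solution $w_0$ it is built upon and acting as the identity on the remaining four basis vectors. Next I would conjugate this block back to the frame $\mathcal{B}_1$ of \eqref{eq:base-B1} by the connection matrix of the first leg $p_{q_{12}\leftarrow b_o}$ of the path, which is precisely the data produced in Proposition~\ref{prop:connect-phi-21}. This gives $\mathtt{T}_{E_1}$ on $H_3(X^*_{b_o},\mathbb{Z})$, and $T_{E_1}={}^{t}\mathtt{T}_{E_1}^{-1}$ (Remark~\ref{rem:T-and-dual-T}) gives the action on $H^3$ in the basis $\mathcal{B}_1$.

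To pin down the answer without grinding through every entry, I would use that $T_{E_1}$ is integral, symplectic, and unipotent of index two, hence a transvection $v\mapsto v+c\,\langle v,\delta\rangle\,\delta$, with $\delta$ spanning $\mathrm{Im}(T_{E_1}-\mathrm{id})$ and $\delta^{\perp}=\ker(T_{E_1}-\mathrm{id})$; here $\langle\,,\,\rangle$ is the form dual to $A_i\cap B_j=\delta_{ij}$. Conjugating the size-two block by the connection matrix then outputs $\delta$ and the integer $c$ directly, and I expect $\delta=\beta_1$, $c=50$, so that $\alpha_1\mapsto\alpha_1+50\beta_1$ while every other vector of $\mathcal{B}_1$ is fixed (this is consistent with the symplectic constraint, since $\beta_1$ is isotropic and its orthogonal complement contains $\alpha_0,\alpha_2,\beta_2,\beta_1,\beta_0$). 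The direction $\beta_1$ I would cross-check against the flop geometry: the common wall of $\mathcal{K}_{X_1}$ and $\varphi_{21}^{*}\mathcal{K}_{X_2}$ is the ray $\mathbb{R}_{>0}H_2$ (Lemma~\ref{lem:KahlerX2}), across which the curves are flopped, so the vanishing cycle is mirror to that single contracted curve class.

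The main obstacle is exactly the determination of the coefficient $c=50$ together with the verification that $\delta$ is $\beta_1$ and not mixed with $\beta_0$ or $\beta_2$; this rests on the explicit connection coefficients of Proposition~\ref{prop:connect-phi-21}, which themselves come from continuing $\Pi(x,0)$. As the conceptual content behind the name, I would remark that $c=50$ is precisely the number of $\mathbb{P}^1$'s contracted in the Atiyah flops of Proposition~\ref{proposition3.3}: $T_{E_1}$ is the Picard--Lefschetz transvection attached to the simultaneous vanishing of these $50$ flopping curves, whose common vanishing cycle is mirror to $\beta_1$.
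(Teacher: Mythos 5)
Your proposal follows essentially the same route as the paper's proof: local solutions of the Picard--Fuchs system in the blow-up coordinates $(s_1,s_2)$ centered at $q_{12}$, the continuation $\log s_1\mapsto\log s_1+2\pi i$ around $E_1$, conjugation back to the basis $\mathcal{B}_1$ by the connection matrix of the path leg $p_{q_{12}\leftarrow b_o}$ (obtained, as in Proposition~\ref{prop:connect-phi-21}, by matching power series expansions, with precision guaranteed by the analytic formula for $\Pi(x,0)$), and dualization $T_{E_1}={}^{t}\mathtt{T}_{E_1}^{-1}$ per Remark~\ref{rem:T-and-dual-T}. Your extra observation that integrality, symplecticity, and index-two unipotency force $T_{E_1}$ to be the transvection $v\mapsto v+c\langle v,\beta_1\rangle\beta_1$ is a correct and tidy way to organize the final matrix identification, but, as you yourself note, pinning down $\delta=\beta_1$ and $c=50$ still rests on the same explicit continuation data, so the essential content coincides with the paper's computation.
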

\begin{proof}As sketched briefly in the proof of Proposition \ref{prop:connect-phi-21}, we make the local solutions of the Picard--Fuchs equation around the point of the blow-up $q_{12}=E_{1}\cap D_{y}$, and calculate the local monodromy around the divisor~$E_{1}$. The claimed monodromy
follows from the analytic continuation of the local solutions in the period integral $\Pi(x,y)$ near the origin~$o_{1}$. In our actual calculations, we only have powerseries expressions for the local solutions around $q_{12}$ and evaluate them numerically for the analytic continuation. However, as in Proposition~\ref{prop:connect-phi-21}, we can attain sufficient precision having an analytic formula for $\Pi(x,0)$. \end{proof}

\begin{Remark} \label{rem:Remark-TE1} The `Picard--Lefschetz formula' above is written using the symplectic basis $\{ \alpha_{i},\beta_{j}\} $ of $H^{3}(X_{b_{o}}^{*},\mathbb{Z})$. When we translate this into the dual basis $\{ A_{i},B_{j}\}$ of $H_{3}(X_{b_{o}}^{*},\mathbb{Z})$, we have
\begin{gather*}
A_{1}\to A_{1},\qquad B_{1}\to B_{1}-50A_{1}
\end{gather*}
with the rest of the basis left invariant. This should be contrasted to the genuine Picard--Lefschetz monodromy
\begin{gather*}
A_{0}\to A_{0}+B_{0},\qquad B_{0}\to B_{0},
\end{gather*}
which we can see for the monodromy transformation around the proper transform $\widetilde{{\rm Dis}}_{0}$ of the discriminant. In the latter case, we see the topology of the cycles as $A_{0}\approx T^{3}$, $B_{0}\approx S^{3}$, where~$S^{3}$ is a vanishing cycle and~$T^{3}$ is its dual torus cycle. Recently, the construction of the $A_{k}$-cycles $(k\not=0)$ has been discussed in general in~\cite{RuddSie}. It is interesting to see how the dual $B_{k}$-cycles are constructed, and how the above `Picard--Lefschetz formula' are explained by the geometry of these cycles.
\end{Remark}

\begin{Proposition}We have the following monodromy relations:
\begin{gather}
\boxed{T_{x'}=T_{E_{1}}^{-1}T_{x}^{-1}T_{y}^{4},\qquad T_{y'}=T_{y}.}\label{eq:T-relation}
\end{gather}
\end{Proposition}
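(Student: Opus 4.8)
The plan is to obtain both identities from a single relation in the fundamental groupoid of $\overline{\mathcal M}_{X^*}^{\rm cpx}\setminus{\rm Dis}$, transported to the base point $b_o$ along the path $p_{b_o'\leftarrow b_o}$, and then to confirm the resulting word by a direct matrix computation, all the relevant $6\times6$ matrices being already explicit. By the definitions of $T_{x'},T_{y'}$ and by \eqref{eq:TxTyForAll} we have $T_{x'}=\cvarphi_{21}^{-1}T_x\cvarphi_{21}$ and $T_{y'}=\cvarphi_{21}^{-1}T_y\cvarphi_{21}$, so each claim is an identity of integral symplectic operators on $H^3(X_{b_o}^*,\mathbb{Q})$.

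I would first dispose of $T_{y'}=T_y$, the easy half. Near $o_2$ the divisor $\{y'=0\}$ is literally the same branch $D_y=\{y=0\}$ that passes through $o_1$, and the path $p_{b_o'\leftarrow b_o}$ stays in the region $y\approx0$, running alongside $D_y$; hence the loop defining $T_{y'}$, transported to $b_o$, is freely homotopic to the loop defining $T_y$ with no intervening winding. Equivalently $\cvarphi_{21}$ commutes with $T_y$, which is also immediate from the explicit matrix of Proposition~\ref{prop:connect-phi-21}.

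For $T_{x'}$ I would work on the line $\{y=\epsilon\}$ with $0<\epsilon\ll1$, whose relevant punctures are $x=0$ (the branch $D_x$, giving $T_x$), the five points of ${\rm Dis}_0\cap\{y=\epsilon\}$ clustered near $x=1$, and $x=\infty$ (the branch $D_z=\{x'=0\}$, since $x'=1/x$). The transported $T_{x'}$ is, up to homotopy, the monodromy of a loop around $x=\infty$; the sphere relation on $\mathbb P^1_x$ expresses this loop as an ordered product of the loop around $D_x$ and the loop around the five--point cluster. The cluster is then resolved in the blow--up coordinates $s_1=x-1$, $s_2=y/(1-x)^5$ of Proposition~\ref{prop:connect-phi-21}, in which the order--$5$ tangency of ${\rm Dis}_0$ with $D_y$ at $(1,0)$ is separated, and near the path only the normal crossing $E_1=\{s_1=0\}$, $\widetilde D_y=\{s_2=0\}$ is seen. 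Because $s_2$ carries the fifth power $(1-x)^{-5}$, the cluster loop winds once around $E_1$ and five times around $\widetilde D_y$, while the excursion out to $q_{12}=E_1\cap D_y$ and on to $o_2$ accounts for one further unit; reorganizing these windings into the monodromies $T_{E_1}$, $T_x$, $T_y$ produces the boxed word $T_{x'}=T_{E_1}^{-1}T_x^{-1}T_y^{4}$.

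The delicate step --- and the one I would treat most carefully --- is exactly this bookkeeping: fixing the orientations and the cyclic order in the sphere relation, and tracking how the five $\widetilde D_y$--windings and the single $E_1$--winding, together with the excursion through $q_{12}$, collapse to the precise exponent $4$ and to the ordering $T_{E_1}^{-1}T_x^{-1}T_y^{4}$ rather than a conjugate or a $T_y^{5}$. To make the conclusion unambiguous I would finish with a direct verification: substituting the explicit matrices for $\cvarphi_{21}$ (Proposition~\ref{prop:connect-phi-21}), $T_{E_1}$ (Proposition~\ref{prop:Picard-Lefschetz-Reye}), and $T_x={}^{t}\mathtt{T}_x^{-1}$, $T_y={}^{t}\mathtt{T}_y^{-1}$ in the dual convention of Remark~\ref{rem:T-and-dual-T}, one checks the two operator identities $\cvarphi_{21}^{-1}T_y\cvarphi_{21}=T_y$ and $\cvarphi_{21}^{-1}T_x\cvarphi_{21}=T_{E_1}^{-1}T_x^{-1}T_y^{4}$ by explicit multiplication, which is routine and settles the relation.
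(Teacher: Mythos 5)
Your proposal is correct and its operative step is the same as the paper's proof: the paper likewise reduces everything to the explicit matrices, using $T_{x}=(\,^{t}\mathtt{T}_{x})^{-1}$, $T_{y}=(\,^{t}\mathtt{T}_{y})^{-1}$, the connection matrix $\cvarphi_{21}$ of Proposition~\ref{prop:connect-phi-21}, and $T_{E_{1}}$ of Proposition~\ref{prop:Picard-Lefschetz-Reye}, verifying both identities by direct multiplication, and it also notes (as you do) that $T_{y'}=T_{y}$ reflects the normal crossing of $\{y=0\}$ with $E_{1}$. Your fundamental-groupoid/sphere-relation sketch is only motivational and you rightly defer its delicate bookkeeping to the matrix check; this matches the paper, which states that a conceptual derivation of the boxed relation is left to future investigations.
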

\begin{proof}Recall that we have the relations $T_{x}=(\,^{t}\mathtt{T}_{x})^{-1}$, $T_{y}=(\,^{t}\mathtt{T}_{y})^{-1}$ (see Remark~\ref{rem:T-and-dual-T}). Then both the relations can be verified directly using the explicit forms of~$T_{x}$,~$T_{y}$ given in Section~\ref{para:B-str-1} and~$T_{x'}$, $T_{y'}$, $T_{E_{1}}$ above. The second relation also follows from the fact that the divisor \smash{$\{ y=0\} =\{y'=0\} $} intersects normally with the exceptional divisor~$E_{1}$ of the blowing-up.
\end{proof}

We have arrived at~(\ref{eq:T-relation}) by explicit monodromy calculations. It is natural to expect to have a conceptual derivation of~(\ref{eq:T-relation}) by studying mirror symmetry of conifold transitions, but we have to this to future investigations. Instead, in the rest of this section, we will interpret the monodromy relation~(\ref{eq:T-relation}).

\begin{Proposition}
\label{prop:Sigma0+Sigma2}The following properties hold:
\begin{enumerate}\itemsep=0pt
\item[{\rm (1)}] Generators $N_{i}'$ are expressed as
\begin{gather*}
N_{1}'=4N_{2}-N_{1}+\Delta_{1,0}',\qquad N_{2}'=N_{2},
\end{gather*}
where $\Delta_{1,0}'$ is a non-zero element of $\operatorname{End}\big(H^{3}(X_{b_{o}}^{*},\mathbb{R})\big)$ which annihilates the subspace $W_{2}$, i.e., $\Delta_{1,0}'\vert_{W_{2}}=0$.

\item[{\rm (2)}] The monodromy nilpotent cones $\Sigma_{o_{2}}=\mathbb{R}_{>0}N_{1}'+\mathbb{R}_{>0}N_{2}'$ and $\Sigma_{o_{1}}$ glue together along \smash{$N_{2}'=N_{2}$}. They are not in a two dimensional plane in $\operatorname{End}\big(H^{3}(X_{b_{o}}^{*},\mathbb{R})\big)$.
\end{enumerate}\end{Proposition}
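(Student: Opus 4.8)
The plan is to read off both generators from the boxed monodromy relations~(\ref{eq:T-relation}) by passing to logarithms, and then to isolate the part of $N_1'$ that survives on the low-weight space $W_2$. Throughout I use that $\log T_x=N_1$ and $\log T_y=N_2$: indeed $T_x=(\,{}^{t}\mathtt{T}_x)^{-1}$ gives $\log T_x=-\,{}^{t}(\log\mathtt{T}_x)=N_1$ by Remark~\ref{rem:T-and-dual-T}, and similarly for $y$. Likewise $N_1'=\log T_{x'}$ and $N_2'=\log T_{y'}$, since taking $\log$ commutes with the conjugation by $\cvarphi_{21}$ that defines $N_i'$. The second relation $T_{y'}=T_y$ then gives $N_2'=\log T_{y'}=\log T_y=N_2$ immediately; this is the asserted identity for $N_2'$ and already furnishes the common ray needed in part~(2).

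For the first generator, the relation gives $N_1'=\log\!\big(T_{E_1}^{-1}T_x^{-1}T_y^{4}\big)$, where $\log T_{E_1}=50E$ exactly because $T_{E_1}=\mathrm{id}+50E$ with $E\colon\alpha_1\mapsto\beta_1$ of square zero (Proposition~\ref{prop:Picard-Lefschetz-Reye}). I would set $\Delta_{1,0}':=N_1'-(4N_2-N_1)$, so that $\Delta_{1,0}'$ collects the term $-50E$ together with all the Baker--Campbell--Hausdorff brackets generated by the non-commuting factors $T_{E_1}^{-1},T_x^{-1},T_y^{4}$. The main obstacle is precisely this non-commutativity: one may \emph{not} conclude $\log\!\big(T_{E_1}^{-1}T_x^{-1}T_y^{4}\big)=-50E-N_1+4N_2$, and since $T_{E_1}$ does not preserve the flag $W_0\subset W_2$ (it carries $\alpha_1\in W_2$ into $\beta_1\in W_4$), a naive weight count does not by itself control the $W_2\to W_0$ component --- the cubic brackets $[[E,N_i],N_j]$ also have weight $-2$ and must be shown to cancel the weight-raising contribution of $E$ on $W_2$. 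As all six-by-six matrices are explicit, the most reliable route is to compute $N_1'=\log T_{x'}$ directly and subtract $4N_2-N_1$, verifying that the result kills $\alpha_0,\alpha_1,\alpha_2$ (this is $\Delta_{1,0}'|_{W_2}=0$) while acting nontrivially on the higher pieces $W_4,W_6$ (this is $\Delta_{1,0}'\neq0$). The outcome is forced conceptually by mirror symmetry: on $W_2/W_0\cong H^2$ the generator $N_1'$ must reproduce $\varphi_{21}^{*}(L_{Z_3})=4H_2-H_1$ and $N_2'=N_2$ must reproduce $\varphi_{21}^{*}(L_{Z_2})=H_2$, exactly as in Lemma~\ref{lem:KahlerX2}(1).

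For part~(2), since $N_2'=N_2$ the ray $\mathbb{R}_{>0}N_2'$ is a common face of $\Sigma_{o_1}=\mathbb{R}_{>0}N_1+\mathbb{R}_{>0}N_2$ and $\Sigma_{o_2}=\mathbb{R}_{>0}N_1'+\mathbb{R}_{>0}N_2'$. By part~(1) the class $4N_2-N_1$ obtained from $N_1'$ by discarding $\Delta_{1,0}'$ has negative $N_1$-coefficient, hence lies on the side of $\mathbb{R}N_2$ opposite to $N_1$; so the two cones meet exactly along this ray, mirroring the wall $\mathbb{R}_{>0}H_2$ shared by $\mathcal{K}_{X_1}$ and $\varphi_{21}^{*}(\mathcal{K}_{X_2})$. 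To see that $\Sigma_{o_1}$ and $\Sigma_{o_2}$ do not lie in one plane, I argue by contradiction: if $N_1'\in\mathbb{R}N_1+\mathbb{R}N_2$ then $\Delta_{1,0}'=aN_1+bN_2$ for some $a,b$, and restricting to $W_2$ with $\Delta_{1,0}'|_{W_2}=0$ gives $a\,m_{1k}+b\,m_{2k}=0$ for $k=1,2$; invertibility of $(m_{jk})$ (Definition~\ref{def:LCSL}(3)) forces $a=b=0$, whence $\Delta_{1,0}'=0$, contradicting part~(1). Thus $N_1,N_2,N_1'$ are independent directions and the two cones are not coplanar. I expect the genuinely delicate point to remain $\Delta_{1,0}'|_{W_2}=0$, which reflects the flop geometry carried by $T_{E_1}$ and which I do not expect to settle without either the explicit matrix logarithm or a careful accounting of the cubic brackets against the action of $N_1,N_2$ on $\langle\beta_1,\beta_2\rangle$.
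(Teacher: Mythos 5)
Your proposal is correct and takes essentially the same route as the paper: the paper likewise obtains $N_2'=N_2$ directly from $T_{y'}=T_y$, and proves (1) by explicitly evaluating the matrix logarithm of $T_{x'}=T_{E_1}^{-1}T_x^{-1}T_y^{4}$, exhibiting $\Delta_{1,0}'=N_1'-(4N_2-N_1)$ as a concrete $6\times 6$ matrix whose triangular form gives both $\Delta_{1,0}'\neq 0$ and $\Delta_{1,0}'\vert_{W_2}=0$ --- exactly the computation you outline after correctly noting that non-commutativity blocks any shortcut. Your part (2) also matches the paper's deduction, with the added value that you make the non-coplanarity explicit via the invertibility of $(m_{jk})$ from the LCSL condition, a point the paper dismisses as ``clear from (1)''.
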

\begin{proof} The properties in (1) are based on explicit calculations using~(\ref{eq:T-relation}). The second relation $N_{2}'=\log T_{y'}=\log T_{y}=N_{2}$ is clear. For the first relation, by evaluating the matrix logarithms, we have
\begin{gather*}
\Delta_{1,0}'=N_{1}'-(4N_{2}-N_{1})=\left(\begin{smallmatrix}0 & 0 & 0 & 0 & 25 & -\frac{25}{3}\\
0 & 0 & 0 & 0 & -50 & 25\\
0 & 0 & 0 & 0 & 0 & 0\\
0 & 0 & 0 & 0 & 0 & 0\\
0 & 0 & 0 & 0 & 0 & 0\\
0 & 0 & 0 & 0 & 0 & 0
\end{smallmatrix}\right).
\end{gather*}
From this triangular form, we see the claimed property of $\Delta_{1,0}'$ (see also~(\ref{eq:W2i-filt})). The claims in~(2) are clear from~(1) and also from the fact that the cone~$\Sigma_{o_{1}}$ is generated by $N_{1}=\log T_{x}$ and $N_{2}=\log T_{y}$.
\end{proof}

\begin{Remark}(1) It should be observed that, under the identification
\begin{gather*}
L_{Z_{3}}\leftrightarrow N_{1}',\qquad L_{Z_{2}}\leftrightarrow N_{2}'\qquad \text{and}\qquad H_{1}\leftrightarrow N_{1},\qquad H_{2}\leftrightarrow N_{2},
\end{gather*}
Proposition \ref{prop:Sigma0+Sigma2} above is the mirror counter part for the gluing of K\"ahler cones described in Lemma~\ref{lem:KahlerX2}.

(2) If the first monodromy relation of (\ref{eq:T-relation}) were $T_{x'}=T_{x}^{-1}T_{y}^{4}$, then we would have
\begin{gather*}
N_{1}'=4N_{2}-N_{1},\qquad N_{2}'=N_{2},
\end{gather*}
since $T_{x}$ and $T_{y}$ are commutative. These relations are exactly the same as those we have seen in Lemma~\ref{lem:KahlerX2}. However the presence of $T_{E_{1}}$ prevents this exact correspondence. We will see that $T_{E_{1}}$ represents the first order quantum correction coming from the 50 flopping curves of the contraction $X_{1}\dashrightarrow Z_{1}$. Thus the gluing relation found in Proposition~\ref{prop:Sigma0+Sigma2}(1) naturally encodes the first order quantum corrections.
\end{Remark}

\subsubsection{Gluing nilpotent cones} Before going into general descriptions, it will be helpful to see that the cone $\Sigma_{o_{1}}$ is glued with~$\Sigma_{o_{3}}$ along $N_{1}$ in a similar way as above. Let
us define
\begin{gather*}
T_{x''}:=\cvarphi_{31}^{-1}T_{x''}''\cvarphi_{31},\qquad T_{y''}:=\cvarphi_{31}^{-1}T_{y''}''\cvarphi_{31},
\end{gather*}
and also $T_{E_{1}''}$ for the monodromy matrix around the exceptional divisor $E_{1}''$. Observing the symmetry in Fig.~\ref{Fig2} and~(\ref{eq:PiXpXpp}), it is easy to deduce the following monodromy relations
\begin{gather}
\boxed{T_{x''}=T_{x},\qquad T_{y''}=T_{E_{1}''}^{-1}T_{y}^{-1}T_{x}^{4}}\label{eq:T-relation2}
\end{gather}
with $T_{E_{1}''}=\mathrm{p}_{23}\mathrm{p}_{45}T_{E_{1}}\mathrm{p}_{23}\mathrm{p}_{45}$ in $\operatorname{End}\big(H^{3}(X_{b_{o}}^{*},\mathbb{Z})\big)$, where $\mathrm{p}_{ij}$ are the permutation matrices. Since the generators of the cone $\Sigma_{o_{3}}$ are given by $N_{1}''=\log T_{y''}$ and $N_{2}''=\log T_{x''}$, we can evaluate these as
\begin{gather}
N_{1}''=N_{1},\qquad N_{2}''=4N_{1}-N_{2}+\Delta_{2,0}'',\label{eq:Npp-and-N}
\end{gather}
where $\Delta_{2,0}''$ is given by $\Delta_{2,0}''=\mathrm{p}_{23}\mathrm{p}_{45}\Delta_{1,0}'\mathrm{p}_{23}\mathrm{p}_{45}$ with the vanishing property $\Delta_{2,0}''\vert_{W_{2}}=0$. As before, $\Delta_{2,0}''$ is a non-vanishing element. Hence, the nilpotent cones $\Sigma_{o_{1}}$ and~$\Sigma_{o_{3}}$ glue together along the common half line $\mathbb{R}_{\geq0}N_{1}$ but do not lie on the same plane. Now we generalize these properties in the following proposition.

\begin{Proposition}\label{prop:Seq-N-Cones}\quad \begin{enumerate}\itemsep=0pt
\item[{\rm (1)}] The matrix $\crho$ preserves the monodromy weight filtration
\begin{gather*}
W_{0}\subset W_{2}\subset W_{4}\subset W_{6}=H^{3}(X_{b_{o}}^{*},\mathbb{Q}).
\end{gather*}
\item[{\rm (2)}] The $($closures of the$)$ monodromy nilpotent cones $\Sigma_{o_{1}}^{(n)}$, $\Sigma_{o_{2}}^{(n)}$, $\Sigma_{o_{3}}^{(n)}$ glue sequentially as in
\begin{gather*}
\dots,\Sigma_{o_{2}}^{(1)},\,\Sigma_{o_{1}}^{(1)},\,\Sigma_{o_{3}}^{(1)},\,\Sigma_{o_{2}},\,
\Sigma_{o_{1}},\,\Sigma_{o_{3}},\,\Sigma_{o_{2}}^{(-1)},\,\Sigma_{o_{1}}^{(-1)},\,\Sigma_{o_{3}}^{(-1)},\dots\,.
\end{gather*}

\item[{\rm (3)}] The generators of the cones satisfy
\begin{alignat*}{3}
& \mathrm{(i)} & & (N_{1}(n),N_{2}(n))=(N_{1},N_{2})\left(\begin{matrix}0 & -1\\
1 & 4 \end{matrix}\right)^{3n}+(\Delta_{1,n},\Delta_{2,n}),&\\
& \mathrm{(ii)} && (N_{1}'(n),N_{2}'(n))=(N_{1}',N_{2}')\left(\begin{matrix}0 & -1\\
1 & 4 \end{matrix}\right)^{-3n}+(\Delta_{1,n}',\Delta_{2,n}'),& \\
& \mathrm{(iii)} \quad & & (N_{1}''(n),N_{2}''(n))=(N_{1}'',N_{2}'')\left(\begin{matrix}0 & -1\\
1 & 4 \end{matrix}\right)^{-3n}+(\Delta_{1,n}'',\Delta_{2,n}''),&
\end{alignat*}
where $\Delta_{i,n},\Delta_{i,n}',\Delta_{i,n}''\in\operatorname{End}\big(H^{3}(X_{b_{o}}^{*},\mathbb{Q})\big)$ and satisfy $\Delta_{i,n}\vert_{W_{2}}=\Delta_{i,n}'\vert_{W_{2}}=\Delta_{i,n}''\vert_{W_{2}}=0$.

\item[{\rm (4)}] The following relations glue the nilpotent cones in $(2)$ $($see Fig.~{\rm \ref{Fig3})}:
\begin{gather*}
N_{1}(n)=N_{1}''(n),\qquad N_{2}(n)=N_{2}'(n),\qquad N_{2}''(n)=N_{1}'(n-1).
\end{gather*}
\end{enumerate}\end{Proposition}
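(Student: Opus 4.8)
The plan is to reduce all four parts to a single explicit conjugation computation and then propagate it formally by induction on $n$. Throughout I write $P=\left(\begin{smallmatrix}0&-1\\1&4\end{smallmatrix}\right)$ and record the key numerical coincidence $P^{3}=\left(\begin{smallmatrix}-4&-15\\15&56\end{smallmatrix}\right)$, which is exactly the matrix of $\rho^{*}$ in (\ref{eq:rho-actionH}); this is the mirror counterpart of Lemmas~\ref{lem:KahlerX2} and~\ref{lem:KahlerX3} and is what makes the arithmetic of the gluing close up. The base data I lean on are the relations already in hand: $N_{1}''=N_{1}$ and $N_{2}'=N_{2}$ (exact), together with $N_{1}'\equiv 4N_{2}-N_{1}$ and $N_{2}''\equiv 4N_{1}-N_{2}$ modulo endomorphisms annihilating $W_{2}$, from Proposition~\ref{prop:Sigma0+Sigma2} and (\ref{eq:Npp-and-N}). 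For part~(1) I argue that $\crho=\cvarphi_{13}\cvarphi_{32}\cvarphi_{21}$ preserves $W_{0}\subset W_{2}\subset W_{4}\subset W_{6}$ because each of the three connection isomorphisms does (Proposition~\ref{prop:connect-phi-21} and the explicit block-upper-triangular shapes of $\cvarphi_{32}$, $\cvarphi_{13}$ relative to the flag-adapted bases), and the graded structure of the weight filtration is identical at $o_{1},o_{2},o_{3}$. I record the consequence used repeatedly afterwards: conjugation by $\crho$ preserves the property of annihilating $W_{2}$, since $\Delta|_{W_{2}}=0$ gives $(\crho^{-1}\Delta\crho)(W_{2})=\crho^{-1}\Delta(\crho W_{2})=\crho^{-1}\Delta(W_{2})=0$.

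The heart of the matter is the single-step identity
\[
(\crho^{-1}N_{1}\crho,\ \crho^{-1}N_{2}\crho)=(N_{1},N_{2})P^{3}+(\Delta_{1,1},\Delta_{2,1}),\qquad \Delta_{i,1}|_{W_{2}}=0,
\]
which I would verify by direct matrix computation: assemble $\crho$ from the three given $6\times 6$ connection matrices, form $N_{1}=\log T_{x}$ and $N_{2}=\log T_{y}$ from the data of Section~\ref{para:B-str-1}, and check both the $6\times 6$ identity and the vanishing of the corrections on $W_{2}$. The companion statement $\crho^{-1}N_{2}''\crho=N_{1}'$ falls out of the same computation; note that this companion by itself only fixes the image of the combination $4N_{1}-N_{2}\mapsto 4N_{2}-N_{1}$, so the individual images of $N_{1}$ and $N_{2}$ genuinely require the explicit calculation. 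Granting the single-step identity, part~(4) is then immediate: $N_{1}(n)=N_{1}''(n)$ and $N_{2}(n)=N_{2}'(n)$ follow by conjugating the exact base relations $N_{1}=N_{1}''$, $N_{2}=N_{2}'$ by $(\crho^{-1})^{n}(\,\cdot\,)\crho^{n}$, and $N_{2}''(n)=N_{1}'(n-1)$ is the same conjugation applied to $\crho^{-1}N_{2}''\crho=N_{1}'$.

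For part~(3)(i) I induct on $n$ in both directions via $N_{i}(n+1)=\crho^{-1}N_{i}(n)\crho$. Feeding the inductive hypothesis through the single-step identity multiplies the leading part by one more factor of $P^{3}$; because $\crho$-conjugation keeps the correction terms $W_{2}$-annihilating and $P^{3n}$ has integer entries, the errors stay of the required type, giving $(N_{1}(n),N_{2}(n))=(N_{1},N_{2})P^{3n}+(\Delta_{1,n},\Delta_{2,n})$ with $\Delta_{i,n}|_{W_{2}}=0$. Parts~(3)(ii) and~(3)(iii) follow by re-expressing these in the generators $\{N_{i}'\}$ and $\{N_{i}''\}$ using the base relations; the change of basis $S=\left(\begin{smallmatrix}-1&0\\4&1\end{smallmatrix}\right)$ from $(N_{1},N_{2})$ to $(N_{1}',N_{2}')$ satisfies $S^{-1}P^{3}S=P^{-3}$, which is precisely why the exponent flips to $P^{-3n}$ there. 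Part~(2) then rests on part~(4): the face-sharing relations show consecutive cones in the list meet along a common boundary ray. To see the gluing is genuinely sequential I pass to the two-dimensional quotient by $W_{2}$-annihilating endomorphisms and check that one period of ray generators, namely $4N_{2}-N_{1},\,N_{2},\,N_{1},\,4N_{1}-N_{2}$, sweeps monotonically; the $P^{3}$-action advances the fan by exactly one period, and its eigendirections have slopes $-2\pm\sqrt{3}$, the boundary rays of $\overline{\operatorname{Mov}}(X)$ in (\ref{eq:MovX}), so the cones accumulate there as in Fig.~\ref{Fig3}.

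The step I expect to be the real obstacle is the single-step identity. It is the one place where the integers $4$ (and, behind them, the Picard--Lefschetz factor $T_{E_{1}}$ of Proposition~\ref{prop:Picard-Lefschetz-Reye}) must conspire to reproduce exactly $P^{3}=\rho^{*}$ modulo $W_{2}$-annihilating corrections; the presence of $T_{E_{1}}$ is what forces the nonzero $\Delta_{i,1}$ rather than an exact linear relation. Once this matrix identity, and the companion $\crho^{-1}N_{2}''\crho=N_{1}'$, are established, everything else is formal conjugation, induction, and a short two-dimensional convexity check.
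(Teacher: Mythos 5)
Your proposal is correct and follows essentially the same route as the paper's proof: part (1) from the fact that each connection matrix $\cvarphi_{ij}$ preserves the weight filtrations, the explicit single-step conjugation $(\crho^{-1}N_{1}\crho,\crho^{-1}N_{2}\crho)=(N_{1},N_{2})P^{3}+(\Delta_{1,1},\Delta_{2,1})$ verified by direct matrix computation, induction on $n$ using that $\crho$-conjugation preserves $W_{2}$-annihilation, the intertwining relation $P^{3n}\left(\begin{smallmatrix}-1&0\\4&1\end{smallmatrix}\right)=\left(\begin{smallmatrix}-1&0\\4&1\end{smallmatrix}\right)P^{-3n}$ for the exponent flip in (ii)/(iii), and part (4) from conjugating the base relations together with the directly verified identity $\crho^{-1}N_{2}''\crho=N_{1}'$. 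Your extra two-dimensional sweep argument for part (2) is a harmless elaboration of what the paper defers to Corollary~\ref{cor:XpartI}.
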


\begin{proof} (1) Recall that $\crho$ is defined by $\crho=\cvarphi_{13}\cvarphi_{32}\cvarphi_{21}.$ Each isomorphism $\cvarphi_{ij}$ preserves the monodromy weight filtrations defined for each boundary point~$o_{k}$ (see Proposition~\ref{prop:connect-phi-21}). Hence, $\crho\colon H^{3}(X_{b_{o}},\mathbb{Q})\to H^{3}(X_{b_{o}},\mathbb{Q})$ preserves the monodromy weight filtration as claimed.

(2) We have introduced the generators of the nilpotent cones $\Sigma_{o_{k}}^{(n)}$ by $N_{i}(n),N_{i}'(n)$ and $N_{i}''(n)$ for $k=1,2,3$, respectively, in Section~\ref{para:Sigma(n)-and-Ni(n)}. Then the claim follows from the properties~(3) and~(4) (see also Fig.~\ref{Fig3}).

(3) By the definition of $N_{i}(n)$, it is straightforward to calculate $N_{i}(1)$ as
\begin{gather*}
N_{i}(1)=\crho^{-1}N_{i}\crho=\begin{cases}
-4N_{1}+15N_{2}+\Delta_{1,1}, & i=1,\\
-15N_{1}+56N_{2}+\Delta_{2,1}, & i=2,
\end{cases}
\end{gather*}
where $\Delta_{i,1}$ satisfy $\Delta_{1,1}\vert_{W_{2}}=\Delta_{2,1}\vert_{W_{2}}=0$ on the subspace $W_{2}\subset H^{3}(X_{b_{o}}^{*},\mathbb{Q})$. We note the relation $\left(\begin{smallmatrix}0 & -1\\
1 & 4 \end{smallmatrix}\right)^{3}=\left(\begin{smallmatrix}-4 & -15\\
15 & 56 \end{smallmatrix}\right)$ and arrange the above relation into the claimed matrix form for $n=1$. Then we can obtain the claimed formula (i) for general $n$ (in the first line) by evaluating $(\crho^{-n}N_{1}\crho^{n},\crho^{-n}N_{2}\crho^{n})$ inductively. In the evaluation, we should note that $\crho^{-1}\Delta_{i,n-1}\crho\vert_{W_{2}}=0$ if $\Delta_{i,n-1}\vert_{W_{2}}=0$ since $\crho$ preserves the monodromy weight filtration. For the second formula (ii), we note the relation
\begin{gather*}
(N_{1}',N_{2}')=(N_{1},N_{2})\left(\begin{matrix}-1 & 0\\
4 & 1
\end{matrix}\right)+(\Delta_{1,0}',0)
\end{gather*}
obtained in Proposition \ref{prop:Sigma0+Sigma2}(1). Taking the conjugations $\crho^{-n}(\text{-})\crho^{n}$ on the both sides of this relation, and using the first formula (i) for $\crho^{-n}(N_{1},N_{2})\crho^{n}$, we have the claimed formula. In the derivation, we use the relation
\begin{gather*}
\left(\begin{matrix}0 & -1\\
1 & 4
\end{matrix}\right)^{3n}\left(\begin{matrix}-1 & 0\\
4 & 1
\end{matrix}\right)=\left(\begin{matrix}-1 & 0\\
4 & 1
\end{matrix}\right)\left(\begin{matrix}0 & -1\\
1 & 4
\end{matrix}\right)^{-3n}
\end{gather*}
and also the property $\crho^{-1}\Delta'_{i,n-1}\crho\vert_{W_{2}}=0$ if $\Delta'_{i,n-1}\vert_{W_{2}}=0$. For the third relation (iii),
calculations are similar but we need to use the relation $\left(\begin{smallmatrix}0 & -1\\
1 & 4
\end{smallmatrix}\right)^{3n}\left(\begin{smallmatrix}1 & 4\\
0 & -1
\end{smallmatrix}\right)=\left(\begin{smallmatrix}1 & 4\\
0 & -1
\end{smallmatrix}\right)\left(\begin{smallmatrix}0 & -1\\
1 & 4
\end{smallmatrix}\right)^{-3n}$.

(4) Since $N_{i}(n)$, $N_{i}'(n)$, $N_{i}''(n)$ are defined by the conjugation of $N_{i}(n-1)$, $N_{i}'(n-1)$ and $N_{i}''(n-1)$ by $\crho$, it is sufficient to show the equalities
\begin{gather*}
N_{1}=N_{1}'',\qquad N_{2}=N_{2}',\qquad N_{2}''(1)=N_{1}'.
\end{gather*}
The first two relations are verified already in Proposition~\ref{prop:Sigma0+Sigma2} and~(\ref{eq:Npp-and-N}). For the last relation, we evaluate $N_{2}''(1)=\crho^{-1}N_{2}''\crho$ directly verifying its equality to~$N_{1}'$.
\end{proof}

\begin{figure}[t]\centering
\includegraphics[width=95mm]{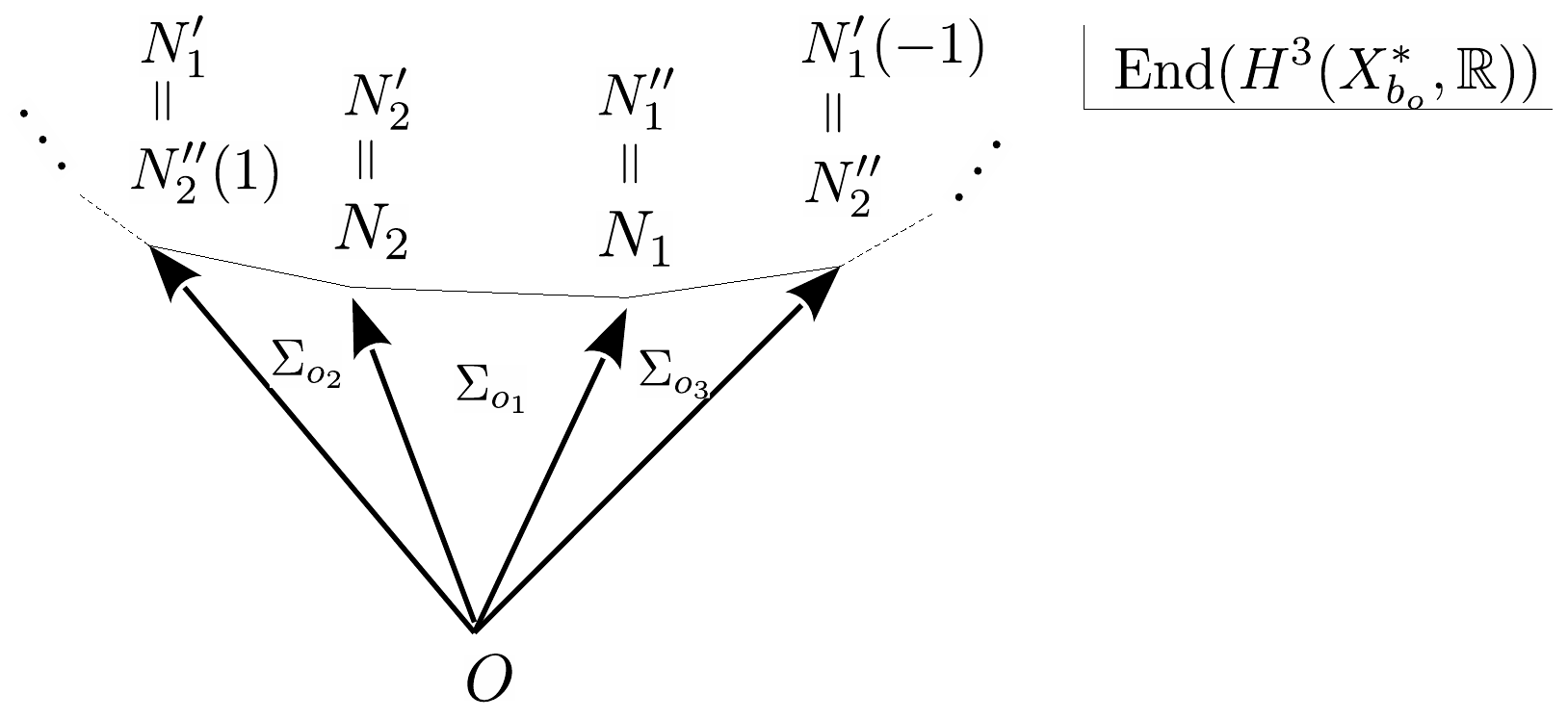}

\caption{Nilpotent cones glued in $\operatorname{End}\big(H^3(X_{b_o}^*,\mathbb{R})\big)$. Glueing continues to the both infinities, corresponding to the monodromy actions $\crho^n$, $n\to\pm\infty$. This should be compared with Fig.~\ref{Fig1}.}\label{Fig3}
\end{figure}

\begin{Corollary}\label{cor:XpartI} Consider the left ideal $\mathcal{I}_{2}:=\big\{ X\in\operatorname{End}\big(H^{3}(X_{b_{0}}^{*},\mathbb{R})\big)\,|\, X\vert_{W_{2}}=0\big\} $
of \linebreak $\operatorname{End}\big(H^{3}(X_{b_{0}}^{*},\mathbb{R})\big)$, and $\pi\colon\operatorname{End}\big(H^{3}(X_{b_{0}}^{*},\mathbb{R})\big)\to\operatorname{End}\big(H^{3}(X_{b_{0}}^{*},\mathbb{R})\big)/\mathcal{I}_{2}$
be the natural projection as a vector space. Then, taking the closure in $\operatorname{End}\big(H^{3}(X_{b_{0}}^{*},\mathbb{R})\big)/\mathcal{I}_{2}$, we have
\begin{gather*}
\bigcup_{n}\overline{\pi\big(\crho^{-n}\big(\Sigma_{o_{2}}\cup\Sigma_{o_{1}}\cup\Sigma_{o_{3}}\big)\crho^{n}\big)}=\mathbb{R}_{>0} \bar{c}_{1}+\mathbb{R}_{>0} \bar{c}_{2},
\end{gather*}
where $\bar{c}_{1}=-\bar{N}_{1}+(2+\sqrt{3})\bar{N}_{2}$ and $\bar{c}_{2}=\bar{N}_{1}-(2-\sqrt{3})\bar{N}_{2}$ with $\bar{N}_{i}=\pi(N_{i})$, $i=1,2$. \end{Corollary}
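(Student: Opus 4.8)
The plan is to use the projection $\pi$ to annihilate every correction term produced in Proposition~\ref{prop:Seq-N-Cones}, thereby collapsing the statement onto the purely two-dimensional linear-algebra computation already performed in the proof of the movable-cone formula~(\ref{eq:MovX}). First I would record the structure of the target. Restriction to $W_{2}$ identifies $\operatorname{End}\big(H^{3}(X_{b_{0}}^{*},\mathbb{R})\big)/\mathcal{I}_{2}$ with $\operatorname{Hom}\big(W_{2},H^{3}(X_{b_{0}}^{*},\mathbb{R})\big)$, and since each $N_{i}$ lowers the weight by two and kills $W_{0}$, the images $\bar{N}_{1},\bar{N}_{2}$ land in the subspace $\operatorname{Hom}(W_{2}/W_{0},W_{0})$, which is two-dimensional. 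Moreover $\bar{N}_{1},\bar{N}_{2}$ are a basis of it: their linear independence is exactly the invertibility of the matrix $(m_{jk})=(\langle w_{0},N_{j}w_{k}\rangle)$ demanded by condition~(3) of Definition~\ref{def:LCSL}. Hence the whole computation takes place in the plane $\mathbb{R}\bar{N}_{1}+\mathbb{R}\bar{N}_{2}$, and I read off coordinates in the basis $(\bar{N}_{1},\bar{N}_{2})$.

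Next I would transport the gluing data through $\pi$. Every correction term $\Delta_{i,n},\Delta'_{i,n},\Delta''_{i,n}$ vanishes on $W_{2}$, hence lies in $\mathcal{I}_{2}$ and is killed; so Proposition~\ref{prop:Seq-N-Cones}(3) collapses, after projection, to
\begin{gather*}
(\bar{N}_{1}(n),\bar{N}_{2}(n))=(\bar{N}_{1},\bar{N}_{2})\begin{pmatrix}0 & -1\\ 1 & 4\end{pmatrix}^{3n},
\end{gather*}
and identically for the primed and double-primed generators. Writing $A=\left(\begin{smallmatrix}0 & -1\\ 1 & 4\end{smallmatrix}\right)$, one has $A^{3}=\left(\begin{smallmatrix}-4 & -15\\ 15 & 56\end{smallmatrix}\right)$, which is precisely the matrix of $\rho^{*}$ in~(\ref{eq:rho-actionH}). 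Thus, on the quotient, the conjugation $\crho^{-n}(-)\crho^{n}$ acts exactly as $(\rho^{*})^{n}$ on $H^{2}(X,\mathbb{R})$ under the dictionary $\bar{N}_{i}\leftrightarrow H_{i}$.

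Then I would identify the seed cone at $n=0$. Proposition~\ref{prop:Sigma0+Sigma2}(1), equation~(\ref{eq:Npp-and-N}), and the gluing relations of Proposition~\ref{prop:Seq-N-Cones}(4) give, after projection, $\pi(\Sigma_{o_{2}})=\mathbb{R}_{>0}(4\bar{N}_{2}-\bar{N}_{1})+\mathbb{R}_{>0}\bar{N}_{2}$, $\pi(\Sigma_{o_{1}})=\mathbb{R}_{>0}\bar{N}_{1}+\mathbb{R}_{>0}\bar{N}_{2}$, and $\pi(\Sigma_{o_{3}})=\mathbb{R}_{>0}\bar{N}_{1}+\mathbb{R}_{>0}(4\bar{N}_{1}-\bar{N}_{2})$. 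These glue along $\bar{N}_{2}$ and $\bar{N}_{1}$ into the single cone with extreme rays $4\bar{N}_{2}-\bar{N}_{1}$ and $4\bar{N}_{1}-\bar{N}_{2}$, which under $\bar{N}_{i}\leftrightarrow H_{i}$ is exactly $\overline{C}_{123}$. Consequently the union over $n$ becomes $\bigcup_{n}A^{3n}\cdot\overline{C}_{123}=\langle\rho^{*}\rangle\cdot\overline{C}_{123}=M$ in the notation of the proof of~(\ref{eq:MovX}), whose closure was shown there to equal $\mathbb{R}_{\geq0}\big({-}H_{1}+(2+\sqrt{3})H_{2}\big)+\mathbb{R}_{\geq0}\big(H_{1}+(-2+\sqrt{3})H_{2}\big)$; translating back by the dictionary yields the claimed $\mathbb{R}_{>0}\bar{c}_{1}+\mathbb{R}_{>0}\bar{c}_{2}$.

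The genuinely delicate point is this last accumulation step: one must check that the rotated copies $A^{3n}\overline{C}_{123}$ form a nested family sweeping out precisely the open cone between the two limiting directions, and that nothing protrudes beyond it. This is controlled by the hyperbolicity of $A^{3}$: its characteristic polynomial is $\lambda^{2}-52\lambda+1$, so the eigenvalues $26\pm15\sqrt{3}$ are real, positive, and mutually reciprocal, with eigen-rays of slope $-2\mp\sqrt{3}$, and the seed cone $\overline{C}_{123}$ straddles neither eigenline. Since the eigen-directions are exactly $\bar{c}_{1}$ and $\bar{c}_{2}$, the closure of the union is spanned by them. This is the same argument invoked silently in the proof of~(\ref{eq:MovX}), so the essential mathematical content was already supplied by the monodromy relations of Proposition~\ref{prop:Seq-N-Cones}, and the present corollary is their two-dimensional shadow.
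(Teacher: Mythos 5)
Your proposal is correct and follows essentially the same route as the paper's own proof: project by $\pi$ so that all correction terms $\Delta_{i,n},\Delta'_{i,n},\Delta''_{i,n}$ (which lie in $\mathcal{I}_{2}$) die, reduce to the two-dimensional action of $\left(\begin{smallmatrix}0&-1\\1&4\end{smallmatrix}\right)^{3n}$ on the glued seed cone spanned by $\pi(4N_{2}-N_{1})$ and $\pi(4N_{1}-N_{2})$, and let the hyperbolicity of this matrix force the rotated sectors to accumulate on the eigen-rays $\bar{c}_{1}$, $\bar{c}_{2}$. You additionally make explicit two points the paper leaves implicit --- the identification of the relevant quotient with $\operatorname{Hom}(W_{2}/W_{0},W_{0})$ together with the linear independence of $\bar{N}_{1},\bar{N}_{2}$ via Definition~\ref{def:LCSL}(3), and the eigenvalue computation $26\pm15\sqrt{3}$ --- which strengthen rather than alter the argument.
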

\begin{proof}
From Proposition \ref{prop:Seq-N-Cones}, we have
\begin{gather*}
\overline{\pi\left(\Sigma_{o_{1}}\cup\Sigma_{o_{2}}\cup\Sigma_{o_{3}}\right)} =\mathbb{R}_{\geq0}\pi(N_{1}')+\mathbb{R}_{\geq0}\pi(N_{2}'')
 =\mathbb{R}_{\geq0}\pi(4N_{2}-N_{1})+\mathbb{R}_{\geq0}\pi(4N_{1}-N_{2}).
\end{gather*}
Evaluating the matrix power $\left(\begin{smallmatrix}0 & -1\\
1 & 4\end{smallmatrix}\right)^{3n}$, it is easy to see that
\begin{gather*}
\lim_{n\to\infty}\mathbb{R}_{\geq0}\pi(N_{1}(n))=\lim_{n\to\infty}\mathbb{R}_{\geq0}\pi(N_{2}(n))=\mathbb{R}_{\geq0} \bar{c}_{1}
\end{gather*}
and
\begin{gather*}
\lim_{n\to-\infty}\mathbb{R}_{\geq0}\pi(N_{1}(n))=\lim_{n\to-\infty}\mathbb{R}_{\geq0}\pi(N_{2}(n))=\mathbb{R}_{\geq0} \bar{c}_{2}.
\end{gather*}
Then the claim follows from the gluing property (1) of Proposition~\ref{prop:Seq-N-Cones}.
\end{proof}

\subsection[Flopping curves and $T_{E_{1}}$]{Flopping curves and $\boldsymbol{T_{E_{1}}}$} \label{sub:TE1-Reye}

The matrix $T_{E_{1}}$ arises from the tangential intersection of the relevant components of the discriminant ${\rm Dis}$ in the moduli space $\overline{\mathcal{M}}_{X^{*}}^{\rm cpx}$. As noted in the remark above, $T_{E_{1}}$ may be identified with the first order correction from the quantum cohomology of $X_{1}$. To see this, let us introduce
\begin{gather}
N_{1}^{\mathtt{f}}:=\log\big(T_{x}^{-1}T_{y}^{4}\big)=4N_{2}-N_{1}\label{eq:N-Nf}
\end{gather}
and $N_{2}^{\mathtt{f}}=N_{2}'=N_{2}$. Here, we should note the difference in $N_{1}^{\mathtt{f}}$ from the definition $N_{1}'=\log\big(T_{E_{1}}^{-1}T_{x}^{-1}T_{y}^{4}\big)$.
\begin{Proposition}\label{prop: Cijk'-Cf-prop}Define $C_{ijk}'$ and $C_{ijk}^{\mathtt{f}}$ by $N_{i}'N_{j}'N_{k}'=C_{ijk}'\mathtt{N}_{0}$ and $N_{i}^{\mathtt{f}}N_{j}^{\mathtt{f}}N_{k}^{\mathtt{f}}=C_{ijk}^{\mathtt{f}}\mathtt{N}_{0}$ with $\mathtt{N}_{0}$ as given in Proposition~{\rm \ref{prop:Nijk-N0}}. Non-vanishing $($totally symmetric$)$ $C_{ijk}'$ and $C_{ijk}^{\mathtt{f}}$
are given by
\begin{gather}
(C_{111}',C_{112}',C_{122}',C_{222}')=(5,10,10,5),\nonumber\\
(C_{111}^{\mathtt{f}},C_{112}^{\mathtt{f}},C_{122}^{\mathtt{f}},C_{222}^{\mathtt{f}})=(-45,10,10,5).\label{eq:Cijk'-Cf}
\end{gather}
\end{Proposition}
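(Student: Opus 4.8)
The two families of coefficients are computed by different, but both elementary, routes, so I would treat them separately.

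\emph{The flopping coefficients $C_{ijk}^{\mathtt f}$.} By \eqref{eq:N-Nf} and the line following it we have $N_1^{\mathtt f}=4N_2-N_1$ and $N_2^{\mathtt f}=N_2$, so both generators are integral linear combinations of the commuting nilpotents $N_1,N_2$. Proposition~\ref{prop:Nijk-N0} records the totally symmetric cubic form $N_iN_jN_k=C_{ijk}\mathtt N_0$ with $(C_{111},C_{112},C_{122},C_{222})=(5,10,10,5)$; equivalently $(sN_1+tN_2)^3=\Phi(s,t)\mathtt N_0$ with $\Phi(s,t)=5s^3+30s^2t+30st^2+5t^3$. The plan is simply to substitute the coefficient vectors $(s,t)=(-1,4)$ for $N_1^{\mathtt f}$ and $(0,1)$ for $N_2^{\mathtt f}$ and polarize. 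This gives $C_{111}^{\mathtt f}=\Phi(-1,4)=-45$ and, after the analogous bilinear and linear specializations, $C_{112}^{\mathtt f}=10$, $C_{122}^{\mathtt f}=10$, $C_{222}^{\mathtt f}=5$. No further input is needed.

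\emph{The corrected coefficients $C_{ijk}'$.} Here I would avoid grinding through the expansion of $N_1'=N_1^{\mathtt f}+\Delta_{1,0}'$ and instead argue by conjugation. By \eqref{eq:TxTyForAll} the local monodromies at $o_2$ satisfy $\mathtt T_{x'}'=\mathtt T_x$ and $\mathtt T_{y'}'=\mathtt T_y$, so the dual nilpotents $\tilde N_1',\tilde N_2'$ are, as numerical matrices in the basis \eqref{eq:base-B23}, identical to $N_1,N_2$. Consequently they obey the very same cubic relation $\tilde N_i'\tilde N_j'\tilde N_k'=C_{ijk}\mathtt N_0$ with the \emph{original} values $(5,10,10,5)$. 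Since $N_i'=\cvarphi_{21}^{-1}\tilde N_i'\cvarphi_{21}$, conjugation yields $C_{ijk}'\mathtt N_0=C_{ijk}\,\cvarphi_{21}^{-1}\mathtt N_0\cvarphi_{21}$, and the whole statement reduces to the single identity $\cvarphi_{21}^{-1}\mathtt N_0\cvarphi_{21}=\mathtt N_0$. To check it, recall that $\mathtt N_0$ is the rank-one map whose only nonzero entry sends $\beta_0\mapsto\alpha_0$. From the matrix of $\cvarphi_{21}$ in Proposition~\ref{prop:connect-phi-21}, its first column is $-\alpha_0'$ (so $\cvarphi_{21}(\alpha_0)=-\alpha_0'$) and its last row is $(0,0,0,0,0,-1)$, i.e.\ the $\beta_0'$-coordinate of $\cvarphi_{21}(v)$ equals minus the $\beta_0$-coordinate of $v$. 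Hence $\mathtt N_0\cvarphi_{21}$ annihilates $\alpha_0,\alpha_1,\alpha_2,\beta_2,\beta_1$ and sends $\beta_0$ to $-\alpha_0'$, after which $\cvarphi_{21}^{-1}$ returns $\alpha_0$. Thus $\cvarphi_{21}^{-1}\mathtt N_0\cvarphi_{21}=\mathtt N_0$, and therefore $C_{ijk}'=C_{ijk}=(5,10,10,5)$.

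I do not expect a genuine obstacle, since each half is a finite computation. The only conceptually delicate point is the invariance $\cvarphi_{21}^{-1}\mathtt N_0\cvarphi_{21}=\mathtt N_0$: it is precisely the statement that $\cvarphi_{21}$ respects both the weight filtration and the symplectic normalization $\langle w_0,w_0\rangle=1$ underlying the definition of $\mathtt N_0$ in Definition~\ref{def:LCSL}, and it is what forces the corrected cubic form $C'$ to coincide with the original $C$ rather than with the classical one $C^{\mathtt f}$. The same fact can be seen directly: expanding $N_1'=N_1^{\mathtt f}+\Delta_{1,0}'$ via Proposition~\ref{prop:Sigma0+Sigma2} and using $\Delta_{1,0}'\vert_{W_2}=0$, $(\Delta_{1,0}')^2=0$ and the weight-lowering property $N_i(W_{2k})\subseteq W_{2k-2}$ (see \eqref{eq:W2i-filt}), every cross term in $(N_1')^3$, $(N_1')^2N_2'$ and $N_1'(N_2')^2$ dies except the single term $N_1^{\mathtt f}\Delta_{1,0}'N_1^{\mathtt f}=50\,\mathtt N_0$ occurring in $(N_1')^3$. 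This one term promotes $C_{111}^{\mathtt f}=-45$ to $C_{111}'=5$, and is exactly the first-order quantum correction carried by the factor $50$ in $T_{E_1}$.
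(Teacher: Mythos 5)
Your proof is correct, and it takes a genuinely different route from the paper's, whose entire argument for this proposition is the sentence ``We derive these numbers by direct calculations of matrix products.'' You replace the brute-force $6\times 6$ multiplications by two structural steps: (i)~since $N_1^{\mathtt f}=4N_2-N_1$ and $N_2^{\mathtt f}=N_2$ are combinations of the commuting nilpotents $N_1$, $N_2$, the numbers $C_{ijk}^{\mathtt f}$ are the polarized cubic form of Proposition~\ref{prop:Nijk-N0} evaluated at the coefficient vectors $(-1,4)$ and $(0,1)$, and indeed $\Phi(-1,4)=-45$ with mixed values $10$, $10$, $5$; and (ii)~since $N_i'=\cvarphi_{21}^{-1}\tilde N_i'\cvarphi_{21}$ with $\tilde N_i'$ numerically equal to $N_i$ by \eqref{eq:TxTyForAll}, the first line of \eqref{eq:Cijk'-Cf} reduces to the single identity $\cvarphi_{21}^{-1}\mathtt{N}_0\cvarphi_{21}=\mathtt{N}_0$, which you verify correctly from the first column and last row of the matrix in Proposition~\ref{prop:connect-phi-21}. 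What this buys: the paper only remarks \emph{after} its computational proof that $C_{ijk}'=C_{ijk}$ ``is a consequence from mirror symmetry,'' whereas your conjugation argument turns that remark into the proof and isolates exactly which feature of the connection matrix (preservation of the $\alpha_0$- and $\beta_0$-lines, with compensating signs) is responsible. One side remark: step~(i) leans on $[N_1,N_2]=0$, which the paper asserts and which must hold at a normal-crossing LCSL; if you test it on the printed matrices you will find $\mathtt{T}_x\mathtt{T}_y\neq\mathtt{T}_y\mathtt{T}_x$ and $N_2^3=15\,\mathtt{N}_0$ --- the entry $(\mathtt{T}_y)_{21}=1$ is evidently a misprint for $0$, the value forced by the symmetry $\mathtt{T}_y=\mathrm{p}_{23}\mathrm{p}_{45}\mathtt{T}_x\mathrm{p}_{23}\mathrm{p}_{45}$, and with that correction everything you use checks out.

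In your final (avowedly redundant) cross-check there is one small gap. The facts you cite ($\Delta_{1,0}'\vert_{W_2}=0$, $(\Delta_{1,0}')^2=0$, weight-lowering of the $N$'s) must first be supplemented by $\operatorname{Im}\Delta_{1,0}'\subseteq W_2$, read off from the displayed matrix in Proposition~\ref{prop:Sigma0+Sigma2}, to kill terms such as $(N_1^{\mathtt f})^2\Delta_{1,0}'$; and even then they do not force $N_1^{\mathtt f}\Delta_{1,0}'N_2'$ inside $(N_1')^2N_2'$ to vanish, since that composition lowers weights as $W_6\to W_4\to W_2\to W_0$, which degree counting alone permits to be nonzero. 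It does vanish, either because $\Delta_{1,0}'N_2=0$ outright (the nonzero columns of $\Delta_{1,0}'$ pair only with the zero rows of $N_2$), or because $[\Delta_{1,0}',N_2]=[N_1'-N_1^{\mathtt f},N_2']=0$ lets you move $\Delta_{1,0}'$ to the right, after which $\operatorname{Im}\Delta_{1,0}'\subseteq W_2$ and two further weight-lowerings annihilate it. Since the conjugation argument already establishes $(C_{111}',C_{112}',C_{122}',C_{222}')=(5,10,10,5)$, this affects nothing; your asserted value $N_1^{\mathtt f}\Delta_{1,0}'N_1^{\mathtt f}=50\,\mathtt{N}_0$ is also correct and indeed accounts for $-45+50=5$.
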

\begin{proof}We derive these numbers by direct calculations of matrix products.
\end{proof}

The nilpotent matrices $N_{1}'$, $N_{2}'$ follow from the B-structure at $o_{2}$, which has been identified with the A-structure of~$X_{2}$. Hence the first equality in~(\ref{eq:Cijk'-Cf}) is a consequence from mirror symmetry. To see more details of the equality, let us recall the so-called mirror map which are defined by
\begin{gather}
t_{i}'=\frac{\int_{A_{i}'}\Omega_{\bm{x}'}}{\int_{A_{0}'}\Omega_{\bm{x}'}},\qquad t_{i}=\frac{\int_{A_{i}}\Omega_{\bm{x}}}{\int_{A_{0}}\Omega_{\bm{x}}}\label{eq:mirror-map}
\end{gather}
for each boundary point $o_{2}$ and $o_{1}$, respectively. If we relate these local definitions by the isomorphism $(\,^{t}\varphi_{b_{o}'b_{o}})^{-1}\colon H_{3}(X_{b_{o}}^{*},\mathbb{Z})\to H_{3}(X_{b_{o}'}^{*},\mathbb{Z})$ along the path $p_{b_{o}'\leftarrow b_{o}}$ (cf.\ Proposition~\ref{prop:connect-phi-21}), we have
\begin{gather*}
t_{1}'=-t_{1},\qquad t_{2}'=4t_{1}+t_{2}.
\end{gather*}

\begin{Proposition}\label{prop:Yukawa-Reye}Let $C_{ijk}$ be as defined in Proposition~{\rm \ref{prop:Nijk-N0}}. Also set $q_{1}':=e^{t_{1}'}$ and $q_{1}=e^{t_{1}}$. Then we have the following relations
\begin{gather*}
C_{ijk}^{\mathtt{f}}=\sum_{l,m,n}C_{lmn}\frac{dt_{l}}{dt_{i}'}\frac{dt_{m}}{dt_{j}'}\frac{dt_{n}}{dt_{k}'}
\end{gather*}
and
\begin{gather}
C_{111}'+50\frac{q_{1}'}{1-q_{1}'}=C_{111}^{\mathtt{f}}+50\frac{q_{1}}{1-q_{1}}\left(\frac{dt_{1}}{dt_{1}'}\right)^{3}.\label{eq:Cijk-Flop-inv}
\end{gather}
\end{Proposition}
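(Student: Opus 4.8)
The plan is to prove the two relations separately: the first is the tensorial transformation law of the leading Yukawa coupling under the linear part of the mirror map, while the second is an elementary rational-function identity that expresses flop invariance of the full coupling at first order.

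For the first relation I would begin by matching two pieces of linear data. From \eqref{eq:N-Nf} together with $N_2^{\mathtt{f}}=N_2$ we have $N_i^{\mathtt{f}}=\sum_l J_{il}N_l$ with $J=\left(\begin{smallmatrix}-1&4\\0&1\end{smallmatrix}\right)$, while inverting the mirror-map relations $t_1'=-t_1$, $t_2'=4t_1+t_2$ gives $t_1=-t_1'$, $t_2=4t_1'+t_2'$, so that the Jacobian $\big(\tfrac{dt_l}{dt_i'}\big)$ is exactly this same matrix $J$. Hence $N_i^{\mathtt{f}}=\sum_l\tfrac{dt_l}{dt_i'}N_l$. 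Substituting this into $N_i^{\mathtt{f}}N_j^{\mathtt{f}}N_k^{\mathtt{f}}=C_{ijk}^{\mathtt{f}}\mathtt{N}_0$, expanding the product, and applying Proposition~\ref{prop:Nijk-N0} in the form $N_lN_mN_n=C_{lmn}\mathtt{N}_0$ (so the triple products all collapse onto the single rank-one matrix $\mathtt{N}_0$, and since $C_{lmn}$ is totally symmetric the ordering is immaterial) yields $C_{ijk}^{\mathtt{f}}=\sum_{l,m,n}C_{lmn}\tfrac{dt_l}{dt_i'}\tfrac{dt_m}{dt_j'}\tfrac{dt_n}{dt_k'}$ at once. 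As a check, this reproduces the values $(C_{111}^{\mathtt{f}},C_{112}^{\mathtt{f}},C_{122}^{\mathtt{f}},C_{222}^{\mathtt{f}})=(-45,10,10,5)$ of Proposition~\ref{prop: Cijk'-Cf-prop} from $C_{111}=C_{222}=5$, $C_{112}=C_{122}=10$.

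For \eqref{eq:Cijk-Flop-inv} the relevant inputs are the constants $C_{111}'=5$ and $C_{111}^{\mathtt{f}}=-45$ from Proposition~\ref{prop: Cijk'-Cf-prop}, the value $\tfrac{dt_1}{dt_1'}=-1$ read off from $t_1=-t_1'$, and the exponentiated relation $q_1=e^{t_1}=e^{-t_1'}=1/q_1'$. Substituting these into the right-hand side, where $(\tfrac{dt_1}{dt_1'})^3=-1$ and $\tfrac{q_1}{1-q_1}=-\tfrac{1}{1-q_1'}$, turns it into $-45+\tfrac{50}{1-q_1'}$; rewriting the left-hand side through $50\tfrac{q_1'}{1-q_1'}=-50+\tfrac{50}{1-q_1'}$ turns it into $5-50+\tfrac{50}{1-q_1'}$, and the two agree. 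This establishes \eqref{eq:Cijk-Flop-inv}.

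The algebra above is routine, so the real point — and the only genuine obstacle — is conceptual rather than computational. The identity holds precisely because the jump in the leading coefficient, $C_{111}'-C_{111}^{\mathtt{f}}=5-(-45)=50$, equals the number of flopping curves of the contraction $X_1\dashrightarrow Z_1$, and this jump is exactly absorbed by the analytic continuation $q_1'\mapsto q_1=1/q_1'$ of the multicover instanton series $50\,q/(1-q)$ across the flop wall. Thus what really needs justification (and what I would emphasise, drawing on the interpretation of $T_{E_1}$ in Section~\ref{sub:TE1-Reye}) is that $50\,q/(1-q)$ is the correct first-order quantum correction attached to those $50$ curves; once this is granted, \eqref{eq:Cijk-Flop-inv} is exactly the statement that the quantum Yukawa coupling is flop-invariant up to the Jacobian factor, consistently with the tensorial first relation.
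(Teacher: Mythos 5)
Your proof is correct and takes essentially the same route as the paper: the paper's own proof just says the relations are easy to verify and records the key identity $50\frac{q_{1}}{1-q_{1}}\left(\frac{dt_{1}}{dt_{1}'}\right)^{3}=50+50\frac{q_{1}'}{1-q_{1}'}$ for $q_{1}=1/q_{1}'$, which is exactly your computation (your Jacobian derivation of the tensorial first relation, reducing it to Proposition~\ref{prop:Nijk-N0} and the total symmetry of $C_{lmn}$, is the natural way to flesh out the paper's ``easy to verify''). One caveat on your closing paragraph: the proposition as stated is a purely algebraic identity among the constants of Proposition~\ref{prop: Cijk'-Cf-prop} and the mirror-map substitution, so the claim that $50\,q/(1-q)$ is the correct instanton series of the $50$ flopping curves is interpretive context (which the paper also places \emph{after} the proposition) and is not needed to complete the proof.
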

\begin{proof}It is easy to verify these. For the second relation, we note that $50\frac{q_{1}}{1-q_{1}}\big(\frac{dt_{1}}{dt_{1}'}\big)^{3}$
$=50+50\frac{q_{1}'}{1-q_{1}'}$ for $q_{1}=1/q_{1}'$.
\end{proof}

The equality (\ref{eq:Cijk-Flop-inv}) is a consequence of the flop invariance of the quantum cohomology (see, e.g.,~\cite{Iri,IriXia, LLWang}). As mentioned in Remark~\ref{rem:Remark-TE1}, the number 50 represents the flopping curves. Comparing this with Proposition~\ref{prop: Cijk'-Cf-prop}, we see that the monodromy $T_{E_{1}}$ encodes the data of the flopping curves which is in the first order of the quantum cohomology of~$X_{1}$.

\subsection{Prepotentials}

The flop invariance expressed in (\ref{eq:Cijk-Flop-inv}) is known more precisely as the invariance of quantum cohomology under analytic continuations, where all higher order quantum corrections are taken into account. Here we rephrase this property as a property of the so-called prepotentials.

For the B-structure at each boundary point, we can define the prepotential. For example for the B-structure at $o_{1}$ and $o_{2}$, respectively,
they are given by
\begin{gather*}
\mathcal{F}=\frac{1}{2}\sum_{i=0}^{3}\int_{A_{i}}\Omega_{\bm{x}}\int_{B_{i}}\Omega_{\bm{x}},\qquad \mathcal{F}'=\frac{1}{2}\sum_{i=0}^{3}\int_{A_{i}'}\Omega_{\bm{x}'}\int_{B_{i}'}\Omega_{\bm{x}'}
\end{gather*}
with the symplectic integral bases for period integrals in $\Pi(x,y)$ and $\Pi'(x',y')$.
\begin{Proposition}\label{prop:prepot-F}By the isomorphism $(\,^{t}\varphi_{b_{o}'b_{o}})^{-1}\colon H_{3}(X_{b_{o}}^{*},\mathbb{Z})\to H_{3}(X_{b_{o}'}^{*},\mathbb{Z})$ along the path $p_{b_{o}'\leftarrow b_{o}}$ chosen as in Proposition~{\rm \ref{prop:connect-phi-21}}, $\mathcal{F}$ and $\mathcal{F}'$ are related by
\begin{gather*}
\mathcal{F}'=\mathcal{F}+\frac{1}{2}\sum_{i,j=1}^{2}Q_{ij}\int_{A_{i}}\Omega_{\bm{x}}\int_{A_{j}}\Omega_{\bm{x}},
\end{gather*}
where $(Q_{ij})=\left(\begin{smallmatrix}-25 & -2\\
2 & 0
\end{smallmatrix}\right)$.\end{Proposition}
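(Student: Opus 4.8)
The plan is to realize both prepotentials as one and the same quadratic form in the respective period vectors and then transport everything to the period vector $\Pi(x,y)$ at $o_1$ by means of the connection matrix already computed in Proposition~\ref{prop:connect-phi-21}. Writing $\Pi={}^{t}(X^{0},X^{1},X^{2},F_{2},F_{1},F_{0})$ for the column of periods in~\eqref{eq:PiX}, with $X^{I}=\int_{A_{I}}\Omega_{\bm{x}}$ and $F_{I}=\int_{B_{I}}\Omega_{\bm{x}}$, the defining expression $\mathcal{F}=\frac{1}{2}\sum_{I}\int_{A_{I}}\Omega_{\bm{x}}\int_{B_{I}}\Omega_{\bm{x}}$ is nothing but the degree-two special-geometry form $\frac{1}{2}\,{}^{t}\Pi\,\eta\,\Pi$, where $\eta$ is the symmetric pairing matching each $A$-period with the corresponding $B$-period (in the ordering of~\eqref{eq:PiX} this is the anti-diagonal pairing $X^{I}\leftrightarrow F_{I}$). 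The same holds verbatim for $\mathcal{F}'$ in the primed periods $\Pi'$. First I would record these two identities.

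Next I would feed in the connection. Under the isomorphism $(\,{}^{t}\varphi_{b_{o}'b_{o}})^{-1}\colon H_{3}(X_{b_{o}}^{*},\mathbb{Z})\to H_{3}(X_{b_{o}'}^{*},\mathbb{Z})$ of Proposition~\ref{prop:connect-phi-21}, together with the normalization $\Pi'(x',y')=x'\Pi(x',y')$ of~\eqref{eq:PiXpXpp}, the primed period vector is expressed linearly through the unprimed one, say $\Pi'=M\Pi$ with $M$ determined by $\varphi_{b_{o}'b_{o}}$. (A consistency check I would run is that $M$ reproduces the stated mirror-map relations $t_{1}'=-t_{1}$, $t_{2}'=4t_{1}+t_{2}$ on the flat coordinates $t_{i}=X^{i}/X^{0}$.) Substituting $\Pi'=M\Pi$ gives $\mathcal{F}'=\frac{1}{2}\,{}^{t}\Pi\,({}^{t}M\eta M)\,\Pi$, whence $\mathcal{F}'-\mathcal{F}=\frac{1}{2}\,{}^{t}\Pi\,({}^{t}M\eta M-\eta)\,\Pi$; the whole content of the proposition is the shape of the matrix ${}^{t}M\eta M-\eta$.

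Here I would exploit the two structural properties of $\varphi_{b_{o}'b_{o}}$ established in Proposition~\ref{prop:connect-phi-21}: it is block-triangular with respect to the weight filtration~\eqref{eq:W2i-filt} and it is symplectic. The first property forces $M$ to map the Lagrangian block of $A$-periods into itself, so that the transformed $A$-periods are combinations of $A$-periods alone; the second property (the symplectic relations among the blocks of $M$) guarantees that the block-diagonal part of $\frac{1}{2}\,{}^{t}\Pi\,{}^{t}M\eta M\,\Pi$ reproduces $\mathcal{F}$ exactly and that every term containing a $B$-period cancels. What survives is a quadratic expression in the $A$-periods alone. Moreover the extreme-weight classes $\alpha_{0},\beta_{0}$ are merely sign-reversed by $\varphi_{b_{o}'b_{o}}$, so $X^{0}=\int_{A_{0}}\Omega_{\bm{x}}$ never appears and the correction is carried only by $\int_{A_{1}}\Omega_{\bm{x}}$ and $\int_{A_{2}}\Omega_{\bm{x}}$. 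Reading off the resulting $2\times2$ block of the transformation (the non-symmetrized product that produces the same quadratic form, which is why $(Q_{ij})$ need not itself be symmetric) yields the stated matrix.

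The routine but delicate part — the main obstacle — is the exact bookkeeping: one must track the normalization factor $x'$ from~\eqref{eq:PiXpXpp} alongside the precise entries of $M$, and verify that the $B$-period terms and the unwanted $\int_{A_1}\Omega_{\bm{x}}\int_{A_2}\Omega_{\bm{x}}$ cross-terms cancel exactly as the symplectic identities predict, leaving precisely $\frac{1}{2}\sum_{i,j=1}^{2}Q_{ij}\int_{A_{i}}\Omega_{\bm{x}}\int_{A_{j}}\Omega_{\bm{x}}$ with $(Q_{ij})=\left(\begin{smallmatrix}-25 & -2\\ 2 & 0\end{smallmatrix}\right)$. Conceptually this is just the standard fact that a symplectic transformation preserving the Lagrangian of $A$-cycles shifts the special-geometry prepotential by a quadratic polynomial in the flat coordinates; the explicit connection matrix of Proposition~\ref{prop:connect-phi-21} converts that general principle into the stated numerical identity, and in particular exhibits $\mathcal{F}$ as invariant up to quadratic terms, as anticipated in the Remark following Observation~\ref{Observation2}.
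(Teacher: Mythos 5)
Your overall strategy is the same as the paper's: the paper also writes the continuation as $\Pi'=({}^{t}\varphi_{b_{o}'b_{o}})^{-1}\Pi$, reads off from the explicit matrix the relations $A_{0}'=-A_{0}$, $B_{0}'=-B_{0}$, $(A_{1}',A_{2}')^{t}=R\,(A_{1},A_{2})^{t}$ and $(B_{1}',B_{2}')^{t}=({}^{t}R)^{-1}(B_{1},B_{2})^{t}+Q\,(A_{1},A_{2})^{t}$ with $R=\left(\begin{smallmatrix}1 & 0\\ -4 & -1\end{smallmatrix}\right)$ and $Q$ the stated matrix, and then substitutes into $\mathcal{F}'=\frac{1}{2}\sum_{i}\int_{A_{i}'}\Omega_{\bm{x}'}\int_{B_{i}'}\Omega_{\bm{x}'}$. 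Your structural observations are sound: filtration preservation plus symplecticity do force the discrepancy to be a quadratic form in the $A$-periods alone, with $A_{0}$ dropping out, and your mirror-map consistency check agrees with the paper.

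The genuine gap is the step you defer as ``routine but delicate bookkeeping'': it is the only step carrying the content of the proposition, and the outcome you assert for it is not what the computation gives. Writing $a=(a_{1},a_{2})^{t}$, $b=(b_{1},b_{2})^{t}$ with $a_{i}=\int_{A_{i}}\Omega_{\bm{x}}$, $b_{i}=\int_{B_{i}}\Omega_{\bm{x}}$, the substitution yields
\begin{gather*}
2\mathcal{F}'=a_{0}b_{0}+{}^{t}(Ra)\bigl(({}^{t}R)^{-1}b+Qa\bigr)=2\mathcal{F}+{}^{t}a\,({}^{t}RQ)\,a,
\qquad
{}^{t}RQ=\left(\begin{matrix}-33 & -2\\ -2 & 0\end{matrix}\right),
\end{gather*}
so the correction is $-\frac{33}{2}a_{1}^{2}-2a_{1}a_{2}$. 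By contrast, the displayed formula of the proposition, read literally with both periods unprimed, evaluates to $\frac{1}{2}\sum Q_{ij}a_{i}a_{j}=-\frac{25}{2}a_{1}^{2}$, since the antisymmetric part of $Q$ contributes nothing to such a sum. Your parenthetical justification --- that the stated $(Q_{ij})$ is merely a non-symmetrized representative of the same quadratic form --- therefore cannot be right: in your own formulation the matrix you read off is ${}^{t}M\eta M-\eta$, which is symmetric by construction, and its nonzero block corresponds to ${}^{t}RQ$, not to $Q$; no choice of antisymmetric part converts $\mathrm{diag}(-25,0)$ into $\left(\begin{smallmatrix}-33 & -2\\ -2 & 0\end{smallmatrix}\right)$. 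The identity that actually produces the stated matrix is ${}^{t}a'Qa={}^{t}a({}^{t}RQ)a$ with $a'=Ra$, i.e.\ the correction equals $\frac{1}{2}\sum Q_{ij}\int_{A_{i}'}\Omega_{\bm{x}'}\int_{A_{j}}\Omega_{\bm{x}}$ with one primed period factor. So to complete the argument you must either carry out the computation and state the result with ${}^{t}RQ$ (equivalently, with the mixed periods), or make explicit the convention under which the proposition's formula is to be read; as literally written it is inconsistent with the connection matrix of Proposition~\ref{prop:connect-phi-21}. (The paper's own proof stops at the cycle relations above and calls the formula ``immediate,'' eliding exactly the factor ${}^{t}R$ on which this hinges, so your hand-wave lands on the one point where the bookkeeping is not routine.)
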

\begin{proof}Using the basis $\{ A_{i},B_{j}\} $, $\{ A_{i}',B_{j}'\} $, the connection matrix has the form
\begin{gather*}
\big({}^{t}\varphi_{b_{o}'b_{0}}\big)^{-1}=\left(\begin{smallmatrix}-1 & 0 & 0 & 0 & 0 & 0\\
0 & 1 & 0 & 0 & 0 & 0\\
0 & -4 & -1 & 0 & 0 & 0\\
0 & 2 & 0 & -1 & 0 & 0\\
0 & -25 & -2 & -4 & 1 & 0\\
0 & 0 & 0 & 0 & 0 & -1
\end{smallmatrix}\right),
\end{gather*}
which gives the analytic continuation by $\Pi'(x',y')=(\,^{t}\varphi_{b_{o}'b_{0}})^{-1}\Pi(x,y)$.
From this, we read $A_{0}'=-A_{0}$, $B_{0}'=-B_{0}$ and
\begin{gather*}
\left(\begin{smallmatrix}A_{1}'\\
A_{2}'
\end{smallmatrix}\right)=R\left(\begin{smallmatrix}A_{1}\\
A_{2}
\end{smallmatrix}\right),\qquad \left(\begin{smallmatrix}B_{1}'\\
B_{2}'
\end{smallmatrix}\right)=\big(\,^{t}R\big)^{-1}\left(\begin{smallmatrix}B_{1}\\
B_{2}
\end{smallmatrix}\right)+\left(\begin{smallmatrix}-25 & -2\\
2 & 0
\end{smallmatrix}\right)\left(\begin{smallmatrix}A_{1}\\
A_{2}
\end{smallmatrix}\right),
\end{gather*}
where we set $R=\left(\begin{smallmatrix}1 & 0\\
-4 & -1
\end{smallmatrix}\right)$. The claimed formula is immediate from these.
\end{proof}

As we can deduce in the above proof, the prepotentials are invariant only up to quadratic terms of the $A_{i}$-periods under the analytic continuations even if they are symplectic and also preserve the monodromy weight filtrations. However the so-called Yukawa couplings are invariant since they are given by the third derivatives of the prepotentials with respect to the coordinates~$t_{i}$ (see~(\ref{eq:mirror-map})).

\section{Gluing monodromy nilpotent cones II}\label{section5}

We will study the following Calabi--Yau threefold of complete intersections:
\begin{gather*}
X=\left(\begin{smallmatrix}\mathbb{P}^{3}|\,2\,1\,1\\
\mathbb{P}^{3}|\,2\,1\,1
\end{smallmatrix}\right)^{2,66}.
\end{gather*}
We assume the defining equations of $X$ are chosen general unless otherwise mentioned. For such $X$, there is no other birational model than $X$. However $X$ has an interesting birational automorphism of infinite order \cite{Oguiso1}, and also has a non-trivial movable cone similar to the one in the preceding section.

\subsection{Birational automorphisms of infinite order}

Let $\pi_{i}\colon X\to\mathbb{P}^{3}$ be the projections to the first and second factor of $\mathbb{P}^{3}\times\mathbb{P}^{3}$ for $i=1$ and $2$, respectively. It is easy to see that the projection $\pi_{i}$ is surjective and generically $2:1$. We consider the Stein factorization $X\to W_{i}\to\mathbb{P}^{3}$ of the morphism $\pi_{i}\colon X\to\mathbb{P}^{3}$ and denote the morphism by $\phi_{i}\colon X\to W_{i}$ for $i=1,2$.
\begin{Proposition} For $i=1,2$, the morphism $W_{i}\to\mathbb{P}^{3}$ is a double cover of $\mathbb{P}^{3}$ branched along an octic, and $W_{i}$ is a $($smooth$)$ Calabi--Yau threefold. \end{Proposition}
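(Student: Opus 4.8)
The plan is to read off the double-cover structure from the fibres of $\pi_i$, locate the branch divisor as a discriminant surface, fix its degree by the Calabi--Yau normalisation, and reduce smoothness to smoothness of that branch surface for general coefficients. Throughout I treat $\pi_1$; the case $\pi_2$ is symmetric under exchanging the two $\mathbb{P}^3$ factors. First I would describe the fibres of $\pi_1\colon X\to\mathbb{P}^3_z$ explicitly. Write the three defining equations as a form $Q(z,w)$ of bidegree $(2,2)$ and two forms $L_1(z,w),L_2(z,w)$ of bidegree $(1,1)$. For general $z$ the equations $L_1(z,w)=L_2(z,w)=0$, being linear in $w$, cut out a line $\ell_z\cong\mathbb{P}^1\subset\mathbb{P}^3_w$, and the restriction of $Q(z,-)$ to $\ell_z$ is a binary quadratic form, so the fibre is its two zeros. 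This shows $\pi_1$ is surjective and generically $2:1$. As $\pi_1$ is generically finite with connected Stein fibres, $\phi_1\colon X\to W_1$ is birational and the induced finite morphism $g_1\colon W_1\to\mathbb{P}^3$ has degree $2$; that is, $W_1\to\mathbb{P}^3$ is a double cover whose deck involution interchanges the two points of a general fibre.

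Next I would pin down the branch divisor $B\subset\mathbb{P}^3$, the locus where the two roots of the binary form $q_z$ collide, equivalently where $\ell_z$ is tangent to the quadric $Q(z,-)$. Writing $Q(z)$ for the symmetric $4\times4$ matrix of the quadric and $A(z)$ for the $2\times4$ coefficient matrix of $(L_1,L_2)$, tangency is the vanishing of the bordered determinant $\mathcal{D}(z)=\det\left(\begin{smallmatrix}Q(z)&{}^{t}A(z)\\ A(z)&0\end{smallmatrix}\right)$. Since the entries of $Q$ are quadratic and those of $A$ linear in $z$, a term-by-term count in the Leibniz expansion (using the vanishing lower-right block to force two rows and two columns into $A$ and ${}^{t}A$) gives $\deg_z\mathcal{D}=8$, so $B$ is an octic.

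For the Calabi--Yau property I would invoke the standard double-cover formulae. With $\tfrac12\deg B=4$ one has $g_{1*}\sO_{W_1}=\sO\oplus\sO(-4)$, whence $H^1(\sO_{W_1})=H^2(\sO_{W_1})=0$ because the corresponding groups of $\sO$ and $\sO(-4)$ on $\mathbb{P}^3$ vanish; and $K_{W_1}=g_1^{*}\big(K_{\mathbb{P}^3}\otimes\sO(4)\big)=g_1^{*}\sO(-4+4)$ is trivial. Conversely, triviality of $K_{W_1}$ is already forced since $\phi_1$ is birational and $K_X=0$, which pins $\tfrac12\deg B=4$ independently and confirms that $B$ must be an octic. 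Hence $W_1$ is a Calabi--Yau double cover of $\mathbb{P}^3$ branched along an octic.

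The main obstacle is smoothness. Since a double cover is smooth precisely when its branch divisor is smooth, it suffices to show that $B=\{\mathcal{D}=0\}$ is smooth for a general choice of equations. The singular points of this determinantal octic are controlled by the degeneracy loci of $M(z)$: the corank-two points, the points where $\operatorname{rank}A(z)\le1$ (there, because of the zero block, the kernel vectors of $M$ are supported on that block and $d\det M$ vanishes automatically, so $B$ acquires a singular point), and the points where $\ell_z$ lies entirely on the quadric $Q(z,-)$. Geometrically these are exactly the $z$ over which $\pi_1$ acquires a positive-dimensional (conic or line) fibre. The plan is to bound these loci by a parameter count: form the incidence variety of pairs $\big(z,(Q,L_1,L_2)\big)$ realizing each degeneracy, estimate its dimension, and conclude that for general equations each degeneracy imposes more than three conditions on $z$, so that no such $z$ exists and $B$ is smooth; openness of smoothness then yields a dense family of smooth $W_1$. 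Verifying that the \emph{special} (non-generic) determinantal shape of $B$ does not force these singularities—equivalently, controlling the jumping fibres of $\pi_i$ so they do not obstruct smoothness of $W_i$—is the delicate step, and is where I expect the real work to lie.
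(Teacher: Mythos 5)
Your first three steps are correct, and they follow the standard route; note that the paper itself gives no argument at all here (its ``proof'' is the sentence ``We omit proofs since they are standard'' together with a citation of Oguiso), so the comparison is with that standard argument. The fibre of $\pi_1$ over a general $z$ is indeed cut out on the line $\ell_z=\{L_1(z,\cdot)=L_2(z,\cdot)=0\}$ by a binary quadric, so the Stein factorization yields a finite degree-$2$ morphism $g_1\colon W_1\to\mathbb{P}^3$; the branch divisor is the bordered-determinant surface, whose degree is $8$ exactly as you count it; and the cyclic double-cover formulas give $K_{W_1}\simeq\mathcal{O}_{W_1}$ and $H^1(\mathcal{O}_{W_1})=H^2(\mathcal{O}_{W_1})=0$.

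The smoothness step, however, is a genuine gap, and it is not one that can be filled: your proposed parameter count comes out to codimension \emph{exactly} three, not more than three, so the degeneracy loci are finite but non-empty for general coefficients. Concretely, $\{z\,|\,\operatorname{rank}A(z)\le1\}$ is the intersection of the linear $\mathbb{P}^3$ of matrices $A(z)$ with the codimension-$3$, degree-$4$ Segre variety $\mathbb{P}^1\times\mathbb{P}^3\subset\mathbb{P}^7$ of rank-one $2\times4$ matrices, hence consists of $4$ points; and $\{z\,|\,Q(z,\cdot)|_{\ell_z}\equiv0\}$ is the zero locus of a section of a rank-$3$ vector bundle on $\mathbb{P}^3$, hence is likewise finite and non-empty ($80$ points). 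These $84$ points are precisely the images of the $80$ lines and $4$ conics contracted by $\phi_1$ in Proposition~\ref{prop:P3P3-curves} (cf.\ the count $96=80+4\cdot2^2$ in Remark~\ref{rem:Remark-TE1-II}). Over the $80$ points all three coefficients of the binary quadric vanish, so locally $B$ is the nodal discriminant $\{b_1^2-4b_0b_2=0\}$, and a similar expansion of the bordered determinant shows $B$ is singular at the $4$ rank-one points as well; correspondingly $W_1$ has ordinary double points there and $\phi_1\colon X\to W_1$ is a small resolution. In fact smoothness of $W_1$ is excluded on general grounds: by purity of the exceptional locus, a projective birational morphism from the smooth $X$ onto a smooth threefold cannot have exceptional locus of codimension two, so Proposition~\ref{prop:P3P3-curves} itself forces $W_i$ to be singular. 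The parenthetical ``smooth'' in the statement therefore cannot be proved; what is actually true (and what your own incidence computation exhibits when carried out) is that $W_i$ is a Calabi--Yau threefold with $84$ ordinary double points, branched over an $84$-nodal octic. Your closing remark correctly identified the delicate step, but the resolution is that the jumping fibres do occur and do obstruct smoothness, rather than being avoidable by genericity.
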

\begin{proof} We omit proofs since they are standard (see, e.g., \cite{Oguiso1}).
\end{proof}

Let $\tilde{\tau}_{i}\colon W_{i}\simeq W_{i}^{+}$ be the deck transformation of the covering $W_{i}\to\mathbb{P}^{3}$. Then we have the map $\tau_{i}$ which covers $\tilde{\tau}_{i}$ as in the following diagram:
\begin{gather*}
\begin{xy}
(0,0)*++{X}="cX",
(-15,0)*++{X}="lX",
( 15,0)*++{X}="rX",
(-10,-20)*++{\mathbb{P}^3}="lP",
( 10,-20)*++{\mathbb{P}^3}="rP",
(-10,-10)*+{W_2}="lW",
(-26,-10)*+{W_2^+}="lWp",
( 10,-10)*+{W_1}="rW",
( 26,-10)*+{W_1^+.}="rWp",
( 17.5,-8)*++{\,^{\tilde\tau_1}},
(-17.5,-8)*++{\,^{\tilde\tau_2}},
( 17.5,-10)*++{\simeq},
(-17.5,-10)*++{\simeq},
( 7,1)*++{\sim},
( 7,3)*++{\,_{\tau_1}},
(-7,1)*++{\sim},
(-7,3)*++{\,_{\tau_2}},
( 7,-4)*++{\,_{\phi_1}},
(-7,-4)*++{\,_{\phi_2}},
\ar@{-->}_{} "cX";"lX"
\ar@{-->}^{} "cX";"rX"
\ar^{\pi_2} "cX";"lP"
\ar_{\pi_1} "cX";"rP"
\ar "cX";"lW"
\ar_{} "lW";"lP" 
\ar "lX";"lWp"
\ar_{2:1} "lWp";"lP"
\ar "cX";"rW"
\ar "rW";"rP"
\ar^{2:1} "rWp";"rP" 
\ar "rX";"rWp"
\end{xy}
\end{gather*}

\begin{Proposition}\label{prop:P3P3-curves}The following hold:
\begin{enumerate}\itemsep=0pt
\item[{\rm (i)}] The map $\tau_{i}\colon X\dashrightarrow X$ is birational but not bi-holomorphic.
\item[{\rm (ii)}]The morphism $\phi_{i}\colon X\to W_{i}$ contracts $80$ lines and $4$~conics to points, and the birational map $\tau_{i}$ is an Atyah's flop of these curves.
\end{enumerate}\end{Proposition}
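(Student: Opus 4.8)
The plan is to realize each $\tau_i$ concretely as the fibrewise Galois involution of the generically $2:1$ morphism $\pi_i\colon X\to\mP^3$, and then read off both its indeterminacy and its flopping behaviour from the fibres. Fix $i=1$ and write $X=\{Q=L_1=L_2=0\}\subset\mP^3_z\times\mP^3_w$, where $Q$ has bidegree $(2,2)$ and $L_1,L_2$ have bidegree $(1,1)$. For fixed $z$ the forms $L_1(z,\cdot),L_2(z,\cdot)$ are linear in $w$ with coefficient vectors $A^{(1)}(z),A^{(2)}(z)$, and when these are independent they cut out a line $\ell_z\cong\mP^1\subset\mP^3_w$ on which $Q(z,\cdot)$ restricts to a binary quadric whose two roots are the two points of $\pi_1^{-1}(z)$. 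Thus on the open set where $\pi_1$ is honestly $2:1$ the map $\tau_1$ is the involution exchanging these two roots; this shows at once that $\tau_1$ is birational, being a biregular involution there and the pullback through $\phi_1$ of the deck transformation $\tilde\tau_1$ of $W_1\to\mP^3$. The indeterminacy of $\tau_1$ is exactly the locus where the fibre is positive dimensional.

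To see that $\tau_1$ is not bi-holomorphic I would compute its action on $\operatorname{Pic}(X)\otimes\mR=\mR H_1+\mR H_2$, where $H_i=\pi_i^{*}(\text{hyperplane})$. From the class $[X]=2(H_1+H_2)^3$ in $\mP^3\times\mP^3$ one gets the triple intersection numbers $H_1^3=H_2^3=2$ and $H_1^2H_2=H_1H_2^2=6$. Since $H_1$ is pulled back from $\mP^3_z$ it is $\tau_1$-invariant, and since $\tau_1$ is a deck involution, $H_2+\tau_1^{*}H_2$ is pulled back from $\mP^3_z$, hence equals $mH_1$ for some $m\in\mZ$. Intersecting with the curve class $H_1^2$, which $\tau_1$ preserves, gives $2m=\int H_1^2H_2+\int H_1^2\tau_1^{*}H_2=6+6$, so $m=6$ and $\tau_1^{*}H_2=6H_1-H_2$. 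The resulting matrix $\left(\begin{smallmatrix}1&6\\0&-1\end{smallmatrix}\right)$ does not preserve the nef cone $\mR_{\ge0}H_1+\mR_{\ge0}H_2$, whence $\tau_1\notin\operatorname{Aut}(X)$, proving (i); the symmetric computation gives $\tau_2^{*}=\left(\begin{smallmatrix}-1&0\\6&1\end{smallmatrix}\right)$, and one checks that $\tau_1\tau_2$ has infinite order.

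For (ii) I would enumerate the positive-dimensional fibres, which are of two kinds. (a) When $A^{(1)}(z),A^{(2)}(z)$ become proportional the line degenerates to a $\mP^2$ and the fibre is the conic $\{Q(z,\cdot)=0\}\cap\mP^2$; the rank-one locus of the $2\times4$ matrix $A(z)$ of linear forms, realized as the image of the incidence $\{(z,[u])\in\mP^3\times\mP^1 : u_1A^{(1)}(z)+u_2A^{(2)}(z)=0\}$ cut out by four $(1,1)$-forms, consists of $4$ points, hence $4$ conics. (b) When $\ell_z$ is a genuine line lying on the quadric, i.e.\ $Q(z,\cdot)|_{\ell_z}\equiv0$, the whole line is contracted; writing $\mathcal K=\ker(\mathcal O^4\xrightarrow{A}\mathcal O(1)^2)$ so that $Q|_{\ell_z}$ is a section of $\operatorname{Sym}^2\mathcal K^{*}\otimes\mathcal O(2)$, the computation $c(\mathcal K)=(1+h)^{-2}$ gives $c_3(\operatorname{Sym}^2\mathcal K^{*}\otimes\mathcal O(2))=96$; subtracting the excess contribution supported at the four rank-one points (of total length $16$) leaves $96-16=80$ genuine lines. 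These $84$ curves lie in distinct fibres of $\pi_1$, so they are pairwise disjoint.

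Finally I would identify $\phi_1$ with the contraction of exactly these $84$ disjoint rational curves to ordinary double points of $W_1$, and check that $\tau_1$ flops them. The key point is that $\tau_1$ is an isomorphism off the $84$ curves, where $\pi_1$ is honestly $2:1$ and $\tau_1$ is the deck involution away from the branch locus; hence $\tau_1$ is a pseudo-automorphism that is an isomorphism in codimension one, and being induced by the biregular $\tilde\tau_1$, which fixes each node and interchanges the two small resolutions there, it is precisely the Atiyah flop of the $84$ curves. The main obstacle is the local verification along the contracted curves: one must check that each has normal bundle $\mathcal O(-1)^{\oplus2}$ so that its contraction yields an ordinary double point and $\tilde\tau_1$ exchanges the two rulings. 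This is routine for the $80$ lines but requires a slightly different local model for the $4$ conics, and it is also the place where the excess-intersection bookkeeping $96\to80$ must be reconciled with the local geometry.
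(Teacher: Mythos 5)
The paper gives no argument for this proposition at all: its ``proof'' is the single citation \cite[Proposition~6.1]{Oguiso1}. So your proposal can only be measured against that source and against the paper's own mirror-side data, and by both measures your approach and all your numbers are right: $\tau_i$ as the lift of the deck involution of the covering $f_i\colon W_i\to\mathbb{P}^3$, the action $\tau_1^{*}H_1=H_1$, $\tau_1^{*}H_2=6H_1-H_2$, the four conics over the rank-one locus (the count $(H+h)^4=4H^3h$ is clean), and $80$ lines. Indeed your matrices $\left(\begin{smallmatrix}1&6\\0&-1\end{smallmatrix}\right)$, $\left(\begin{smallmatrix}-1&0\\6&1\end{smallmatrix}\right)$ are exactly the two matrices of Proposition~\ref{prop:tau1-tau2-actions} under the identification $H_i\leftrightarrow N_i$, and the split $80+4$ is corroborated by Remark~\ref{rem:Remark-TE1-II} ($96=80+4\times 2^2$) and by (\ref{eq:tCijk-Cf-P3P3}), since $C_{111}^{\mathtt{f}}=-110=2-\big(80\cdot 1^3+4\cdot 2^3\big)$ is the change of the cubic form under the flop.

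Two points need repair or completion. In (i), the phrase ``intersecting with the curve class $H_1^2$, which $\tau_1$ preserves'' is the one step that would not survive scrutiny as written: a flop does \emph{not} preserve the intersection ring (that is precisely what (\ref{eq:tCijk-Cf-P3P3}) records), so you cannot intersect pulled-back divisor classes as if $\tau_1^{*}$ were a ring homomorphism. The correct justification is by representative cycles: $H_1^2=[\pi_1^{-1}(\ell)]$ for a general line $\ell\subset\mathbb{P}^3_z$; this curve avoids the contracted curves (the indeterminacy locus) and is mapped to itself by $\tau_1$, so the projection formula on the locus where $\tau_1$ is a morphism gives $\tau_1^{*}H_2\cdot H_1^2=H_2\cdot\tau_{1*}[\pi_1^{-1}(\ell)]=6$. (Likewise, ``invariant classes are multiples of $H_1$'' deserves a line: it follows from $\operatorname{Cl}(W_1)^{\tilde\tau_1}\otimes\mathbb{Q}=f_1^{*}\operatorname{Cl}(\mathbb{P}^3)\otimes\mathbb{Q}$ together with $\operatorname{Cl}(W_1)\cong\operatorname{Pic}(X)$, $\phi_1$ being small.) Note also that your nef-cone conclusion presupposes $H_1$, $H_2$ are not ample, i.e., input from (ii); you can decouple the parts by concluding instead that a biregular $\tau_1$ would preserve the cubic form, whereas $\big(\tau_1^{*}(H_1+H_2)\big)^3=(7H_1-H_2)^3=-72\neq 40=(H_1+H_2)^3$.

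In (ii) the two deferred verifications are genuine gaps in the write-up, but both close. Excess length $16$: blow up the four rank-one points $p_i$; then $z\mapsto\ell_z$ becomes a morphism to $G(2,4)$ carrying each exceptional plane $E_i$ isomorphically onto the plane of lines in $\mathbb{P}\big(\ker A(p_i)\big)$, the pullback $\widetilde{\mathcal{K}}$ of the tautological subbundle has $c_1\big(\widetilde{\mathcal{K}}^{*}\big)=2H-\sum E_i$ and $c_2\big(\widetilde{\mathcal{K}}^{*}\big)=3H^2+\sum E_i^2$, the induced section of $\operatorname{Sym}^2\widetilde{\mathcal{K}}^{*}\otimes\mathcal{O}(2H)$ has no zeros on $E_i$ (a zero would be a line inside the smooth fibre conic), and $c_3=\big(4H-\textstyle\sum E_i\big)\big(24H^2+4\textstyle\sum E_i^2\big)=96-16=80$, using $HE_i=0$, $E_i^3=1$. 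Flop identification: lines and conics are handled uniformly by showing the branch octic $B_1$ has an ordinary node at each of the $84$ points; at a line-point the octic is $b^2-4ac$ with $a,b,c$ having independent differentials (a rank-three quadratic singularity), and at a conic-point its projectivized tangent cone is the dual conic of the fibre conic, again nondegenerate. Hence $W_1$ is locally $\{uv=st\}$, every contracted curve is a $(-1,-1)$-curve, and the deck involution ($u\mapsto -v$, $v\mapsto -u$, $s\mapsto s$, $t\mapsto t$ in suitable coordinates) sends the plane $\{u=s=0\}$ to $\{v=s=0\}$, which lies in the other ruling: it exchanges the two small resolutions at \emph{every} node, so $\tau_1$ is the Atiyah flop of all $84$ curves, not merely non-regular somewhere. (Incidentally this shows $W_1$ has $84$ ordinary double points, so the parenthetical ``smooth'' in the paper's preceding proposition on $W_i$ cannot be taken literally.) Finally, a cheap cross-check that bypasses all excess bookkeeping: stratifying $\pi_1$ gives $e(X)=2e(\mathbb{P}^3)-e(B_1)+N=-296+2N$ for an octic with $N$ nodes, and $e(X)=2(2-66)=-128$ forces $N=84$.
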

\begin{proof}
(i), (ii) See the reference \cite[Proposition~6.1]{Oguiso1}. \end{proof}

\begin{Proposition} $(1)$ $\operatorname{Bir}(X)=\operatorname{Aut}(X)\cdot\langle\tau_{1},\tau_{2}\rangle$. $(2)$ $\tau_{i}^{2}={\rm id}$ for $i=1,2$. Also $\tau_{1}\tau_{2}$ has infinite order. \end{Proposition}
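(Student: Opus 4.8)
The plan is to dispatch the three assertions in turn: the generation statement (1) by the minimal--model argument already used in Proposition~\ref{prop:Birat(Xi)}, and both order statements in (2) by reducing them to the induced action on $H^{2}(X,\mR)=\operatorname{Pic}(X)\otimes\mR$, which is only two--dimensional since $h^{1,1}(X)=2$.

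For (1) I would copy the scheme of Proposition~\ref{prop:Birat(Xi)} (itself modelled on \cite[Lemma~6.4]{Oguiso1}). Given $\tau\in\operatorname{Bir}(X)$ and an ample $D$, put $D'=(\tau^{-1})_{*}D$; if $D'$ is nef then $\tau\in\operatorname{Aut}(X)$ by \cite[Lemma~4.4]{Kollar}, and otherwise $(X,\varepsilon D')$ is klt with $K_{X}+\varepsilon D'$ not nef, so there is an extremal contraction of $\overline{\mathrm{NE}}(X)$. Since the Picard number of $X$ is $2$ and $X$ has no birational model other than itself, up to automorphism the only such contractions are the two small contractions $\phi_{1}\colon X\to W_{1}$, $\phi_{2}\colon X\to W_{2}$ of Proposition~\ref{prop:P3P3-curves}; peeling off the associated flop $\tau_{1}$ or $\tau_{2}$ and inducting, the process terminates by \cite[Theorem~3.5]{Kollar}, exhibiting $\tau$ as an element of $\langle\tau_{1},\tau_{2}\rangle$ times an automorphism.

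The relation $\tau_{i}^{2}=\mathrm{id}$ I would read off from the defining diagram: $\tau_{i}$ covers the deck involution $\tilde\tau_{i}$, i.e.\ $\phi_{i}\circ\tau_{i}=\tilde\tau_{i}\circ\phi_{i}$. As $\tilde\tau_{i}^{2}=\mathrm{id}$ one gets $\phi_{i}\circ\tau_{i}^{2}=\phi_{i}$, and since $\phi_{i}$ is a small contraction---an isomorphism over a dense open set---this forces $\tau_{i}^{2}=\mathrm{id}$. For the infinite order of $\tau_{1}\tau_{2}$ I would compute the reflections $\tau_{i}^{*}$ on $\operatorname{Pic}(X)=\mZ H_{1}\oplus\mZ H_{2}$, $H_{i}=\pi_{i}^{*}\sO_{\mP^{3}}(1)$. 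From $\pi_{i}\circ\tau_{i}=\pi_{i}$ we have $\tau_{i}^{*}H_{i}=H_{i}$; since $\tau_{i}^{*}$ is an involution of a rank--two lattice and $\tau_{i}$ is a nontrivial flop (so $\tau_{i}^{*}\sK_{X}$ is the chamber adjacent to $\sK_{X}$ across the wall $\mR_{\ge0}H_{i}$, whence $\tau_{i}^{*}\neq\mathrm{id}$), the second eigenvalue is $-1$ and $\tau_{i}^{*}H_{j}=p\,H_{i}-H_{j}$ for $\{i,j\}=\{1,2\}$. To find $p$ I intersect the invariant class $H_{j}+\tau_{i}^{*}H_{j}=p\,H_{i}$ with the curve $\pi_{i}^{-1}(m)$, $m\subset\mP^{3}$ a general line: this curve has class $H_{i}^{2}$, is disjoint from the finitely many flopped curves, and is preserved by $\tau_{i}$ acting as the covering involution, so $\tau_{i}^{*}H_{j}\cdot\pi_{i}^{-1}(m)=H_{j}\cdot H_{i}^{2}$. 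With $H_{1}^{3}=H_{2}^{3}=2$ and $H_{1}^{2}H_{2}=H_{1}H_{2}^{2}=6$ for the $(2,2)(1,1)(1,1)$ complete intersection, this yields $2p=2H_{i}^{2}H_{j}=12$, so $p=6$ and
\[
\tau_{1}^{*}=\begin{pmatrix}1&6\\0&-1\end{pmatrix},\qquad \tau_{2}^{*}=\begin{pmatrix}-1&0\\6&1\end{pmatrix}.
\]
Hence $(\tau_{1}\tau_{2})^{*}=\tau_{2}^{*}\tau_{1}^{*}=\left(\begin{smallmatrix}-1&-6\\6&35\end{smallmatrix}\right)$ has determinant $1$ and trace $34$, so its eigenvalues $17\pm12\sqrt{2}$ are not roots of unity and $\tau_{1}\tau_{2}$ has infinite order.

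The linear algebra above is routine once the reflections are identified; the real obstacle is (1), which rests on the minimal--model machinery---namely that the wall--crossing induction terminates and that $\phi_{1},\phi_{2}$ are the only extremal contractions, so no foreign birational model can occur. I would invoke \cite{Oguiso1} for this classification, just as Proposition~\ref{prop:Birat(Xi)} does for the $\mP^{4}\times\mP^{4}$ example, rather than redo the termination argument from scratch.
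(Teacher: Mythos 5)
Your proof is correct, and it is essentially the argument that the paper outsources to its one-line citation of \cite[Lemma~6.4]{Oguiso1}: the two-extremal-ray MMP induction for (1) (the same scheme as Proposition~\ref{prop:Birat(Xi)}), the deck-transformation relation $\phi_i\circ\tau_i=\tilde\tau_i\circ\phi_i$ together with birationality of the small contraction $\phi_i$ for $\tau_i^2=\mathrm{id}$, and the Picard-lattice reflection computation for the infinite order of $\tau_1\tau_2$. Your numbers are also consistent with the paper's own data: $H_i^3=2$, $H_i^2H_j=6$ agree with the $C_{ijk}$ recorded in Section~\ref{sub:B-structure-P3P3}, and your $(\tau_1\tau_2)^*$ (trace $34$, eigenvalues $17\pm12\sqrt{2}=(3\pm2\sqrt{2})^2$) matches, up to a labeling convention, the mirror-side matrices of Proposition~\ref{prop:tau1-tau2-actions} and Corollary~\ref{cor:Union-cones-P3P3}.
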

\begin{proof}
See \cite[Lemma~6.4]{Oguiso1}.
\end{proof}

\subsection[Mirror family of $X$]{Mirror family of $\boldsymbol{X}$}

We can describe the mirror family $\mathfrak{X}^{*}\to\mathcal{M}_{X^{*}}^{\rm cpx}$ of $X$ by writing $X$ in terms of a Gorenstein cone following Batyrev--Borisov. The parameter space of the defining equations up to isomorphisms naturally gives the moduli space $\mathcal{M}_{X^{*}}^{\rm cpx}$, which turns out to be compactified to $\mathbb{P}^{2}$ as before. Here we will not go into the details of the mirror family, but we only write the form of the Picard--Fuchs differential operator in the affine coordinate $[1,x,y]\in\overline{\mathcal{M}}_{X^{*}}^{\rm cpx}=\mathbb{P}^{2}$.
\begin{Proposition} Picard--Fuchs equations of the family on the affine coordinate $[1,x,y]$ are given by $\mathcal{D}_{1}w(x,y)=\mathcal{D}_{2}w(x,y)=0$ with
\begin{gather*}
\mathcal{D}_{1}= \big(3\theta_{x}^{2}-4\theta_{x}\theta_{y}+3\theta_{y}^{2}\big)-(\theta_{x}+\theta_{y})(2\theta_{x}+2\theta_{y}-1)(10x+6y)\\
\hphantom{\mathcal{D}_{1}=}{} +4\theta_{x}(2\theta_{x}+2\theta_{y}-1)(x-y),\\
\mathcal{D}_{2}= \big(\theta_{x}^{3}-\theta_{x}^{2}\theta_{y}+\theta_{x}\theta_{y}^{2}-\theta_{y}^{3}\big)-2(\theta_{x}+\theta_{y})^{2}(2\theta_{x}+2\theta_{y}-1)(x-y),
\end{gather*}
where $\theta_{x}=x\frac{\partial\;}{\partial x},\theta_{y}=y\frac{\partial\;}{\partial y}$. The discriminant locus of this system is given by ${\rm Dis}=D_{1}\cup D_{2}\cup D_{3}\cup {\rm Dis}_{0}$ with
\begin{gather*}
{\rm Dis}_{0}=\big\{ (1-4x-4y)^{4}-128xy\big(17+56(x+y)+16\big(x^{2}+y^{2}\big)\big)=0\big\} ,
\end{gather*}
and the coordinate lines $D_{i}$ of $\mathbb{P}^{2}$. \end{Proposition}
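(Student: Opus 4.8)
The plan is to obtain the mirror family and its period system from the Batyrev--Borisov construction, check the two stated operators against an explicit fundamental period, and then read off the discriminant from the principal symbols of the operators. First I would record the nef-partition data: on $\mathbb{P}^{3}\times\mathbb{P}^{3}$ the anticanonical class $\mathcal{O}(4,4)$ is split by the three defining divisors of bidegrees $(2,2)$, $(1,1)$, $(1,1)$, and the two K\"ahler generators of $\mathbb{P}^{3}\times\mathbb{P}^{3}$ supply the two moduli coordinates $x,y$ on $\overline{\mathcal{M}}_{X^{*}}^{\rm cpx}=\mathbb{P}^{2}$, exactly as in the $\mathbb{P}^{4}\times\mathbb{P}^{4}$ example of Section~\ref{sec: CICY-I}. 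The Batyrev--Borisov recipe \cite{BatBo,HKTY,GKZ1} then yields the fundamental period as the hypergeometric series
\[
w_{0}(x,y)=\sum_{m,n\geq0}\frac{(2m+2n)!\,((m+n)!)^{2}}{(m!)^{4}(n!)^{4}}\,x^{m}y^{n},
\]
the numerator $(2m+2n)!\,((m+n)!)^{2}$ coming from the three nef-partition divisors and the denominator $(m!)^{4}(n!)^{4}$ from the eight homogeneous coordinates.

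Next I would verify directly that $\mathcal{D}_{1}w_{0}=\mathcal{D}_{2}w_{0}=0$. Writing $c(m,n)$ for the coefficient above and using $\theta_{x}(x^{m}y^{n})=m\,x^{m}y^{n}$, each operator turns the equation into a recursion among the $c(m,n)$; for $\mathcal{D}_{1}$ this reduces to the polynomial identity $2(3m^{2}-4mn+3n^{2})(m+n)^{3}=(6m+10n)m^{4}+(10m+6n)n^{4}$, and $\mathcal{D}_{2}$ produces an analogous identity, both of which hold term by term. Since every period is a $\mathbb{C}$-combination of $w_{0}$ and its logarithmic companions for the same local system, this places $\mathcal{D}_{1},\mathcal{D}_{2}$ in the Picard--Fuchs ideal. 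To see they generate it I would check that the holonomic rank equals $b_{3}(X^{*})=2+2h^{2,1}(X^{*})=6$ (here $h^{2,1}(X^{*})=h^{1,1}(X)=2$): this number is the normalized volume of the configuration and can be confirmed by exhibiting the expected six local solutions at the maximal unipotent point (one power series, two single logarithms, two double logarithms, one triple logarithm). Matching rank $6$ forces $\langle\mathcal{D}_{1},\mathcal{D}_{2}\rangle$ to be the full Picard--Fuchs ideal.

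For the discriminant I would use that the singular locus of this regular holonomic system is the image in the $(x,y)$-base of the non-zero part of its characteristic variety, which is cut out by the principal symbols. Regarding $\theta_{x},\theta_{y}$ as commuting symbols, the top-degree parts are
\[
\sigma(\mathcal{D}_{1})=3\theta_{x}^{2}-4\theta_{x}\theta_{y}+3\theta_{y}^{2}-2(\theta_{x}+\theta_{y})\big[(6\theta_{x}+10\theta_{y})x+(10\theta_{x}+6\theta_{y})y\big],
\]
\[
\sigma(\mathcal{D}_{2})=\theta_{x}^{3}-\theta_{x}^{2}\theta_{y}+\theta_{x}\theta_{y}^{2}-\theta_{y}^{3}-4(\theta_{x}+\theta_{y})^{3}(x-y),
\]
a binary quadric and a binary cubic in $[\theta_{x}:\theta_{y}]$ with coefficients in $\mathbb{C}[x,y]$. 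Off the discriminant the only common zero in the cotangent fibre is the origin, so ${\rm Dis}$ is exactly the locus where these two forms share a projective root, i.e.\ where $\operatorname{Res}_{[\theta_{x}:\theta_{y}]}(\sigma(\mathcal{D}_{1}),\sigma(\mathcal{D}_{2}))$ vanishes; equivalently this resultant is the principal $A$-determinant of the configuration. Factoring it, the monomial factors in $x,y$ together with the behaviour at the line at infinity of $\mathbb{P}^{2}$ account for the three coordinate lines $D_{1},D_{2},D_{3}$, and the remaining irreducible factor is the asserted quartic
\[
{\rm Dis}_{0}=(1-4x-4y)^{4}-128xy\big(17+56(x+y)+16\big(x^{2}+y^{2}\big)\big).
\]

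The hard part will be this last elimination: carrying out the resultant cleanly and confirming that the principal-symbol computation reproduces exactly the stated coefficients, while correctly separating the ${\rm Dis}_{0}$ factor from the coordinate-line (face-discriminant) factors of the principal $A$-determinant. A secondary subtlety is the rank check, since GKZ systems are frequently of rank larger than $b_{3}$; one must ensure that the reduction to the two operators above genuinely has holonomic rank $6$ rather than an extraneous larger value.
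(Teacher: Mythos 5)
Your computational core is sound, but your route differs from the paper's and has one genuine logical gap. On the correct side: the fundamental period you write down is exactly the $w_0(x,y)$ the paper records in Section~5.3; your recursion identity for $\mathcal{D}_1$ is correct provided one uses the convention (implicit in the paper) that the multiplication operators $(10x+6y)$ and $(x-y)$ act \emph{first} --- it is equivalent to the identity $(3n^2-4nm+3m^2)(n+m)^3=3n^5+5n^4m+5nm^4+3m^5$, and the $\mathcal{D}_2$ recursion reduces to $(n^4-m^4)/(n+m)=n^3-n^2m+nm^2-m^3$. Your treatment of the discriminant is essentially what the paper does (its proof says precisely: read the singular locus off the characteristic variety), and the resultant plan works; in fact one can check that $\operatorname{Res}_{[\theta_x:\theta_y]}\big(\sigma(\mathcal{D}_1),\sigma(\mathcal{D}_2)\big)=32\,{\rm Dis}_0$ on the nose (e.g.\ at $y=x$ it equals $32(1-64x)(1+16x)^2$ and at $(x,y)=(1,0)$ it equals $2592=32\cdot 3^4$): the degree-five part cancels and there are \emph{no} monomial factors. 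So your expectation of separating coordinate-line factors out of the resultant is slightly off; the lines $D_i$ enter ${\rm Dis}$ not as factors of the $\theta$-resultant but because along them the symbols in the $\partial_x$, $\partial_y$ variables degenerate, so the conormals of $\{x=0\}$, $\{y=0\}$ (and of the line at infinity, in the other charts) lie in the characteristic variety.

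The genuine gap is the step from $\mathcal{D}_iw_0=0$ to ``$\mathcal{D}_1,\mathcal{D}_2$ lie in the Picard--Fuchs ideal.'' Your justification --- ``every period is a $\mathbb{C}$-combination of $w_0$ and its logarithmic companions for the same local system'' --- is circular: the logarithmic companions are by definition the remaining solutions of whichever system one is considering, and an operator annihilating $w_0$ need not annihilate the logarithmic solutions of the Picard--Fuchs system; the left ideal $\operatorname{Ann}(w_0)$ is in general strictly larger than the Picard--Fuchs ideal $I_{\rm PF}$, and a general element of it meets the period space only in $\mathbb{C}w_0$. Nor can this be repaired by a cheap monodromy argument, since $w_0$ is invariant under both local monodromies at the LCSL; one would need global irreducibility of the monodromy representation (equivalently, of the Gauss--Manin $D$-module) to propagate $\mathcal{D}_iw_0=0$ to all periods, and you have not established that. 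The paper's route avoids exactly this issue: the GKZ operators annihilate \emph{all} period integrals by construction (via the integral representation), and $\mathcal{D}_1,\mathcal{D}_2$ are then obtained by factorization inside that ideal --- e.g.\ for the GKZ operators $\mathcal{L}_1,\mathcal{L}_2$ of this configuration one has $\mathcal{L}_1-\mathcal{L}_2=(\theta_x+\theta_y)\mathcal{D}_2$, and the factor $(\theta_x+\theta_y)$ is stripped off by an argument that applies to every period, not just to $w_0$. Your closing worry about the holonomic rank (exhibiting six Frobenius solutions only bounds the rank from below) is real but secondary; the missing containment $\mathcal{D}_i\in I_{\rm PF}$ is the step that fails as written.
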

\begin{proof}The differential operators $\mathcal{D}_{1}$ and $\mathcal{D}_{2}$ arise from the Gel'fand--Kapranov--Zelevinski system after finding suitable factorizations of differential operators. See~\cite{HKTY} for more details. Once~$\mathcal{D}_{1}$ and~$\mathcal{D}_{2}$ are determined,
it is straightforward to determine the discriminant locus (singular locus) of the system from the equations of the characteristic variety.
\end{proof}

From the forms of $\mathcal{D}_{1}$and $\mathcal{D}_{2}$, the origin $x=y=0$ is expected to be a LCSL. In fact, we can verify all the properties for the LCSL in Definition~\ref{def:LCSL}. We also verify that there is no other LCSL point in $\overline{\mathcal{M}}_{X^{*}}^{\rm cpx}=\mathbb{P}^{2}$. In Fig.~\ref{Fig4}, we schematically describe the structure of the moduli space $\overline{\mathcal{M}}_{X^{*}}^{\rm cpx}.$ There, as in the preceding example, we see that the component ${\rm Dis}_{0}$ intersects tangentially with the divisors $D_{1}=\{ x=0\} $ and $D_{2}=\{ y=0\} $. This time, we blow-up at these two intersection points successively four times to make the intersections normal crossing (see Fig.~\ref{Fig4} right).
\begin{Remark} As in the previous example, we should be able to arrive at the mirror family $\mathfrak{X}^{*}\to\mathcal{M}_{X^{*}}^{\rm cpx}$ starting with a special family $\{ X_{\rm sp}\} _{a,b}$. But we leave this task for other occasions, since we have the mirror family in any case as above.
\end{Remark}

\begin{figure}[t]\centering

\includegraphics[width=95mm]{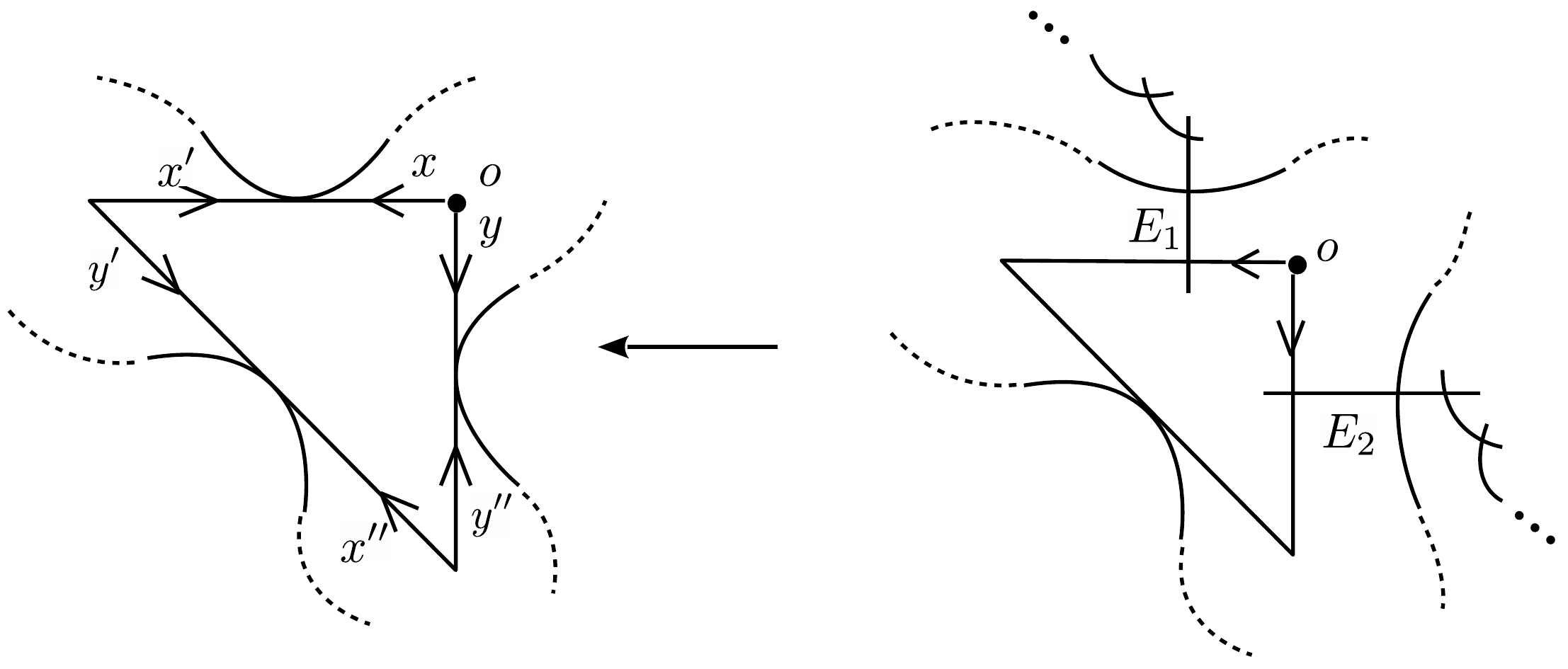}

\caption{Blowing-up $\overline{\mathcal M}_{X^*}^{\rm cpx}=\mathbb{P}^2$. There is only one LCSL at~$o_1$ in this case, and we blow-up four times at the two points introducing exceptional divisors $E_1$ and $E_2$ shown.}\label{Fig4}
\end{figure}

\subsection[B-structure of $X$ at the origin $o$]{B-structure of $\boldsymbol{X}$ at the origin $\boldsymbol{o}$} \label{sub:B-structure-P3P3}

As in Section~\ref{sub:B-str-Reye}, the canonical integral and symplectic structure can be introduced from the power series solution
\begin{gather*}
w_{0}(x,y)=\sum_{n,m}\frac{\Gamma(1+2n+2m)\Gamma(1+n+m)^{2}}{\Gamma(1+n)^{4}\Gamma(1+m)^{4}}x^{n}y^{m}
\end{gather*}
around the origin $o:=[1,0,0]$. Simply replacing the necessary parameters in the general formula \cite[Section~6.3.1]{HTcmp}, and fixing the so-called quadratic ambiguities there by $C_{kl}=0$, we obtain the canonical integral and symplectic structure in the form of period integrals $\Pi(x,y)$ with the corresponding symplectic basis
\begin{gather*}
\{ A_{0},A_{1},A_{2},B_{2},B_{1},B_{0}\} \subset H_{3}(X_{b_{o}}^{*},\mathbb{Z})\qquad \text{with}\qquad A_{i}\cap B_{j}=\delta_{ij},A_{i}\cap A_{j}=B_{i}\cap B_{j}=0,
\end{gather*}
where a base point~$b_{o}$ is taken near the origin. We denote by $\mathtt{T}_{x}$ the matrix of the monodromy transformation of~$\Pi(x,y)$ for a small loop around the divisor $D_{1}= \{ x=0 \} $, and similarly denote by $\mathtt{T}_{y}$ for a small loop around $D_{2}= \{ y=0 \} $. Writing the local solutions explicitly, it is straightforward to have
\begin{gather*}
\mathtt{T}_{x}=\left(\begin{smallmatrix}1 & 0 & 0 & 0 & 0 & 0\\
1 & 1 & 0 & 0 & 0 & 0\\
0 & 0 & 1 & 0 & 0 & 0\\
3 & 6 & 6 & 1 & 0 & 0\\
1 & 2 & 6 & 0 & 1 & 0\\
-4 & -1 & -3 & 0 & -1 & 1
\end{smallmatrix}\right),\qquad \mathtt{T}_{y}=\left(\begin{smallmatrix}1 & 0 & 0 & 0 & 0 & 0\\
0 & 1 & 0 & 0 & 0 & 0\\
1 & 0 & 1 & 0 & 0 & 0\\
1 & 6 & 2 & 1 & 0 & 0\\
3 & 6 & 6 & 0 & 1 & 0\\
-4 & -3 & -1 & -1 & 0 & 1
\end{smallmatrix}\right).
\end{gather*}
We introduce the dual basis $\mathcal{B}= \{ \alpha_{i},\beta_{i} \} $ of $H^{3}(X_{b_{o}}^{*},\mathbb{Z})$ and consider the dual actions $T_{x}:=(\,^{t}\mathtt{T}_{x})^{-1}$ and $T_{y}:=(\,^{t}\mathtt{T}_{y})^{-1}$ on $H^{3}(X_{b_{o}}^{*},\mathbb{Z})$ which are clearly unipotent.
\begin{Definition}
We define the monodromy nilpotent cone at $o$ by
\begin{gather*}
\Sigma_{o}=\left\{ \sum\lambda_{i}N_{i}\,|\,\lambda_{i}\geq0\right\} \subset\operatorname{End}\big(H^{3}(X_{b_{o}}^{*},\mathbb{Z})\big)
\end{gather*}
with $N_{1}=-\,^{t}(\log\mathtt{T}_{x})$ and $N_{2}=-\,^{t}(\log\mathtt{T}_{y})$.
\end{Definition}

Using the explicit forms of these matrices, we verify the following properties:
\begin{Proposition}\quad
\begin{enumerate}\itemsep=0pt \item[{\rm (1)}] The nilpotent element $N_{\lambda}:=\sum_{i}\lambda_{i}N_{i}$, $\lambda_{i}>0$, defines the weight monodromy filtration $W_{0}\subset W_{2}\subset W_{4}\subset W_{6}=H^{3}(X_{b_{0}}^{*},\mathbb{Z})$ with the same form $W_{2i}$ as given in~\eqref{eq:W2i-filt}.

\item[{\rm (2)}] We have
\begin{gather*}
N_{i}N_{j}N_{k}=C_{ijk}\mathtt{N_{0}}
\end{gather*}
with totally symmetric $C_{ijk}$ defined by $C_{111}=C_{222}=2$, $C_{122}=C_{112}=6$, $C_{ijk}=0$ otherwise, and $\mathtt{N}_{0}=\left(\begin{smallmatrix}0 & 1\\
O_{5} & 0
\end{smallmatrix}\right)$ where $O_{5}$ is the zero matrix of size $5\times5$.
\end{enumerate}
\end{Proposition}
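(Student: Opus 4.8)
The plan is to reduce the whole statement to an explicit computation with the two $6\times 6$ unipotent matrices $\mathtt{T}_{x}$ and $\mathtt{T}_{y}$ displayed above, following verbatim the template already executed for the $\mathbb{P}^{4}\times\mathbb{P}^{4}$ model in Proposition~\ref{prop:Nijk-N0}. Since $\mathtt{T}_{x}-\mathrm{id}$ and $\mathtt{T}_{y}-\mathrm{id}$ are strictly lower-triangular, each logarithm $\log\mathtt{T}_{x}=(\mathtt{T}_{x}-\mathrm{id})-\tfrac12(\mathtt{T}_{x}-\mathrm{id})^{2}+\cdots$ is a finite nilpotent sum, and transposing produces the explicit operators $N_{1}=-\,{}^{t}(\log\mathtt{T}_{x})$ and $N_{2}=-\,{}^{t}(\log\mathtt{T}_{y})$ on $H^{3}(X_{b_{o}}^{*},\mathbb{Q})$ in the dual basis $\mathcal{B}=\{\alpha_{i},\beta_{i}\}$. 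One checks directly that $(\mathtt{T}_{x}-\mathrm{id})^{4}=(\mathtt{T}_{y}-\mathrm{id})^{4}=0$, whence $N_{\lambda}^{4}=0$ for $N_{\lambda}=\lambda_{1}N_{1}+\lambda_{2}N_{2}$. Throughout I keep in mind the transpose-and-inverse passage from the homology monodromy $\mathtt{T}_{\bullet}$ to the cohomology operators $N_{i}$ recorded in Remark~\ref{rem:T-and-dual-T}.

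For part~(2) I compute first, since part~(1) will lean on it. As $N_{\lambda}^{4}=0$ and $N_{i}N_{j}N_{k}$ carries $W_{6}$ into $W_{0}=\mathbb{Q}\alpha_{0}$ while annihilating $W_{4}$, each triple product is a rank-one operator lying in $\Hom(W_{6}/W_{4},W_{0})$, i.e.\ a scalar multiple of the matrix $\mathtt{N}_{0}$ whose only nonzero entry is in the top-right corner. Multiplying out $N_{1}^{3}$, $N_{1}^{2}N_{2}$, $N_{1}N_{2}^{2}$, $N_{2}^{3}$ and reading off that single entry yields $C_{111}=C_{222}=2$, $C_{112}=C_{122}=6$, and $C_{ijk}=0$ otherwise; total symmetry is automatic because $N_{1}$ and $N_{2}$ commute, being logarithms of the commuting unipotent monodromies $\mathtt{T}_{x},\mathtt{T}_{y}$ around the normal-crossing divisors $D_{1},D_{2}$.

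For part~(1) I verify the three conditions characterizing the monodromy weight filtration centered at $3$, uniformly in $\lambda_{1},\lambda_{2}>0$. From the explicit $N_{i}$ one reads off $N_{\lambda}W_{2i}\subset W_{2i-2}$ for the flag $\langle\alpha_{0}\rangle\subset\langle\alpha_{0},\alpha_{1},\alpha_{2}\rangle\subset\langle\alpha_{0},\alpha_{1},\alpha_{2},\beta_{2},\beta_{1}\rangle\subset H^{3}$ of \eqref{eq:W2i-filt}. Because $\operatorname{Gr}_{1}^{W}=\operatorname{Gr}_{3}^{W}=\operatorname{Gr}_{5}^{W}=0$ with $\dim\operatorname{Gr}_{0}=\dim\operatorname{Gr}_{6}=1$ and $\dim\operatorname{Gr}_{2}=\dim\operatorname{Gr}_{4}=2$, the only nontrivial requirements are that $N_{\lambda}\colon\operatorname{Gr}_{4}^{W}\to\operatorname{Gr}_{2}^{W}$ and $N_{\lambda}^{3}\colon\operatorname{Gr}_{6}^{W}\to\operatorname{Gr}_{0}^{W}$ be isomorphisms. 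The second is equivalent to $N_{\lambda}^{3}\beta_{0}\neq0$, and this follows from part~(2): $N_{\lambda}^{3}=C(\lambda)\mathtt{N}_{0}$ with $C(\lambda)=\sum C_{ijk}\lambda_{i}\lambda_{j}\lambda_{k}=2\lambda_{1}^{3}+18\lambda_{1}^{2}\lambda_{2}+18\lambda_{1}\lambda_{2}^{2}+2\lambda_{2}^{3}>0$ for $\lambda_{i}>0$. The first is read off from the same matrices. Since none of these checks depends on the particular values $\lambda_{i}>0$, the induced filtration is $\lambda$-independent and coincides with \eqref{eq:W2i-filt}.

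The computation is entirely routine and presents no genuine obstacle; it is the exact analogue of the $\mathbb{P}^{4}\times\mathbb{P}^{4}$ case with different integer entries. The only points demanding care are the bookkeeping of the transpose-and-inverse convention of Remark~\ref{rem:T-and-dual-T}, and confirming that the graded-isomorphism conditions hold uniformly in $\lambda_{1},\lambda_{2}>0$ rather than merely generically — which is precisely what the positivity $C(\lambda)>0$ guarantees.
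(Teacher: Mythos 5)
Your proposal is correct and takes essentially the same route as the paper: the paper's entire justification is the sentence ``Using the explicit forms of these matrices, we verify the following properties,'' i.e., exactly the direct computation with $\mathtt{T}_x$, $\mathtt{T}_y$ (modelled on Proposition~\ref{prop:Nijk-N0}) that you lay out. Your added structural framing --- commutativity of $N_1,N_2$ giving total symmetry of $C_{ijk}$, and positivity of $C(\lambda)$ together with the graded-piece isomorphism $N_\lambda\colon W_4/W_2\to W_2/W_0$ guaranteeing the filtration uniformly for all $\lambda_i>0$ --- is consistent with, and slightly more explicit than, what the paper records.
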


From the above proposition, we see that the origin satisfies the conditions for LCSL. Also, looking other boundary points in the moduli space
$\overline{\mathcal{M}}_{X^{*}}^{\rm cpx}$, we see that no other LCSL exists in~$\overline{\mathcal{M}}_{X^{*}}^{\rm cpx}$.

\subsection[Gluing the monodromy nilpotent cone $\Sigma_{o}$]{Gluing the monodromy nilpotent cone $\boldsymbol{\Sigma_{o}}$}

Although there is only one LCSL in the mirror family $\mathfrak{X}^{*}\to\mathcal{M}_{X^{*}}^{\rm cpx}$, we can find the monodromy transformations which correspond to the birational automorphisms~$\tau_{1}$ and~$\tau_{2}$ of~$X$. We observe that the monodromy nilpotent cone $\Sigma_{o}$ extends to a larger cone (or cone structure) using these monodromy transformations, and we will identify the resulting cone structure with the movable cone~$\operatorname{Mov}(X)$ of~$X$.

\subsubsection[Path $p_{o\leftarrow E_i\leftarrow o}$, $i=1,2$]{Path $\boldsymbol{p_{o\leftarrow E_i\leftarrow o}}$, $\boldsymbol{i=1,2}$}

As shown in Fig.~\ref{Fig4}, the discriminant locus ${\rm Dis}$ has non-normal crossing intersection at three points. To make the intersections normal, we blow-up successively four times at the two points near the origin~$o$. We denote by~$E_{1}$ and~$E_{2}$, respectively, the exceptional divisors introduced by the blow-ups (see Fig.~\ref{Fig4} right). As we see in the form of the discriminant~${\rm Dis}$, the family over $\mathcal{M}_{X^{*}}^{\rm cpx}$ is symmetric under $x\leftrightarrow y$. Because of this symmetry reason, it suffices to describe the divisor $D_{2}= \{ y=0\} $ which intersects with~$E_{1}$ at $[1,x,y]=\big[1,\frac{1}{4},0\big]\in\mathbb{P}^{2}$. Explicitly, we introduce the blow-up coordinate at the origin $q_{12}:=E_{1}\cap D_{y}$ by
\begin{gather*}
s_{1}=4x-1,\qquad s_{2}=\frac{1}{2^{6}}\frac{y}{(1-4x)^{4}}.
\end{gather*}

\begin{Definition} Let $R_{12}=\big\{ \frac{1}{4}+\frac{s_{1}}{4}e^{i\theta}\,|\,0\leq\theta\leq2\pi\big\} $ be a small loop around $E_{1}$ on $D_{2}$. We denote by $p_{q_{12}\leftarrow b_{o}}= \{ (1-t)b_{o}+tq_{12}\,|\,0\leq t\leq1-\varepsilon \} $ the straight line connecting the base point $b_{o}$ near $o$ and a point $q_{12}$ on the small loop $R_{12}$. Then we define
\begin{gather*}
p_{b_{o}\leftarrow E_{1}\leftarrow b_{0}}:=(p_{q_{12}\leftarrow b_{o}})^{-1}\circ R_{12}\circ p_{q_{12}\leftarrow b_{o}}
\end{gather*}
to be the composite path which encircles the divisor $E_{1}$ from the base point $b_{o}$. In a similar way, we define a closed path $p_{b_{0}\leftarrow E_{2}\leftarrow b_{o}}$ which encircle the divisor~$E_{2}$ from $b_{0}$ (see Fig.~\ref{Fig4}).

\subsubsection[Monodromy around $E_i$]{Monodromy around $\boldsymbol{E_i}$} \label{para:monod-Ei-p3p3}

Let $(x',y')=\big(\frac{1}{x},\frac{y}{x}\big)$ be the affine coordinate with the origin $[0,1,0]\in\mathbb{P}^{2}$ and $b_{o}'$ be a base point near the origin. We denote by $\mathtt{T}_{x'}'$ and $\mathtt{T}_{y'}'$ the local monodromy around $x'=0$ and $y'=0$, respectively. Conjugating $\mathtt{T}_{x'}'$, $\mathtt{T}_{y'}'$ by the connection matrix for the path $p_{b_{o}'\leftarrow b_{0}}=p_{b_{0}'\leftarrow q_{12}}\circ p_{q_{12}\leftarrow b_{o}}$, we define the corresponding monodromy matrices $\mathtt{T}_{x'}$ and $\mathtt{T}_{y'}$ for loops with the base point $b_{o}$. We define $T_{x'}:=(^{t}\mathtt{T}_{x'})^{-1}$ and $T_{y'}:=(\,^{t}\mathtt{T}_{y'})^{-1}$ to be the linear actions on the dual space $H^{3}(X_{b_{o}}^{*},\mathbb{Z})$. \end{Definition}
\begin{Proposition}\label{prop:Tx'-Ty'}We have
\begin{gather*}
T_{x'}=\left(\begin{smallmatrix}-1 & -1 & \,\,3 & \,\,6 & -10 & \,\,2\\
\,0 & \,1 & -6 & -12 & \,\,8 & \,\,2\\
\,0 & \,0 & -1 & \,\,0 & \,12 & -6\\
\,0 & \,0 & \,\,0 & -1 & -6 & \,\,3\\
\,0 & \,0 & \,\,0 & \,\,0 & \,\,1 & -1\\
\,0 & \,0 & \,\,0 & \,\,0 & \,\,0 & -1
\end{smallmatrix}\right),\qquad T_{y'}=\left(\begin{smallmatrix}1 & \,\,0 & -1 & \,\,1 & \,\,3 & \,\,4\\
0 & \,\,1 & \,0 & -6 & -6 & -3\\
0 & \,\,0 & \,1 & -2 & -6 & -1\\
0 & \,\,0 & \,0 & \,\,1 & \,\,0 & \,\,1\\
0 & \,\,0 & \,0 & \,\,0 & \,\,1 & \,\,0\\
0 & \,\,0 & \,0 & \,\,0 & \,\,0 & \,\,1
\end{smallmatrix}\right).
\end{gather*}
In particular we have $T_{y'}=T_{y}$. \end{Proposition}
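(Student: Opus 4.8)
The plan is to follow the computational strategy of Proposition~\ref{prop:connect-phi-21} and of the monodromy relations~\eqref{eq:T-relation}: first determine the local monodromies at the boundary point $[0,1,0]$, then transport them to the base point $b_o$ near $o$ by analytic continuation along the chosen path $p_{b_o'\leftarrow b_o}=p_{b_o'\leftarrow q_{12}}\circ p_{q_{12}\leftarrow b_o}$, and finally conjugate and dualize. I would dispose of the easy half, $T_{y'}=T_y$, first. Writing $x'=1/x$, $y'=y/x$ one checks that, near $[0,1,0]$, the divisor $x'=0$ is the coordinate line $D_3$ while $y'=0$ is the same divisor $\{y=0\}=D_y$ that appears near $o$. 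Since $D_y$ meets the exceptional divisor $E_1$ transversally (the blow-ups were performed precisely to make this intersection normal crossing), a small loop around $y'=0$ based at $b_o'$ is homotopic, along $p_{b_o'\leftarrow b_o}$, to a small loop around $y=0$ based at $b_o$. Hence $\mathtt{T}_{y'}=\mathtt{T}_y$ and therefore $T_{y'}=T_y$.

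For the harder matrix $T_{x'}$ I would begin by producing a basis of local solutions of the Picard--Fuchs system $\mathcal{D}_1 w=\mathcal{D}_2 w=0$ around the origin of the $(x',y')$ chart: rewrite $\mathcal{D}_1,\mathcal{D}_2$ in these coordinates, read off the indicial structure along $x'=0$ ($=D_3$) and $y'=0$ ($=D_2$), and build the Frobenius basis. From this basis I read the local monodromy matrices $\mathtt{T}_{x'}'$ and $\mathtt{T}_{y'}'$. Since $[0,1,0]$ is not an LCSL, I expect $\mathtt{T}_{x'}'$ to be of finite order up to a unipotent factor; this non-unipotent character is the source of the eigenvalues $-1$ in the claimed $T_{x'}$, and it is consistent with the associated birational map being the order-two involution $\tau_1$.

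The second and principal step is the analytic continuation yielding the connection matrix $C$ along $p_{b_o'\leftarrow b_o}$, carried out exactly as in Proposition~\ref{prop:connect-phi-21}. Near $q_{12}=E_1\cap D_y$ I would pass to the blow-up coordinates $s_1=4x-1$, $s_2=\tfrac{1}{2^{6}}\,y/(1-4x)^{4}$, write the six local solutions (two regular power series and four carrying the logarithms $\log s_1$ and $\log s_2,\dots,(\log s_2)^{3}$), and match them to $\Pi(x,0)$ by high-precision power-series expansion, setting $\log s_2=-4\log(1-4x)$ so as to track the fourth-order tangency of $\mathrm{Dis}_0$ with $D_y$; the second half $p_{b_o'\leftarrow q_{12}}$ is treated in the same way. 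With $C$ in hand the claim is immediate: $\mathtt{T}_{x'}=C^{-1}\mathtt{T}_{x'}'C$, and then $T_{x'}=({}^{t}\mathtt{T}_{x'})^{-1}$, $T_{y'}=({}^{t}\mathtt{T}_{y'})^{-1}$, which I would verify reproduce the two displayed $6\times 6$ matrices. The hard part is precisely this continuation across the tangential intersection: as remarked after Proposition~\ref{prop:connect-phi-21}, it rests on either sufficiently accurate numerics or an analytic formula for $\Pi(x,0)$, and the bookkeeping of the factor $4$ in $\log s_2=-4\log(1-4x)$ (the analogue of the factor $5$ in the $\mathbb{P}^4\times\mathbb{P}^4$ case) is where sign and normalization errors are easiest to introduce. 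As a final consistency check I would confirm that the resulting $T_{x'}$, $T_{y'}$ are integral, preserve the symplectic pairing on $H^{3}(X_{b_o}^{*},\mathbb{Z})$, and are compatible with the monodromy weight filtration~\eqref{eq:W2i-filt}.
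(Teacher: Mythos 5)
Your proposal follows essentially the same route as the paper's proof: Frobenius-type local solutions in the blow-up coordinates $(s_1,s_2)$ centered at $q_{12}$, connection matrices obtained by matching these to the local solutions at $o$ and at $[0,1,0]$, then conjugation of the local monodromies and dualization $T=({}^{t}\mathtt{T})^{-1}$; your normal-crossing homotopy argument for $T_{y'}=T_y$ is likewise the one the paper invokes (in the proof of Proposition~\ref{prop:Monod-rel-P3P3}). The only real difference is one of labor, not of method: the paper observes that in this example the domain of convergence of the solutions at $(s_1,s_2)=(0,0)$ overlaps with those at both $(x,y)=(0,0)$ and $(x',y')=(0,0)$, so the continuation is direct, and the high-precision numerical matching across the tangential intersection that you import from Proposition~\ref{prop:connect-phi-21} is not needed here.
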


\begin{proof} These are based on explicit calculations. Here we only sketch the calculations. We first make local solutions using the coordinate $(s_{1},s_{2})$ centered at~$q_{12}$. Then their domain of convergence have overlap both with the local solutions around $(x,y)=(0,0)$ and $(x',y')=(0,0)$. Then it is straightforward to obtain the connection matrices. The local monodromy matrices $\mathtt{T}_{x'}'$ and~$\mathtt{T}_{y'}'$ are easily read off from the local solutions. Then by conjugating these local monodromy matrices by the connection matrix, we have the expressions for $\mathtt{T}_{x'}$, $\mathtt{T}_{y'}$ as the linear actions on $H_{3}(X_{b_{0}}^{*},\mathbb{Z})$. Translating these to $H^{3}(X_{b_{o}}^{*},\mathbb{Z})$, we obtain~$T_{x'}$ and~$T_{y'}$.
\end{proof}

Similarly we define the monodromy matrix $\mathtt{T}_{E_{1}}$ along the loop $p_{b_{o}\leftarrow E_{1}\leftarrow b_{o}}$ and set $T_{E_{1}}:=(\,^{t}\mathtt{T}_{E_{1}})^{-1}$. Corresponding to Proposition~\ref{prop:Picard-Lefschetz-Reye} we have
\begin{Proposition}[`Picard--Lefschetz formula' for the flopping curves] \label{prop:Picard--Lefschetz-P3P3}
\quad \begin{enumerate}\itemsep=0pt
\item[{\rm (1)}] The monodromy matrix is given by
\begin{gather*}
T_{E_{1}}=\left(\begin{smallmatrix}-1 & 0 & 0 & 0 & 0 & 0\\
0 & 1 & -6 & 0 & 48 & 0\\
0 & 0 & -1 & 0 & 0 & 0\\
0 & 0 & 0 & -1 & -6 & 0\\
0 & 0 & 0 & 0 & 1 & 0\\
0 & 0 & 0 & 0 & 0 & -1
\end{smallmatrix}\right).
\end{gather*}
In particular, this is quasi-unipotent.

\item[{\rm (2)}] For $T_{E_{1}}^{2}$ we have
\begin{gather*}
T_{E_{1}}^{2}=\left(\begin{smallmatrix}1\\
 & 1 & & & 96\\
 & & 1\\
 & & & 1\\
 & & & & 1\\
 & & & & & 1
\end{smallmatrix}\right),\qquad \text{i.e.},\qquad \begin{cases}
\alpha_{1}\to\alpha_{1}+96\beta_{1},\\
\beta_{1}\to\beta_{1},\\
\alpha_{i}=\alpha_{i}, \quad \beta_{i}=\beta_{i}, \quad i\not=1.
\end{cases}
\end{gather*}

\item[{\rm (3)}] By symmetry, we have similar formula for $T_{E_{2}}$ and $T_{E_{2}}^{2}$. In particular, $T_{E_{2}}^{2}$ is given by
$\alpha_{2}\to\alpha_{2}+96\beta_{2}$, $\beta_{2}\to\beta_{2}$, with $\alpha_{i}=\alpha_{i}$, $\beta_{i}=\beta_{i}$ for $i\not=2$.

\end{enumerate}\end{Proposition}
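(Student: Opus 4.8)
The plan is to derive all three parts from a single direct monodromy computation of the Picard--Fuchs system $\mathcal{D}_{1}w=\mathcal{D}_{2}w=0$, in exact parallel with Propositions~\ref{prop:Picard-Lefschetz-Reye} and~\ref{prop:Tx'-Ty'}. In fact this is the very local analysis already performed for Proposition~\ref{prop:Tx'-Ty'}: I would construct the Frobenius-type local solutions at the blow-up point $q_{12}=E_{1}\cap D_{2}$ in the coordinates $s_{1}=4x-1$, $s_{2}=\frac{1}{2^{6}}\frac{y}{(1-4x)^{4}}$, in which $E_{1}=\{s_{1}=0\}$ and $D_{2}=\{s_{2}=0\}$ cross normally and the loop $R_{12}$ encircling $E_{1}$ is realized as $s_{1}\to e^{2\pi i}s_{1}$ along $s_{2}=0$. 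The matrix $T_{E_{1}}$ is then the monodromy of these same solutions for $s_{1}\to e^{2\pi i}s_{1}$, read off on $H_{3}(X_{b_{o}}^{*},\mathbb{Z})$ and translated to $H^{3}$ by $T_{E_{1}}=(\,^{t}\mathtt{T}_{E_{1}})^{-1}$, once they are analytically continued to the canonical symplectic basis $\Pi(x,y)$ near the LCSL $o$.

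The decisive new feature, compared with the $\mathbb{P}^{4}\times\mathbb{P}^{4}$ case of Proposition~\ref{prop:Picard-Lefschetz-Reye}, is that $T_{E_{1}}$ is only \emph{quasi}-unipotent: the diagonal entries $-1$ force some local exponents in the $s_{1}$-direction to be half-integral, so that a single turn $s_{1}\to e^{2\pi i}s_{1}$ contributes the eigenvalue $-1$. I expect this to be the analytic shadow of the $2:1$ structure of $\pi_{i}\colon X\to\mathbb{P}^{3}$: the entry $\alpha_{0}\to-\alpha_{0}$ says that the holomorphic period $\int_{A_{0}}\Omega_{\bm{x}}$ changes sign around $E_{1}$, the mirror shadow of the deck involution $\tilde\tau_{i}$ and hence of $\tau_{i}^{2}=\mathrm{id}$. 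Part~(1) then follows by reading off the matrix and noting the $\pm1$ eigenvalues. For part~(2) I would simply square: the diagonal $-1$'s become $+1$, the $-6$ cross-terms cancel, and the single surviving off-diagonal entry doubles from $48$ to $96$, producing the displayed unipotent $T_{E_{1}}^{2}$, a genuine Picard--Lefschetz transvection in the $(\alpha_{1},\beta_{1})$-plane. Thus the full flop transvection is realized only after encircling $E_{1}$ twice, while one turn mixes half of it with the involution; one expects the number $96$ to count the $80$ lines and $4$ conics of Proposition~\ref{prop:P3P3-curves} weighted by degree squared, $80\cdot 1^{2}+4\cdot 2^{2}=96$. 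Part~(3) is immediate from the $x\leftrightarrow y$ symmetry of ${\rm Dis}$: conjugation by $\mathrm{p}_{23}\mathrm{p}_{45}$, which exchanges $\alpha_{1}\leftrightarrow\alpha_{2}$ and $\beta_{1}\leftrightarrow\beta_{2}$, sends $E_{1}\mapsto E_{2}$ and yields $T_{E_{2}}$, $T_{E_{2}}^{2}$ with the indices $1$ and $2$ interchanged, exactly as $T_{E_{1}''}=\mathrm{p}_{23}\mathrm{p}_{45}T_{E_{1}}\mathrm{p}_{23}\mathrm{p}_{45}$ in the previous section.

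The main obstacle is, as in Propositions~\ref{prop:connect-phi-21} and~\ref{prop:Picard-Lefschetz-Reye}, the analytic continuation across the blown-up region: the local solutions at $q_{12}$ are available only as power series in $s_{1},s_{2}$, and matching them to the global basis $\Pi(x,y)$ near $o$ to enough precision to pin down the \emph{integer} entries of $\mathtt{T}_{E_{1}}$ is delicate --- the more so here, since the half-integer exponents make the branch-cut bookkeeping subtler than in the unipotent case. In practice I would set $s_{2}=0$, substitute $\log s_{2}=-4\log(1-4x)$ up to a constant, and match high-order expansions numerically, the clean route being an explicit analytic formula for $\Pi(x,0)$. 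A reassuring independent check is to verify a monodromy relation expressing the already-computed $T_{x'}$ of Proposition~\ref{prop:Tx'-Ty'} through $T_{E_{1}}$, $T_{x}$, $T_{y}$ in the spirit of~\eqref{eq:T-relation}; together with the relation $T_{y'}=T_{y}$ recorded there, this would confirm the computed $T_{E_{1}}$ without re-examining the continuation.
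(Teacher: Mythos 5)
Your proposal follows essentially the same route as the paper's proof: construct the local Frobenius solutions at $q_{12}=E_{1}\cap D_{2}$ in the blow-up coordinates $(s_{1},s_{2})$, analytically continue them to the canonical symplectic basis $\Pi(x,y)$ at the LCSL $o$, read off $\mathtt{T}_{E_{1}}$ and translate it to $H^{3}$ (with part (2) obtained by squaring and part (3) by the $x\leftrightarrow y$ symmetry). The only difference is that the paper points out that the local solutions around $(s_{1},s_{2})=(0,0)$ and $(x,y)=(0,0)$ have overlapping domains of convergence in this example, so the continuation is a straightforward series matching and the delicate numerical step you anticipate (as was needed in the $\mathbb{P}^{4}\times\mathbb{P}^{4}$ case) is not required here.
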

\begin{proof}These results follow from making local solutions and the analytic continuations of them. Again, calculations are straightforward since local solutions around $(x,y)=(0,0)$ and $(s_{1},s_{2})=(0,0)$ have overlap in their domains of convergence. \end{proof}

\begin{Remark} \label{rem:Remark-TE1-II}(1) As before, the monodromy action~(2) in the above proposition is expressed in terms of the symplectic basis $\{ A_{i},B_{j}\} $ of $H_{3}(X_{b_{o}}^{*},\mathbb{Z})$ as
\begin{gather*}
A_{1}\to A_{1},\qquad B_{1}\to B_{1}-96 A_{1}.
\end{gather*}

(2) We have seen in Proposition~\ref{prop:P3P3-curves} that each $\tau_{i}\colon X\dashrightarrow X$ is an Atiyah's flop with respect to 80~lines and also 4~conics. We observe that $96=80+4\times2^{2}$ holds for the number in $T_{E_{i}}^{2}$. We can verify the corresponding relations also for other examples. Based on these, we conjecture the following general form:
\begin{gather*}
A_{1}\to A_{1},\qquad B_{1}\to B_{1}-\big(n_{0}(1)+n_{0}(2)\times2^{2}\big) A_{1}
\end{gather*}
for the Atiyah's flops of $n_{0}(1)$ lines and $n_{0}(2)$ conics associated to the contractions to the double cover of $\mathbb{P}^{3}$.
\end{Remark}

\subsubsection{Monodromy relations}
Take affine coordinates $(x,y)$, $(x',y')$ and $(x'',y'')$ of $\mathbb{P}^{2}$ as shown in Fig.~\ref{Fig4}. Let $T_{x'}$, $T_{y'}$ be as defined in Proposition~\ref{prop:Tx'-Ty'}.
\begin{Proposition} \label{prop:Monod-rel-P3P3}The following monodromy relations holds
\begin{gather}
\boxed{T_{x'}=T_{E_{1}}^{-1}T_{x}^{-1}T_{y}^{3},\qquad T_{y'}=T_{y},\qquad T_{E_{1}}T_{y}=T_{y}T_{E_{1}}.}\label{eq:XP3P3-monod-rel-1}
\end{gather}
\end{Proposition}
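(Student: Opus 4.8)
The plan is to treat all three relations of~(\ref{eq:XP3P3-monod-rel-1}) as explicit matrix identities in $\operatorname{End}\big(H^{3}(X_{b_{o}}^{*},\mathbb{Z})\big)$, because every matrix that occurs has already been made explicit: $T_{x}=(\,^{t}\mathtt{T}_{x})^{-1}$ and $T_{y}=(\,^{t}\mathtt{T}_{y})^{-1}$ come from the matrices $\mathtt{T}_{x}$, $\mathtt{T}_{y}$ recorded in Section~\ref{sub:B-structure-P3P3} via the dual convention of Remark~\ref{rem:T-and-dual-T}, while $T_{x'}$, $T_{y'}$ are given in Proposition~\ref{prop:Tx'-Ty'} and $T_{E_{1}}$ in Proposition~\ref{prop:Picard--Lefschetz-P3P3}. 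Thus, exactly as in the derivation of~(\ref{eq:T-relation}) for the $\mathbb{P}^{4}\times\mathbb{P}^{4}$ example, each boxed equation reduces to a finite computation with $6\times6$ integral matrices, and the role of the geometry is only to explain \emph{why} these particular products appear.

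For the first relation $T_{x'}=T_{E_{1}}^{-1}T_{x}^{-1}T_{y}^{3}$ I would form the product on the right directly and compare it entry-by-entry with the matrix $T_{x'}$ of Proposition~\ref{prop:Tx'-Ty'}. Conceptually this is the expected relation: in the chart $(x',y')=\big(\frac{1}{x},\frac{y}{x}\big)$ centred at $[0,1,0]$ the locus $x'=0$ is the coordinate line of $\mathbb{P}^{2}$ at infinity, and a small loop around it, transported to the base point $b_{o}$ along the path through $q_{12}=E_{1}\cap D_{2}$, is homotopic in the complement of ${\rm Dis}$ to a composite of loops around $\{x=0\}$, $\{y=0\}$ and the exceptional divisor $E_{1}$. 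The exponent $3$ of $T_{y}$ and the factor $T_{E_{1}}^{-1}$ are the analogues of the exponent $4$ and $T_{E_{1}}^{-1}$ in~(\ref{eq:T-relation}), dictated here by the fourth-order tangency of the component ${\rm Dis}_{0}$ with $D_{2}$ at $[1,\frac{1}{4},0]$ (visible in ${\rm Dis}_{0}|_{y=0}=(1-4x)^{4}$) and the resulting four-fold blow-up that separates $\widetilde{{\rm Dis}}_{0}$ from $D_{2}$. I would use this homotopy picture to predict the product and then confirm it against the explicit matrices.

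The remaining two relations are then immediate. The equality $T_{y'}=T_{y}$ has already been recorded in Proposition~\ref{prop:Tx'-Ty'}; geometrically it holds because the divisor $\{y=0\}=\{y'=0\}$ is unchanged by the coordinate change and meets $E_{1}$ transversally, so the monodromy picked up around it does not depend on whether the base point sits near $o$ or near $[0,1,0]$. For the commutativity $T_{E_{1}}T_{y}=T_{y}T_{E_{1}}$, the divisors $E_{1}$ and $D_{2}$ cross normally at $q_{12}$, so in a neighbourhood of that point the fundamental group of the complement is abelian ($\cong\mathbb{Z}^{2}$); hence the two loops commute, and the identity can equally be checked by multiplying the two explicit matrices in both orders.

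The only genuinely delicate point lies upstream of this proposition rather than in it: the matrices $T_{x'}$, $T_{y'}$ and $T_{E_{1}}$ rest on the analytic continuation of the period vector $\Pi(x,y)$ through the blow-up coordinate $(s_{1},s_{2})$ at $q_{12}$, which in the proofs of Propositions~\ref{prop:Tx'-Ty'} and~\ref{prop:Picard--Lefschetz-P3P3} is carried out only numerically. Once those matrices are granted, the relations~(\ref{eq:XP3P3-monod-rel-1}) are purely mechanical. Accordingly, I expect the main effort to be the careful bookkeeping of the $6\times6$ matrix arithmetic (transposes, inverses, and especially the order of the factors $T_{E_{1}}^{-1}T_{x}^{-1}T_{y}^{3}$), and the principal conceptual check to be the identification of the homotopy class of the loop around $x'=0$ that produces the first relation.
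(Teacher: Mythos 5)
Your proposal is correct and proceeds essentially as the paper does: the first relation is established by direct multiplication of the explicit $6\times 6$ matrices $T_{E_1}$, $T_x$, $T_y$ against the matrix $T_{x'}$ of Proposition~\ref{prop:Tx'-Ty'}, while the second and third relations follow from the normal crossings of the relevant divisors after the four-fold blow-up (the paper's proof invokes exactly this, and your homotopy picture of the loop around $x'=0$ is a conceptual gloss the authors themselves defer, not a different proof mechanism). No gaps.
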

\begin{proof} We have the second and the third relations since all the divisors are normal crossing after the blow-ups. We can verify the first relation directly by using $T_{x}=(\,^{t}\mathtt{T}_{x})^{-1}$, $T_{y}=(\,^{t}\mathtt{T}_{y})^{-1}$ given in Section~\ref{sub:B-structure-P3P3} and $T_{x'}$, $T_{E_{1}}$ in Section~\ref{para:monod-Ei-p3p3}.
\end{proof}

\begin{Definition}
Define the following conjugations of $T_{x},T_{y}$ by $T_{E_{1}}$:
\begin{gather*}
\tilde{T}_{x}:=T_{E_{1}}^{-1}T_{x}T_{E_{1}},\qquad \tilde{T}_{y}:=T_{E_{1}}^{-1}T_{y}T_{E_{1}}.
\end{gather*}
Using these, we define the monodromy nilpotent cone by
\begin{gather*}
\tilde{\Sigma}_{o}:=\left\{ \sum\lambda_{i}\tilde{N}_{i}\,|\,\lambda_{i}>0\right\} \subset\operatorname{End}\big(H^{3}(X_{b_{o}}^{*},\mathbb{R})\big),
\end{gather*}
where $\tilde{N}_{1}:=\log\tilde{T}_{x}$ and $\tilde{N}_{2}:=\log\tilde{T}_{y}$.
\end{Definition}

\begin{Proposition}\label{prop:Glue-P3P3-Tx}The $($closures of the$)$ monodromy nilpotent cones $\Sigma_{o}$ and $\tilde{\Sigma}_{o}$ glue along the ray $\mathbb{R}_{\geq0}N_{2}$, but they are not on the same two dimensional plane. \end{Proposition}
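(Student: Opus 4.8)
The plan is to use that, by its very definition, $\tilde\Sigma_{o}$ is the conjugate $T_{E_{1}}^{-1}\Sigma_{o}T_{E_{1}}$ of $\Sigma_{o}$, so the statement reduces to tracking how conjugation by $T_{E_{1}}$ moves the two generators $N_{1}=\log T_{x}$ and $N_{2}=\log T_{y}$. I would prove two facts: (a) $\tilde N_{2}=N_{2}$, which furnishes the common boundary ray $\mR_{\geq0}N_{2}$; and (b) $\tilde N_{1}\notin P$, where $P:=\mR N_{1}+\mR N_{2}$ is the plane carrying $\Sigma_{o}$. Fact (b) is exactly the assertion that the two cones are not coplanar, and it simultaneously promotes the shared ray to an honest gluing: once $\tilde N_{1}\notin P$ one has $\tilde\Sigma_{o}\cap P=\mR_{\geq0}\tilde N_{2}=\mR_{\geq0}N_{2}$, and since $\Sigma_{o}\subset P$ this forces $\Sigma_{o}\cap\tilde\Sigma_{o}=\mR_{\geq0}N_{2}$ precisely.

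Fact (a) is the soft half and I would dispatch it immediately from the monodromy relations \eqref{eq:XP3P3-monod-rel-1}. The third relation there states $T_{E_{1}}T_{y}=T_{y}T_{E_{1}}$, so $T_{E_{1}}$ commutes with every convergent power series in $T_{y}-\mathrm{id}$, in particular with $N_{2}=\log T_{y}$. Hence $\tilde N_{2}=\log\big(T_{E_{1}}^{-1}T_{y}T_{E_{1}}\big)=T_{E_{1}}^{-1}(\log T_{y})T_{E_{1}}=N_{2}$, as required. This is the exact analogue of the equality $N_{2}'=N_{2}$ in Proposition~\ref{prop:Sigma0+Sigma2}.

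For fact (b), the harder point, I would compute $\tilde N_{1}=T_{E_{1}}^{-1}N_{1}T_{E_{1}}$ from the explicit matrices of $\mathtt T_{x}$ in Section~\ref{sub:B-structure-P3P3} and of $T_{E_{1}}$ in Proposition~\ref{prop:Picard--Lefschetz-P3P3}, and then verify that $N_{1}$, $N_{2}$, $\tilde N_{1}$ are linearly independent in $\operatorname{End}\big(H^{3}(X_{b_{o}}^{*},\mR)\big)$; since $N_{1},N_{2}$ already span the two-dimensional $P$, this is precisely $\tilde N_{1}\notin P$. Rather than evaluate the full $6\times6$ logarithm, the economical route uses that $T_{E_{1}}$ preserves the weight filtration $W_{0}\subset W_{2}\subset W_{4}\subset W_{6}$ (its matrix is block upper triangular for the basis $\mathcal B$), so any putative relation $\tilde N_{1}=aN_{1}+bN_{2}$ may be tested weight by weight. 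Testing it on the classes $\alpha_{1},\alpha_{2}$, which the $N_{i}$ send into $W_{0}=\langle\alpha_{0}\rangle$, constrains the coefficients $a,b$; testing it next on $\beta_{1},\beta_{2}$ then fails, because the entry $48$ of $T_{E_{1}}$ — which couples $\beta_{1}$ to $\alpha_{1}$, and whose double $96=80+4\cdot2^{2}$ records the flopping curves of Remark~\ref{rem:Remark-TE1-II} — injects into $\tilde N_{1}$ a contribution on $W_{4}$ that no combination $aN_{1}+bN_{2}$ can reproduce. Producing this explicit surviving term, the counterpart of the nonvanishing $\Delta_{1,0}'$ of Proposition~\ref{prop:Sigma0+Sigma2} and the mirror of the first-order quantum correction, is the one genuinely computational step and the only real obstacle; with $\tilde N_{1}\notin P$ in hand, both assertions of the proposition follow as in the first paragraph.
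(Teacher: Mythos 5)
Your proposal is correct and is essentially the paper's own argument: the common ray comes from the commutation relation $T_{E_1}T_y=T_yT_{E_1}$ of Proposition~\ref{prop:Monod-rel-P3P3} (giving $\tilde N_2=N_2$), and non-coplanarity comes from an explicit matrix computation exhibiting $\tilde N_1=6N_2-N_1+\Delta_1$ with $\Delta_1\neq 0$ but $\Delta_1\vert_{W_2}=0$, combined with the observation that no nonzero combination $aN_1+bN_2$ restricts to zero on $W_2$. Your weight-by-weight test is the same computation organized slightly differently, and the surviving contribution you predict on $\beta_1$ is exactly the paper's $\Delta_1$ in equation~(\ref{eq:tildeN1}).
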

\begin{proof}Using the monodromy relations in Proposition \ref{prop:Monod-rel-P3P3}, we have $\tilde{T}_{y'}=T_{y}$. Hence the claim is immediate since we have $\tilde{N}_{2}=N_{2}$ by definition. To see the second claim, we use again the monodromy relations to have
\begin{gather*}
\tilde{T}_{x}=T_{E_{1}}^{-1}T_{x}T_{E_{1}}=T_{E_{1}}^{-1}T_{y}^{3}T_{x'}^{-1}=T_{E_{1}}^{-1}T_{x'}^{-1}T_{y}^{3},
\end{gather*}
which is reminiscent of the relation (\ref{eq:T-relation}). In fact, after some matrix calculations, we obtain
\begin{gather}
\tilde{N}_{1}=6N_{2}-N_{1}+\Delta_{1},\qquad \Delta_{1}=\left(\begin{smallmatrix}0 & 0 & 0 & 0 & 48 & -\frac{44}{3}\\
0 & 0 & 0 & 0 & -112 & 48\\
0 & 0 & 0 & 0 & 0 & 0\\
0 & 0 & 0 & 0 & 0 & 0\\
0 & 0 & 0 & 0 & 0 & 0\\
0 & 0 & 0 & 0 & 0 & 0
\end{smallmatrix}\right),\label{eq:tildeN1}
\end{gather}
where $\Delta_{1}$ satisfies $\Delta_{1}\vert_{W_{2}}=0$. Since the nilpotent cone $\Sigma_{o}$ lies on the plane spanned by~$N_{1}$ and~$N_{2}$, and $aN_{1}+bN_{2}\vert_{W_{2}}\not=0$ holds for any~$a$,~$b$, the basis element $\tilde{N}_{1}$ does not lie on the same plane as $\Sigma_{o}$.
\end{proof}

\subsubsection{Gluing nilpotent cones} As the example in the previous section, the structure of the moduli space $\mathcal{M}_{X^{*}}^{\rm cpx}$ is
symmetric under the exchange of $x$ and $y$. Hence, corresponding to~(\ref{eq:XP3P3-monod-rel-1}), we have
\begin{gather}
\boxed{T_{x''}=T_{x},\qquad T_{y''}=T_{E_{2}}^{-1}T_{y}^{-1}T_{x}^{3},\qquad T_{E_{2}}T_{x}=T_{x}T_{E_{2}}.}\label{eq:XP3P3-monod-rel-2}
\end{gather}
When we define $\tilde{T}_{x}':=T_{E_{2}}^{-1}T_{x}T_{E_{2}}$, $\tilde{T}_{y}':=T_{E_{2}}^{-1}T_{y}T_{E_{2}}$,
we have the following relations
\begin{gather*}
\tilde{N}_{1}'=N_{1},\qquad \tilde{N}_{2}'=6N_{1}-N_{2}+\Delta_{1}'
\end{gather*}
for $\tilde{N}_{1}':=\log\tilde{T}_{x}$, $\tilde{N}_{2}':=\log\tilde{T}_{y}$ with $\Delta'_{1}\vert_{W_{2}}=0$. This entails the corresponding
gluing property described in Proposition~\ref{prop:Sigma0+Sigma2}. We summarize these two actions into the following general form.
\begin{Definition}
We denote by $\tau_{E_{i}}$ the conjugations by $T_{E_{i}}$ on $\operatorname{End}\big(H^{3}(X_{b_{o}}^{*},\mathbb{Q})\big)$,
which act on the nilpotent matrices $N$ in general as
\begin{gather*}
\tau_{E_{1}}(N)=T_{E_{1}}^{-1}NT_{E_{1}},\qquad \tau_{E_{2}}(N)=T_{E_{2}}^{-1}NT_{E_{2}}.
\end{gather*}
We set $G:=\langle\tau_{E_{1}},\tau_{E_{2}}\rangle$, i.e., the group generated by $\tau_{E_{1}}$ and $\tau_{E_{2}}$.
\end{Definition}

\begin{Proposition}\label{prop:tau1-tau2-actions}\quad
\begin{enumerate}\itemsep=0pt
\item[{\rm (1)}] The actions of $\tau_{E_{i}}^{n}\in G$ on $N_{1}=\log T_{x}$, $N_{2}=\log T_{y}$ are summarized as
\begin{gather*}
\big(\tau_{E_{1}}^{n}(N_{1}),\tau_{E_{1}}^{n}(N_{2})\big)=(N_{1},N_{2})\left(\begin{matrix}-1 & 0\\
6 & 1
\end{matrix}\right)^{n}+(\Delta_{n},0),\\
\big(\tau_{E_{2}}^{n}(N_{1}),\tau_{E_{2}}^{n}(N_{2})\big)=(N_{1},N_{2})\left(\begin{matrix}1 & 6\\
0 & -1 \end{matrix}\right)^{n}+(0,\Delta_{n}'),
\end{gather*}
where $\Delta_{n}$, $\Delta_{n}'$ are elements in $\operatorname{End}\big(H^{3}(X_{b_{o}}^{*},\mathbb{Q})\big)$ satisfying $\Delta_{n}\vert_{W_{2}}=\Delta_{n}'\vert_{W_{2}}=0$. In particular, we have
\begin{gather*}
\tau_{E_{1}}^{n}(N_{2})=N_{2},\qquad \tau_{E_{2}}^{n}(N_{1})=N_{1}.
\end{gather*}

\item[{\rm (2)}] The action of $\sigma\in G$ on $\Delta_{n},\Delta_{n}'$ preserves the vanishing properties of $\Delta_{n}$, $\Delta_{n}'$ on
$W_{2}$, i.e., $\sigma(\Delta_{n})\vert_{W_{2}}=\sigma(\Delta_{n}')\vert_{W_{2}}=0$.

\item[{\rm (3)}] $\Delta_{n}$, $\Delta_{n}'$ have the following forms:
\begin{gather*}
\Delta_{2m}=\left(\begin{matrix}
O_{24} & \begin{smallmatrix}-96m & 0\\
0 & -96m
\end{smallmatrix}\\
O_{44} & O_{42}
\end{matrix}\right),\qquad \Delta_{2m-1}=\left(\begin{matrix}
O_{24} & \begin{smallmatrix}96\big(m-\frac{1}{2}\big) & -\frac{44}{3}\\
-122 & 96\big(m-\frac{1}{2}\big)
\end{smallmatrix}\\
O_{44} & O_{42}
\end{matrix}\right)
\end{gather*}
and $\Delta_{n}'=\mathrm{p}_{23}\mathrm{p}_{45}\Delta_{n}\mathrm{p}_{23}\mathrm{p}_{45}$, where $O_{ab}$ is the $a\times b$ zero matrix and $\mathrm{p}_{ij}$ represents the permutation matrix for the transposition $(i,j)$.
\end{enumerate}\end{Proposition}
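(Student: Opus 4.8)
The plan is to derive all three parts from the single base-case identity of Proposition~\ref{prop:Glue-P3P3-Tx} by induction on $n$, after first isolating the one structural input behind statement~(2). That input is that $T_{E_1}$ preserves $W_2=\langle\alpha_0,\alpha_1,\alpha_2\rangle$: the explicit matrix in Proposition~\ref{prop:Picard--Lefschetz-P3P3}(1) sends $\alpha_0\mapsto-\alpha_0$, $\alpha_1\mapsto\alpha_1$ and $\alpha_2\mapsto-6\alpha_1-\alpha_2$, all in $W_2$, and by the $x\leftrightarrow y$ symmetry the same holds for $T_{E_2}$. Hence for any $\Delta$ with $\Delta|_{W_2}=0$ and any $w\in W_2$ we get $\big(T_{E_i}^{-1}\Delta T_{E_i}\big)w=T_{E_i}^{-1}\big(\Delta(T_{E_i}w)\big)=0$ because $T_{E_i}w\in W_2$; thus conjugation by $T_{E_i}^{\pm1}$ preserves the left ideal $\mathcal{I}_2=\big\{X\,|\,X|_{W_2}=0\big\}$ (cf.\ Corollary~\ref{cor:XpartI}). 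Since every $\sigma\in G$ is a word in $\tau_{E_1}^{\pm1},\tau_{E_2}^{\pm1}$, statement~(2) is immediate, and I would record it as a lemma before the induction.

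For statement~(1) I would induct on $n$, the case $n=1$ being exactly $\tau_{E_1}(N_1)=-N_1+6N_2+\Delta_1$ and $\tau_{E_1}(N_2)=N_2$ from Proposition~\ref{prop:Glue-P3P3-Tx} and~\eqref{eq:tildeN1} (encoded by $M_1=\left(\begin{smallmatrix}-1&0\\6&1\end{smallmatrix}\right)$ acting on the right of $(N_1,N_2)$ with correction $(\Delta_1,0)$), the $\tau_{E_2}$ statement following symmetrically with $M_2=\left(\begin{smallmatrix}1&6\\0&-1\end{smallmatrix}\right)$. The decisive simplification is that $\tau_{E_1}(N_2)=N_2$ holds exactly, so the second slot of the correction stays zero; applying $\tau_{E_1}$ to the ansatz $\tau_{E_1}^n(N_1,N_2)=(N_1,N_2)M_1^n+(\Delta_n,0)$ and using $\mathbb{Q}$-linearity yields the recursion $\Delta_{n+1}=(-1)^n\Delta_1+\tau_{E_1}(\Delta_n)$, i.e.\ $\Delta_n=\sum_{k=0}^{n-1}(-1)^{n-1-k}\tau_{E_1}^k(\Delta_1)$. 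Each summand lies in $\mathcal{I}_2$ by the lemma, so $\Delta_n|_{W_2}=0$, finishing~(1); note $M_1^2=M_2^2=\mathrm{id}$, matching $\tau_i^2=\mathrm{id}$.

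The explicit forms in~(3) are the heart of the matter and the step I expect to be the main obstacle, since it requires genuine matrix bookkeeping rather than formal arguments. The idea is to pass to $\tau_{E_1}^2$, which is conjugation by the honestly unipotent $U:=T_{E_1}^2=\mathrm{id}+96E$, where $E$ is the single matrix unit appearing in Proposition~\ref{prop:Picard--Lefschetz-P3P3}(2); then $E^2=0$ and $U^{\pm m}=\mathrm{id}\pm96mE$. Two vanishings must be verified directly: $EN_1E=0$ (so that $U^{-m}N_1U^m=N_1+96m[N_1,E]$ is linear in $m$) and $E\Delta_1=\Delta_1E=0$ (so that conjugating $\Delta_1$ by $U^{m-1}$ leaves it unchanged), together with $[U,N_2]=0$, which is exactly the third monodromy relation $T_{E_1}T_y=T_yT_{E_1}$ of Proposition~\ref{prop:Monod-rel-P3P3}. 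Feeding these into the sum for $\Delta_n$ collapses it: for even $n=2m$ one gets $\Delta_{2m}=96m\,[N_1,E]$, and for odd $n=2m-1$ one gets $\Delta_{2m-1}=\Delta_1-96(m-1)[N_1,E]$. Evaluating the single commutator $[N_1,E]$, whose top-right $2\times2$ block is $-I_2$, then yields the diagonal entries $-96m$ and $96\big(m-\frac{1}{2}\big)$ respectively, while the off-diagonal rational entries (such as $-\frac{44}{3}$) come straight from $\Delta_1$ in~\eqref{eq:tildeN1}. Finally $\Delta_n'=\mathrm{p}_{23}\mathrm{p}_{45}\Delta_n\mathrm{p}_{23}\mathrm{p}_{45}$ is forced by the $x\leftrightarrow y$ symmetry, which swaps $N_1\leftrightarrow N_2$, $T_{E_1}\leftrightarrow T_{E_2}$ and acts on the basis by $\alpha_1\leftrightarrow\alpha_2$, $\beta_1\leftrightarrow\beta_2$. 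The only real labor is confirming the two vanishings and reading off the exact entries so that the even and odd blocks emerge as stated.
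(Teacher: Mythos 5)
Your proposal is correct and takes essentially the same route as the paper's own (very terse) proof, which likewise verifies the formulas by explicit computation with the matrices $T_x$, $T_y$, $T_{E_i}$ and obtains the vanishing on $W_2$ inductively from $\Delta_1\vert_{W_2}=0$ in \eqref{eq:tildeN1} together with the fact that $T_{E_1}$, $T_{E_2}$ preserve the monodromy weight filtration; your reduction of part (3) to $E^2=0$, $EN_1E=0$, $E\Delta_1=\Delta_1E=0$, $[T_{E_1},T_y]=0$ and the single commutator $[N_1,E]$ (whose top-right $2\times2$ block is $-I_2$) is simply a cleaner organization of that explicit calculation. One remark: your derivation forces the $(2,5)$-entry of $\Delta_{2m-1}$ to equal the corresponding entry $-112$ of $\Delta_1$ from \eqref{eq:tildeN1} (as it must, since $\Delta_1=\Delta_{2\cdot1-1}$), so the value $-122$ printed in part (3) of the Proposition is a typo that your argument in fact corrects.
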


\begin{proof} These properties are verified by explicit calculations using the matrix representa\-tions $T_{x}$,~$T_{y}$ and~$T_{E_{i}}$ given previous sections. The vanishing properties follow inductively from $\Delta_{1}\vert_{W_{2}}=\Delta_{1}'\vert_{W_{2}}=0$ and the fact that both $T_{E_{1}}$ and $T_{E_{2}}$ preserve the monodromy weight filtration $W_{0}\subset W_{2}\subset W_{4}\subset W_{6}=H^{3}(X_{b_{o}}^{*},\mathbb{Q})$.
\end{proof}

As before, let $\mathcal{I}_{2}:=\big\{ X\in\operatorname{End}\big(H^{3}(X_{b_{0}}^{*},\mathbb{R})\big)\,|\, X\vert_{W_{2}}=0\big\} $ be an left ideal of $\operatorname{End}\big(H^{3}(X_{b_{0}}^{*},\mathbb{R})\big)$, and $\pi\colon \operatorname{End}\big(H^{3}(X_{b_{0}}^{*},\mathbb{R})\big)\to\operatorname{End}\big(H^{3}(X_{b_{0}}^{*},\mathbb{R})\big)/\mathcal{I}_{2}$ be the natural projection. Since~$T_{E_{i}}$ preserve the monodromy weight filtration, and by the definition of $\tau_{E_{i}}$, it is easy to see that $\sigma(\mathcal{I}_{2})\subset\mathcal{I}_{2}$ for all $\sigma\in G$. Hence we have the naturally induced $G$ action on the quotient $\operatorname{End}\big(H^{3}(X_{b_{0}}^{*},\mathbb{R})\big)/\mathcal{I}_{2}$ by $\sigma(X+\mathcal{I}_{2}):=\sigma(X)+\mathcal{I}_{2}$. Note that, if we denote by $\bar{\sigma}$ the action of $\sigma\in G$ on the quotient space, we have $\overline{\sigma\tau}=\bar{\tau}\bar{\sigma}$ (i.e., anti-homomorphism by our convention for the adjoint action) for all $\sigma,\tau\in G$.

\begin{Corollary}\label{cor:Union-cones-P3P3}Denote by $\bar{N}_{i}:=\pi(N_{i})$ the basis of the cone $\pi(\Sigma_{o})$ in the quotient space. Then the following hold:
\begin{enumerate}\itemsep=0pt
\item[{\rm (1)}] Define $\tau_{12}:=\tau_{1}\tau_{2}$ with $\tau_{i}=\tau_{E_{i}}$. We have
\begin{gather*}
\big(\bar{\tau}_{12}^{n}(\bar{N}_{1}),\bar{\tau}_{12}^{n}(\bar{N}_{2})\big)=\big(\bar{N}_{1},\bar{N}_{2}\big)\left(\begin{matrix}35 & 6\\
-6 & -1
\end{matrix}\right)^{n}.
\end{gather*}

\item[{\rm (2)}] $\{ \sigma\in G\,|\,\bar{\sigma}(\bar{N}_{i})=\bar{N}_{i}, \, i=1,2 \} =\langle\tau_{1}^{2},\tau_{2}^{2}\rangle$.

\item[{\rm (3)}] Taking the closure in $\operatorname{End}\big(H^{3}(X_{b_{0}}^{*},\mathbb{R})\big)/\mathcal{I}_{2}$, we have
\begin{gather*}
\bigcup_{\sigma\in G}\overline{\pi(\sigma(\Sigma_{o}))}=\mathbb{R}_{>0}\big({-}\bar{N}_{1}+(3+2\sqrt{2})\bar{N}_{2}\big)+\mathbb{R}_{>0}\big(\bar{N}_{1}+(3-2\sqrt{2})\bar{N_{2}}\big).
\end{gather*}
\end{enumerate}\end{Corollary}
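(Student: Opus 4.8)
The plan is to carry out everything inside the two-dimensional quotient $\operatorname{End}\big(H^{3}(X_{b_{o}}^{*},\mathbb{R})\big)/\mathcal{I}_{2}$, where the correction terms $\Delta_{n},\Delta_{n}'$ of Proposition~\ref{prop:tau1-tau2-actions} all vanish since they lie in $\mathcal{I}_{2}$. On the plane spanned by $\bar{N}_{1}=\pi(N_{1})$ and $\bar{N}_{2}=\pi(N_{2})$ the generators $\tau_{E_{1}},\tau_{E_{2}}$ of $G$ then act, after projecting Proposition~\ref{prop:tau1-tau2-actions}(1), by the integral matrices $A_{1}=\left(\begin{smallmatrix}-1&0\\6&1\end{smallmatrix}\right)$ and $A_{2}=\left(\begin{smallmatrix}1&6\\0&-1\end{smallmatrix}\right)$. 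Since $\sigma\mapsto\bar{\sigma}$ is an anti-homomorphism, $\bar{\tau}_{12}=\overline{\tau_{1}\tau_{2}}=\bar{\tau}_{2}\bar{\tau}_{1}$ is represented on the frame by $A_{2}A_{1}=\left(\begin{smallmatrix}35&6\\-6&-1\end{smallmatrix}\right)=:B$, and raising this to the $n$-th power yields part~(1) at once.

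For part~(2) I would first note $A_{1}^{2}=A_{2}^{2}=\mathrm{id}$, so that $\tau_{1}^{2},\tau_{2}^{2}$ fix the frame $(\bar{N}_{1},\bar{N}_{2})$ and $\langle\tau_{1}^{2},\tau_{2}^{2}\rangle$ is contained in the stabilizer. The images $A_{1},A_{2}$ are involutions whose product $B$ has eigenvalues $17\pm12\sqrt{2}$, hence infinite order, so they generate the infinite dihedral group $D_{\infty}\subset GL_{2}(\mathbb{Z})$, and the stabilizer of the frame is exactly the kernel of the surjection $G\twoheadrightarrow D_{\infty}$. The one genuinely group-theoretic point is that $\langle\tau_{1}^{2},\tau_{2}^{2}\rangle$ is already normal: $\tau_{i}^{2}=\mathrm{Ad}_{T_{E_{i}}^{2}}$ where, by Proposition~\ref{prop:Picard--Lefschetz-P3P3}, $T_{E_{i}}^{2}$ is the symplectic transvection with direction $\beta_{i}$, and $T_{E_{1}}$ sends $\beta_{2}\mapsto-\beta_{2}$ while $T_{E_{2}}$ sends $\beta_{1}\mapsto-\beta_{1}$. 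Because a transvection is insensitive to the sign of its direction, conjugation by $\tau_{1}$ preserves $\tau_{2}^{2}$ and conjugation by $\tau_{2}$ preserves $\tau_{1}^{2}$; thus $\tau_{1},\tau_{2}$ normalize $\langle\tau_{1}^{2},\tau_{2}^{2}\rangle$. Since $D_{\infty}=\langle a,b\mid a^{2},b^{2}\rangle$, the quotient $G/\langle\tau_{1}^{2},\tau_{2}^{2}\rangle$ is already $D_{\infty}$, so the kernel equals $\langle\tau_{1}^{2},\tau_{2}^{2}\rangle$, proving part~(2).

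For part~(3), parts~(1)--(2) show the distinct cones $\pi(\sigma(\Sigma_{o}))$ are indexed by $D_{\infty}$, and Proposition~\ref{prop:Glue-P3P3-Tx} together with its $x\leftrightarrow y$ mirror shows consecutive cones share the rays $\mathbb{R}_{\geq0}\bar{N}_{2}$ and $\mathbb{R}_{\geq0}\bar{N}_{1}$; thus the images form a fan of edge-adjacent cones swept out by the powers $\bar{\tau}_{12}^{n}$. I would then compute the two extreme edges: in the $(\bar{N}_{1},\bar{N}_{2})$ frame $\bar{\tau}_{12}^{n}(\bar{N}_{i})$ has coordinate column $B^{n}e_{i}$, and since $B$ has real reciprocal eigenvalues $17\pm12\sqrt{2}$, neither a root of unity, the rays $\mathbb{R}_{\geq0}B^{n}e_{i}$ converge as $n\to\pm\infty$ to the two eigendirections of $B$, of slope $-3\pm2\sqrt{2}$ in the frame. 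Oriented so as to bound the fan, these are the two boundary rays $-\bar{N}_{1}+(3+2\sqrt{2})\bar{N}_{2}$ and $\bar{N}_{1}-(3-2\sqrt{2})\bar{N}_{2}$; taking closures, the fan fills exactly the cone they span, giving the asserted equality (exactly parallel to Corollary~\ref{cor:XpartI} and Fig.~\ref{Fig1}).

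The hardest part will be the clean bookkeeping in part~(3): one must verify that the fan swept out by $\bar{\tau}_{12}^{n}$ advances monotonically, i.e.\ that the angular position of the shared edges is strictly increasing in $n$, so that the cones neither overlap nor leave gaps, and that the two ends of the $D_{\infty}$-indexed fan limit onto two \emph{distinct} eigendirections rather than collapsing to a single line. This monotonicity is precisely what guarantees that the closure of the union is the eigen-cone and not a larger sector; the eigenvalue/eigenvector computation and the normality argument of part~(2) are routine by comparison.
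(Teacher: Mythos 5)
Your part (1) is correct and is exactly the paper's argument: project Proposition~\ref{prop:tau1-tau2-actions}(1) to the quotient, use the reversed-order composition for $\bar{\tau}_{12}$, and multiply $A_2A_1=\left(\begin{smallmatrix}35&6\\-6&-1\end{smallmatrix}\right)$. Your part (3) is also essentially the paper's route: the paper produces the same edge-adjacent chain of cones from the exact relations $\tau_1(N_2)=N_2$, $\tau_2(N_1)=N_1$ and the coset decomposition $G=\langle\tau_{12},\tau_1^2,\tau_2^2\rangle\cup\langle\tau_{12},\tau_1^2,\tau_2^2\rangle\tau_1$, then takes limits of rays under powers of $\left(\begin{smallmatrix}35&6\\-6&-1\end{smallmatrix}\right)$. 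Your remark that only the \emph{image} of $G$ in $GL_2$ matters is correct and important, since it makes (3) independent of (2) (the paper's proof of (3) likewise never invokes (2)). One caveat: your boundary rays $-\bar{N}_1+(3+2\sqrt{2})\bar{N}_2$ and $\bar{N}_1-(3-2\sqrt{2})\bar{N}_2$ are the geometrically correct ones (they match the pattern of Corollary~\ref{cor:XpartI}, whose second generator is $\bar{N}_1-(2-\sqrt{3})\bar{N}_2$), but they do \emph{not} agree with the displayed statement, which has $\bar{N}_1+(3-2\sqrt{2})\bar{N}_2$; that ray lies in the interior of $\pi(\Sigma_o)$ and cannot bound the union. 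You should have flagged this sign discrepancy instead of declaring "the asserted equality''.

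The genuine gap is in part (2). You correctly isolate what the paper's one-line proof leaves implicit: since every $\bar{g}$ preserves the plane spanned by $\bar{N}_1,\bar{N}_2$ (Proposition~\ref{prop:tau1-tau2-actions}(1)), the stabilizer is the kernel of the induced (anti)representation $G\to GL_2$, hence is automatically normal; so the asserted equality forces $\langle\tau_1^2,\tau_2^2\rangle$ to be normal in $G$. But your verification of normality rests on a false matrix identity. You take $\beta_2$ as the transvection direction of $T_{E_2}^2$ \emph{and} claim $T_{E_1}\colon\beta_2\mapsto-\beta_2$; these two statements hold in mutually dual readings of the matrices of Proposition~\ref{prop:Picard--Lefschetz-P3P3} and never simultaneously. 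In the reading in which $T_{E_1}$ preserves the weight filtration (the one required by Proposition~\ref{prop:tau1-tau2-actions}), one has $T_{E_2}^2\colon\beta_2\mapsto\beta_2+96\alpha_2$, a transvection with direction $\alpha_2$, while $T_{E_1}\colon\alpha_2\mapsto-6\alpha_1-\alpha_2$, $\beta_1\mapsto48\alpha_1-6\beta_2+\beta_1$; in the dual reading the direction is $\beta_2$ but $T_{E_1}\colon\beta_2\mapsto-\beta_2-6\beta_1$. The off-diagonal $-6$ entries of $T_{E_1}$ are exactly what your argument ignores. Concretely, $T_{E_1}T_{E_2}^2T_{E_1}^{-1}$ (and equally $T_{E_1}^{-1}T_{E_2}^2T_{E_1}$) is the transvection in the mixed direction $6\alpha_1+\alpha_2$: it sends $\beta_2\mapsto\beta_2+576\alpha_1+96\alpha_2$ and $\beta_1\mapsto\beta_1+3456\alpha_1+576\alpha_2$ and fixes all other basis vectors, whereas every element of $\langle T_{E_1}^2,T_{E_2}^2\rangle$ acts by $\beta_1\mapsto\beta_1+96m\alpha_1$, $\beta_2\mapsto\beta_2+96n\alpha_2$ with no cross terms; since both fix $\alpha_0$, they cannot even differ by a scalar. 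Hence $\tau_1\tau_2^2\tau_1^{-1}\notin\langle\tau_1^2,\tau_2^2\rangle$, the subgroup is not normal, and your proof collapses.

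Moreover this cannot be repaired: the conjugating matrix $t_{6\alpha_1+\alpha_2}$ restricts to the identity on $W_2$, and $N_i(W_2)\subset W_0\subset W_2$, so $\tau_1\tau_2^2\tau_1^{-1}(N_i)-N_i$ annihilates $W_2$, i.e., this element \emph{does} fix $\bar{N}_1$ and $\bar{N}_2$. It therefore lies in the left-hand side of (2) but not in the right-hand side: the stabilizer is the normal closure of $\{\tau_1^2,\tau_2^2\}$ in $G$, which strictly contains $\langle\tau_1^2,\tau_2^2\rangle$. Carried out correctly, your reduction disproves the displayed equality rather than proving it. (This defect is shared by the paper's own terse proof of (2); it does not propagate further, since neither (1) nor (3) uses (2).)
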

\begin{proof} The equality (1) follow from Proposition~\ref{prop:tau1-tau2-actions}(1) and $\bar{\tau}_{12}=\bar{\tau}_{2}\bar{\tau}_{1}$. By definition, $G$ is generated by $\tau_{1}$, $\tau_{2}$. Then the claim (2) follows from Proposition~\ref{prop:tau1-tau2-actions}(1) and the above equality~(1). To show the claim (3), we write by $(N_{1},N_{2})_{>0}$ the cone generated by $N_{1}$ and $N_{2}$. Then we first show that
the following cones successively glue together to a large cone:
\begin{gather*}
\big(\tau_{12}^{n}(N_{1}),\tau_{12}^{n}(N_{2})\big)_{>0},\qquad \big(\tau_{12}^{n}(\tau_{1}N_{1}),\tau_{12}^{n}(\tau_{1}N_{2})\big)_{>0}, \qquad n\in\mathbb{Z}.
\end{gather*}
Using the property $\tau_{1}(N_{2})=N_{2}$, $\tau_{2}(N_{1})=N_{1}$, we have
\begin{gather*}
\tau_{12}^{n}(\tau_{1}N_{1})=\tau_{12}^{n+1}(N_{1}),\qquad \tau_{12}^{n}(\tau_{1}N_{2})=\tau_{12}^{n}(N_{2}),
\end{gather*}
by which we can arrange a sequence of cones schematically as follows:
\begin{gather*}
\begin{array}{@{}ccccccc}
 & \big(\tau_{12}^{n+1}(\tau_{1}N_{1}), & \tau_{12}^{n+1}(\tau_{1}N_{2})\big)_{>0} & \big(\tau_{12}^{n}(\tau_{1}N_{1}), & \tau_{12}^{n}(\tau_{1}N_{2})\big)_{>0} & \cdots\\
 & \shortparallel & \shortparallel & \shortparallel & \shortparallel & \shortparallel\\
 & \cdots & \big(\tau_{12}^{n+1}(N_{2}), & \tau_{12}^{n+1}(N_{1})\big)_{>0} & \big(\tau_{12}^{n}(N_{2}), & \tau_{12}^{n}(N_{1})\big)_{>0}
\end{array}
\end{gather*}
Let us note that $\tau_{2}\tau_{1}=\tau_{2}^{2}\tau_{12}^{-1}\tau_{1}^{2}$ and $\tau_{2}=\tau_{2}^{2}\tau_{12}^{-1}\tau_{1}$ hold. Then, using these relations, we can deduce the decomposition
\begin{gather*}
G=\big\langle\tau_{12},\tau_{1}^{2},\tau_{2}^{2}\big\rangle\cup\big\langle\tau_{12},\tau_{1}^{2},\tau_{2}^{2}\big\rangle\tau_{1}.
\end{gather*}
Since $\tau_{1}^{2}$, $\tau_{2}^{2}$ have trivial actions on $\bar{N}_{i}$, $i=1,2$, the above sequence of the cones explain the union $\bigcup_{\sigma\in G}\overline{\pi(\sigma(\Sigma_{o}))}$. After some linear algebra of the matrix power $\left(\begin{smallmatrix}35 & 6\\
-6 & -1
\end{smallmatrix}\right)^{n}$, we can determine the infinite union in the claimed form.
\end{proof}

\begin{figure}[t]\centering
\includegraphics[width=102mm]{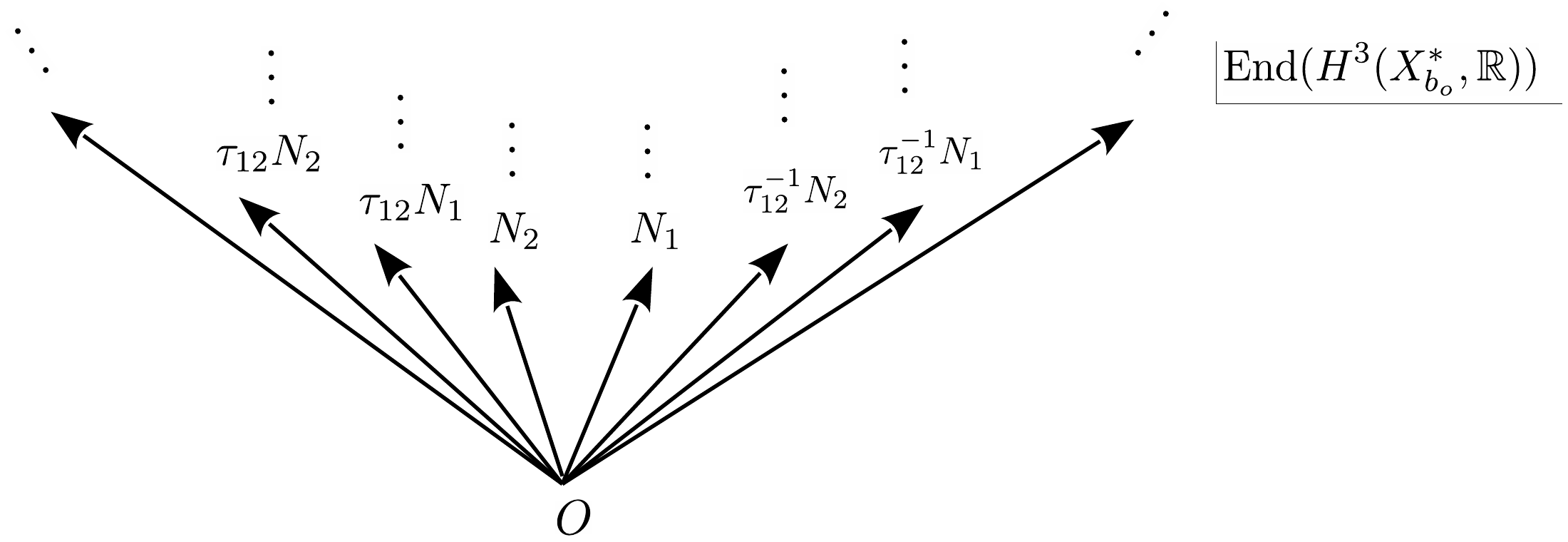}

\caption{Gluing nilpotent cones. The nilpotent cones $\sigma(\Sigma_o)$ glue in $\operatorname{End}\big(H^3(X_{b_o}^*,\mathbb{R})\big)$. The composite actions of $\tau_1^2$ and $\tau_2^2$ on each ray are not trivial, although they are trivial on the images in $\operatorname{End}\big(H^3(X_{b_o}^*,\mathbb{R})\big)/\mathcal{I}_2$.}\label{Fig5}
\end{figure}

\subsection[Flopping curves and $T_{E_{1}}$]{Flopping curves and $\boldsymbol{T_{E_{1}}}$}

The monodromy $T_{E_{1}}$ has appeared in the moduli space from the tangential intersection of the discriminants. This is quite parallel to Section~\ref{sub:TE1-Reye}. However, $T_{E_{1}}$ is not unipotent but only quasi-unipotent in the present case. This prevents a parallel definition to the second equation in~(\ref{eq:N-Nf}), but this time we set
\begin{gather*}
N_{1}^{\mathtt{f}}:=6N_{2}-N_{1}
\end{gather*}
with $\tilde{N}_{1}=N_{1}^{\mathtt{f}}+\Delta_{1}$ (see~(\ref{eq:tildeN1})) and also $\tilde{N}_{2}=N_{2}^{\mathtt{f}}=N_{2}$. Then we have
\begin{Proposition}
Let $\tilde{N}_{i}\tilde{N}_{j}\tilde{N}_{k}=\tilde{C}_{ijk}\mathtt{N}_{0}$ and $N_{i}^{\mathtt{f}}N_{j}^{\mathtt{f}}N_{k}^{\mathtt{f}}=C_{ijk}^{\mathtt{f}}\mathtt{N}_{0}$ with $\mathtt{N}_{0}$ as given in Proposition~{\rm \ref{prop:Nijk-N0}}. Non-vanishing $($totally symmetric$)$ $\tilde{C}_{ijk}$ and $C_{ijk}^{\mathtt{f}}$ are given by
\begin{gather}
\big(\tilde{C}_{111},\tilde{C}_{112},\tilde{C}_{122},\tilde{C}_{222}\big)=(2,6,6,2), \nonumber\\
\big(C_{111}^{\mathtt{f}},C_{112}^{\mathtt{f}},C_{122}^{\mathtt{f}},C_{222}^{\mathtt{f}}\big)=(-110,6,6,2).\label{eq:tCijk-Cf-P3P3}
\end{gather}
\end{Proposition}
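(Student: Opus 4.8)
The plan is to reduce everything to the cubic relation $N_iN_jN_k=C_{ijk}\mathtt{N}_0$ with $(C_{111},C_{112},C_{122},C_{222})=(2,6,6,2)$ already established for this family, so that beyond one small verification no new matrix product is really needed.

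For the values $\tilde C_{ijk}$ the key point is that the $\tilde N_i$ are honest conjugates of the $N_i$. From $\tilde T_x=T_{E_1}^{-1}T_xT_{E_1}$, $\tilde T_y=T_{E_1}^{-1}T_yT_{E_1}$ together with $N_1=\log T_x$, $N_2=\log T_y$ and the identity $\log(g^{-1}Ag)=g^{-1}(\log A)g$, which holds for any invertible $g$ when $A$ is unipotent and hence applies even though $T_{E_1}$ is only quasi-unipotent, one gets $\tilde N_i=T_{E_1}^{-1}N_iT_{E_1}$. Therefore
\[
\tilde N_i\tilde N_j\tilde N_k=T_{E_1}^{-1}(N_iN_jN_k)T_{E_1}=C_{ijk}\,T_{E_1}^{-1}\mathtt{N}_0T_{E_1}.
\]
First I would check the single fact $T_{E_1}^{-1}\mathtt{N}_0T_{E_1}=\mathtt{N}_0$. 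Writing $\mathtt{N}_0$ as the rank-one operator $\alpha_0\otimes\beta_0^{*}$ sending $\beta_0\mapsto\alpha_0$, the explicit $T_{E_1}$ of Proposition~\ref{prop:Picard--Lefschetz-P3P3} satisfies $T_{E_1}\alpha_0=-\alpha_0$ and $T_{E_1}\beta_0=-\beta_0$, so the two signs cancel and $\mathtt{N}_0$ is conjugation-invariant. This gives $\tilde C_{ijk}=C_{ijk}$, i.e.\ $(2,6,6,2)$.

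For the values $C_{ijk}^{\mathtt{f}}$ the maps $N_i^{\mathtt{f}}$ are instead explicit linear combinations, $N_1^{\mathtt{f}}=-N_1+6N_2$ and $N_2^{\mathtt{f}}=N_2$. Writing $N_a^{\mathtt{f}}=\sum_iP_{ia}N_i$ with $P=\left(\begin{smallmatrix}-1&0\\6&1\end{smallmatrix}\right)$, trilinearity of the product together with the cubic relation gives at once
\[
C_{abc}^{\mathtt{f}}=\sum_{i,j,k}P_{ia}P_{jb}P_{kc}\,C_{ijk},
\]
so $C^{\mathtt{f}}$ is the pushforward of the symmetric cubic form $\mathcal{C}(u)=2u_1^3+18u_1^2u_2+18u_1u_2^2+2u_2^3$ under the substitution encoded by $P$. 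I would then simply evaluate $C_{111}^{\mathtt{f}}=\mathcal{C}(-1,6)=-110$ and $C_{222}^{\mathtt{f}}=\mathcal{C}(0,1)=2$, and read off the two mixed polarizations $C_{112}^{\mathtt{f}}=C_{122}^{\mathtt{f}}=6$, matching the claim.

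The only place where the explicit matrices are touched is the conjugation-invariance $T_{E_1}^{-1}\mathtt{N}_0T_{E_1}=\mathtt{N}_0$, and this is the crux rather than an obstacle: it forces $\tilde C_{ijk}=C_{ijk}$ and thereby makes the contrast between the two tuples conceptual. The combination $N_1^{\mathtt{f}}=-N_1+6N_2$ is the bare linear change of variables and produces the ``classical'' coupling $C_{111}^{\mathtt{f}}=-110$, while the true nilpotent $\tilde N_1=N_1^{\mathtt{f}}+\Delta_1$ carries the extra term $\Delta_1$ coming from $T_{E_1}$ (the flopping-curve data of Remark~\ref{rem:Remark-TE1-II}), which restores the coupling to $\tilde C_{111}=2$; the discrepancy between $-110$ and $2$ is exactly the quantum correction supplied by $\Delta_1$. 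Everything else is formal multilinear algebra, so I would present the computation in these two conceptual steps rather than as one block of matrix multiplications.
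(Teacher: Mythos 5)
Your proposal is correct, and it takes a genuinely different route from the paper's. The paper states this proposition with no accompanying proof; the intended argument---as in its Section~\ref{sec:Gluing-monod-I} analogue, Proposition~\ref{prop: Cijk'-Cf-prop}, whose proof reads ``we derive these numbers by direct calculations of matrix products''---is brute-force multiplication of the explicit $6\times 6$ matrices. You instead reduce everything to the relation $N_iN_jN_k=C_{ijk}\mathtt{N}_0$ with $(C_{111},C_{112},C_{122},C_{222})=(2,6,6,2)$, already established in Section~\ref{sub:B-structure-P3P3}, via two formal observations, both sound: (a) $\tilde N_i=T_{E_1}^{-1}N_iT_{E_1}$, since $\log\big(g^{-1}Ag\big)=g^{-1}(\log A)g$ needs only unipotence of $A=T_x,T_y$ (the quasi-unipotence of $g=T_{E_1}$ is indeed irrelevant), together with the single matrix fact $T_{E_1}^{-1}\mathtt{N}_0T_{E_1}=\mathtt{N}_0$, which forces $\tilde C_{ijk}=C_{ijk}$; and (b) trilinearity plus total symmetry of $C_{ijk}$, which make $C^{\mathtt{f}}$ the pushforward of the cubic form $\mathcal{C}(u)=2u_1^3+18u_1^2u_2+18u_1u_2^2+2u_2^3$, and indeed $\mathcal{C}(-1,6)=-2+108-648+432=-110$, $\mathcal{C}(0,1)=2$, with both mixed polarizations equal to $6$. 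Your route buys a structural, purely linear-algebraic explanation of why $\tilde C_{ijk}$ reproduces $C_{ijk}$ exactly---the paper offers only the a posteriori interpretation via mirror symmetry and flop invariance---and it isolates precisely where the flopping-curve datum $T_{E_1}$ drops out (conjugation of $\mathtt{N}_0$) versus where it matters (the correction $\Delta_1=\tilde N_1-N_1^{\mathtt{f}}$); what the paper's direct computation buys is logical independence from the earlier proposition and a simultaneous consistency check on all the displayed matrices.

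Two small precision points, neither of which breaks your argument. First, invariance of the rank-one matrix $\mathtt{N}_0$ under conjugation requires, besides $T_{E_1}\alpha_0=-\alpha_0$, the dual statement $\beta_0^{*}\circ T_{E_1}=-\beta_0^{*}$, i.e., that the last \emph{row} (not merely the last column) of $T_{E_1}$ is $(0,\dots,0,-1)$; this is visible in the matrix of Proposition~\ref{prop:Picard--Lefschetz-P3P3}, so the check goes through as you state it. Second, in the convention the paper uses to decode these matrices (compare how $T_{E_1}^2$ is read there as $\alpha_1\to\alpha_1+96\beta_1$), $\mathtt{N}_0$ sends $\alpha_0\mapsto\beta_0$ rather than $\beta_0\mapsto\alpha_0$; the matrix identity you verify is convention-independent, so the conclusion is unaffected.
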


As before, the first equality of \eqref{eq:tCijk-Cf-P3P3} is explained by mirror symmetry, i.e., the isomorphism of the B-structure at $o$ with the A-structure of $X$. To see this isomorphism more explicitly, we recall the mirror map
\begin{gather*}
t_{i}=\frac{\int_{A_{i}}\Omega_{\bm{x}}}{\int_{A_{0}}\Omega_{\bm{x}}}
\end{gather*}
defined by the B-structure at the LCSL $o$. The monodromy matrix $\mathtt{T}_{E_{1}}=(\,^{t}T_{E_{1}})^{-1}$ represents the isomorphism $H_{3}(X_{b_{o}}^{*},\mathbb{Z})\to H_{3}(X_{b_{o}'}^{*},\mathbb{Z})$ which follows from the analytic continuation of the period integral $\Pi(x,y)$ along the path $p_{b_{0}\leftarrow E_{1}\leftarrow b_{o}}$. After the continuation, the coordinate $(t_{1},t_{2})$ transformed to $(t_{1}',t_{2}')$ with
\begin{gather*}
t_{1}'=-t_{1},\qquad t_{2}'=6t_{1}+t_{2}.
\end{gather*}
 Corresponding to Proposition \ref{prop:Yukawa-Reye}, we now have
\begin{Proposition}Let $C_{ijk}$ be as defined in Proposition~{\rm \ref{prop:Nijk-N0}}. Also set $q_{1}':=e^{t_{1}'}$ and $q_{1}=e^{t_{1}}$. Then we have the following relations
\begin{gather*}
C_{ijk}^{\mathtt{f}}=\sum_{l,m,n}C_{lmn}\frac{dt_{l}}{dt_{i}'}\frac{dt_{m}}{dt_{j}'}\frac{dt_{n}}{dt_{k}'}
\end{gather*}
and
\begin{gather}
\tilde{C}_{111}+80\frac{q_{1}'}{1-q_{1}'}+4\frac{2^{3}q_{1}'^{2}}{1-q_{1}'^{2}}=C_{111}^{\mathtt{f}}+\left(80\frac{q_{1}}{1-q_{1}}+4\frac{2^{3}q_{1}^{2}}{1-q_{1}^{2}}\right)\left(\frac{dt_{1}}{dt_{1}'}\right)^{3}.\label{eq:Cijk-Flop-inv-P3P3}
\end{gather}
\end{Proposition}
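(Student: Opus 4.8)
The plan is to follow the template of Proposition~\ref{prop:Yukawa-Reye}, both relations being finite verifications once the mirror-map coordinate change is recorded. The linear change of coordinates here is $t_{1}'=-t_{1}$, $t_{2}'=6t_{1}+t_{2}$, so inverting gives $t_{1}=-t_{1}'$, $t_{2}=t_{2}'+6t_{1}'$, and hence the Jacobian entries
\begin{gather*}
\frac{dt_{1}}{dt_{1}'}=-1,\qquad \frac{dt_{2}}{dt_{1}'}=6,\qquad \frac{dt_{1}}{dt_{2}'}=0,\qquad \frac{dt_{2}}{dt_{2}'}=1.
\end{gather*}

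For the first relation I would simply substitute these into $\sum_{l,m,n}C_{lmn}\frac{dt_{l}}{dt_{i}'}\frac{dt_{m}}{dt_{j}'}\frac{dt_{n}}{dt_{k}'}$, using the values $C_{111}=C_{222}=2$, $C_{112}=C_{122}=6$ of the Yukawa coupling at the LCSL $o$. This is just the tensorial (chain-rule) transformation law of the Yukawa coupling. The four independent components reproduce \eqref{eq:tCijk-Cf-P3P3} term by term; in particular the $(1,1,1)$ entry equals $C_{111}(-1)^{3}+3C_{112}(-1)^{2}\cdot6+3C_{122}(-1)\cdot6^{2}+C_{222}\cdot6^{3}=-2+108-648+432=-110=C_{111}^{\mathtt{f}}$, while the remaining components give $6,6,2$ as claimed. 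This step carries no conceptual content.

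The second (flop-invariance) relation is the heart of the matter. Here the key input is that $t_{1}'=-t_{1}$ forces $q_{1}=1/q_{1}'$ and $\big(\frac{dt_{1}}{dt_{1}'}\big)^{3}=-1$. Under $q_{1}=1/q_{1}'$ one computes
\begin{gather*}
\frac{q_{1}}{1-q_{1}}=-\frac{1}{1-q_{1}'}=-1-\frac{q_{1}'}{1-q_{1}'},\qquad \frac{q_{1}^{2}}{1-q_{1}^{2}}=-\frac{1}{1-q_{1}'^{2}}=-1-\frac{q_{1}'^{2}}{1-q_{1}'^{2}}.
\end{gather*}
Multiplying the entire instanton correction by $-1$ therefore produces the $q_{1}'$-series on the left together with a constant shift $80+4\cdot2^{3}=112$. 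Combined with $C_{111}^{\mathtt{f}}=-110$ this yields $-110+112=2=\tilde{C}_{111}$ for the constant term, while the two series match term by term; this is exactly \eqref{eq:Cijk-Flop-inv-P3P3}.

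The main obstacle is not the algebra but the correct weighting of the two curve classes. The $80$ lines enter with weight $1$ and the $4$ conics with the multiple-cover factor $2^{3}$ (the cube of the degree), as dictated by the genus-zero Gromov--Witten multiple-cover formula and matching the third-derivative nature of the Yukawa coupling. One must take care that this is the cube $2^{3}$, in contrast to the square $2^{2}$ appearing in the purely topological monodromy $T_{E_{1}}^{2}$ of Proposition~\ref{prop:Picard--Lefschetz-P3P3}, where $96=80+4\cdot2^{2}$. It is precisely the consistency of this cubic weighting with the difference $\tilde{C}_{111}-C_{111}^{\mathtt{f}}=112$ that makes the flop-invariance identity close up; getting this bookkeeping right, rather than any hard analysis, is the only real content of the proof.
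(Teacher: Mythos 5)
Your proof is correct and is essentially the paper's own argument: the paper states this proposition without a written proof, as the direct analogue of Proposition~\ref{prop:Yukawa-Reye}, whose proof consists precisely of the substitution $q_{1}=1/q_{1}'$ that you carry out, here extended to the conic term $4\cdot 2^{3}q_{1}'^{2}/(1-q_{1}'^{2})$. Your chain-rule verification of the first relation against \eqref{eq:tCijk-Cf-P3P3} and the constant-term bookkeeping $-110+(80+4\cdot 2^{3})=2=\tilde{C}_{111}$ are exactly the intended computations.
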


In the above equality, we see the invariance of the quantum cohomology of $X$ under birational transformations. We note that the equality~(\ref{eq:Cijk-Flop-inv-P3P3}) has a slightly more general form than the familiar form~(\ref{eq:Cijk-Flop-inv}) due to the existence of~4 conics in the flopping curves.

\section{Summary and discussions}\label{section6}

We have studied gluings of monodromy nilpotent cones through monodromy relations coming from boundary divisors. Under the mirror symmetry, we have identified them with the corresponding gluings along codimension-one walls of the K\"ahler cones in birational geometry. In this paper, we confined ourself to two specific examples by doing explicit monodromy calculations. However, it is naturally expected that the observed gluings of monodromy nilpotent cones and their interpretation in mirror symmetry hold in general.

We present below some discussions and related subjects in order. In particular, we briefly report the gluing in the case of K3 surfaces whose moduli spaces have parallel structures to the Calabi--Yau threefolds $X$ and $X^{*}$ studied in Sections~\ref{sec: CICY-I} and~\ref{sec:Gluing-monod-I}.

{\bf 6.1.} The gluing of monodromy nilpotent cones has been done naturally through the monodromy relations (\ref{eq:T-relation}), (\ref{eq:T-relation2}) and also (\ref{eq:XP3P3-monod-rel-1}), (\ref{eq:XP3P3-monod-rel-2}). These relations came from boundary divisors which have tangential intersections with some component of discriminant and the blowing-ups at the intersection points. As remarked in Remarks~\ref{rem:Remark-TE1}, \ref{rem:Remark-TE1-II}, these tangential singularities are related to the contractions in the birational geometry of the mirror Calabi--Yau manifolds. We expect some generality in the degenerations of the mirror families $\mathfrak{X}^{*}$ when we approach to the exceptional divisors~$E_{i}$ of the blow-ups. We have to leave this for future investigations although we note that a categorical study of the mirror symmetry for conifold transitions has been put forward in a recent work~\cite{FHLY}.

{\bf 6.2.} In the homological mirror symmetry due to Kontsevich~\cite{Ko}, monodromy transformations in B-structures are interpreted as the corresponding transformations in the derived category of coherent sheaves $D^{b}(X)$. From this viewpoint, the gluing of nilpotent cones in $\operatorname{End}\big(H^{3}(X^{*},\mZ)\big)$ suggests the corresponding gluing of K\"ahler cones in $\operatorname{End}(K(X))$ as a \textit{homological} extension of the movable cones. The resulting wall structures of the gluing in $\operatorname{End}(K(X))$ should be regarded as the wall structures in the stability space \cite{Brid} of the objects in $D^{b}(X)$.

{\bf 6.3.} As addressed in Remark \ref{rem:Bat-Nil}, one can expect non-trivial birational geometry also for other examples of complete intersections described by Gorenstein cones~\cite{BatNil}. Among such examples, there are complete intersections whose projective geometry fits well to the so-called \textit{linear duality} (see Appendix~\ref{section:LD}). We have for example the following complete intersection:
\begin{gather*}
X=\left(\begin{matrix}\mP^{4}|\,2\,1\,1\,1\\
\mP^{3}|\,1\,1\,1\,1
\end{matrix}\right)^{2,56},
\end{gather*}
which shares many properties with (\ref{eq:XP4P4}) in Section \ref{sec: CICY-I}, e.g., three birational models come together when we construct the complete intersection of the form $X$. Although we do not have birational
automorphisms of infinite order in this example, these three birational models are explained nicely by ``double linear duality'', a certain composite of two different linear dualities. We will report this elsewhere.

{\bf 6.4 (Cayley model of Reye congruences).} Historically the Calabi--Yau complete intersection studied in Section~\ref{sec: CICY-I} is a generalization of the following K3 surface:
\begin{gather*}
X=\left(\begin{matrix}\mP^{3}\,|\,\,1\,1\,1\,1\\
\mP^{3}\,|\,\,1\,1\,1\,1
\end{matrix}\right),
\end{gather*}
which is called a Cayley model of Reye congruences. When we take the defining equations general, $X$ is a smooth K3 surface of the Picard lattice isomorphic to $M:=\left(\mZ^{2},\left(\begin{smallmatrix}4 & 6\\
6 & 4
\end{smallmatrix}\right)\right)$. This~K3 surface has been studied in \cite{FGvGvL, Oguiso2} as an example which has an automorphism $\rho$ of infinite order and also positive entropy. Actually, we have the same diagram as~(\ref{diag:XZXZ}) with the parallel definitions of $X_{i}$ ($X_{1}:=X)$ and $Z_{i}$ as well as $\rho$ in Proposition~\ref{prop:Birat(Xi)}. The difference is in that all~$X_{i}$ and~$Z_{i}$ are smooth K3 surfaces and hence
isomorphic to each other under the morphisms, e.g., $\pi_{ij}$ and $\varphi_{ij}$. For~K3 surfaces, we have the so-called counting formula~\cite{HLOY} for the number of Fourier--Mukai partners. Based on it, it is easy to see that the set $FM(X)$ of Fourier--Mukai partners consists of only $X$ itself.

The construction of the mirror family of $X$ is similar to Section~\ref{sub:MirrorFamily}, and there appear three LCSL $o_{i}$, $i=1,2,3$, on the compactified moduli space $\overline{\mathcal{M}}_{X^{*}}^{\rm cpx}=\mP^{2}$. As before, we determine the connecting matrices $\cvarphi_{ij}$ by blowing-up at three points with (fourth) tangential intersections (cf.\ Fig.~\ref{Fig2}). Making similar canonical bases of period integrals as
in (\ref{eq:PiX}) at each point, which represents bases of the transcendental lattice $T_{X^{*}}\simeq U\oplus M$ of the mirror K3 surface $X^{*}$, we obtain
\begin{gather*}
 \cvarphi_{21}=\left(\begin{smallmatrix}-1 & 0 & 0 & 0\\
0 & 1 & -3 & 0\\
0 & 0 & -1 & 0\\
0 & 0 & 0 & -1
\end{smallmatrix}\right),\qquad \cvarphi_{32}=\left(\begin{smallmatrix}-1 & 0 & 0 & 0\\
0 & -3 & 1 & 0\\
0 & -1 & 0 & 0\\
0 & 0 & 0 & -1
\end{smallmatrix}\right),\qquad \cvarphi_{13}=\left(\begin{smallmatrix}-1 & 0 & 0 & 0\\
0 & -1 & 0 & 0\\
0 & -3 & 1 & 0\\
0 & 0 & 0 & -1
\end{smallmatrix}\right),\\
\crho:=\cvarphi_{13}\cvarphi_{32}\cvarphi_{21}=\left(\begin{smallmatrix}-1 & 0 & 0 & 0\\
0 & 3 & -8 & 0\\
0 & 8 & -21 & 0\\
0 & 0 & 0 & -1
\end{smallmatrix}\right)
\end{gather*}
as elements in $O(U\oplus M,\mZ$). Here we define $U=\mZ e\oplus\mZ f$ to be the hyperbolic lattice $\left(\mZ^{2},\left(\begin{smallmatrix}0 & 1\\ 1 & 0 \end{smallmatrix}\right)\right)$ and order the bases of $U\oplus M$ as $\mZ e\oplus M\oplus\mZ f$ when writing the above matrix forms.

The classical mirror symmetry summarized in Section \ref{sec:Classical-MS} applies to the so-called (families of) lattice polarized K3 surfaces replacing the K\"ahler cone with the ample cones~\cite{Do}. In our case here, we consider a primitive lattice embedding $M\oplus U\oplus\check{M}\subset L_{K3}$ with a fixed decomposition $M^{\perp}=U\oplus\check{M}$. Then $X$ is a member of the $M$-polarized K3 surfaces, while the mirror $X^{*}$ is a member of $\check{M}$-polarized K3 surfaces (whose transcendental lattice is $\check{M}^{\perp}=U\oplus M$). The classical mirror symmetry in this case may be summarized in the following isomorphism:
\begin{gather*}
V_{M}^{+}+\sqrt{-1}M\otimes\mR\simeq\Omega^{+}(U\oplus M)
\end{gather*}
for the period domain $\Omega^{+}(U\oplus M)=\left\{ [\omega]\in\mP((U\oplus M)\otimes\mC)\,|\,\omega.\omega=0,\omega.\bar{\omega}>0\right\} ^{+}$ where we take one of the connected components, and the corresponding component of the tube domain $V_{M}^{+}=\left\{ v\in M\otimes\mR\,|\,(v,v)_{M}>0\right\} ^{+}$.

Since there are no elements with $(v,v)_{M}=-2$ in $M$, the ample cones of general members of $M$-polarized K3 surfaces coincide with the positive cone, which is isomorphic to $V_{M}^{+}$. Similarly to what we described in Section~\ref{sub:Movable-cone}, by gluing the cone $\mR_{\geq0}H_{1}+\mR_{\geq0}H_{2}\subset H^{2}(X,\mR$) by the morphisms $\varphi_{ij}$, we arrive at the positive cone $V_{M}^{+}$ which is an irrational cone (see \cite{Oguiso2} and \cite[Section~1.5]{FGvGvL}). This gluing exactly matches to the gluing the monodromy nilpotent cones at each boundary point $o_{i}$ by the connection matrix $\check{\varphi}_{ij}$. The monodromy relations play the key roles for the gluing, and they follow from the parallel calculations to those in Section~\ref{sec:Gluing-monod-I}. For example, we have
\begin{gather*}
T_{x'}=T_{x}^{-1}T_{y}^{3},\qquad T_{y'}=T_{y},\qquad T_{x''}=T_{x},\qquad T_{y''}=T_{y}^{-1}T_{x}^{3}
\end{gather*}
corresponding to (\ref{eq:T-relation}) and (\ref{eq:T-relation2}), respectively, with
\begin{gather*}
 T_{x}=\left(\begin{smallmatrix}1 & -1 & 0 & -2\\
0 & 1 & 0 & 4\\
0 & 0 & 1 & 6\\
0 & 0 & 0 & 1
\end{smallmatrix}\right),\qquad T_{y}=\left(\begin{smallmatrix}1 & 0 & -1 & -2\\
0 & 1 & 0 & 6\\
0 & 0 & 1 & 4\\
0 & 0 & 0 & 1
\end{smallmatrix}\right),\\
 T_{x'}=\cvarphi_{21}^{-1}T_{x}\cvarphi_{21},\qquad T_{y'}=\cvarphi_{21}^{-1}T_{y}\cvarphi_{21}\qquad \text{and}\qquad T_{x''}=\cvarphi_{31}^{-1}T_{x}\cvarphi_{31},\qquad T_{y''}=\cvarphi_{31}^{-1}T_{y}\cvarphi_{31}.
\end{gather*}
 As in Section~\ref{sub:Gluing-MN-subsec}, exceptional divisors $E_{1}$, $E_{1}'$ and $E_{1}''$ have to be introduced to determine the connection matrices $\cvarphi_{ij}$, but it turns out that their monodromies are trivial, i.e., $T_{E_{1}}=T_{E'}=T_{E_{1}''}=\mathrm{id}$. Clearly, this is consistent to our interpretation of these monodromies in terms of the flopping curves (Proposition~\ref{prop:Picard-Lefschetz-Reye}) for the case of Calabi--Yau threefolds.

As this example shows, irrational ample cones indicate infinite gluings of the nilpotent cones in the mirror side. It is natural to expect that the corresponding property holds for the mirror symmetry of Calabi--Yau threefolds in general with ample cones replaced by movable cones and the morphisms by birational maps as known in the so-called \textit{movable cone conjecture} \cite{KawCone, MoKahler}. We have shown in this paper that, in three dimensions, the gluings of monodromy nilpotent cones encode the non-trivial monodromies $T_{E_{i}}$ which correspond to the flopping curves.

\appendix

\section{Proof of Lemmas~\ref{lem:KahlerX2} and~\ref{lem:KahlerX3}}\label{appendixA}

\subsection{Proof of Lemma~\ref{lem:KahlerX2}}

Let us consider the projective spaces $\mP(V_{i})$ with $V_{i}\simeq\mC^{5}$, $i=1,2$. Here we will only present a~proof of~(1), but it should be clear how to modify the following setting to show~(2).

We start with our discussion with the following exact sequence, which we obtain by tensoring the Euler sequence of $\mP(V_{2})$ with $V_{1}$:
\begin{gather*}
0\to V_{1}\otimes\sO_{\mP(V_{2})}(-1)\to V_{1}\otimes V_{2}\otimes\sO_{\mP(V_{2})}\to V_{1}\otimes T_{\mP(V_{2})}(-1)\to0.
\end{gather*}
In the following arguments, we denote this sequence by
\begin{gather*}
0\to\sE\to V_{1}\otimes V_{2}\otimes\sO_{\mP(V_{2})}\to(\sE^{\perp})^{*}\to0
\end{gather*}
with defining $\sE:=V_{1}\otimes\sO_{\mP(V_{2})}(-1)$ and $\sE^{\perp}:=V_{1}^{*}\otimes\Omega_{\mP(V_{2})}(1)$. We also have the following diagram of a linear duality (cf.~\cite[Section~8]{Kuz1}):
\begin{gather*}
\begin{xy}
(-25,0)*++{X_1\;\;\subset\;\;}="Xi",
(-15,0)*++{\mP(\sE)}="PE",
(15,0)*++{\mP(\sE^\perp)}="PEp",
(25,0)*++{\quad\supset\;X_2},
(-30,-13)*++{\mP(V_1\otimes V_2)}="PVV",
( 0,-13)*++{\mP(V_2)}="PV",
( 30,-13)*++{\mP(V_1^*\otimes V_2^*)}="PVsVs",
( 45,-13)*+{\supset \; Z_3.}
\ar "PE";"PVV"
\ar "PE";"PV"
\ar "PEp";"PV"
\ar "PEp";"PVsVs"
\end{xy}
\end{gather*}
Note that $\mP(\sE)$ is isomorphic to $\mP(V_{1})\times\mP(V_{2})$, and $\sO_{\mP(\sE)}(1)\simeq\sO_{\mP(V_{1})\times\mP(V_{2})}(1,1)$ since it is the pull-back of $\sO_{\mP(V_{1}\otimes V_{2})}(1)$ by construction. Therefore $X_{1}$ is a codimension 5 complete intersection in $\mP(\sE)$ with respect to $\sO_{\mP(\sE)}(1)$, and we have $\sO_{\mP(\sE)}(1)|_{X_{1}}=H_{1}+H_{2}$.

We see that
\begin{gather}
\mP(\sE^{\perp})=\{(w,M)\,|\, Mw=0\}\subset\mP(V_{2})\times\mP(V_{1}^{*}\otimes V_{2}^{*}),\label{eq:springer-resol-Pep}
\end{gather}
where we consider $V_{1}^{*}\otimes V_{2}^{*}\simeq\Hom(V_{2},V_{1}^{*})$ and $M$ is a $5\times5$ matrix. Therefore the image~$\sZ$ of the map $\mP(\sE^{\perp})\to\mP(V_{1}^{*}\otimes V_{2}^{*})$ consists of $5\times5$ matrices of rank $\leq4$, thus $\sZ$ is so-called the determinantal quintic. Note that we can write the determinantal quintic $Z_{3}\subset\mathbb{P}_{\lambda}^{4}$ in Proposition~\ref{proposition3.3} by $Z_{3}=\sZ\cap P_{4}$ for a $4$-dimensional linear subspace $P_{4}\subset\mP(V_{1}^{*}\otimes V_{2}^{*})$ with identifying~$P_{4}$ with~$\mathbb{P}_{\lambda}^{4}$. Moreover, the pull-back of~$Z_{3}$ to~$\mP(\sE^{\perp})$ is~$X_{2}$.

By a general fact on linear duality (\ref{eq:appendixB-H-H-det}) in Appendix~\ref{section:LD}, we have
\begin{gather}
\sO_{\mP(\sE)}(1)|_{X_{1}}+\sO_{\mP(\sE^{\perp})}(1)|_{X_{1}}=\det\sE^{*}=5H_{2},\label{eq:appendixA-H-H-det}
\end{gather}
where we denote by $\sO_{\mP(\sE^{\perp})}(1)|_{X_{1}}$ the strict transform of $\sO_{\mP(\sE^{\perp})}(1)|_{X_{2}}$ and abbreviate the notation for the pull-back for $\det\sE^{*}$. In this appendix, unless stated otherwise, we will write proper transforms of a divisor by the same symbol omitting the pull-backs by birational maps. Using this convention, we have $\sO_{\mP(\sE)}(1)|_{X_{1}}=H_{1}+H_{2}$ and also $\sO_{\mP(\sE^{\perp})}(1)|_{X_{1}}=L_{Z_{3}}$. Then we have
\begin{gather*}
(H_{1}+H_{2})+L_{Z_{3}}=5H_{2},
\end{gather*}
which gives $L_{Z_{3}}=4H_{2}-H_{1}$. Therefore, restoring the pull-backs by birational maps, we have
\begin{gather*}
\varphi_{21}^{*}L_{Z_{3}}=4H_{2}-H_{1},\qquad \varphi_{21}^{*}L_{Z_{2}}=H_{2}
\end{gather*}
in $N^{1}(X)$, which determine $\varphi_{21}^{*}(\sK_{X_{2}})$ as claimed.

\subsection{Proof of Lemma~\ref{lem:KahlerX3}}\label{appendixA.2}

Basic idea is very similar to the linear duality in the previous section. We consider the following diagram:
\begin{gather*}
\begin{xy}
(-25,0)*++{\mP(V_1^*\otimes\Omega_{\mP(V_2)}(1))}="PVO",
( 0,-13)*++{\mP(V_1^*\otimes V_2^*).}="PVsVs",
( 25,0)*++{\mP(\Omega_{\mP(V_1)}(1)\otimes V_2^*)}="PVOt"
\ar "PVO";"PVsVs"
\ar "PVOt";"PVsVs"
\end{xy}
\end{gather*}

\begin{cla}$\mP(V_{1}^{*}\otimes\Omega_{\mP(V_{2})}(1))\to\mP(V_{1}^{*}\otimes V_{2}^{*})$ and $\mP(\Omega_{\mP(V_{1})}(1)\otimes V_{2}^{*})\to\mP(V_{1}^{*}\otimes V_{2}^{*})$ are flopping contractions onto the common image~$\sZ$. Moreover, it is of Atiyah type outside the locus in $\sZ$ of corank $\geq2$.
\end{cla}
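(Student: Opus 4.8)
The plan is to realize the two projective bundles as Springer-type incidence resolutions of the determinantal quintic and then to read off the flop structure from the kernel stratification. First I would identify the bundles fibrewise. Dualizing the Euler sequence of $\mP(V_{2})$ and tensoring with $V_{1}^{*}$ gives an inclusion $V_{1}^{*}\otimes\Omega_{\mP(V_{2})}(1)\hookrightarrow V_{1}^{*}\otimes V_{2}^{*}\otimes\sO_{\mP(V_{2})}$ whose fibre at $[w]\in\mP(V_{2})$ is $V_{1}^{*}\otimes w^{\perp}=\{M\in\Hom(V_{2},V_{1}^{*})\mid Mw=0\}$. Hence, exactly as in~\eqref{eq:springer-resol-Pep},
\[
\mP\big(V_{1}^{*}\otimes\Omega_{\mP(V_{2})}(1)\big)=\{([w],[M])\mid Mw=0\}\subset\mP(V_{2})\times\mP(V_{1}^{*}\otimes V_{2}^{*}),
\]
and, by the symmetric computation on $\mP(V_{1})$,
\[
\mP\big(\Omega_{\mP(V_{1})}(1)\otimes V_{2}^{*}\big)=\{([u],[M])\mid {}^{t}Mu=0\}\subset\mP(V_{1})\times\mP(V_{1}^{*}\otimes V_{2}^{*}),
\]
where ${}^{t}M\in\Hom(V_{1},V_{2}^{*})$ is the transpose of $M$. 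Both total spaces are smooth, being projective bundles over $\mP(V_{2})$ and $\mP(V_{1})$.

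Next I would compute the two projections to $\mP(V_{1}^{*}\otimes V_{2}^{*})$. A point $[M]$ lies in the image of the first (resp.\ second) projection iff $\ker\!\big(M\colon V_{2}\to V_{1}^{*}\big)\neq 0$ (resp.\ $\ker\!\big({}^{t}M\colon V_{1}\to V_{2}^{*}\big)\neq 0$); since $M$ and ${}^{t}M$ have equal rank, both images equal the determinantal quintic $\sZ=\{\det M=0\}$. Stratifying $\sZ$ by corank, the fibre of either projection over a point of corank $k$ is $\mP(\ker)=\mP^{k-1}$; in particular over the dense corank-$1$ locus both fibres are points, so both projections are birational morphisms onto $\sZ$ and are isomorphisms away from the corank $\geq 2$ locus $\sZ_{\geq 2}$.

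I would then check that these birational morphisms are flopping contractions. The sources are smooth and the exceptional loci lie over $\sZ_{\geq 2}$. A Porteous dimension count gives $\operatorname{codim}\sZ_{\geq 2}=(5-3)^{2}=4$ in $\mP(V_{1}^{*}\otimes V_{2}^{*})$, hence codimension $3$ in $\sZ$; over the open corank-$2$ stratum the fibre is $\mP^{1}$, so the exceptional locus has codimension $2$ in the smooth source, and the deeper strata contribute higher codimension. Thus each contraction is small. Since $\sZ$ is a hypersurface it is Cohen--Macaulay, and $\operatorname{Sing}\sZ=\sZ_{\geq 2}$ has codimension $3$, so $\sZ$ is regular in codimension one, hence normal and Gorenstein; a small birational morphism from a smooth variety then carries no exceptional divisor and is automatically crepant. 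Therefore both maps are small crepant contractions of the same $\sZ$, so they are flopping contractions related by a flop.

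Finally, for the Atiyah-type assertion I would take a transverse slice at a generic point $[M_{0}]\in\sZ$ of corank exactly $2$. Choosing bases with $M_{0}=\operatorname{diag}(I_{3},0_{2})$, the germ of $\sZ$ at $[M_{0}]$ splits analytically as a smooth factor times the germ at the origin of the rank-$\leq 1$ locus of $2\times 2$ matrices, namely the conifold $\{xy-zw=0\}\subset\mC^{4}$; under this decomposition the two projections restrict to the two small resolutions $\mP(\ker)$ and $\mP(\operatorname{coker})$ of the conifold, i.e.\ to its two rulings contracting the transverse $\mP^{1}$. Hence the induced flop is the standard Atiyah flop over the corank-$2$ stratum, away from the deeper locus $\sZ_{\geq 3}$, which is the content of the last sentence of the claim. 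I expect this final step to be the main obstacle: producing the transverse conifold and identifying the two resolutions with its rulings requires a careful local (versal-deformation) analysis of the determinantal singularity along the rank-$3$ stratum, in contrast to the essentially formal bundle-theoretic arguments of the earlier steps.
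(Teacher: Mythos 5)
Your proof is correct and takes essentially the same route as the paper: the paper's entire proof is the single remark that the two maps are the Springer-type resolutions of $\sZ$ (see~(\ref{eq:springer-resol-Pep})), with everything you spell out --- the incidence-variety identification, smallness via the corank stratification, crepancy from Gorenstein plus small, and the transverse conifold analysis at corank-$2$ points --- left implicit as ``standard''. Note also that your reading of the final assertion (Atiyah type along the corank-exactly-$2$ stratum, i.e., away from the corank $\geq 3$ locus) is the mathematically sensible interpretation of the claim's wording.
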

\begin{proof} This is standard since $\mP(V_{1}^{*}\otimes\Omega_{\mP(V_{2})}(1))\to\mP(V_{1}^{*}\otimes V_{2}^{*})$ and $\mP(\Omega_{\mP(V_{1})}(1)\otimes V_{2}^{*})\to\mP(V_{1}^{*}\otimes V_{2}^{*})$ are the Springer type resolutions of the image $\mathcal{Z}$ (see~(\ref{eq:springer-resol-Pep})).
\end{proof}

As we have seen in the proof of Lemma~\ref{lem:KahlerX2}, $X_{2}$ is contained in
$\mP(V_{2}^{*}\otimes\Omega_{\mP(V_{1})}(1))$. Similarly, $X_{3}$
is contained in $\mP(\Omega_{\mP(V_{2})}(1)\otimes V_{1}^{*})$. Indeed,
for the 4-dimensional linear subspace $P_{4}\subset\mP(V_{1}^{*}\otimes V_{2}^{*})$
such that $Z_{3}=\sZ\cap P_{4}$, $X_{2}$ and $X_{3}$ are the pull-backs
of $P_{4}$ to $\mP(V_{1}^{*}\otimes\Omega_{\mP(V_{2})}(1))$ and
$\mP(\Omega_{\mP(V_{1})}(1)\otimes V_{2}^{*})$, respectively.

Now we take the fiber product
\begin{gather*}
\mP:=\mP\big(V_{1}^{*}\otimes\Omega_{\mP(V_{2})}(1)\big)\times_{\mP(V_{1}^{*}\otimes V_{2}^{*})}\mP\big(\Omega_{\mP(V_{1})}(1)\otimes V_{2}^{*}\big).
\end{gather*}
\begin{cla}
It holds that $\mP=\mP_{\mP(V_{1})\times\mP(V_{2})}\big(\Omega_{\mP(V_{1})}(1)\boxtimes\Omega_{\mP(V_{2})}(1)\big)$.
\end{cla}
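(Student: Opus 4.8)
The plan is to identify both factors of the fiber product with incidence varieties inside $\mP(V_1)\times\mP(V_2)\times\mP(V_1^*\otimes V_2^*)$ and then read off their intersection. Throughout I regard an element $M\in V_1^*\otimes V_2^*$ as a bilinear form on $V_1\times V_2$, equivalently as a homomorphism $V_2\to V_1^*$ or $V_1\to V_2^*$. By \eqref{eq:springer-resol-Pep} the first factor is already described as
\begin{gather*}
\mP\big(V_1^*\otimes\Omega_{\mP(V_2)}(1)\big)=\big\{([w],[M])\,|\,M(-,w)=0\big\}\subset\mP(V_2)\times\mP\big(V_1^*\otimes V_2^*\big),
\end{gather*}
and, by the symmetric argument using the Euler sequence on $\mP(V_1)$, the second factor is
\begin{gather*}
\mP\big(\Omega_{\mP(V_1)}(1)\otimes V_2^*\big)=\big\{([\lambda],[M])\,|\,M(\lambda,-)=0\big\}\subset\mP(V_1)\times\mP\big(V_1^*\otimes V_2^*\big).
\end{gather*}
Here the two conditions simply say that the line $[M]$ lies in the Euler subbundle $V_1^*\otimes\Omega_{\mP(V_2)}(1)$ (resp.\ $\Omega_{\mP(V_1)}(1)\otimes V_2^*$) of the trivial bundle $V_1^*\otimes V_2^*\otimes\sO$.

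Second, I would form the fiber product over $\mP(V_1^*\otimes V_2^*)$. Since both factors project to the common space by remembering $[M]$, the fiber product is the triple incidence
\begin{gather*}
\mP=\big\{([\lambda],[w],[M])\,|\,M(\lambda,-)=0,\ M(-,w)=0\big\}.
\end{gather*}
Projecting to $\mP(V_1)\times\mP(V_2)$ and fixing a point $([\lambda],[w])$, the fiber is the projectivization of $\big(\lambda^\perp\otimes V_2^*\big)\cap\big(V_1^*\otimes w^\perp\big)$, where $\lambda^\perp\subset V_1^*$ and $w^\perp\subset V_2^*$ are the respective annihilators. The linear-algebra identity $(A\otimes V_2^*)\cap(V_1^*\otimes B)=A\otimes B$ (proved at once by choosing complements $V_1^*=A\oplus A'$ and $V_2^*=B\oplus B'$) then identifies this fiber with $\mP(\lambda^\perp\otimes w^\perp)$, which is exactly the fiber of $\Omega_{\mP(V_1)}(1)\boxtimes\Omega_{\mP(V_2)}(1)$ over $([\lambda],[w])$.

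Third, I would upgrade this fiberwise statement to an isomorphism of projective bundles. On $\mP(V_1)\times\mP(V_2)$ the two conditions cut out the subsheaves $\Omega_{\mP(V_1)}(1)\otimes V_2^*$ and $V_1^*\otimes\Omega_{\mP(V_2)}(1)$ of the trivial bundle $V_1^*\otimes V_2^*\otimes\sO$ (pulled back along the two projections); by the previous step their intersection has fibers of constant dimension $16=4\cdot 4$, hence is a subbundle, and it is canonically $\Omega_{\mP(V_1)}(1)\boxtimes\Omega_{\mP(V_2)}(1)$. Since the relative hyperplane class of $\mP$ restricts on each factor to that factor's own relative $\sO(1)$ (both being pulled back from $\sO_{\mP(V_1^*\otimes V_2^*)}(1)$), the incidence description matches the universal property of the projectivization of this subbundle, yielding the claimed equality.

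The step I expect to be the main obstacle is the scheme-theoretic (rather than merely set-theoretic) identification in the third paragraph: one must check that the fiber product, defined by the simultaneous vanishing of the two linear conditions, carries precisely the subbundle structure and has no embedded or excess components. This is guaranteed by the constancy of the fiber dimension, which forces the intersection of the two subbundles to be flat over $\mP(V_1)\times\mP(V_2)$ and therefore a genuine subbundle; once flatness is established, the identification of the tautological data is routine.
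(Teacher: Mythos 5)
Your proof is correct and takes essentially the same route as the paper's: both identify $\mP$ with the triple incidence variety $\big\{([\lambda],[w],[M])\,|\,M(\lambda,-)=0,\ M(-,w)=0\big\}$ and then recognize the fiber over $([\lambda],[w])\in\mP(V_{1})\times\mP(V_{2})$ as $\mP\big(\lambda^{\perp}\otimes w^{\perp}\big)$, the fiber of $\Omega_{\mP(V_{1})}(1)\boxtimes\Omega_{\mP(V_{2})}(1)$. The only difference is that you spell out the linear-algebra identity $(A\otimes V_{2}^{*})\cap(V_{1}^{*}\otimes B)=A\otimes B$ and the constant-rank/subbundle argument upgrading the fiberwise statement to an identification of projective bundles, details the paper compresses into ``the assertion follows.''
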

\begin{proof}Note that
\begin{gather*}
\mP=\big\{(w,M,z)\,|\, Mw=0,{\empty^{t}z}M=0\big\}\subset\mP(V_{2})\times\mP(V_{1}^{*}\otimes V_{2}^{*})\times\mP(V_{1}).
\end{gather*}
Thus the fiber of $\mP\to\mP(V_{1})\times\mP(V_{2})$ over $(w,z)$ is nothing but $\mP((V_{1}/\mC w)^{*}\otimes(V_{2}/\mC z)^{*})$ and the assertion follows.
\end{proof}

Note that the tautological divisor $\sO_{\mP}(1)$ of $\mP\big(\Omega_{\mP(V_{1})}(1)\boxtimes\Omega_{\mP(V_{2})}(1)\big)$ defines a map to $\mP(V_{1}^{*}\otimes V_{2}^{*})$ and it is the pull-back of $\sO_{\mP(V_{1}^{*}\otimes V_{2}^{*})}(1)$. We will denote it by $L_{\mP(V_{1}^{*}\otimes V_{2}^{*})}$. By the canonical bundle formula of $\mP(\Omega_{\mP(V_{1})}(1)\boxtimes\Omega_{\mP(V_{2})}(1))$, we have
\begin{gather*}
K_{\mP}=-16L_{\mP(V_{1}^{*}\otimes V_{2}^{*})}+K_{\mP(V_{1})\times\mP(V_{2})}+\det\big\{\Omega_{\mP(V_{1})}(1)\boxtimes\Omega_{\mP(V_{2})}(1)\big\}^{*},
\end{gather*}
where we omit the notation of the pull-backs for $K_{\mP(V_{1})\times\mP(V_{2})}$ and $\det\{\Omega_{\mP(V_{1})}(1)\boxtimes\Omega_{\mP(V_{2})}(1)\}^{*}$. Since $K_{\mP(V_{1})\times\mP(V_{2})}=-5L_{\mP(V_{1})}-5L_{\mP(V_{2})}$, where $L_{\mP(V_{1})}$ and $L_{\mP(V_{2})}$ are the pull-backs of $\sO_{\mP(V_{i})}(1)$'s of $\mP(V_{i})$ on the left and right factors of $\mP(V_{1})\times\mP(V_{2})$, respectively, and $\det\{\Omega_{\mP(V_{1})}(1)\boxtimes\Omega_{\mP(V_{2})}(1)\}^{*}$ $=4L_{\mP(V_{1})}+4L_{\mP(V_{2})}$, we have
\begin{gather}
K_{\mP}=-16L_{\mP(V_{1}^{*}\otimes V_{2}^{*})}-L_{\mP(V_{1})}-L_{\mP(V_{2})}.\label{eq:App-KmP}
\end{gather}
By the canonical bundle formula of $\mP(V_{1}^{*}\otimes\Omega_{\mP(V_{2})}(1))$, we have
\begin{gather*}
-K_{\mP(V_{1}^{*}\otimes\Omega_{\mP(V_{2})}(1))}=20L_{\mP(V_{1}^{*}\otimes V_{2}^{*})}.
\end{gather*}
Pushing forwards (\ref{eq:App-KmP}) to $\mP(V_{1}^{*}\otimes\Omega_{\mP(V_{2})}(1))$, we obtain
\begin{gather*}
-K_{\mP(V_{1}^{*}\otimes\Omega_{\mP(V_{2})}(1))}=16L_{\mP(V_{1}^{*}\otimes V_{2}^{*})}+L_{\mP(V_{1})}+L_{\mP(V_{2})}.
\end{gather*}
Therefore we have
\begin{gather}
L_{\mP(V_{1})}+L_{\mP(V_{2})}=4L_{\mP(V_{1}^{*}\otimes V_{2}^{*})}.\label{eq:new706}
\end{gather}

Now, restricting the above construction over the linear subspace $P_{4}\subset\mP(V_{1}^{*}\otimes V_{2}^{*})$, we have
\begin{gather*}
\begin{xy}
(0,0)*++{\mP_{|P_4}}="topP",
(-15,-10)*++{X_2}="Xl",
( 15,-10)*++{X_3,}="Xr",
( 0,-20)*++{P_4}="botP"
\ar "topP";"Xl"
\ar "topP";"Xr"
\ar "Xl";"botP"
\ar "Xr";"botP"
\end{xy}
\end{gather*}
where we denote by $\mP_{|P_{4}}$ the restriction of $\mP$ over $P_{4}$. Restricting (\ref{eq:new706}) to $X_{2}$, we have
\begin{gather}
\varphi_{32}^{*}(M_{Z_{1}})+L_{Z_{2}}=4L_{Z_{3}}.\label{eq:desiredrel}
\end{gather}
This is the claimed relation.
\begin{Corollary}$\mP(V_{1}^{*}\otimes\Omega_{\mP(V_{2})}(1))\dashrightarrow\mP(\Omega_{\mP(V_{1})}(1)\otimes V_{2}^{*})$ is the flop. Similarly, $X_{2}\dashrightarrow X_{3}$ is the flop. \end{Corollary}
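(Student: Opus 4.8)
The plan is to read off both assertions from the two Claims proved above together with the canonical-bundle computation that produced (\ref{eq:App-KmP}). Write $Y_{1}=\mP(V_{1}^{*}\otimes\Omega_{\mP(V_{2})}(1))$ and $Y_{2}=\mP(\Omega_{\mP(V_{1})}(1)\otimes V_{2}^{*})$, with the two Springer-type resolution maps $f_{i}\colon Y_{i}\to\sZ$. By the first Claim each $f_{i}$ is a flopping contraction onto the common image $\sZ$; in particular each is a small birational morphism, with exceptional locus of codimension $\geq2$, so the induced birational map $f_{2}^{-1}\circ f_{1}$ is an isomorphism in codimension one. It is not a global isomorphism, since by the same Claim the two resolutions differ by an Atiyah flop over the generic point of the corank-one stratum of $\sZ$.

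Next I would check crepancy. The pushforward argument leading to (\ref{eq:App-KmP}) yields $-K_{Y_{1}}=20L_{\mP(V_{1}^{*}\otimes V_{2}^{*})}$, so $K_{Y_{1}}$ is the $f_{1}$-pullback of $\sO_{\mP(V_{1}^{*}\otimes V_{2}^{*})}(-20)$, and the symmetric computation gives the same for $f_{2}$. Since $\sZ$ is the determinantal quintic hypersurface in $\mP(V_{1}^{*}\otimes V_{2}^{*})\simeq\mP^{24}$ (normal and Gorenstein, with singular locus in codimension $\geq 2$), adjunction gives $K_{\sZ}=(K_{\mP^{24}}+\sZ)|_{\sZ}=-20\,L_{\mP(V_{1}^{*}\otimes V_{2}^{*})}|_{\sZ}$. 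Hence $K_{Y_{i}}=f_{i}^{*}K_{\sZ}$ for $i=1,2$: both contractions are crepant. A small birational modification between two crepant small contractions of the same normal base is a flop, which settles the first assertion; to exhibit a polarizing divisor one takes $L_{\mP(V_{1})}$, which is $f_{1}$-relatively ample, and uses (\ref{eq:new706}) to check that its proper transform reverses sign along the $f_{2}$-contracted curves.

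For the second assertion I would restrict the whole picture to the linear section. Recall that $X_{2}$ and $X_{3}$ are the pullbacks of $P_{4}$ to the two resolutions, fibered over the quintic threefold $Z_{3}=\sZ\cap P_{4}\subset\mP_{\lambda}^{4}$. A quintic in $\mP^{4}$ has $K_{Z_{3}}=0$, and $X_{2}$, $X_{3}$ are Calabi--Yau, so $K_{X_{2}}=K_{X_{3}}=0$; thus the induced contractions $X_{2}\to Z_{3}$ and $X_{3}\to Z_{3}$ are automatically crepant. They are small because they contract only finitely many curves (the $50$ lines of Proposition~\ref{proposition3.3}), so $X_{2}\dashrightarrow X_{3}$ is a flop, in agreement with the Atiyah flops already identified there.

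The step I expect to be the real obstacle is the orientation of the polarizing divisor: one must confirm that the proper transform of an $f_{1}$-ample divisor is genuinely $f_{2}$-anti-ample, so that $f_{2}^{-1}\circ f_{1}$ is a flop rather than a divisorial contraction or an isomorphism. This is exactly what the corank stratification in the first Claim and the relation (\ref{eq:new706}) control, and one must also ensure that the section $P_{4}$ meets $\sZ$ so that the contracted curves of $Z_{3}$ lie in the Atiyah-type locus, which is what ties the threefold statement to the count of $50$ flopping $\mP^{1}$s.
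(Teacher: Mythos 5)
Your decisive step is the same as the paper's: the flop is certified by producing a divisor that is relatively ample for one of the two small contractions and whose proper transform is relatively anti-ample for the other, using (\ref{eq:new706}) for the big projective bundles and the restricted relation (\ref{eq:desiredrel}) for $X_{2}\dashrightarrow X_{3}$. Your additional crepancy check (adjunction on the quintic hypersurface $\sZ$) is correct but not needed beyond the paper's computation $-K=20L_{\mP(V_{1}^{*}\otimes V_{2}^{*})}$, which already exhibits $K$ as a pull-back from the base.

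However, as written the argument has concrete defects. First, the bridging principle ``a small birational modification between two crepant small contractions of the same normal base is a flop'' is false: such a map can simply be an isomorphism over the base (take the identity), and ruling this out is precisely what the sign-reversal accomplishes; so that check is not an optional ``polarization'' flourish appended at the end --- it \emph{is} the proof, as you in fact concede in your final paragraph. Second, your way of excluding the isomorphism is circular: the first Claim says each contraction is a flopping contraction of Atiyah type; it does not say that the two Springer resolutions sit on \emph{opposite} sides of the flop, which is exactly what the Corollary asserts. (Also, the relevant stratum is the corank-$\geq 2$ locus; over corank-one points both maps are isomorphisms, so ``Atiyah flop over the generic point of the corank-one stratum'' is not meaningful.) Third, your labels in the decisive step are swapped: with $Y_{1}=\mP(V_{1}^{*}\otimes\Omega_{\mP(V_{2})}(1))$, the $f_{1}$-contracted curves are lines in the $\mP(V_{2})$-direction, so $L_{\mP(V_{2})}$ is $f_{1}$-ample, and since $L_{\mP(V_{1}^{*}\otimes V_{2}^{*})}$ has degree zero on contracted curves, (\ref{eq:new706}) forces $L_{\mP(V_{1})}$ to be $f_{1}$-negative and $f_{2}$-ample --- the opposite of what you wrote. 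Finally, for $X_{2}\dashrightarrow X_{3}$ the paper does not invoke Calabi--Yau triviality of $K$ together with Proposition~\ref{proposition3.3}; it restricts (\ref{eq:new706}) to the linear section to get (\ref{eq:desiredrel}) and runs the identical sign-reversal there. Appealing to Proposition~\ref{proposition3.3}, which already declares the maps to be Atiyah flops, makes the second assertion a quotation rather than the self-contained deduction the appendix is constructing.
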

\begin{proof}
Note that $L_{\mP(V_{1})}$ and $L_{\mP(V_{2})}$ are relatively ample for $\mP(\Omega_{\mP(V_{1})}(1)\otimes V_{2}^{*})\to\mP(V_{1})$ and $\mP(V_{1}^{*}\otimes\Omega_{\mP(V_{2})}(1))\to\mP(V_{2})$, respectively. Since $L_{\mP(V_{1}^{*}\otimes V_{2}^{*})}$ is the pull-backs of a divisor on $\mP(V_{1}^{*}\otimes V_{2}^{*})$, we see that $-L_{\mP(V_{1})}$ is relatively ample for $\mP(V_{1}^{*}\otimes\Omega_{\mP(V_{2})}(1))\to\mP(V_{2})$ by (\ref{eq:new706}). Therefore $\mP(V_{1}^{*}\otimes\Omega_{\mP(V_{2})}(1))\dashrightarrow\mP(\Omega_{\mP(V_{1})}(1)\otimes V_{2}^{*})$ is the flop. We can show the assertion for $X_{2}\dashrightarrow X_{3}$ in the same way using (\ref{eq:desiredrel}).
\end{proof}

\section{Linear duality}\label{section:LD}

Having the case $W=V_{1}\otimes V_{2}$ and $B=\mP(V_{2})$ in mind, we consider the exact sequence of sheaves (vector bundles) in the following general form with $\dim W=N$:
\begin{gather*}
0\to\sE\to W\otimes\sO_{B}\to\big(\sE^{\perp}\big)^{*}\to0,\\
0\to\sE^{\perp}\to W^{*}\otimes\sO_{B}\to\sE^{*}\to0.
\end{gather*}
Under this general setting, we have the following natural morphisms:
\begin{gather*}
\begin{xy}
(-15,0)*++{\mP(\sE)}="PE",
( 15,0)*++{\mP\big(\sE^{\perp}\big)}="PEp",
(-30,-13)*++{\mP(W)}="PW",
( 0,-13)*++{B}="B",
( 30,-13)*++{\mP(W^{*}).}="PWs",
\ar_{f} "PE";"PW"
\ar"PE";"B"
\ar"PEp";"B"
\ar^{g} "PEp";"PWs"
\end{xy}
\end{gather*}

\begin{Lemma}\label{lem:appendix-dim} Let $\sE_{b}$ and $\sE_{b}^{\perp}$ be the fibers over $b\in B$ of $\sE$ and $\sE^{\perp}$, respectively. Then it holds
\begin{gather*}
\dim\mP\big(\sE_{b}\cap L_{r}^{\perp}\big)=\dim\mP\big(\sE_{b}^{\perp}\cap L_{r}\big)
\end{gather*}
for any $r$-dimensional linear subspace $L_{r}\subset W^{*}$ and the orthogonal linear subspace $L_{r}^{\perp}$ in $W$. \end{Lemma}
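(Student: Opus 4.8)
We have exact sequences
$$0\to\sE\to W\otimes\sO_B\to(\sE^\perp)^*\to0,\qquad 0\to\sE^\perp\to W^*\otimes\sO_B\to\sE^*\to0,$$
and for any $r$-dimensional linear subspace $L_r\subset W^*$ we want
$$\dim\mP(\sE_b\cap L_r^\perp)=\dim\mP(\sE_b^\perp\cap L_r).$$
Let me sketch how I would prove this.

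**Approach.** The plan is to work fiberwise at a fixed $b\in B$ and reduce everything to elementary linear algebra inside the pair of vector spaces $(W,W^*)$. Write $e:=\dim\sE_b$, so that from the first sequence $\dim(\sE^\perp_b)^*=N-e$, hence $\dim\sE_b^\perp=N-e$ as well. The two fibers $\sE_b\subset W$ and $\sE^\perp_b\subset W^*$ are genuinely orthogonal complements of each other under the pairing $W\times W^*\to\mC$: this is the content of the name $\sE^\perp$ and follows from the fact that the two displayed sequences are mutually dual, so that $\sE_b^\perp=\{\,\xi\in W^*\mid \xi|_{\sE_b}=0\,\}$ and symmetrically $\sE_b=\{\,w\in W\mid \langle w,\sE_b^\perp\rangle=0\,\}=(\sE_b^\perp)^\perp$. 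The whole lemma is then a statement purely about the two complementary subspaces $\sE_b,\ \sE_b^\perp$ and the subspace $L_r$.

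**Key steps.** First I would establish, in the abstract setting of a nondegenerate pairing between $W$ and $W^*$, the dimension formula
$$\dim(\sE_b^\perp\cap L_r)=\dim L_r-\dim\bigl(W^*/(\sE_b^\perp+L_r)\bigr) \quad\text{together with}\quad W^*/(\sE_b^\perp+L_r)\cong\bigl(\sE_b\cap L_r^\perp\bigr)^*.$$
The second isomorphism is the crux: dualizing the inclusion $\sE_b^\perp+L_r\hookrightarrow W^*$ identifies the annihilator of $\sE_b^\perp+L_r$ in $W=(W^*)^*$ with $(\sE_b^\perp)^\perp\cap L_r^\perp=\sE_b\cap L_r^\perp$, since $(\sE_b^\perp)^\perp=\sE_b$ by the previous paragraph. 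Counting dimensions then gives $\dim(\sE_b^\perp\cap L_r)-\dim(\sE_b\cap L_r^\perp)=\dim L_r+\dim\sE_b^\perp-N$, and by the symmetric computation with the roles of $(\sE_b,L_r^\perp)$ and $(\sE_b^\perp,L_r)$ interchanged, using $\dim L_r^\perp=N-r$ and $\dim\sE_b=N-\dim\sE_b^\perp$, the right-hand quantities match, forcing $\dim(\sE_b\cap L_r^\perp)=\dim(\sE_b^\perp\cap L_r)$. Passing from vector-space dimensions to projective dimensions only subtracts $1$ from each side, so the equality is preserved.

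**Main obstacle.** The only genuinely delicate point is verifying cleanly that $\sE_b$ and $\sE_b^\perp$ really are each other's annihilators under the canonical pairing, rather than merely having complementary dimensions; this is what makes the two exact sequences a ``linear duality'' and not just a coincidence of ranks. Once that orthogonality is pinned down, the rest is the standard fact that for subspaces $U,V$ of a space with a perfect pairing one has $(U\cap V^\perp)^\perp=U^\perp+V$, applied to $U=\sE_b$ (or $\sE_b^\perp$) and $V=L_r$, and the dimension bookkeeping is routine. I would therefore spend the bulk of the write-up making the orthogonality statement precise and keep the dimension count terse.
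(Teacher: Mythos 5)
Your concluding ``symmetry'' step is where the argument breaks down. You correctly reduce everything to the identity $\dim\big(\sE_{b}^{\perp}\cap L_{r}\big)-\dim\big(\sE_{b}\cap L_{r}^{\perp}\big)=\dim L_{r}+\dim\sE_{b}^{\perp}-N=r-e$, where $e:=\dim\sE_{b}$ (this follows from the annihilator property together with $\dim\big(\sE_{b}^{\perp}+L_{r}\big)=N-\dim\big(\sE_{b}\cap L_{r}^{\perp}\big)$). But the ``symmetric computation'' with $(\sE_{b},L_{r}^{\perp})$ and $(\sE_{b}^{\perp},L_{r})$ interchanged yields $\dim\big(\sE_{b}\cap L_{r}^{\perp}\big)-\dim\big(\sE_{b}^{\perp}\cap L_{r}\big)=\dim L_{r}^{\perp}+\dim\sE_{b}-N=e-r$, which is just the negative of the same identity; the two equations are automatically consistent for all $e$ and $r$ and force nothing. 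Indeed the asserted equality is false whenever $e\neq r$: take $W=\mC^{3}$ with basis $w_{1},w_{2},w_{3}$ and dual basis $\xi_{1},\xi_{2},\xi_{3}$, let $\sE_{b}=\langle w_{1}\rangle$ (so $\sE_{b}^{\perp}=\langle\xi_{2},\xi_{3}\rangle$) and $L_{2}=\langle\xi_{1},\xi_{2}\rangle$; then $\sE_{b}\cap L_{2}^{\perp}=0$ while $\sE_{b}^{\perp}\cap L_{2}=\langle\xi_{2}\rangle$.

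The missing ingredient is the numerical hypothesis $\dim L_{r}=\operatorname{rank}\sE$, which is implicit in the lemma (in the application of Appendix A both equal $5$) and which the paper's proof inserts silently when it substitutes $\dim\sE_{b}+\dim L_{r}^{\perp}=r+(N-r)$; the paper then concludes by $N-\dim\big(\sE_{b}+L_{r}^{\perp}\big)=\dim\big(\big(\sE_{b}+L_{r}^{\perp}\big)^{\perp}\big)=\dim\big(\sE_{b}^{\perp}\cap L_{r}\big)$, which is exactly your duality step. So with $e=r$ granted, your difference identity finishes the proof, and your treatment of what you call the main obstacle (that $\sE_{b}$ and $\sE_{b}^{\perp}$ are mutual annihilators) is correct and is indeed needed --- but it is not the crux; the rank condition your write-up never uses is. Separately, your first displayed formula $\dim\big(\sE_{b}^{\perp}\cap L_{r}\big)=\dim L_{r}-\dim\big(W^{*}/\big(\sE_{b}^{\perp}+L_{r}\big)\big)$ is false as written: combined with your (correct) isomorphism $W^{*}/\big(\sE_{b}^{\perp}+L_{r}\big)\cong\big(\sE_{b}\cap L_{r}^{\perp}\big)^{*}$ it would say the two dimensions in question \emph{sum} to $r$, contradicting the difference identity you state immediately afterwards; the correct ingredients are $\dim\big(\sE_{b}^{\perp}\cap L_{r}\big)=\dim\sE_{b}^{\perp}+\dim L_{r}-\dim\big(\sE_{b}^{\perp}+L_{r}\big)$ together with $\dim\big(\sE_{b}^{\perp}+L_{r}\big)=N-\dim\big(\sE_{b}\cap L_{r}^{\perp}\big)$.
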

\begin{proof}We calculate the dimensions as follows: $\dim\big(\sE_{b}\cap L_{r}^{\perp}\big)=\dim\sE_{b}+\dim L_{r}^{\perp}-\dim\big(\sE_{b}+L_{r}^{\perp}\big)=r+(N-r)-\dim\big(\sE_{b}+L_{r}^{\perp}\big)=\dim\big(\sE_{b}^{\perp}\cap L_{r}\big)$.
\end{proof}

The complete intersections $X_{1}$, $X_{2}$ in Appendix~\ref{appendixA.2} may be described, respectively, in general terms as
\begin{gather*}
X_{L_{r}^{\perp}}=f^{-1}\big(L_{r}^{\perp}\big)\cap\mP(\sE),\qquad Y_{L_{r}}=g^{-1}(L_{r})\cap\mP\big(\sE^{\perp}\big)
\end{gather*}
for a fixed subspace $L_{r}\subset W^{*}$, which we call \textit{orthogonal linear sections.} Consider the Grassmannian $\rG=\operatorname{Gr}(r,N)$ of $r$-spaces in $W^{*}$ and define the following family of orthogonal
linear sections:
\begin{gather*}
 \mathcal{X}_{r}:=\big\{ ([L_{r}],x)\in\rG\times\mP(\sE)\,|\, f(x)\in\mP\big(L_{r}^{\perp}\big)\big\} ,\\
 \mathcal{Y}_{r}:=\big\{ ([L_{r}],y)\in\rG\times\mP(\sE^{\perp})\,|\, g(y)\in\mP(L_{r})\big\} .
\end{gather*}
Also we define
\begin{gather*}
\Sigma_{0}:=\big\{ ([L_{r}],b)\in\rG\times B\,|\,\sE_{b}\cap L_{r}^{\perp}\not=0\big\} =\big\{ ([L_{r}],b)\in\rG\times B\,|\,\sE_{b}^{\perp}\cap L_{r}\not=0\big\} ,
\end{gather*}
where the second equality is valid due to Lemma~\ref{lem:appendix-dim}. Then $\mathcal{X}_{r}$, $\mathcal{Y}_{r}$ are orthogonal linear sections fibered over $\Sigma_{0}$ and, with natural morphisms, they
can be arranged in the following diagram:
\begin{gather}
\begin{xy}
( 0,13)*++{\mathcal{X}_{r}\times_{\Sigma_{0}}\mathcal{Y}_{r}}="XrYr",
(-15,0)*++{\mathcal{X}_{r}}="Xr",
( 15,0)*++{\mathcal{Y}_{r}}="Yr",
( 0,-13)*++{\Sigma_{0}}="Sigma",
(-30,0)*++{\mP(\sE)}="PE",
(30,0)*++{\mP\big(\sE^{\perp}\big).}="PEp",
( 12,-13)*{\subset\;\rG\times B},
\ar(-6,9.5);"Xr"
\ar(6,9.5);"Yr"
\ar"Xr";"Sigma"
\ar"Yr";"Sigma"
\ar"Xr";"PE"
\ar"Yr";"PEp"
\end{xy}\label{eq:appendixB-diagram}
\end{gather}
Let us introduce the following divisors related to the diagram:
\begin{gather*}
H_{\sE}:=\sO_{\mP(\sE)}(1),\qquad H_{\sE^{\perp}}:=\sO_{\mP(\sE^{\perp})}(1),\qquad H_{\rG}:=\sO_{\rG}(1).
\end{gather*}

\begin{Proposition}\label{prop:appendixB-KKK} Abbreviating the pull-back symbols by the morphisms in the diagram \eqref{eq:appendixB-diagram}, we have
\begin{gather*}
K_{\mathcal{X}_{r}\times_{\Sigma_{0}}\mathcal{Y}_{r}}=-(N-2)H_{\rG}-H_{\sE}-H_{\sE^{\perp}}+K_{B}+2\det\,\sE^{*}
\end{gather*}
and
\begin{gather*}
 K_{\mathcal{X}_{r}}=-(N-1)H_{\rG}+K_{B}+\det \sE^{*},\\
 K_{\mathcal{Y}_{r}}=-(N-1)H_{\rG}+K_{B}+\det \big(\sE^{\perp}\big)^{*}.
\end{gather*}
\end{Proposition}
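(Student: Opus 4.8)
The plan is to realize each of $\mathcal{X}_r$, $\mathcal{Y}_r$ and the fiber product as a tower of Grassmann and projective bundles, and to read off the canonical classes from relative Euler sequences together with adjunction. Write $0\to\mathcal{S}\to W^{*}\otimes\sO_{\rG}\to\mathcal{Q}\to0$ for the tautological sequence on $\rG=\operatorname{Gr}(r,W^{*})$, so that $c_{1}(\mathcal{S}^{*})=c_{1}(\mathcal{Q})=H_{\rG}$ and $K_{\rG}=-N H_{\rG}$. On $\mP(\sE)$ (lines in $\sE$, so that $\sO_{\mP(\sE)}(-1)\subset\sE$ and $H_{\sE}$ is pulled back from $\mP(W)$), the relative Euler sequence gives $K_{\mP(\sE)/B}=\det\sE^{*}-(\operatorname{rk}\sE)H_{\sE}$, and likewise for $\mP(\sE^{\perp})$. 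I will also use $\det(\sE^{\perp})^{*}=\det\sE^{*}$, immediate from $0\to\sE\to W\otimes\sO_{B}\to(\sE^{\perp})^{*}\to0$ since $\det(W\otimes\sO_{B})$ is trivial.

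First I would treat $\mathcal{X}_r$ and $\mathcal{Y}_r$. The condition $f(x)\in\mP(L_{r}^{\perp})$ says exactly that the tautological line $\sO_{\mP(\sE)}(-1)$ pairs to zero against $\mathcal{S}$, so $\mathcal{X}_r$ is the zero scheme on $\rG\times\mP(\sE)$ of the induced section of $\mathcal{S}^{*}\otimes H_{\sE}$; equivalently $\mathcal{X}_r\to\mP(\sE)$ is the relative Grassmannian of $r$-planes in the corank-one subbundle $\ker(W^{*}\otimes\sO\to\sO_{\mP(\sE)}(1))$, hence a genuine bundle and the section is regular. Adjunction gives $K_{\mathcal{X}_r}=K_{\rG}+K_{\mP(\sE)}+c_{1}(\mathcal{S}^{*}\otimes H_{\sE})$, and collecting terms the $H_{\sE}$-contributions amount to $(r-\operatorname{rk}\sE)H_{\sE}$, which vanish since $r=\operatorname{rk}\sE$ in the orthogonal-linear-section setting under consideration; this yields $K_{\mathcal{X}_r}=-(N-1)H_{\rG}+K_{B}+\det\sE^{*}$. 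The computation for $\mathcal{Y}_r$ is identical with $\mathcal{S}^{*}\otimes H_{\sE}$ replaced by the rank-$(N-r)$ bundle $\mathcal{Q}\otimes H_{\sE^{\perp}}$ and $\det\sE^{*}$ by $\det(\sE^{\perp})^{*}$.

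The fiber product is where the care is needed, and this is the main obstacle. Since $\sE_{b}^{\perp}=\operatorname{Ann}(\sE_{b})$, the lines $\ell_{x}\subset\sE_{b}$ and $\ell_{y}\subset\sE_{b}^{\perp}$ satisfy $\ell_{y}\subset\ell_{x}^{\perp}$ at every point of $P:=\mP(\sE)\times_{B}\mP(\sE^{\perp})$, so the two incidence conditions are automatically compatible. Consequently the naive system ``$\mathcal{S}\subset\ell_{x}^{\perp}$ and $\ell_{y}\subset\mathcal{S}$'' on $\rG\times P$, of expected codimension $N$, in fact cuts out a locus of codimension $N-1$: it is an excess (non-transverse) intersection, so blindly applying adjunction to $(\mathcal{S}^{*}\otimes H_{\sE})\oplus(\mathcal{Q}\otimes H_{\sE^{\perp}})$ would over-count by one. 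Instead I would identify $\mathcal{X}_r\times_{\Sigma_{0}}\mathcal{Y}_r$ with the relative Grassmann bundle $\operatorname{Gr}(r-1,\mathcal{K})\to P$ of $(r-1)$-planes in $\mathcal{K}:=\ell_{x}^{\perp}/\ell_{y}$ (rank $N-2$), via $L_{r}\mapsto L_{r}/\ell_{y}$; Lemma~\ref{lem:appendix-dim} guarantees the fibers have constant dimension, so this really is a bundle.

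Finally I would compute the canonical class of this bundle. From $0\to\ell_{y}\to\mathcal{S}\to\mathcal{S}'\to0$ one gets $c_{1}(\mathcal{S}'^{*})=H_{\rG}-H_{\sE^{\perp}}$, and from the Euler sequences $c_{1}(\mathcal{K})=-H_{\sE}+H_{\sE^{\perp}}$, so the relative canonical $-(N-2)c_{1}(\mathcal{S}'^{*})-(r-1)c_{1}(\mathcal{K})$ equals $-(N-2)H_{\rG}+(r-1)H_{\sE}+(N-r-1)H_{\sE^{\perp}}$. Adding $K_{P}=2\det\sE^{*}-(\operatorname{rk}\sE)H_{\sE}-(N-\operatorname{rk}\sE)H_{\sE^{\perp}}+K_{B}$ and setting $r=\operatorname{rk}\sE$, the $H_{\sE}$ and $H_{\sE^{\perp}}$ coefficients collapse to $-1$ each, giving exactly $K_{\mathcal{X}_r\times_{\Sigma_{0}}\mathcal{Y}_r}=-(N-2)H_{\rG}-H_{\sE}-H_{\sE^{\perp}}+K_{B}+2\det\sE^{*}$. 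The only genuinely delicate points are establishing the bundle structure (equidimensionality of fibers and smoothness) and pinning down all sign and convention choices consistently; once those are fixed the Chern-class bookkeeping is routine.
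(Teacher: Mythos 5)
The paper offers no proof of Proposition~\ref{prop:appendixB-KKK} to compare against: its ``proof'' reads \emph{we leave the proofs for readers}, with the subsequent remark that the argument ``relies on the projective geometry behind the diagram~(\ref{eq:appendixB-diagram})'' and will be reported elsewhere. Judged on its own merits, your proof is correct, and it is exactly the kind of argument the authors allude to. Your bundle realizations are right: $\mathcal{X}_r\to\mP(\sE)$ is the Grassmann bundle $\operatorname{Gr}\big(r,\ker(W^{*}\otimes\sO\to\sO_{\mP(\sE)}(1))\big)$, equivalently the zero locus of a regular section of $\mathcal{S}^{*}\boxtimes H_{\sE}$, and adjunction gives $K_{\mathcal{X}_r}=-(N-1)H_{\rG}+K_{B}+\det\sE^{*}+(r-\operatorname{rk}\sE)H_{\sE}$, and similarly for $\mathcal{Y}_r$. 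Your treatment of the fiber product is the crucial point and is also correct: since $\ell_{y}\subset\sE_{b}^{\perp}$ annihilates $\ell_{x}\subset\sE_{b}$ identically on $P=\mP(\sE)\times_{B}\mP(\sE^{\perp})$, the two incidence conditions cut out a locus of codimension $N-1$ rather than $N$, so naive adjunction with the direct-sum bundle would be wrong, whereas $L_{r}\mapsto L_{r}/\ell_{y}$ identifies $\mathcal{X}_r\times_{\Sigma_{0}}\mathcal{Y}_r$ with $\operatorname{Gr}_{P}\big(r-1,\ell_{x}^{\perp}/\ell_{y}\big)$; I have checked your Chern-class bookkeeping from there ($c_{1}(\mathcal{S}'^{*})=H_{\rG}-H_{\sE^{\perp}}$, $c_{1}(\mathcal{K})=-H_{\sE}+H_{\sE^{\perp}}$, the Grassmann-bundle relative canonical, and $K_{P}$) and it reproduces the stated formula exactly.

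Two remarks. First, your computation shows that the three formulas hold precisely when $r=\operatorname{rk}\sE$ (equivalently $N-r=\operatorname{rk}\sE^{\perp}$); otherwise nonzero correction terms such as $(r-\operatorname{rk}\sE)H_{\sE}$ survive. The proposition never states this hypothesis, though it is forced and is satisfied in the paper's application ($r=5=\operatorname{rk}\sE$ for $W=V_{1}\otimes V_{2}$, $B=\mP(V_{2})$); you were right to invoke it, but it should be flagged as an explicit assumption rather than attributed vaguely to the ``setting under consideration.'' Second, your appeal to Lemma~\ref{lem:appendix-dim} to justify the bundle structure of the fiber product is misplaced: that lemma concerns fibers over $\Sigma_{0}$, whereas what you need is that over \emph{every} point of $P$ the fiber is $\operatorname{Gr}(r-1,N-2)$, which is automatic from $\ell_{y}\subset\ell_{x}^{\perp}$. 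The genuinely necessary (routine) check is that the scheme-theoretic fiber product coincides with this Grassmann bundle; this follows from universal properties, since the vanishing of the two bundle maps on the fiber product forces $\ell_{y}\subset\mathcal{S}\subset\ell_{x}^{\perp}$ as subbundles and hence yields the classifying map inverse to yours. Neither point invalidates your argument.
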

\begin{proof}We leave the proofs for readers.
\end{proof}

It is easy to recognize that the proofs of the above proposition rely on the projective geometry behind the diagram (\ref{eq:appendixB-diagram}). We will report the proofs elsewhere with some additional properties which we can extract from the diagram (\ref{eq:appendixB-diagram}); for example, we can show that the morphisms $\mathcal{X}_{r}\to\Sigma_{0}$, $\mathcal{Y}_{r}\to\Sigma_{0}$ are flopping contractions and the naturally induced birational map $\mathcal{X}_{r}\dashrightarrow\mathcal{Y}_{r}$ in the diagram is the flop for these contractions.
\begin{Proposition}\label{prop:appendixB-KKK2}Pushing forward $K_{\mathcal{X}_{r}\times_{\Sigma_{0}}\mathcal{Y}_{r}}$ to $\mathcal{X}_{r}$, and equating to$K_{\mathcal{X}_{r}}$, we have a relation
\begin{gather}
H_{\sE}+H_{\sE^{\perp}}=\det \sE^{*}+H_{\rG}\label{eq:appendixB-H-relation-Xr}
\end{gather}
on $\mathcal{X}_{r}$. Similarly, we have a corresponding relation on $\mathcal{Y}_{r}$,
\begin{gather*}
H_{\sE}+H_{\sE^{\perp}}=\det \big(\sE^{\perp}\big)^{*}+H_{\rG}.
\end{gather*}
\end{Proposition}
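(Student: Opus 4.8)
The plan is to reduce the statement to the two canonical bundle formulas of Proposition~\ref{prop:appendixB-KKK} by exploiting that the projection $q\colon\mathcal{X}_{r}\times_{\Sigma_{0}}\mathcal{Y}_{r}\to\mathcal{X}_{r}$ is a \emph{crepant birational} morphism. Granting this, the identity to be proved becomes nothing more than the statement that $K_{\mathcal{X}_{r}\times_{\Sigma_{0}}\mathcal{Y}_{r}}-q^{*}K_{\mathcal{X}_{r}}=0$, and one reads off \eqref{eq:appendixB-H-relation-Xr} by subtracting the two displayed formulas. So the real content is to install the geometric input and then to track one non-pulled-back divisor class correctly.

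First I would analyze the fibers of $q$. Over a point $([L_{r}],x)\in\mathcal{X}_{r}$ lying above $b\in B$, the fiber is $\mP(\sE_{b}^{\perp}\cap L_{r})$, whereas the mere existence of $x$ forces $\sE_{b}\cap L_{r}^{\perp}\neq 0$. By Lemma~\ref{lem:appendix-dim} the two projective spaces $\mP(\sE_{b}\cap L_{r}^{\perp})$ and $\mP(\sE_{b}^{\perp}\cap L_{r})$ have equal dimension, and for a generic member of $\mathcal{X}_{r}$ this common dimension is $0$; hence $q$ is birational, and symmetrically so is the projection to $\mathcal{Y}_{r}$. For the crepancy I would invoke the flopping-contraction property recorded in the remark following Proposition~\ref{prop:appendixB-KKK}: since $\mathcal{Y}_{r}\to\Sigma_{0}$ is a flopping contraction, $K_{\mathcal{Y}_{r}}$ is trivial relative to $\Sigma_{0}$, and base change of the relative dualizing sheaf along the fibre square gives $K_{\mathcal{X}_{r}\times_{\Sigma_{0}}\mathcal{Y}_{r}/\mathcal{X}_{r}}\equiv_{q}0$; as $q$ is birational this yields $K_{\mathcal{X}_{r}\times_{\Sigma_{0}}\mathcal{Y}_{r}}=q^{*}K_{\mathcal{X}_{r}}$, whence $q_{*}K_{\mathcal{X}_{r}\times_{\Sigma_{0}}\mathcal{Y}_{r}}=K_{\mathcal{X}_{r}}$.

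Next I would push the formula for $K_{\mathcal{X}_{r}\times_{\Sigma_{0}}\mathcal{Y}_{r}}$ forward along $q$ term by term. Each of $H_{\rG}$, $H_{\sE}$, $K_{B}$ and $\det\sE^{*}$ is pulled back from $\mathcal{X}_{r}$ (through $\rG\times B$ and through $\mP(\sE)$), so $q_{*}q^{*}=\mathrm{id}$ leaves them unchanged. The only class not pulled back from $\mathcal{X}_{r}$ is $H_{\sE^{\perp}}$, coming from the $\mathcal{Y}_{r}$-factor; because $q$ is an isomorphism in codimension one (the induced $\mathcal{X}_{r}\dashrightarrow\mathcal{Y}_{r}$ is a flop), its pushforward is the strict transform of $H_{\sE^{\perp}}$ on $\mathcal{X}_{r}$, still denoted $H_{\sE^{\perp}}$ by the convention of Appendix~\ref{appendixA}. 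Equating $q_{*}K_{\mathcal{X}_{r}\times_{\Sigma_{0}}\mathcal{Y}_{r}}$ with $K_{\mathcal{X}_{r}}$ then gives
\[
-(N-2)H_{\rG}-H_{\sE}-H_{\sE^{\perp}}+K_{B}+2\det\sE^{*}=-(N-1)H_{\rG}+K_{B}+\det\sE^{*},
\]
and cancelling $K_{B}$, cancelling one copy of $\det\sE^{*}$, and collecting the $H_{\rG}$ terms rearranges this immediately to $H_{\sE}+H_{\sE^{\perp}}=\det\sE^{*}+H_{\rG}$, which is \eqref{eq:appendixB-H-relation-Xr}. The relation on $\mathcal{Y}_{r}$ is obtained verbatim after interchanging the roles of $\mathcal{X}_{r}$ and $\mathcal{Y}_{r}$ (equivalently $\sE$ and $\sE^{\perp}$), using the formula for $K_{\mathcal{Y}_{r}}$.

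The main obstacle is precisely the crepancy input $q_{*}K_{\mathcal{X}_{r}\times_{\Sigma_{0}}\mathcal{Y}_{r}}=K_{\mathcal{X}_{r}}$, together with the compatible claim that the pushforward of $H_{\sE^{\perp}}$ is its strict transform. Both rest on $\mathcal{X}_{r}\to\Sigma_{0}$ and $\mathcal{Y}_{r}\to\Sigma_{0}$ being flopping (Springer-type) contractions with $\mathcal{X}_{r}\times_{\Sigma_{0}}\mathcal{Y}_{r}$ realizing their common modification—the very facts the authors defer. Once these are in place the remaining step is the elementary divisor-class algebra displayed above.
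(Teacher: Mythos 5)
Your overall strategy is the paper's own: the proof of this proposition is nothing more than pushing the canonical-class formulas of Proposition~\ref{prop:appendixB-KKK} forward along the birational projection $q\colon\mathcal{X}_{r}\times_{\Sigma_{0}}\mathcal{Y}_{r}\to\mathcal{X}_{r}$ and comparing with $K_{\mathcal{X}_{r}}$. Your fiber analysis via Lemma~\ref{lem:appendix-dim} (giving birationality of $q$), the term-by-term pushforward with $H_{\sE^{\perp}}$ going to its strict transform under the paper's notational convention, and the concluding algebra are all correct. One small slip of wording: $q$ itself is \emph{not} an isomorphism in codimension one, since it contracts a divisor; what you need, and in fact use, is that the induced birational map $\mathcal{X}_{r}\dashrightarrow\mathcal{Y}_{r}$ is an isomorphism in codimension one.

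The genuine flaw is your justification of the key input $q_{*}K_{\mathcal{X}_{r}\times_{\Sigma_{0}}\mathcal{Y}_{r}}=K_{\mathcal{X}_{r}}$. The morphism $q$ is \emph{not} crepant, and the base-change argument you give cannot work because $\mathcal{Y}_{r}\to\Sigma_{0}$ is not flat (its fibers jump in dimension), so the relative dualizing sheaf does not pull back along the fiber square. This is visible already in the local model the paper itself points to (Atiyah type): if $X^{\pm}\to Y$ are the two small resolutions of the $3$-fold ordinary double point, then $W=X^{+}\times_{Y}X^{-}$ is the common blow-up of the flopping curve, with exceptional divisor $E\simeq\mP^{1}\times\mP^{1}$ and $K_{W}=q^{*}K_{X^{+}}+E\neq q^{*}K_{X^{+}}$; moreover $E\cdot C=-1$ for a $q$-contracted ruling $C$, so $K_{W}$ is not even $q$-numerically trivial. (Crepancy would actually contradict the two propositions combined: it would force $q^{*}$ of the strict transform of $H_{\sE^{\perp}}$ to equal its pullback from $\mathcal{Y}_{r}$, i.e., the exceptional correction to vanish.) Fortunately crepancy is never needed: for any proper birational morphism $q\colon Z\to X$ of normal varieties one has $q_{*}K_{Z}=K_{X}$, since both canonical classes can be computed from the same rational top-form, the non-exceptional components of $K_{Z}$ map onto the corresponding components of $K_{X}$ with multiplicity one, and the $q$-exceptional components push forward to zero --- the same elementary mechanism you already use to push forward $H_{\sE^{\perp}}$. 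With your crepancy paragraph replaced by this remark, your proof is complete and coincides with the paper's intended argument.
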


Now restricting the relation (\ref{eq:appendixB-H-relation-Xr}) on~$\mathcal{X}_{r}$ to $\mathcal{X}_{r}|_{[L_{r}]\times\mP(\sE)}=X_{L_{r}^{\perp}}$, we obtain
\begin{gather}
H_{\sE}|_{X_{L_{r}^{\perp}}}+H_{\sE^{\perp}}|_{X_{L_{r}}^{\perp}}=\det\,\sE^{*},\label{eq:appendixB-H-H-det}
\end{gather}
which is the relation we used in~(\ref{eq:appendixA-H-H-det}).

\subsection*{Acknowledgements} The results of this work have been reported by the first named author (S.H.) in several workshops; ``Modular forms in string theory'' at BIRS (2016), ``Categorical and Analytic invariants~IV'' at Kavli IPMU (2016), ``Workshop on mirror symmetry and related topics'' at Kyoto University (2016) and ``The 99th Encounter between Mathematicians and Theoretical Physicists'' at IRMA, Strasbourg (2017). He would like to thank the organizers for the invitations for the workshops where he had valuable discussions with the participants. Writing this paper started when S.H.~was staying at Brandeis University and Harvard University in March, 2017. He would like to thank B.~Lian and S.-T.~Yau for their kind hospitality and also valuable discussions during his stay. The authors would like to thank anonymous referees for valuable comments which helped them improve this paper. This work is supported in part by Grant-in Aid Scientific Research JSPS (C~16K05105, JP17H06127 S.H. and C 16K05090 H.T.).

\pdfbookmark[1]{References}{ref}
\LastPageEnding

\end{document}